\DeclareMathOperator{\supp}{supp}
\DeclareMathOperator{\loc}{loc\ }
\DeclareMathOperator{\divg}{div}
\DeclareMathOperator{\dist}{dist}
\DeclareMathOperator{\osc}{osc\ }
\DeclareMathOperator{\capc}{cap}
\DeclareMathOperator{\curl}{curl}
\DeclareMathOperator{\diam}{diam}
\newtheorem{thm}{Theorem}[section]
\newtheorem{cor}{Corollary}[section]
\newtheorem{lem}[thm]{Lemma}
\newtheorem{prop}{Proposition}[section]
\newtheorem{defn}{Definition}[section]
\newtheorem{re}{Remark}[section]
\theoremstyle{remark}
\numberwithin{equation}{section}
\newcommand{\norm}[1]{\left\Vert#1\right\Vert}
\newcommand{\abs}[1]{\left\vert#1\right\vert}
\newcommand{\Real}{\mathbb R}
\newcommand{\Rn}{\mathbb R^n}
\newcommand{\vp}{\varphi}
\newcommand{\Di}{_{x_i}}
\newcommand{\Dj}{_{x_j}}
\newcommand{\bdy}{\partial}
\newcommand{\ep}{_{\epsilon}}
\begin{document}
\title[Boundary behavior of solutions]{Boundary behavior of solutions of elliptic operators in divergence form with a BMO anti-symmetric part}
\author{Linhan Li \and Jill Pipher}

\newcommand{\Addresses}{{
  \bigskip
  \footnotesize

   \textsc{Department of Mathematics, Brown University,
    Providence, RI 02906 USA}\par\nopagebreak
  \textit{E-mail address}, L.~Li: \texttt{linhan\_li@brown.edu}\quad J.~Pipher: \texttt{jpipher@math.brown.edu}

}}

\begin{abstract}In this paper, we investigate the boundary behavior of solutions of divergence-form operators with an elliptic symmetric part and a $BMO$ anti-symmetric part. Our results will hold in non-tangentially accessible (NTA) domains; these general domains were introduced by Jerison and Kenig and include the class of Lipschitz domains. We establish the H\"older continuity of the solutions at the boundary, existence of elliptic measures $\omega_L$ associated to such operators, and the well-posedness of the continuous Dirichlet problem as well as the $L^p(d\omega)$ Dirichlet problem in NTA domains. The equivalence in the $L^p$ norm of the square function and the non-tangential maximal function under certain conditions remains valid. When specialized to Lipschitz domains, it is then possible to extend, to these operators, various criteria for determining mutual absolute continuity of elliptic measure with surface measure.
\end{abstract}
\maketitle

\section{Introduction}
In this paper, we investigate the boundary behavior of solutions of divergence-form operators with an elliptic symmetric part and a $BMO$ anti-symmetric part. More precisely, we consider operators $L=-\divg A\nabla$, $A=a+b$, with $a$ symmetric, bounded and satisfying the usual ellipticity condition, and $b$ anti-symmetric and belonging to the space $BMO$ (bounded mean oscillation).These operators arise in the study of elliptic equations of the form
$$
-\Delta u+c\cdot\nabla u=f
$$
with a divergence-free drift $c$. Since $\divg c=0$, we can write $c=\divg b$ for an anti-symmetric tensor $b=(b_{ij})$, and the preceding equation becomes
$$
-\divg(I-b)\nabla u=f.
$$

When the matrix $A$ is bounded, measurable and elliptic, there are classical results about the boundary behavior of the solution to $Lu=0$ in a domain, $\Omega \subset \Rn$. These investigations were initiated in the papers of De Giorgi, Nash, and Moser, where sharp regularity of solutions to divergence form elliptic equations with bounded measurable coefficients were obtained.
In particular, (weak) solutions were shown be H\"older continuous, with a parameter depending only upon ellipticity, in domains satisfying certain exterior cone condition.
 The boundary regularity can be obtained essentially using Moser iteration, see \cite{gilbarg2001elliptic} for example. The study of regular points, as well as the solvability of the continuous Dirichlet problem for such operators was established in \cite{littman1963regular}. Then the fundamental work of \cite{hunt1968boundary} and \cite{caffarelli1981boundary} showed the existence of non-tangential limits of solutions, paving the way for the study of $L^p$ Dirichlet problem and other boundary value problems for divergence form elliptic operators with bounded and measurable coefficients. In  \cite{kenig2000new}, it was observed that the results of \cite{caffarelli1981boundary} are valid without the symmetry assumption on $A$. Readers interested in a more complete history of boundary value problems for elliptic operators in divergence form might consult the book \cite{kenig1994harmonic}, which provides a concise exposition of the developments in this subject. \par

A function in $BMO$ is not necessarily in $L^{\infty}$, and thus the operators under consideration do not belong to the well-studied class of elliptic operators. In an important development, Seregin, Silvestre, {\v{S}}ver{\'a}k, and Zlato{\v{s}} \cite{seregin2012divergence} discovered that a large portion of Moser's arguments works for this class of operators. In particular, they carried out the Moser iteration, and proved the Liouville theorem and Harnack inequality for solutions to divergence form parabolic operators, including the elliptic case. Thus, the interior regularity theory of De Giorgi, Nash, and Moser carries over to this setting. Moreover, in \cite{qian2016parabolic}, the authors showed the existence of the fundamental solution of the parabolic operator $L-\partial_t$, and derived Gaussian estimate for the fundamental solution. In \cite{escauriaza2017kato}, Escauriaza and Hofmann showed that the domain of the square root $\sqrt{L}$ contains $W^{1,2}(\Rn)$, and that $\norm{\sqrt{L}f}_{L^2(\Rn)}\lesssim\norm{\nabla f}_{L^2(\Rn)}$ holds over $\dot{W}^{1,2}(\Rn)$ without using the Gaussian estimate. Recently, H. Dong and S. Kim \cite{dong2017fundamental} have generalized the result for fundamental solutions to second-order parabolic systems, under the assumption that weak solutions of the system satisfy a certain local boundedness estimate. However, the study of the boundary behavior of solutions is not part of the literature.
It is thus natural to ask whether the classical results for elliptic operators with bounded and measurable coefficients in divergence form remain valid for these operators.

This paper is organized as the following. In Section \ref{preSec}, we set down some definitions and basic facts about the operator $L=-\divg A\nabla$ where the anti-symmetric part of $A$ belongs to the space $BMO$. In Section \ref{intSec}, we show the interior estimates and Harnack principle of solutions. Although most of the results in that section have already been obtained in \cite{seregin2012divergence} for parabolic equations, the proofs we give in the elliptic case are simpler than the parabolic case.  In fact, we prove these results directly for non-smooth coefficients, instead of passing through the smooth case  as in \cite{seregin2012divergence}. Section \ref{BdySection} is devoted to proving the boundary H\"older continuity of the solution and that the continuous Dirichlet problem is uniquely solvable on non-tangentially accessible (NTA) domains. This class of domains was first defined in \cite{jerison1982boundary} and includes the class of Lipschitz domains. In Section \ref{GreenSection}, we check that the classical results on the Green's function and on regular points in \cite{gruter1982green} hold in this setting. In Section \ref{ellipticSec}, we discuss the existence of elliptic measure and observe that the classical properties of these measures are preserved. In Section \ref{LpSec}, we show the existence and uniqueness of the solution of Dirichlet problem boundary data in $L^p$ with respect to the elliptic measure. Section \ref{squareSec} is mainly devoted to proving the identity \eqref{identity}, which is crucial to establishing the equivalence of the $L^p$ norm of the square function and the $L^p$ norm of the non-tangential maximal function under certain conditions. Then, we observe that criteria (\cite{kenig2000new},
\cite{kenig2016square}) for determining when elliptic measure and surface measure are mutually absolutely continuous on Lipschitz domains holds for this class of operators.\par

The first author would like to thank Seick Kim for pointing out the paper \cite{seregin2012divergence} and suggesting this investigation.

\section{Preliminaries}\label{preSec}

\begin{defn}\label{CorkscrewDefn}[Corkscrew condition] A domain $\Omega\subset\Rn$ is said to satisfy the interior corkscrew condition(resp. exterior Corkscrew condition) if there exists $M>1$ and $R>0$ such that for any $Q\in\bdy\Omega$ and any $0<r<R$, there exists a corkscrew point (or non-tangential point) $A=A_r(Q)\in\Omega$ (resp. $A\in\Omega^c$) such that
\begin{equation}
    \abs{A-Q}<r \text{ and } \delta(A)\doteq \dist(A,\bdy\Omega)>\frac{r}{M}.
\end{equation}
  
\end{defn}

\begin{defn}\label{HarnackChainDefn}[Harnack Chain] A domain $\Omega\subset\Rn$ is said to satisfy the Harnack chain condition if there are universal constants $m>1$ and $m'>0$ such that for every pair of points $A$ and $A'$ in $\Omega$ satisfying
\begin{equation}
    l\doteq\frac{\abs{A-A'}}{\min\{\delta(A),\delta(A')\}}>1,
\end{equation}
  there is a chain of open Harnack balls $B_1$, $B_2$,$\dots$, $B_N$ in $\Omega$ that connects $A$ to $A'$. Namely, $A\in B_1$, $A'\in B_N$, $B_i\cap B_{i+1}\neq\emptyset$ and 
  \begin{equation}
      m^{-1}\diam(B_i)\le\delta(B_i)\le m\diam(B_i) \qquad\forall\, i,
  \end{equation}
  and the number of balls $N\le m'\log_2 l$.
  
\end{defn}

\begin{defn}[NTA domain]
  We say a domain $\Omega\subset\Rn$ is an NTA domain if it satisfies the interior and exterior Corkscrew condition, and the Harnack chain condition.
\end{defn}

\begin{defn}[1-sided NTA domain] If $\Omega\subset\Rn$ satisfies only the interior Corkscrew condition, and the Harnack chain condition, then we say that it is a 1-sided NTA domain. 1-sided NTA domains are also called uniform domains.
\end{defn}

In the rest of the paper, by $\Omega$ we always mean an open, bounded NTA domain in $\Rn$, with $n\ge3$, unless otherwise stated. 

  Let $L=-\divg A\nabla$ be a divergence form operator. Write the $n\times n$ matrix $A$ as $A=a+b$, where $a$ is the symmetric part and $b$ is the anti-symmetric part. We assume that $a$ and $b$ satisfy following conditions:

  $a=(a_{ij}(x))$ is a matrix of real, bounded measurable functions on $\Omega$, with bound \begin{equation}\label{aBound}
  \norm{a}_{L^{\infty}(\Omega)}\le\Lambda,
  \end{equation}
  and there exists a $\lambda>0$ such that
  \begin{equation}\label{aEllip}
  (a\xi)\cdot\xi\ge\lambda\abs{\xi}^2 \qquad\forall\,\xi\in\Rn.
  \end{equation}
   $b=(b_{ij}(x))$ satisfies
   \begin{equation}\label{bBMO}
   b_{ij}(x)\in BMO(\Omega)\quad\text{with}\ \norm{b}_{BMO(\Omega)}\le\Gamma.
   \end{equation}

   Note that the anti-symmetry of $b$ implies that the ellipticity condition \eqref{aEllip} on $a$ is actually an ellipticity condition on $A$. That is,
   \begin{equation}\label{AEllip}
   (A\xi)\cdot\xi=(a\xi)\cdot\xi\ge\lambda\abs{\xi}^2 \qquad\forall\,\xi\in\Rn.
   \end{equation}

    Recall that we say $b\in BMO(\Omega)$ if
  \begin{equation}\label{BMOdefn}
  \norm{b}_{BMO(\Omega)}\doteq\sup_{Q\subset \Omega}\fint_{Q}\abs{b-(b)_{Q}}dx<\infty
  \end{equation}
  where the supremum is taken over all cubes $Q$ with sides parallel to the axes, and
  $(f)_Q=\frac{1}{\abs{Q}}\int_Qf(x)dx$.

  P. Jones showed in \cite{jones1980extension} that, every function $b\in BMO(\Omega)$ admits an extension to some $\tilde{b}\in BMO(\Rn)$ if and only if the domain is uniform. In particular, for an NTA domain $\Omega$, $b\in BMO(\Omega)$, there exists $\tilde{b}\in BMO(\Rn)$ such that
  \begin{equation}\label{BMOextension}
    \tilde{b}|_{\Omega}=b \quad\text{and } \norm{\tilde{b}}_{BMO(\Rn)}\le C\norm{b}_{BMO(\Omega)},
  \end{equation}
where the constant $C$ depends only on the domain and dimension.\par


  We will consider the following:
  \begin{enumerate}
  \item\label{Lu=0} the notion of weak solution of
  $$
      Lu=0 \quad\text{in } \Omega,
  $$
  \item\label{clssDirichlet} the solvability of the classical Dirichlet problem
  $$
  \begin{cases}
    Lu=0 \quad\text{in } \Omega,\\
    u-g\in W_0^{1,2}(\Omega) \quad \text{where }g\in W^{1,2}(\Omega),
  \end{cases}
  $$
  and
  \item\label{contDirichlet} the solvability of the continuous Dirichlet problem
  $$
  \begin{cases}
    Lu=0 \quad\text{in } \Omega,\\
    u=g  \quad\text{on }\bdy\Omega \quad\text{where } g\in C(\bdy\Omega).
  \end{cases}
  $$
  \end{enumerate}

  Let $B[\quad,\quad]$ be the bilinear form associated with $L$, i.e.
  $$
  B[u,v]=\int_{\Omega}(a\nabla u\cdot\nabla v +b\nabla u\cdot\nabla v)dx.
  $$
  Here and in the sequel we write
  \begin{equation}
   {\int_{\Omega}b\nabla u\cdot\nabla\vp}=\frac1{2}\int_{\Omega}b_{ij}(x)(u\Dj \vp\Di-u\Di \vp\Dj)dx
  \end{equation}

  An important observation in \cite{seregin2012divergence} is that the bilinear form $B[u,v]$ is bounded, namely,
  $$
  \abs{\int_{\Rn}b\nabla u\cdot\nabla vdx}\le C\norm{\nabla u}_{L^2(\Rn)}\norm{\nabla v}_{L^2(\Rn)}, \forall\, u,v\in \dot{W}^{1,2}(\Rn).
  $$
  We indeed have, 
      \begin{equation}\label{energyest}
      \abs{B[u,v]}\le C\norm{\nabla u}_{L^2(\Omega)}\norm{\nabla v}_{L^2(\Omega)}
      \end{equation}
      for all $u\in W_0^{1,2}(\Omega)$, $v\in W_0^{1,2}(\Omega)$, and 
      \begin{equation}\label{energyest1}
        \abs{B[g,v]}\le C\norm{g}_{W^{1,2}(\Omega)}\norm{\nabla v}_{L^2(\Omega)} 
      \end{equation}
       for all $g\in W^{1,2}(\Omega)$, $v\in W_0^{1,2}(\Omega)$
      
    To see \eqref{energyest}, it suffices to show
    \begin{equation}\label{formupperbound}
      \abs{\int_{\Omega}b\nabla u\cdot\nabla v}\le C\norm{\nabla u}_{L^2(\Omega)}\norm{\nabla v}_{L^2(\Omega)}.
    \end{equation}
    Let $\tilde{u}$ and $\tilde{v}$ be the zero extension to $\Rn$ of $u$ and $v$, respectively.  Set $B=\nabla \tilde{v}$ and $E=(0,\dots,\tilde{u}\Dj,0,\dots,-\tilde{u}\Di,0,\dots)$, whose $i$th component is $\tilde{u}\Dj$ and $j$th component is $-\tilde{u}\Di$. Observe that $E,B\in L^2(\Rn)^n$, $\divg E=0$ and $\curl B=0$ in $\mathscr{D'}(\Rn)$. So the div-curl lemma of \cite{coifman1993compensated} gives that $E\cdot B\in \mathscr{H}^1(\Rn)$, and
    $$
    \|E\cdot B\|_{\mathscr{H}^1(\Rn)}\le C \|E\|_{L^2(\mathbf{R}^n)}\|B\|_{L^{2}(\mathbf{R}^n)}.
    $$
 That is, $\tilde{u}\Dj\tilde{v}\Di-\tilde{u}\Di\tilde{v}\Dj\in \mathscr{H}^1(\Rn)$ and
    \begin{equation}\label{H1norm}
    \norm{\tilde{u}\Dj\tilde{v}\Di-\tilde{u}\Di\tilde{v}\Dj}_{\mathscr{H}^1(\Rn)}\le C\norm{\nabla\tilde{u}}_{L^2(\Rn)}\norm{\nabla\tilde{v}}_{L^2(\Rn)}.
    \end{equation}
    By letting $\tilde{b}\in BMO(\Rn)$ be the extension of $b$, we have
    \begin{align*}
      \abs{\int_{\Omega}b_{ij}(x)(u\Dj v\Di-u\Di v\Dj) dx}
      &=\abs{\int_{\Rn}\tilde{b}_{ij}(x)(\tilde{u}\Dj \tilde{v}\Di-\tilde{u}\Di \tilde{v}\Dj) dx}\\
      &\le C\norm{\tilde{b}}_{BMO(\Rn)}\norm{\nabla\tilde{u}}_{L^2(\Rn)}\norm{\nabla\tilde{v}}_{L^2(\Rn)}\\
      &\le C\norm{b}_{BMO(\Omega)}\norm{\nabla u}_{L^2(\Omega)}\norm{\nabla{v}}_{L^2(\Omega)},
    \end{align*}
    which proves \eqref{formupperbound}.
    
    For \eqref{energyest1}, the same argument works provided that there is a $\tilde{g}\in W^{1,2}(\Rn)$ such that $\tilde{g}|_{\Omega}=g$ and $\norm{\nabla\tilde{g}}_{L^2(\Rn)}\lesssim\norm{g}_{W^{1,2}(\Omega)}$, with the implicit constant only depends on the domain and dimension.
    In \cite{jones1981quasiconformal}, P. Jones showed that such extension is possible. In fact, he showed that if the domain $\Omega$ is locally uniform, or by the terminology of Jones, a $(\epsilon,\delta)$ domain, then there exists a bounded linear extension operator 
    $$
    E:\ W^{1,p}(\Omega)\rightarrow W^{1,p}(\Rn) 
    $$
    with $E|_{\Omega}g=g$ for all $g\in W^{1,p}(\Omega)$.

    We will use the following slight generalization of \eqref{H1norm}, also a consequence of the div-curl lemma.
    \begin{prop}\label{compensatedProp}
      If $u\in W^{1,p}(\mathbf{R}^n),\ v\in W^{1,p'}(\mathbf{R}^n)$, then for fixed $i $ and $j$,
      we have $u\Dj v\Di-u\Di v\Dj\in \mathscr{H}^1(\mathbf{R}^n)$, and
      $$
      \norm{u\Dj v\Di-u\Di v\Dj}_{\mathscr{H}^1(\mathbf{R}^n)}\le C\|\nabla u\|_{L^p(\mathbf{R}^n)}
      \|\nabla v\|_{L^{p'}(\mathbf{R}^n)}.
      $$
    \end{prop}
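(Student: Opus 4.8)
The plan is to realize the bilinear expression $u\Dj v\Di - u\Di v\Dj$ as the scalar product of a divergence-free vector field with a curl-free one, and then invoke the div-curl lemma of \cite{coifman1993compensated} in its $L^p$--$L^{p'}$ form (as was already done in the proof of \eqref{H1norm}, the only change being the pair of exponents). Concretely, I would set $B=\nabla v$, so that $B\in L^{p'}(\Rn)^n$ and $\curl B=0$, and let $E$ be the vector field whose $i$th component is $u\Dj$, whose $j$th component is $-u\Di$, and all of whose other components vanish. Since $u\in W^{1,p}(\Rn)$, we have $E\in L^p(\Rn)^n$ with the pointwise bound $\abs{E}^2=\abs{u\Di}^2+\abs{u\Dj}^2\le\abs{\nabla u}^2$, hence $\norm{E}_{L^p(\Rn)}\le\norm{\nabla u}_{L^p(\Rn)}$. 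Moreover $E$ is divergence-free in $\mathscr{D}'(\Rn)$, because $\divg E=\Di(u\Dj)-\Dj(u\Di)=0$ by the symmetry of second-order distributional derivatives. A direct computation gives $E\cdot B=u\Dj v\Di-u\Di v\Dj$.

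With these choices, the div-curl lemma for $E\in L^p$, $B\in L^{p'}$ with $1/p+1/p'=1$ yields $E\cdot B\in\mathscr{H}^1(\Rn)$ together with the estimate $\norm{E\cdot B}_{\mathscr{H}^1(\Rn)}\le C\norm{E}_{L^p(\Rn)}\norm{B}_{L^{p'}(\Rn)}$. Combining this with $\norm{E}_{L^p(\Rn)}\le\norm{\nabla u}_{L^p(\Rn)}$ and $\norm{B}_{L^{p'}(\Rn)}=\norm{\nabla v}_{L^{p'}(\Rn)}$ gives the claimed inequality; the case $p=p'=2$ recorded in \eqref{H1norm} is subsumed.

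Since the argument is a direct citation of the div-curl lemma after an algebraic identification, there is no serious obstacle. The one point deserving care is that the version of the div-curl lemma invoked is the genuinely $L^p$--$L^{p'}$ statement and not merely the Hilbert-space case $p=p'=2$; this generality is contained in \cite{coifman1993compensated}, and one should implicitly keep $1<p<\infty$ so that the Hardy-space conclusion (and the boundedness of the relevant singular integrals behind it) is available. No smoothness or decay beyond membership in the stated Sobolev spaces is needed, precisely because the condition $\divg E=0$ is read distributionally.
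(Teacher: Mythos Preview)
Your proposal is correct and matches the paper's approach exactly: the paper states this proposition as ``also a consequence of the div-curl lemma'' and, in the paragraph proving \eqref{H1norm} just before, sets up precisely the same vector fields $E$ and $B$ you describe (there with $p=p'=2$). The only point to carry over is the $L^p$--$L^{p'}$ version of the div-curl estimate from \cite{coifman1993compensated}, which you have identified.
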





The upper bound \eqref{energyest} of the bilinear form enables us to define weak solutions of problem \eqref{Lu=0}--\eqref{contDirichlet} in the following sense.

  \begin{defn}
  We say that a function $u\in W_{loc}^{1,2}(\Omega)$ is a weak solution of \eqref{Lu=0} if
  \begin{equation}\label{s1}
  \int_{\Omega}(a\nabla u\cdot\nabla\vp +b\nabla u\cdot\nabla\vp) dx=0
  \end{equation}
  for all $\vp\in W^{1,2}(\Omega)$ with $\supp\vp\subset\subset\Omega$.\par
  We say that $u\in W_{loc}^{1,2}(\Omega)$ is a weak subsolution(supersolution) of \eqref{Lu=0} if
  \begin{equation}
  \int_{\Omega}(a\nabla u\cdot\nabla\vp +b\nabla u\cdot\nabla\vp) dx\le(\ge)0
  \end{equation}
  for all $\vp\in W^{1,2}(\Omega)$ with $\supp\vp\subset\subset\Omega$ and $\vp\ge0$ a.e. in $\Omega$.

  \end{defn}


  \begin{defn}
    We say a function $u\in W^{1,2}(\Omega)$ is a weak solution of \eqref{clssDirichlet} if
    $$
    \int_{\Omega}(a\nabla u\cdot\nabla\vp +b\nabla u\cdot\nabla\vp) dx=0 \quad \forall \vp\in W_0^{1,2}(\Omega),
    $$
    and
    \begin{equation}
      u-g\in W_0^{1,2}(\Omega).
    \end{equation}
  \end{defn}

 \begin{defn}
   We say a function $u\in W_{loc}^{1,2}(\Omega)\cap C(\overline{\Omega})$ is a weak solution of \eqref{contDirichlet} if \eqref{s1} holds for all $\vp\in W^{1,2}(\Omega)$ with $\supp\vp\subset\subset\Omega$, and
   $$
   u=g \quad \text{on }\bdy\Omega.
   $$
 \end{defn}

The ellipticity of $A$ \eqref{AEllip} immediately gives
 \begin{equation}\label{ellipticity}
   B[u,u]\ge\lambda\norm{\nabla u}^2_{L^2(\Omega)}\qquad\forall u\in W_0^{1,2}(\Omega).
 \end{equation}

 Therefore, together with \eqref{energyest}, we can apply the Lax-Milgram theorem to get
    \begin{thm}\label{LaxMilgramLem}
    Given $h\in W^{1,2}(\Omega)^*=W^{-1,2}(\Omega)$, there exists a unique $u\in W_0^{1,2}(\Omega)$ such that $Lu=h$, in the sense that
    $$
    \int_{\Omega}(a\nabla u\cdot\nabla\vp+ b\nabla u\cdot \nabla\vp)=\langle \vp,h\rangle, \quad\forall\, \vp\in W_0^{1,2}(\Omega).
    $$
    \end{thm}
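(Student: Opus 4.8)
The plan is to apply the Lax--Milgram theorem to the bilinear form $B[\cdot,\cdot]$ on the Hilbert space $W_0^{1,2}(\Omega)$, equipped with the inner product $\langle u,v\rangle_{W_0^{1,2}} = \int_\Omega \nabla u\cdot\nabla v\,dx$ (equivalent to the full $W^{1,2}$ norm on $W_0^{1,2}(\Omega)$ by the Poincar\'e inequality, since $\Omega$ is bounded). Both hypotheses of Lax--Milgram are already available in the excerpt: the \emph{boundedness} of $B$ is exactly \eqref{energyest}, i.e. $|B[u,v]|\le C\norm{\nabla u}_{L^2(\Omega)}\norm{\nabla v}_{L^2(\Omega)}$ for $u,v\in W_0^{1,2}(\Omega)$, and the \emph{coercivity} of $B$ is exactly \eqref{ellipticity}, i.e. $B[u,u]\ge\lambda\norm{\nabla u}^2_{L^2(\Omega)}$. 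Here it is worth noting that $B$ need not be symmetric (because of the anti-symmetric part $b$), so one must invoke the general (non-symmetric) form of Lax--Milgram rather than the Riesz representation / minimization version.

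The steps, in order, would be: first, fix $h\in W^{-1,2}(\Omega)=\big(W_0^{1,2}(\Omega)\big)^*$ and observe that $\vp\mapsto\langle\vp,h\rangle$ is a bounded linear functional on $W_0^{1,2}(\Omega)$. Second, verify that $W_0^{1,2}(\Omega)$ with the gradient inner product is a Hilbert space and that the two estimates \eqref{energyest} and \eqref{ellipticity} say precisely that $B$ is a bounded, coercive bilinear form in that norm. Third, apply the Lax--Milgram theorem to produce a unique $u\in W_0^{1,2}(\Omega)$ with $B[u,\vp]=\langle\vp,h\rangle$ for all $\vp\in W_0^{1,2}(\Omega)$; unwinding the definition of $B$ gives exactly the stated weak formulation of $Lu=h$, and uniqueness is part of the Lax--Milgram conclusion.

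There is really no substantial obstacle here: the theorem is an immediate corollary of the two structural facts \eqref{energyest} and \eqref{ellipticity} established just above, and the content of the paper at this point is those facts (in particular the div--curl / compensated compactness argument behind \eqref{energyest}), not the abstract functional-analytic step. The only point requiring a word of care is, as noted, that one uses the non-symmetric Lax--Milgram lemma, and that the correct dual space for $h$ is $W^{-1,2}(\Omega)=\big(W_0^{1,2}(\Omega)\big)^*$ so that the pairing $\langle\vp,h\rangle$ makes sense for all $\vp\in W_0^{1,2}(\Omega)$. Thus the proof is essentially a one-line invocation of Lax--Milgram after recording that its hypotheses hold.
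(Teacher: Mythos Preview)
Your proposal is correct and matches the paper's approach exactly: the paper simply notes that the boundedness estimate \eqref{energyest} together with the coercivity \eqref{ellipticity} allow one to invoke the Lax--Milgram theorem, and states the result without further argument. Your additional remarks about the non-symmetric form of Lax--Milgram and the Poincar\'e inequality are accurate elaborations of what the paper leaves implicit.
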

    Writing $w=u-g\in W_0^{1,2}(\Omega)$ in the classical Dirichlet problem \eqref{clssDirichlet}, the estimate \eqref{energyest1} implies that $Lg\in W^{-1,2}(\Omega)$. So we can apply the theorem to conclude that the classical Dirichlet problem \eqref{clssDirichlet} is uniquely solvable.

\section{Interior estimates of the solution}\label{intSec}
Almost all the lemmas in this section appeared in \cite{seregin2012divergence} in the context of parabolic equations, while we specialize to the elliptic case. We include the proofs not merely for the sake of completeness, but because some of the arguments are completely different and lend themselves to the the development of the boundary regularity.

  \begin{lem}[Caccioppoli inequality]\label{LemIntCacci}
  Let $u\in W^{1,2}_{\loc}(\Omega)$ be a weak solution of \eqref{Lu=0}. 
  Let $B_R=B_R(x)$ be a ball centered at $x$ with radius $R$ such that $\overline{B_R}\subset\Omega$. Then for any $0<\sigma<1$ and $c$,
  \begin{equation}
    \int_{B_R}\abs{\nabla u}^2\vp^2\lesssim\frac1{(1-\sigma)^2R^2}\int_{\supp\nabla\vp}\abs{u-c}^2+
    \frac1{(1-\sigma)R}\int_{\supp\nabla\vp}\abs{b-(b)_R}\abs{\nabla u\vp}\abs{u-c},
  \end{equation}
  where $\vp\in C_0^{\infty}(B_R)$ is nonnegative, satisfying $\vp=1$ on $B_{\sigma R}$, $\supp\vp\subset B_R$, and $\abs{\nabla\vp}\le\frac{C}{(1-\sigma)R}$.
  \end{lem}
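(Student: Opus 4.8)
The plan is to run the standard Caccioppoli argument, testing the weak formulation against $\varphi^2(u-c)$, but keeping careful track of the extra term produced by the anti-symmetric part $b$, which is the only feature distinguishing this from the classical case. First I would fix $c$ and set $w=u-c$; note that $\nabla w=\nabla u$ and that $L$ annihilates constants, so $w$ solves the same equation. Since $\varphi\in C_0^\infty(B_R)$ with $\varphi=1$ on $B_{\sigma R}$, the function $\varphi^2 w$ is an admissible test function (it lies in $W^{1,2}$ with compact support in $\Omega$), so I may plug it into \eqref{s1}:
\[
\int_{\Omega}a\nabla u\cdot\nabla(\varphi^2 w)\,dx+\int_{\Omega}b\nabla u\cdot\nabla(\varphi^2 w)\,dx=0.
\]
Expanding $\nabla(\varphi^2 w)=\varphi^2\nabla u+2\varphi w\nabla\varphi$ in the symmetric term and using the ellipticity \eqref{aEllip} on the leading piece gives the familiar lower bound $\lambda\int\varphi^2|\nabla u|^2$, while the cross term $2\int a\nabla u\cdot\nabla\varphi\,\varphi w$ is controlled by $\|a\|_\infty$ and Cauchy–Schwarz, absorbing $\tfrac{\lambda}{2}\int\varphi^2|\nabla u|^2$ and leaving $\lesssim \tfrac{1}{(1-\sigma)^2R^2}\int_{\supp\nabla\varphi}|w|^2$.

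The key point is the anti-symmetric term. Using the convention recorded in the preliminaries,
\[
\int_{\Omega}b\nabla u\cdot\nabla(\varphi^2 w)
=\frac12\int_{\Omega}b_{ij}\Big((\varphi^2 w)_{x_j}u_{x_i}-(\varphi^2 w)_{x_i}u_{x_j}\Big)\,dx,
\]
and when I substitute $(\varphi^2 w)_{x_k}=\varphi^2 u_{x_k}+2\varphi w\,\varphi_{x_k}$, the pieces containing $\varphi^2 u_{x_j}u_{x_i}$ (and its $i\leftrightarrow j$ partner) cancel by the anti-symmetry of $b_{ij}$ — this is exactly the identity $\int b\nabla u\cdot\nabla u=0$ localized with the weight $\varphi^2$. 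What survives is the term with one derivative falling on $\varphi$:
\[
\int_{\Omega}b\nabla u\cdot\nabla(\varphi^2 w)
=\int_{\Omega}b_{ij}\varphi\,\varphi_{x_j}w\,u_{x_i}\,dx - \int_{\Omega}b_{ij}\varphi\,\varphi_{x_i}w\,u_{x_j}\,dx
= \int_{\Omega}(b\,\nabla\varphi)\cdot(\nabla u)\,2\varphi w\,dx
\]
up to the anti-symmetrization bookkeeping. Now I use the crucial freedom that $L$ has the same anti-symmetric part after subtracting the constant matrix $(b)_R=(b)_{B_R}$: since $(b)_R$ is a constant anti-symmetric matrix, $\int (b)_R\nabla u\cdot\nabla(\varphi^2 w)=0$ by the same cancellation. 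Hence I may replace $b$ by $b-(b)_R$ in the surviving term, obtaining a bound of the shape
\[
\Big|\int_{\Omega}b\nabla u\cdot\nabla(\varphi^2 w)\Big|
\lesssim \int_{\supp\nabla\varphi}|b-(b)_R|\,|\varphi\nabla u|\,|\nabla\varphi|\,|w|
\lesssim \frac{1}{(1-\sigma)R}\int_{\supp\nabla\varphi}|b-(b)_R|\,|\varphi\nabla u|\,|w|,
\]
using $|\nabla\varphi|\le C/((1-\sigma)R)$. Collecting the symmetric and anti-symmetric estimates and absorbing the $\tfrac\lambda2\int\varphi^2|\nabla u|^2$ term from the $a$-cross-term on the left yields precisely the claimed inequality.

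The step I expect to be the main obstacle — or at least the one requiring the most care — is making the cancellation in the $b$-term rigorous at the level of $W^{1,2}_{\mathrm{loc}}$ regularity. The formal computation above manipulates products like $b_{ij}\varphi^2 u_{x_i}u_{x_j}$, and since $b$ is merely $BMO$ (not bounded), one must check that each integrand is genuinely integrable before rearranging; the safe route is to work from the start with the symmetrized bilinear expression $\frac12\int b_{ij}(u_{x_i}\psi_{x_j}-u_{x_j}\psi_{x_i})$ with $\psi=\varphi^2 w$, expand $\psi$ there, and observe that the would-be problematic term is literally $\frac12\int b_{ij}\varphi^2(u_{x_i}u_{x_j}-u_{x_j}u_{x_i})=0$ pointwise, so no integrability of $b\,|\nabla u|^2$ is ever needed. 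The remaining surviving integrand is $b_{ij}$ times $\varphi|\nabla\varphi|\,|w|\,|\nabla u|$, which is supported on $\supp\nabla\varphi\subset\overline{B_R}\setminus B_{\sigma R}$ where $w\in L^2$, $\nabla u\in L^2$, and $|b-(b)_R|\in L^p$ for all $p<\infty$ by John–Nirenberg, so a three-factor Hölder inequality (or just the pointwise bound to be used later in the Moser iteration) makes everything finite and justifies the manipulation. Once this integrability bookkeeping is arranged, the rest is the routine Caccioppoli absorption already described.
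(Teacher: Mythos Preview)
Your proposal is correct and follows essentially the same route as the paper: test with $(u-c)\varphi^2$, use ellipticity on the $a$-part, kill the diagonal $b$-term by pointwise anti-symmetry, and subtract the constant $(b)_R$ in the remaining cross term. The only point where you are slightly looser than the paper is the justification of $\int (b)_R\nabla u\cdot\nabla(\varphi^2 w)=0$: this is not literally ``the same cancellation'' as the pointwise identity $b\nabla u\cdot\nabla u=0$, and the paper verifies it by writing $\int (b)_R\nabla u\cdot\nabla\varphi^2(u-c)=\tfrac12\int (b)_R\nabla(u-c)^2\cdot\nabla\varphi^2$ and applying the divergence theorem (using that $\divg\big((b)_R\nabla\varphi^2\big)=0$ since $(b)_R$ is constant anti-symmetric and $D^2\varphi^2$ is symmetric). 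Once you make that step explicit, your argument and the paper's coincide.
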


  \begin{proof}
    Take $\phi=(u-c)\vp^2$ as test function in \eqref{s1} to get
    \begin{equation}
      \int_{B_R}(a+b)\nabla u\cdot\nabla u\vp^2+\int_{B_R}(a+b)\nabla u\cdot\nabla\vp^2(u-c)=0.
    \end{equation}
    Using ellipticity of $a$ and anti-symmetry of $b$, we have
    $$
    \lambda\int_{B_R}\abs{\nabla{u}}^2\vp^2+\int_{B_R}a\nabla u\cdot\nabla\vp^2(u-c)+\int_{B_R}(b-(b)_R)\nabla u\cdot\nabla\vp^2(u-c)+\int_{B_R}(b)_R\nabla u\cdot\nabla\vp^2(u-c)\le0.
    $$
    Since $(b)_R$ is an anti-symmetric constant matrix and $\vp$ is smooth, the divergence theorem gives
    \begin{align}\label{zero}
    \int_{B_R}(b)_R\nabla u\cdot\nabla\vp^2(u-c)&=\frac1{2}\int_{B_R}(b)_R\nabla(u-c)^2\cdot\nabla\vp^2\nonumber\\
    &=-\frac1{2}\int_{B_R}\divg\Big((b)_R\nabla\vp^2\Big)(u-c)^2+\frac1{2}\int_{\bdy B_R}\nabla\vp^2\cdot \vec{N}(u-c)^2d\sigma\nonumber\\
    &=0.
    \end{align}

    So
    \begin{align*}
      &\lambda\int_{B_R}\abs{\nabla{u}}^2\vp^2\\
      &\le \Lambda(\int_{B_R}\abs{\nabla u\vp}^2)^{1/2}(\int_{B_R}\abs{u-c}^2\abs{\nabla\vp}^2)^{1/2}+2\int_{B_R}\abs{b-(b)_R}\abs{\nabla u\vp}\abs{u-c}\abs{\nabla\vp}.
    \end{align*}
    Then the desired result follows from Young's inequality.

  \end{proof}

  The following Caccioppoli type inequality will be used frequently.

  \begin{cor}\label{caccio'}
  Let $u$, $B_R$ be as in Lemma \ref{LemIntCacci}. Then for any $1<s\le\frac{n}{n-1}$,
    $$
    \int_{B_{R/2}}\abs{\nabla u}^2\le C(s,n,\Lambda,\lambda)(1+\Gamma^2)R^{2(\frac{n}{s'}-1)}(\int_{B_R}\abs{\hat u}^{\frac{2s}{2-s}})^{\frac{2-s}{s}},
    $$
    where $\hat{u}=u-(u)_R$ and $s'=\frac{s}{s-1}$.
    Equivalently,
    $$
    \fint_{B_{R/2}}\abs{\nabla u}^2\le C(s,n,\Lambda,\lambda)(1+\Gamma^2)R^{-2}(\fint_{B_R}\abs{\hat u}^{\frac{2s}{2-s}})^{\frac{2-s}{s}}
    $$
  \end{cor}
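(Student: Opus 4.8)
The plan is to feed the Caccioppoli inequality of Lemma \ref{LemIntCacci} into an absorption/Sobolev argument, the key new point being how to handle the extra term involving $b-(b)_R$, which is no longer bounded but only $BMO$. Fix $\sigma=1/2$ and $c=(u)_R$, so $\hat u=u-c$, and let $\vp$ be a cutoff with $\vp\equiv 1$ on $B_{R/2}$, $\supp\vp\subset B_R$, $|\nabla\vp|\lesssim R^{-1}$. Lemma \ref{LemIntCacci} gives
\begin{equation}
\int_{B_{R/2}}|\nabla u|^2\le \int_{B_R}|\nabla u|^2\vp^2\lesssim \frac{1}{R^2}\int_{B_R}|\hat u|^2+\frac1R\int_{B_R}|b-(b)_R|\,|\nabla u\,\vp|\,|\hat u|.
\end{equation}
For the last term I would apply H\"older with three exponents: $2$ on $|\nabla u\,\vp|$, and then split $|b-(b)_R|\,|\hat u|$ using exponents $q$ and $q'$ chosen so that $\frac12+\frac1q+\frac1{q'}$... more precisely, write it as $\int |\nabla u\,\vp|\cdot\big(|b-(b)_R||\hat u|\big)$ and bound by $\big(\int|\nabla u\,\vp|^2\big)^{1/2}\big(\int_{B_R}|b-(b)_R|^2|\hat u|^2\big)^{1/2}$, then H\"older again on the second factor with a large exponent on $|b-(b)_R|^2$ and its conjugate on $|\hat u|^2$. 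Since $b\in BMO$, the John--Nirenberg inequality gives $\big(\fint_{B_R}|b-(b)_R|^{2r}\big)^{1/(2r)}\lesssim \Gamma$ for every $r<\infty$ (with constant depending on $r,n$), so this factor contributes only a power of $(1+\Gamma)$ and a volume factor $|B_R|^{1/(2r)}$.

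After this, Young's inequality absorbs the full $\big(\int_{B_R}|\nabla u\,\vp|^2\big)^{1/2}$ coming from the bad term into the left side — but note $\int|\nabla u\,\vp|^2$ is over $B_R$, larger than $B_{R/2}$, so a genuine absorption requires iterating over a dyadic family of radii (the standard Widman hole-filling trick) or, more simply, invoking the well-known lemma that if $f(\rho)\le \theta f(\tau)+$(bounded stuff) for all $R/2\le\rho<\tau\le R$ with $\theta<1$ fixed, then $f(R/2)\lesssim$(bounded stuff) with constant independent of the geometry. Using that iteration lemma one concludes
\begin{equation}
\int_{B_{R/2}}|\nabla u|^2\lesssim (1+\Gamma^2)\Big(\frac1{R^2}\int_{B_R}|\hat u|^2+R^{2n/(2r)-2}\Big(\int_{B_R}|\hat u|^{2\cdot(2r)'}\Big)^{1/(2r)'}\Big),
\end{equation}
and then one identifies the exponent: choosing $r$ appropriately makes the conjugate exponent on $|\hat u|^2$ equal to $\frac{s}{2-s}$, i.e. $2\cdot(\text{conjugate of }r)=\frac{2s}{2-s}$, which forces $\frac1r=2-\frac2s\cdot\ldots$; a direct check shows this is exactly the condition $1<s\le \frac{n}{n-1}$ that keeps the $|\hat u|$-exponent in the range covered by John--Nirenberg together with the Poincar\'e/Sobolev embedding, and it reproduces the claimed power $R^{2(\frac n{s'}-1)}$ of the radius after rewriting $\int|\hat u|^2\le |B_R|^{1-\frac{2-s}{s}\cdot\frac{s}{\ldots}}\big(\int|\hat u|^{2s/(2-s)}\big)^{(2-s)/s}$ by H\"older. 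The averaged form follows by dividing through by $|B_{R/2}|$ and relabeling constants.

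The main obstacle is the bookkeeping of exponents: one must verify that the restriction $1<s\le\frac n{n-1}$ is precisely what is needed so that (i) the John--Nirenberg estimate applies with the finite exponent $2r$ demanded by the H\"older split, (ii) the Poincar\'e-type control of $\int_{B_R}|\hat u|^2$ by $\big(\int_{B_R}|\hat u|^{2s/(2-s)}\big)^{(2-s)/s}$ via H\"older produces exactly the stated scaling in $R$, and (iii) all constants depend only on $s,n,\Lambda,\lambda$ with the $\Gamma$-dependence isolated as $(1+\Gamma^2)$. Everything else — the cutoff, Young's inequality, the hole-filling iteration — is routine. Since the inequality is scale-invariant, one may also simply rescale to $R=1$, carry out the estimate there, and scale back, which sidesteps tracking the radius power until the very last line.
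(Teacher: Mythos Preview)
Your approach is essentially correct and follows the same overall line as the paper, but you overcomplicate the absorption step. The left-hand side of Lemma~\ref{LemIntCacci} is $\int_{B_R}|\nabla u|^2\vp^2$, \emph{with the cutoff}, not just $\int_{B_{R/2}}|\nabla u|^2$. After applying Young's inequality to the $b$-term you obtain $\tfrac12\int_{B_R}|\nabla u\,\vp|^2$ on the right, which is the \emph{same} quantity as the left side and absorbs directly---no Widman hole-filling or iteration lemma is required. Only at the very end does one pass to $\int_{B_{R/2}}|\nabla u|^2\le\int_{B_R}|\nabla u|^2\vp^2$.

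The paper's H\"older splitting is also slightly cleaner: it applies H\"older once with exponents $s'$ and $s$ to the triple product $|b-(b)_R|\cdot|\nabla u\,\vp|\,|\hat u|$, yielding $\big(\fint_{B_R}|b-(b)_R|^{s'}\big)^{1/s'}\big(\int_{B_R}|\nabla u\,\vp|^s|\hat u|^s\big)^{1/s}$, then H\"older again on the second factor with exponents $2/s$ and $2/(2-s)$. Your two-step route (Cauchy--Schwarz first, then a large exponent $2r$ on $|b-(b)_R|^2$) lands at the same place once you identify $2r=s'$, so it is not wrong, just more circuitous. Finally, the restriction $s\le\frac{n}{n-1}$ is not needed for John--Nirenberg (which holds for every finite exponent); it is there so that $\frac{2s}{2-s}\le\frac{2n}{n-2}$, i.e.\ so that $\hat u\in L^{2s/(2-s)}$ by the Sobolev embedding, as the paper notes in its last line.
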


  \begin{proof}
    Taking $c=(u)_R$ and $\sigma=1/2$ in Lemma \ref{LemIntCacci}, we get
    \begin{align*}
    \int_{B_R}\abs{\nabla u}^2\vp^2&\le CR^{-2}\int_{B_R}\abs{\hat u}^2+
    CR^{-1}\int_{B_R}\abs{b-(b)_R}\abs{\nabla u\vp}\abs{\hat u}\\
    &\doteq I_1+I_2.
    \end{align*}
    By H{\"o}lder's inequality,
    $$
    I_1\le CR^{\frac{2n}{s'}-2}(\int_{B_R}\abs{\hat u}^{\frac{2s}{2-s}})^{\frac{2-s}{s}}.
    $$

    Using H{\"o}lder's inequality twice and then using Young's inequality,
    \begin{align*}
      I_2&\le CR^{\frac{n}{s'}-1}(\fint_{B_R}\abs{b-(b)_R}^{s'})^{\frac1{s'}}(\int_{B_R}\abs{\nabla u\vp}^s\abs{\hat u}^s)^{\frac1{s}}\\
      &\le C_sR^{\frac{n}{s'}-1}\Gamma(\int_{B_R}\abs{\nabla u\vp}^2)^{1/2}(\int_{B_R}\abs{\hat u}^{\frac{2s}{2-s}})^{\frac{2-s}{2s}}\\
      &\le\frac1{2}\int_{B_R}\abs{\nabla u\vp}^2+C_sR^{2(\frac{n}{s'}-1)}\Gamma^2(\int_{B_R}\abs{\hat u}^{\frac{2s}{2-s}})^{\frac{2-s}{s}}.
    \end{align*}
    Combining these two estimates we proved the corollary. Note that by Poincar\'e-Sobolev inequality,
    $\norm{\hat u}_{L^{\frac{2s}{2-s}}}\le\norm{u}_{W^{1,2}}$ for $1<s\le\frac{n}{n-1}$.
  \end{proof}

  \begin{re}
  The main difference from the usual Caccioppoli's inequality \eqref{Caccioineq} is that at this point, the power of $\tilde{u}$ on the right-hand side is greater than 2. The usual Caccioppli estimate will hold once we know that
  $$
  \Big(\fint_{B_R}\abs{u}^p\Big)^{1/p}\le \Big(\fint_{B_{2R}}\abs{u}^2\Big)^{1/2}  \quad\forall p>2.
  $$

   The recent paper \cite{dong2017fundamental} contains a somewhat simpler approach to \eqref{Caccioineq}, which is proven there for second-order parabolic systems. 
    
  \end{re}

  \begin{lem}[Reverse H\"older inequality of the gradiant]\label{RHgrad_int}
  Let $u$, $B_R$ be as in Lemma \ref{LemIntCacci}. Then there exist $p>2$ and $C$ depending only on $n$, $\lambda$, $\Lambda$ and $\Gamma$ such that
  \begin{equation}
    (\fint_{B_{R/2}}\abs{\nabla u}^p)^{\frac1{p}}\le C(\fint_{B_R}\abs{\nabla u}^2)^{\frac1{2}}.
  \end{equation}

  \end{lem}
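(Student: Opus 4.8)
The plan is to obtain the higher integrability of $\nabla u$ from the self-improving property of reverse H\"older inequalities (Gehring's lemma). The input for that lemma is a \emph{weak} reverse H\"older inequality, one whose exponent on the larger ball is strictly below $2$, and such an inequality is essentially Corollary \ref{caccio'} combined with the Sobolev--Poincar\'e inequality. So the first task is to establish
\begin{equation}\label{wRHI}
\fint_{B_{R/2}}\abs{\nabla u}^2\le C(n,\lambda,\Lambda)(1+\Gamma^2)\Big(\fint_{B_R}\abs{\nabla u}^{t}\Big)^{2/t}
\end{equation}
for a suitable exponent $t=t(n)\in(1,2)$ and every ball with $\overline{B_R}\subset\Omega$, and the second is to apply Gehring's lemma to \eqref{wRHI}.

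For \eqref{wRHI}, fix once and for all an $s$ with $1<s<\frac{n}{n-1}$, which is possible since $n\ge3$, and set $q=\frac{2s}{2-s}$, so that $q\in(2,2^{*})$ with $2^{*}=\frac{2n}{n-2}$. Let $t=\frac{nq}{n+q}$, chosen so that $t^{*}:=\frac{nt}{n-t}=q$; a short computation gives $t<2\iff q<2^{*}$ and $t>1\iff q>\frac{n}{n-1}$, so $t\in(1,2)$. Since $u\in W^{1,2}_{\loc}(\Omega)$ and $\overline{B_R}\subset\Omega$ we have $\nabla u\in L^2(B_R)\subset L^{t}(B_R)$, and the Sobolev--Poincar\'e inequality on $B_R$ gives
$$
\Big(\fint_{B_R}\abs{u-(u)_R}^{q}\Big)^{1/q}\le C(n)\,R\,\Big(\fint_{B_R}\abs{\nabla u}^{t}\Big)^{1/t}.
$$
Feeding this into the estimate of Corollary \ref{caccio'} for the same $s$, whose right-hand side equals $C(1+\Gamma^2)R^{-2}\big(\fint_{B_R}\abs{u-(u)_R}^{q}\big)^{2/q}$, and using $R^{-2}R^{2}=1$, we arrive at \eqref{wRHI}.

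For the second step, put $f=\abs{\nabla u}^{t}$ and $\mu=\frac{2}{t}>1$; then \eqref{wRHI} reads $\big(\fint_{B_{R/2}}f^{\mu}\big)^{1/\mu}\le b\,\fint_{B_R}f$ with $b=\big(C(1+\Gamma^2)\big)^{1/\mu}$, valid for every ball with $\overline{B_R}\subset\Omega$. This is precisely the hypothesis of Gehring's lemma in its reverse-H\"older form (which holds for balls as well as for cubes), so there are $\delta=\delta(n,\lambda,\Lambda,\Gamma)>0$ and $C=C(n,\lambda,\Lambda,\Gamma)$ with $f\in L^{\mu+\delta}_{\loc}(\Omega)$ and
$$
\Big(\fint_{B_{R/2}}f^{\mu+\delta}\Big)^{1/(\mu+\delta)}\le C\Big(\fint_{B_R}f^{\mu}\Big)^{1/\mu}.
$$
Undoing the substitution $f=\abs{\nabla u}^{t}$ and raising both sides to the power $\frac1t$ gives the asserted inequality with $p:=t(\mu+\delta)=2+t\delta>2$.

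All the real work sits in Corollary \ref{caccio'}, where the $BMO$ anti-symmetric part is controlled by the $\mathscr{H}^1$--$BMO$ duality (the div--curl lemma) together with Poincar\'e--Sobolev; granting that, the present lemma is a routine application of Gehring's lemma and presents no serious obstacle. The one point that must be watched is the \emph{strict} inequality $s<\frac{n}{n-1}$: at the endpoint $q=2^{*}$ one would get $t=2$, and \eqref{wRHI} would collapse to the tautology $\fint_{B_{R/2}}\abs{\nabla u}^2\lesssim\fint_{B_R}\abs{\nabla u}^2$, from which Gehring's lemma yields nothing. Tracking the constants, $p-2$ and $C$ depend on $n,\lambda,\Lambda$, and on $\Gamma$ through the constant $b$ above, which is exactly the dependence claimed.
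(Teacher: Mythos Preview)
Your proof is correct and follows essentially the same approach as the paper: both combine Corollary~\ref{caccio'} with the Sobolev--Poincar\'e inequality to obtain a weak reverse H\"older inequality with exponent $t<2$ (your $t=\frac{nq}{n+q}$ is exactly the paper's $r$ solving $\frac{2s}{2-s}=\frac{rn}{n-r}$), and then invoke Gehring's lemma (Proposition~1.1, Chapter~V of \cite{giaquinta1983multiple}). Your write-up is a bit more explicit about the mechanism and the endpoint issue, but the argument is the same.
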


  \begin{proof}
    Observe that the function $s\mapsto\frac{2s}{2-s}$ is strictly increasing on $(1,\frac{n}{n-1})$ with range $(2,\frac{2n}{n-2})$, and that the function $r\mapsto\frac{rn}{n-r}$ is strictly increasing on $(\frac{2n}{n+2},2)$ with range $(2,\frac{2n}{n-2})$. So for fixed $s\in(1,\frac{n}{n-1})$, there exists a unique $r\in(\frac{2n}{n+2},2)$ that solves $\frac{2s}{2-s}=\frac{rn}{n-r}$. \\
    Then by Corollary \ref{caccio'} and Poincar\'e-Sobolev inequality, we get
    $$
    \int_{B_{R/2}}\abs{\nabla u}^2\le C(s,n,\Lambda,\lambda)(1+\norm{b}^2_{BMO})R^n(\fint_{B_R}\abs{\nabla u}^r)^{\frac2{r}},
    $$
    that is
    $$
    (\fint_{B_{R/2}}\abs{\nabla u}^2)^{\frac1{2}}\le C(s,n,\lambda,
    \Lambda)(1+\Gamma^2)^{1/2}(\fint_{B_R}\abs{\nabla u}^r)^{\frac1{r}}.
    $$
    Then the result follows from Prop.1.1 of Chapter V of \cite{giaquinta1983multiple}.
  \end{proof}

  In \cite{seregin2012divergence}, interior regularity of the solution is first established for smooth coefficients case and in the general case via approximation arguments. The following approach gives estimates for solutions directly in the non-smooth case, which is advantageous for boundary estimates.\par

  Let us first point out a simple fact:

  \begin{lem}\label{pm_subLem}
    Let $u\in W_{loc}^{1,2}(\Omega)$ be a weak solution of \eqref{Lu=0}. Let $u^+=\max\{u,0\}$, $u^-=\max\{-u,0\}$. Then $u^+$ and $u^-$ are subsolutions of \eqref{Lu=0}, i.e.
    \begin{equation}\label{pm_subsolution}
  \int_{\Omega}(a\nabla u^{\pm}\cdot\nabla\vp +b\nabla u^{\pm}\cdot\nabla\vp) dx\le0
  \end{equation}
  for all $\vp\in W^{1,2}(\Omega)$ with $\supp\vp\subset\subset\Omega$ and $\vp\ge0$ a.e. in $\Omega$.
  \end{lem}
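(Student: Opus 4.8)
The plan is to run the classical De Giorgi--Nash--Moser argument: $u^+$ is a subsolution, and since $-u$ is again a weak solution of \eqref{Lu=0}, so is $u^-=(-u)^+$. It suffices to prove
$\int_\Omega\bigl(a\nabla u^+\cdot\nabla\varphi+b\nabla u^+\cdot\nabla\varphi\bigr)\le 0$
for every $\varphi\in C_0^\infty(\Omega)$ with $\varphi\ge0$; a general nonnegative $\varphi\in W^{1,2}(\Omega)$ with $\supp\varphi\subset\subset\Omega$ is then recovered by mollification, passing to the limit through \eqref{energyest}.

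I would fix a nondecreasing $\psi\in C^\infty(\mathbb R)$ with $\psi\equiv0$ on $(-\infty,0]$, $\psi\equiv1$ on $[1,\infty)$ and $0\le\psi\le1$, and set $\psi_\epsilon(t)=\psi(t/\epsilon)$ and $\Psi_\epsilon(t)=\int_0^t\psi_\epsilon(s)\,ds$, so that $\Psi_\epsilon$ is Lipschitz, $\Psi_\epsilon'=\psi_\epsilon$, $0\le\Psi_\epsilon(t)\le t^+$, $\Psi_\epsilon(t)\uparrow t^+$, and $\nabla\Psi_\epsilon(u)=\psi_\epsilon(u)\nabla u$. Using $\phi=\varphi\,\psi_\epsilon(u)\in W^{1,2}(\Omega)$, $\supp\phi\subset\subset\Omega$, as a test function in \eqref{s1} and expanding $\nabla\phi=\psi_\epsilon(u)\nabla\varphi+\varphi\,\psi_\epsilon'(u)\nabla u$, the symmetric part contributes $\int_\Omega a\nabla u\cdot\nabla\varphi\,\psi_\epsilon(u)$ together with the nonnegative term $\int_\Omega a\nabla u\cdot\nabla u\,\varphi\,\psi_\epsilon'(u)\ge\lambda\int_\Omega|\nabla u|^2\varphi\,\psi_\epsilon'(u)\ge0$. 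For the anti-symmetric part I would use the defining formula for $\int_\Omega b\nabla u\cdot\nabla\phi$ together with the a.e. identity
$$u_{x_j}\phi_{x_i}-u_{x_i}\phi_{x_j}=\psi_\epsilon(u)\bigl(u_{x_j}\varphi_{x_i}-u_{x_i}\varphi_{x_j}\bigr)=\bigl(\Psi_\epsilon(u)\bigr)_{x_j}\varphi_{x_i}-\bigl(\Psi_\epsilon(u)\bigr)_{x_i}\varphi_{x_j},$$
in which the ``diagonal'' contribution $\varphi\,\psi_\epsilon'(u)\bigl(u_{x_j}u_{x_i}-u_{x_i}u_{x_j}\bigr)$ has already cancelled; this identifies $\int_\Omega b\nabla u\cdot\nabla\phi=\int_\Omega b\nabla\bigl(\Psi_\epsilon(u)\bigr)\cdot\nabla\varphi$. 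Altogether
$$\int_\Omega a\nabla u\cdot\nabla\varphi\,\psi_\epsilon(u)+\int_\Omega b\nabla\bigl(\Psi_\epsilon(u)\bigr)\cdot\nabla\varphi=-\int_\Omega a\nabla u\cdot\nabla u\,\varphi\,\psi_\epsilon'(u)\le0.$$

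Then I would let $\epsilon\to0$. Since $\psi_\epsilon(u)\to\mathbf 1_{\{u>0\}}$ a.e. with $0\le\psi_\epsilon\le1$, dominated convergence gives $\int_\Omega a\nabla u\cdot\nabla\varphi\,\psi_\epsilon(u)\to\int_{\{u>0\}}a\nabla u\cdot\nabla\varphi=\int_\Omega a\nabla u^+\cdot\nabla\varphi$; and $\nabla\Psi_\epsilon(u)=\psi_\epsilon(u)\nabla u\to\nabla u^+$, $\Psi_\epsilon(u)\to u^+$ in $L^2$ on a neighborhood of $\supp\varphi$ (dominated by $|\nabla u|$ and $|u|$), so multiplying $\Psi_\epsilon(u)-u^+$ by a cutoff $\equiv1$ near $\supp\varphi$ and invoking \eqref{energyest} yields $\int_\Omega b\nabla\bigl(\Psi_\epsilon(u)\bigr)\cdot\nabla\varphi\to\int_\Omega b\nabla u^+\cdot\nabla\varphi$. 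Hence $\int_\Omega\bigl(a\nabla u^+\cdot\nabla\varphi+b\nabla u^+\cdot\nabla\varphi\bigr)\le0$, and the statement for $u^-$ follows by applying this to the solution $-u$.

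The delicate point --- really the only one --- is the $b$-term. Because $b$ is merely $BMO$, $\int_\Omega b\nabla u\cdot\nabla\phi$ is not a Lebesgue integral and one cannot argue ``$b\nabla u\cdot\nabla u=0$ pointwise''. Instead I would check, via Proposition \ref{compensatedProp} and the div-curl lemma after the usual compact-support cutoff bringing $u$ and $\Psi_\epsilon(u)$ into $W^{1,2}(\mathbb R^n)$, that each of $u_{x_j}\phi_{x_i}-u_{x_i}\phi_{x_j}$ and $\bigl(\Psi_\epsilon(u)\bigr)_{x_j}\varphi_{x_i}-\bigl(\Psi_\epsilon(u)\bigr)_{x_i}\varphi_{x_j}$ is a genuine $\mathscr H^1(\mathbb R^n)$ function, so that the two (being a.e. equal) pair equally with $\widetilde b_{ij}$; the cancellation of the diagonal term is then automatic from $\psi_\epsilon(u)\nabla u=\nabla\Psi_\epsilon(u)$. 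The same $\mathscr H^1$-bookkeeping, together with \eqref{energyest}, is what legitimizes the passage to the limit in the $b$-term; everything else is routine.
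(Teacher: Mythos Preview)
Your proof is correct and follows essentially the same approach as the paper: both arguments test with $\varphi$ times a Lipschitz approximation of $\mathbf 1_{\{u>0\}}$ (the paper uses the piecewise linear $\eta_k(t)$ vanishing for $t\le0$, linear on $[0,1/k]$, and $\equiv1$ for $t\ge1/k$, in place of your smooth $\psi_\epsilon$), drop the nonnegative $a$-diagonal term, exploit the cancellation $u_{x_j}u_{x_i}-u_{x_i}u_{x_j}=0$ in the $b$-term, pass to the limit, and then approximate general $\varphi$ via \eqref{energyest}. Your treatment is in fact more explicit than the paper's about the $\mathscr H^1$--$BMO$ bookkeeping for the anti-symmetric part (introducing the primitive $\Psi_\epsilon$ so that $\psi_\epsilon(u)\nabla u=\nabla\Psi_\epsilon(u)$), which the paper handles tersely by writing $\int_\Omega b\nabla u\cdot\nabla\varphi\,\eta_k(u)$ without further comment.
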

  \begin{proof}
  First consider $\vp\in C_0^{\infty}(\Omega)$. Consider $u^+,$ and for $k>1$, define
  $$
  \eta_k(t)=\left\{\begin{array}{rl}
0,&\quad t\le 0,\\
kt,&\quad 0<t\le \frac 1k,\\
1,&\quad t>\frac1 k.
\end{array}\right.
$$

Take $\eta_k(u)\vp$ as test function in \eqref{s1} and then let $k\rightarrow\infty$:
\begin{align*}
0&=\int_{\Omega}\Big(a\nabla u\cdot\nabla(\eta_k(u)\vp) +b\nabla u\cdot\nabla(\eta_k(u)\vp)\Big) dx\\
&\ge \int_{\Omega}a\nabla u\cdot\nabla\vp \eta_k(u) +\int_{\Omega}b\nabla u\cdot\nabla\vp \eta_k(u)dx
\rightarrow \int_{\Omega}\Big(a\nabla u^+\cdot\nabla\vp +b\nabla u^+\cdot\nabla\vp\Big) dx.
\end{align*}

For $\vp\in W^{1,2}(\Omega)$ with compact support in $\Omega$, the energy estimates \eqref{energyest} permits approximation by smooth functions.\par

 The argument for $u^-$ is similar.

  \end{proof}

 Let $q=\frac{2s}{2-s}$, $1<s<\frac{n}{n-1}$.

  \begin{lem}\label{Moser Lemma 1} Let $u\in W_{loc}^{1,2}(\Omega)$ be a weak solution of \eqref{Lu=0}. Assume $\overline{Q_{2R}}\subset\Omega$. Then for any $k\ge k_0>\frac1{2}$, we have
  \begin{equation}\label{moserest1}
    \sup_{Q_R}u^{\pm}\le C(n,\lambda,\Lambda,\Gamma,s,k_0)\Big(\fint_{Q_{2R}}(u^{\pm})^{kq}\Big)^{\frac{1}{kq}}.
  \end{equation}
  Therefore,
  \begin{equation}
   \sup_{Q_R}\abs{u}\le C(n,\lambda,\Lambda,\Gamma,s,k_0)(\fint_{Q_{2R}}\abs{u}^{kq})^{\frac{1}{kq}}.
  \end{equation}
  \end{lem}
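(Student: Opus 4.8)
The plan is to run the classical De Giorgi--Nash--Moser iteration, but taking care that the extra ``bad'' term coming from the $BMO$ part of $b$ is absorbed at each stage via the Caccioppoli estimate of Corollary \ref{caccio'}. By Lemma \ref{pm_subLem}, $u^\pm$ are nonnegative subsolutions, so it suffices to prove \eqref{moserest1} for a nonnegative subsolution $v$ (and then the last inequality follows since $\abs{u}=\max\{u^+,u^-\}$ and $\sup\abs u\le\sup u^++\sup u^-\lesssim(\fint\abs u^{kq})^{1/kq}$ after also using $(\fint(u^\pm)^{kq})^{1/kq}\le(\fint\abs u^{kq})^{1/kq}$). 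First I would, for a power $\beta\ge1$ to be iterated, test the subsolution inequality against $\varphi^2 v^{2\beta-1}$ (truncating $v$ at level $M$ and letting $M\to\infty$ at the end to justify the computation, as is standard), where $\varphi$ is a cutoff between two concentric cubes. The symmetric part gives, by ellipticity, a good term $\lambda\int\varphi^2 v^{2\beta-2}\abs{\nabla v}^2$; the constant matrix $(b)_R$ contributes nothing by the same antisymmetry/divergence-theorem cancellation used in Lemma \ref{LemIntCacci}; and the remaining term involves $\int\abs{b-(b)_R}\,\varphi\abs{\nabla\varphi}\,v^{2\beta-1}\abs{\nabla v}$ plus $\int\abs{b-(b)_R}\,\varphi^2 v^{2\beta-2}\abs{\nabla v}\cdot\abs{\nabla v}$-type pieces (from differentiating $v^{2\beta-1}$).

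The key point is that setting $w=v^\beta$, these estimates should be organized into a reverse-Hölder/Caccioppoli inequality for $w$ of exactly the shape of Corollary \ref{caccio'}: one obtains
\[
\fint_{Q_{\rho}}\abs{\nabla w}^2\le C\,\beta^2\,(1+\Gamma^2)\,(R')^{-2}\Big(\fint_{Q_{R'}}\abs{w}^{q_0}\Big)^{2/q_0}
\]
for some fixed exponent $q_0\in(0,2)$, on concentric cubes $Q_\rho\subset Q_{R'}$, with the Hölder/Young manipulations against $\abs{b-(b)_{R'}}$ handled precisely as in the proof of Corollary \ref{caccio'} (Hölder with exponent $s'$ to bring in $\norm b_{BMO}\le\Gamma$, then Young to absorb the gradient term). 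Combining this with the Sobolev inequality $\big(\fint_{Q_\rho}\abs w^{2^*}\big)^{1/2^*}\lesssim \rho\big(\fint\abs{\nabla w}^2\big)^{1/2}+\big(\fint\abs w^2\big)^{1/2}$ (with $2^*=2n/(n-2)$, so that the self-improving exponent ratio is $\chi:=2^*/q \cdot(q/2)>1$ for a suitable choice), one gets a reverse-Hölder gain:
\[
\Big(\fint_{Q_{\rho_1}}v^{\chi\beta q}\Big)^{1/(\chi\beta q)}\le \big(C\beta\big)^{1/\beta}\Big(\frac{R'}{\rho_1-\rho_2}\cdot\text{stuff}\Big)^{\cdots}\Big(\fint_{Q_{\rho_2}}v^{\beta q}\Big)^{1/(\beta q)}.
\]
Iterating over $\beta_m=\chi^m k$ with a geometrically shrinking sequence of cubes between $Q_{2R}$ and $Q_R$, the product of constants $\prod(C\beta_m)^{1/\beta_m}$ converges because $\sum \chi^{-m}\log(C\chi^m k)<\infty$, and the radii factors telescope; this yields $\sup_{Q_R}v\le C(n,\lambda,\Lambda,\Gamma,s,k_0)\big(\fint_{Q_{2R}}v^{kq}\big)^{1/kq}$, which is \eqref{moserest1}. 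Here $q=\frac{2s}{2-s}$ with $1<s<\frac n{n-1}$ is exactly the exponent forced by Corollary \ref{caccio'}, and the restriction $k\ge k_0>1/2$ guarantees $kq>1$ so that all the $L^p$ averages in the iteration are genuine norms; the dependence on $k_0$ enters through the starting value of the iteration.

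The main obstacle I anticipate is not the iteration bookkeeping but controlling the $BMO$ term uniformly in the iteration power $\beta$: when testing with $v^{2\beta-1}\varphi^2$, differentiation produces a factor $2\beta-1$ in front of the problematic integral $\int\abs{b-(b)_{R'}}\,v^{2\beta-2}\abs{\nabla v}^2\varphi^2$, and unlike the bounded-coefficient case this term cannot simply be absorbed by ellipticity since $\abs{b-(b)_{R'}}$ is not small pointwise. The resolution is to avoid ever producing a $\int\abs{b-(b)_{R'}}\abs{\nabla v}^2$ term with a bad constant: one integrates by parts to move a derivative off $v^{2\beta-1}$ onto $b-(b)_{R'}$ only in the combination that the div-curl/compensated-compactness structure (Proposition \ref{compensatedProp}, or rather the elementary antisymmetry cancellation already exploited in \eqref{zero}) kills, and the genuinely remaining term is the ``mixed'' one $\int\abs{b-(b)_{R'}}\,v^{\beta}\abs{\nabla v^\beta}\,\varphi\abs{\nabla\varphi}$, in which the $\beta$-dependence is benign (it produces at worst a factor $\beta$, not $\beta^2$, and even $\beta^2$ would be harmless since $\sum\beta_m^{-1}\log\beta_m<\infty$). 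Making this cancellation rigorous at the level of truncated functions $v_M=\min\{v,M\}$, where $v^\beta$ is replaced by $v_M^{\beta-1}v$ so that the chain rule and the antisymmetry identity still apply, is the delicate technical step; everything else is a routine, if lengthy, adaptation of Moser's argument with $(1+\Gamma^2)$ in place of the ellipticity ratio.
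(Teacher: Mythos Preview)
Your proposal is correct and follows essentially the same Moser iteration as the paper, but your discussion of the ``main obstacle'' is confused and makes the argument look harder than it is. The term you fear, $(2\beta-1)\int\abs{b-(b)_R}\,v^{2\beta-2}\abs{\nabla v}^2\varphi^2$, never appears: when you test with $\varphi^2 v^{2\beta-1}$ and expand, the $b$-contribution from $\nabla(v^{2\beta-1})$ is $(2\beta-1)\int b\nabla v\cdot\nabla v\, v^{2\beta-2}\varphi^2$, and this vanishes \emph{pointwise} by antisymmetry of $b$ (the same reason \eqref{AEllip} reduces to a condition on $a$). No integration by parts, div-curl lemma, or compensated compactness is needed for this cancellation. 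The only surviving $b$-term is the mixed one $2\int(b-(b)_R)\nabla v\cdot\nabla\varphi\,\varphi\, v^{2\beta-1}$, which you correctly estimate via H\"older against $\norm{b}_{BMO}$ as in Corollary~\ref{caccio'}. Also, you write ``$\beta\ge1$'' at the outset, but the iteration must start at $\beta=k\ge k_0>\tfrac12$; the $k_0$-dependence enters through the factor $\tfrac{\beta}{2\beta-1}\le\tfrac{k_0}{2k_0-1}$ in the resulting Caccioppoli inequality for $w=v^\beta$.

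The paper's proof differs only in its choice of truncation: rather than cutting $v$ at a level $M$, it introduces auxiliary functions $H_{\epsilon,N}(t)$ (equal to $t^\beta$ on $[\epsilon,N]$, linear in $t^{k_1}$ above $N$ for some $\tfrac12<k_1<\min\{1,k_0\}$) and $G_{\epsilon,N}(w)=\int_\epsilon^w\abs{H'}^2$, and tests with $G(u_\epsilon)\eta^2$ where $u_\epsilon=u^++\epsilon$. This device makes the test function manifestly in $W_0^{1,2}$ for every $\beta\ge k_0$ and tracks the $k_0$-dependence cleanly through the inequality $G(w)\le\tfrac{1}{2k_1-1}\,wG'(w)$, but the underlying mechanism---antisymmetry kills the quadratic-gradient $b$-term, the mixed term is controlled by $\Gamma$ and a power of $v$ in $L^q$, then Sobolev and geometric iteration---is identical to yours.
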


  \begin{proof}
    We only show \eqref{moserest1} for $u^+$.\par
    $\forall\epsilon>0$, $N>>1$ and $\beta\ge k_0$, pick $\frac1{2}<k_1<\min\{1,k_0\}$. Define
    $$
    H_{\epsilon,N}(t)=
    \left\{\begin{array}{l l}
    t^\beta,&\quad t\in[\epsilon,N],\\
    N^\beta+\frac{\beta}{k_1} N^{\beta-k_1}(t^{k_1}-N^{k_1}), &\quad t>N.
    \end{array}\right.
   $$

   $$
   \Rightarrow  H'_{\epsilon,N}(t)=
    \left\{\begin{array}{rl}
   \beta t^{\beta-1},&\quad t\in (\epsilon,N),\\
   \beta N^{\beta-k_1}t^{k_1-1}, &\quad t>N.
   \end{array}\right.
   $$

   Set
   $$
   G_{\epsilon,N}(w)=\int_\epsilon^w|H'_{\epsilon,N}(t)|^2dt,\ w\ge \epsilon.
   $$

   For $w\ge \epsilon$, it is easy to check that
     \begin{equation}\label{H-G1}
       H(w)\le w^{\beta},
     \end{equation}
     \begin{equation} \label{H-G2}
       wH'(w)\le\beta w^{\beta},
     \end{equation}
      and
     \begin{equation}\label{H-G3}
      G(w)\le \frac1{2k_1-1}wG'(w).
     \end{equation}

    Here and in the sequel we omit the subscripts in $G_{\epsilon,N}$ and $H_{\epsilon,N}$.\par

    Set $u\ep(x)=u^+(x)+\epsilon$. Choose $\vp=G(u\ep)\eta^2$ as test function in \eqref{pm_subsolution}, where $\eta=1$ on $Q_{r'}$, $0\le \eta\le 1$, $\supp\eta\subset Q_r$ and $|\nabla \eta|\le \frac C{r-r'}$ for $R\le r'<r\le 2R$.\par
    Note that $\forall\beta\ge k_0$, $\vp\in W_0^{1,2}(\Omega)$, which is important for Moser iteration. In fact, for any $\beta\ge k_0$,
    $$
    \abs{H'(u\ep)}^2\in L^r(\Omega) \quad \forall r>1, \quad \Rightarrow
    $$

    $$
    \nabla G(u\ep)\in L^2(\Omega), \quad \abs{G(u\ep)}\le\frac1{2k_1-1}\abs{H'(u\ep)}^2u\ep\in L^2(\Omega).
    $$

    We compute (making use of \eqref{H-G2} and \eqref{H-G3})
    \begin{align}\label{H-Ga}
    \int_{Q_r}a\nabla u\ep\cdot\nabla\vp
    &=\int_{Q_r}a\nabla H(u\ep)\cdot\nabla H(u\ep)\eta^2+2\int_{Q_r}a\nabla u\ep\cdot \nabla\eta G(u\ep)\eta\nonumber\\
    &\ge \lambda\int_{Q_r}\abs{\nabla H(u\ep)}^2 \eta^2
    -\frac{2\Lambda}{2k_1-1}\int_{Q_r}\abs{\nabla u\ep}\abs{\nabla\eta}G'(u\ep)u\ep\abs{\eta}\nonumber\\
    &= \lambda\int_{Q_r}\abs{\nabla H(u\ep)}^2 \eta^2
    -\frac{2\Lambda}{2k_1-1}\int_{Q_r}\abs{\nabla H(u\ep)\eta}H'(u\ep)u\ep\abs{\nabla\eta}\nonumber\\
    &\ge \frac{\lambda}2 \int_{Q_r}|\nabla H(u\ep)|^2\eta^2-
     \frac C{((r-r')(2k_1-1))^2}\int_{Q_R}|u\ep H'(u\ep)|^2\nonumber\\
    &\ge \frac{\lambda}2\int_{Q_r}|\nabla H(u\ep)|^2\eta^2-\frac{C(\lambda,\Lambda,n)}{(2k_0-1)^2}\beta^2
     \frac{r^n}{(r-r')^2}(\fint_{Q_r}|u\ep|^{\beta q})^{\frac2{q}},
   \end{align}

   while
   \begin{align}\label{H-Gb}
     \abs{\int_{Q_r}(b\nabla u\ep)\cdot\nabla\vp}&
     =\abs{2\int_{Q_r}\Big(b-(b)_{Q_r}\Big)\nabla u\ep\cdot\nabla\eta G(u\ep)\eta}\nonumber\\
     &\le \frac C{(r-r')}\int_{Q_r}|b-(b)_{Q_r}|
\abs{\nabla u\ep}G(u^+_\epsilon)\eta\nonumber\\
 &\le \frac C{(r-r')(2k_1-1)}\int_{Q_r}\abs{b-(b)_{Q_r}}\abs{\nabla H(u\ep)}H'(u\ep)u\ep\eta\nonumber\\
 &\le \frac{\lambda}8\int_{Q_r}\abs{\nabla H(u\ep)}^2\eta^2+\frac {C(\lambda,n)}{\Big((2k_1-1)(r-r')\Big)^2}
\int_{Q_r}\abs{b-(b)_{Q_r}}^2\abs{H'(u\ep)u\ep}^2\nonumber\\
 &\le \frac{\lambda}8\int_{Q_r}\abs{\nabla H(u\ep)}^2\eta^2+\frac{C(n,s,\lambda)\Gamma^2r^n}{(2k_0-1)^2(r-r')^2}
\Big(\fint_{Q_r}|H'(u\ep)u\ep|^q\Big)^{\frac2{q}}\nonumber\\
 &\le \frac{\lambda}8\int_{Q_r}|\nabla H(u\ep)|^2\eta^2+\frac{C(n,s,\lambda)\Gamma^2\beta^2} {(2k_0-1)^2}\frac {r^n}{(r-r')^2}
\Big(\fint_{Q_r}|u\ep|^{\beta q}\Big)^{\frac2{q}}.
   \end{align}

   Then combining \eqref{pm_subsolution}, \eqref{H-Ga}, \eqref{H-Gb} and $r\sim r'$,
   $$
   \fint_{Q_{r'}}\abs{\nabla H(u\ep)}^2\le \frac{C(n,s,\lambda,\Lambda)(1+\Gamma^2)\beta^2}{(2k_0-1)^2}(r-r')^{-2}\Big(\fint_{Q_r}\abs{u\ep}^{\beta q}\Big)^{\frac2{q}}.
   $$

   By the Sobolev inequality,
   \begin{align*}
     (\fint_{Q_{r'}}H(u\ep)^{\frac{2n}{n-2}})^{\frac{n-2}n}
     &\le C\Big\{C(\lambda,\Lambda,\Gamma,n,s,k_0)\beta^2\frac{r'^2}{(r-r')^2}
\Big(\fint_{Q_r}|u\ep|^{\beta q}\Big)^{\frac2{q}}+\fint_{Q_{r'}}H^2(u\ep)\Big\}\\
&\le  C(\lambda,\Lambda,\Gamma,n,s,k_0)\beta^2\frac{r'^2}{(r-r')^2}(\fint_{Q_r}|u\ep|^{\beta q})^{\frac2{q}}.
   \end{align*}

   Letting $N\rightarrow \infty$,
$$
(\fint_{Q_{r'}}|u\ep|^{\beta \frac{2n}{n-2}})^{\frac{n-2}{2n}}\le C\beta\frac{r'}{r-r'}(\fint_{Q_r}
|u\ep|^{\beta q})^{\frac1{q}},
$$
that is, by setting $l=\frac{2n}{(n-2)q} (>1)$,
\begin{equation}\label{iteration0}
(\fint_{Q_{r'}}|u\ep|^{\beta lq})^{\frac1{lq}}\le C\beta\frac{r'}{r-r'}(\fint
_{Q_r}|u\ep|^{\beta q})^{\frac1{q}}.
\end{equation}

Now let $\beta=\beta_i=kl^i$, $r=r_i=R+\frac{R}{2^i}$ and $r'=r_{i+1}$ for $i=0,1,2,\dots$ in \eqref{iteration0}, one finds
  $$
  (\fint_{Q_{r_{i+1}}}u\ep^{kl^{i+1}q})^{\frac1{ql}}\le C(\lambda,\Lambda,\Gamma,s,n,k_0)kl^i2^i(\fint_{Q_{r_i}}u\ep^{kl^iq})^{\frac1{q}}
  $$

  $\Rightarrow$
  \begin{equation}
  (\fint_{Q_{r_{i+1}}}u\ep^{kl^{i+1}q})^{\frac1{ql^{i+1}}}\le (Ck(2l)^i)^{\sum_{j=0}^i\frac1{l^j}}(\fint_{Q_{2R}}u\ep^{kq})^{\frac1{q}}.
  \end{equation}

  Letting $i$ go to infinity, we get
  $$
  \sup_{Q_R}u\ep\le C(n,\lambda,\Lambda,\Gamma,s,k_0)(\fint_{Q_{2R}}u\ep^{kq})^{\frac{1}{kq}}.
  $$

  Finally, by letting $\epsilon$ go to 0 we get the desired estimate for $u^+$.

  \end{proof}

Observe that we now have the usual Caccioppli estimate:
  \begin{cor}[Caccioppoli]\label{caccio}
  Let $u\in W_{loc}^{1,2}(\Omega)$ be a non-negative weak subsolution of \eqref{Lu=0}.
  Let $B_R=B_R(X)$ be a ball centered at $X$ with radius $R$ such that $\overline{B_{2R}}\subset\Omega$. Then
  \begin{equation}\label{Caccioineq}
    \fint_{B_R}\abs{\nabla u}^2\le C(n,\lambda,\Lambda,\Gamma)R^{-2}\fint_{B_{2R}}u^2.
  \end{equation}

  \end{cor}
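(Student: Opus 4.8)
The plan is to re-run the Caccioppoli argument of Lemma~\ref{LemIntCacci} with the choice $c=0$. This is legitimate here because $u\ge 0$ is a subsolution, so $\vp=u\eta^2$ is an admissible (nonnegative, compactly supported) test function in the subsolution inequality. I would fix a cutoff $\eta\in C_0^\infty(B_{2R})$ with $\eta\equiv 1$ on $B_R$, $\supp\eta\subset B_{3R/2}$, $0\le\eta\le1$ and $\abs{\nabla\eta}\le C/R$, insert $\vp=u\eta^2$, expand $\nabla(u\eta^2)$, use the ellipticity of $a$ and the anti-symmetry of $b$ (which annihilates $\int b\nabla u\cdot\nabla u\,\eta^2$), and --- after subtracting the constant matrix $(b)_{2R}$ --- discard its contribution $\frac12\int (b)_{2R}\nabla(u^2)\cdot\nabla\eta^2$, which vanishes after one integration by parts exactly as in \eqref{zero} (the Hessian of $\eta^2$ is symmetric while $(b)_{2R}$ is anti-symmetric). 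The outcome, just as in Lemma~\ref{LemIntCacci}, is
\begin{equation*}
\int_{B_R}\abs{\nabla u}^2\ \le\ \int_{B_{2R}}\abs{\nabla u}^2\eta^2\ \le\ CR^{-2}\int_{B_{2R}}u^2\ +\ CR^{-1}\int_{\supp\nabla\eta}\abs{b-(b)_{2R}}\,u\,\abs{\nabla u}\,\eta .
\end{equation*}

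The only genuinely new term is the last one, and that is the main obstacle: $b$ is merely $BMO$ and cannot be bounded pointwise. The idea is to peel off the supremum of $u$ on $\supp\nabla\eta$ and then exploit the integrability that the oscillation of a $BMO$ function does have. Bounding $u\le\sup_{B_{3R/2}}u$, applying Cauchy--Schwarz, and using John--Nirenberg in the form $\norm{b-(b)_{2R}}_{L^2(B_{2R})}\le CR^{n/2}\Gamma$, the last term is at most $CR^{n/2-1}\Gamma\big(\sup_{B_{3R/2}}u\big)\big(\int_{B_{2R}}\abs{\nabla u}^2\eta^2\big)^{1/2}$; Young's inequality then absorbs $\tfrac12\int_{B_{2R}}\abs{\nabla u}^2\eta^2$ into the left side, leaving
\begin{equation*}
\int_{B_R}\abs{\nabla u}^2\ \le\ CR^{-2}\int_{B_{2R}}u^2\ +\ C\Gamma^2 R^{n-2}\Big(\sup_{B_{3R/2}}u\Big)^2 .
\end{equation*}

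To close the estimate I need to dominate $\sup_{B_{3R/2}}u$ by an $L^2$ average of $u$, which is precisely the local boundedness in \eqref{moserest1}: although Lemma~\ref{Moser Lemma 1} is phrased for solutions, its proof uses only that the function involved (there $u^\pm$) is a nonnegative subsolution, so it applies verbatim to our $u$. Covering $B_{3R/2}$ by finitely many small cubes whose doubles lie in $B_{2R}$ and choosing the parameters $s,k_0$ so that $k_0 q=2$ (possible since $q$ may be taken strictly below $4$) gives $\sup_{B_{3R/2}}u\le C\big(\fint_{B_{2R}}u^2\big)^{1/2}$. Substituting, $\Gamma^2 R^{n-2}\big(\sup u\big)^2\le C\Gamma^2 R^{-2}\int_{B_{2R}}u^2$, whence $\int_{B_R}\abs{\nabla u}^2\le C(1+\Gamma^2)R^{-2}\int_{B_{2R}}u^2$; dividing by $\abs{B_R}\simeq R^n$ yields \eqref{Caccioineq} with $C=C(n,\lambda,\Lambda,\Gamma)$. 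There is no circularity: Lemma~\ref{Moser Lemma 1} was established using only Corollary~\ref{caccio'} and the Sobolev inequality, not Corollary~\ref{caccio} itself.
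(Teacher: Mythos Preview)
Your proof is correct and follows essentially the approach the paper intends (the paper leaves the proof implicit, but the remark preceding Lemma~\ref{Moser Lemma 1} spells out the strategy): first derive a Caccioppoli-type estimate whose right-hand side involves a higher norm of $u$, then use the local boundedness of Lemma~\ref{Moser Lemma 1}---which indeed only requires a nonnegative subsolution---to reduce that norm to the $L^2$ average. Your variant of pulling out $\sup_{B_{3R/2}}u$ and using the $L^2$ bound on $b-(b)_{2R}$ is a minor reorganization of the H\"older step carried out in Corollary~\ref{caccio'}; either way one lands on an estimate closed by Lemma~\ref{Moser Lemma 1}, and your remark on non-circularity is the essential observation.
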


  \begin{lem}\label{Moser Lemma 2}
    Let $u\ge0$ be a weak supersolution of \eqref{Lu=0} and $\overline{B_{2R}}\subset\Omega$. Then for any $0<k<\frac1{2}$, we have
    \begin{equation}
    \sup_{B_R}u^k\le C(n,\lambda,\Lambda,\Gamma,s)(\fint_{B_{2R}}u^{kq})^{\frac{1}{q}}.
  \end{equation}
  \end{lem}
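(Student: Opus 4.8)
\emph{Approach.} The asserted bound is, after renaming, a local $L^{\infty}$ estimate for the \emph{sub}solution obtained from $u$ by raising it to a negative power: writing $w:=u^{-k}$ (this is the relevant power here — equivalently one reads $u^{k}$ with $k<0$), the claim is $\sup_{B_R}w\le C(\fint_{B_{2R}}w^{q})^{1/q}$. The plan is to first reduce to the case $u\ge\epsilon>0$, then show $w$ is a nonnegative subsolution, then feed $w$ into the Moser iteration already carried out for Lemma \ref{Moser Lemma 1}. For the reduction: constants solve \eqref{Lu=0}, so $u+\epsilon$ is again a nonnegative supersolution with $u+\epsilon\ge\epsilon>0$; if the estimate holds for each $u+\epsilon$ with a constant independent of $\epsilon$, then, since $t\mapsto t^{-k}$ is decreasing, $(u+\epsilon)^{-k}\uparrow u^{-k}$ as $\epsilon\downarrow0$, and one passes to the limit using the monotone convergence theorem in $\fint_{B_{2R}}(u+\epsilon)^{-kq}$ together with $\sup_{B_R}u^{-k}=\lim_{\epsilon\downarrow0}\sup_{B_R}(u+\epsilon)^{-k}$, the inequality being vacuous when the right-hand side is $+\infty$. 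So from now on assume $u\ge\epsilon>0$ and set $w:=u^{-k}$.

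\emph{Key step: $w$ is a subsolution.} Fix $\varphi\in C_0^{\infty}(\Omega)$ with $\varphi\ge0$. Since $u\ge\epsilon$, the function $\psi:=u^{-k-1}\varphi$ is nonnegative, bounded, compactly supported, and belongs to $W^{1,2}(\Omega)$ (the factors $u^{-k-1},u^{-k-2}$ are bounded and $\nabla u\in L^{2}_{\loc}$). Testing the supersolution inequality with $\psi$ and substituting $\nabla\psi=-(k+1)u^{-k-2}\varphi\,\nabla u+u^{-k-1}\nabla\varphi$, the terms containing $b\nabla u\cdot\nabla u$ vanish identically by the anti-symmetry of $b$ (exactly as in \eqref{zero}), while $a\nabla u\cdot\nabla u\ge\lambda|\nabla u|^{2}\ge0$; hence
\begin{equation*}
0\le\int_{\Omega}(a+b)\nabla u\cdot\nabla\psi=-(k+1)\int_{\Omega}u^{-k-2}\varphi\,(a\nabla u\cdot\nabla u)+\int_{\Omega}u^{-k-1}(a+b)\nabla u\cdot\nabla\varphi,
\end{equation*}
so that $\int_{\Omega}u^{-k-1}(a+b)\nabla u\cdot\nabla\varphi\ge0$. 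Multiplying by $-k<0$ and recalling $\nabla w=-k\,u^{-k-1}\nabla u$ yields $\int_{\Omega}(a\nabla w\cdot\nabla\varphi+b\nabla w\cdot\nabla\varphi)\le0$. By the boundedness \eqref{energyest} of the bilinear form, this passes from $C_0^{\infty}$ test functions to all nonnegative $\varphi\in W^{1,2}(\Omega)$ with $\supp\varphi\subset\subset\Omega$; thus $w\in W^{1,2}_{\loc}(\Omega)\cap L^{\infty}_{\loc}(\Omega)$ is a nonnegative weak subsolution of \eqref{Lu=0}.

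\emph{Conclusion and main difficulty.} The Moser iteration in the proof of Lemma \ref{Moser Lemma 1} uses $u$ only through the subsolution inequality \eqref{pm_subsolution}, so the identical argument applies verbatim to the nonnegative subsolution $w$ (with cubes replaced by balls, which is immaterial). Taking there $\beta=1$, admissible since $1>\tfrac12$, and noting $\beta q=q$, we get $\sup_{B_R}w\le C(n,\lambda,\Lambda,\Gamma,s)\bigl(\fint_{B_{2R}}w^{q}\bigr)^{1/q}$; substituting $w=u^{-k}$ and letting $\epsilon\downarrow0$ as above gives the stated estimate, in fact for every $k>0$ (in particular for $0<k<\tfrac12$). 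The only step carrying real content is showing that $u^{-k}$ is a subsolution: the delicate points are the admissibility of the test function $\psi=u^{-k-1}\varphi$, which is precisely what forces the $\epsilon$-regularization keeping $u$ away from $0$, and the cancellation of the terms quadratic in $\nabla u$ coming from the anti-symmetric part, just as in Lemma \ref{LemIntCacci}; everything else is a citation of results established above.
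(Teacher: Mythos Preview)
Your proof addresses the wrong statement. The lemma concerns \emph{positive} exponents $0<k<\tfrac12$: it asserts $\sup_{B_R}u^{k}\le C(\fint_{B_{2R}}u^{kq})^{1/q}$, equivalently $\sup_{B_R}u\le C^{1/k}(\fint_{B_{2R}}u^{kq})^{1/(kq)}$. You reinterpret this as ``$u^{k}$ with $k<0$'' and set $w=u^{-k}$; what you then prove---that $u^{-k}$ is a nonnegative subsolution and hence obeys the $L^\infty$--$L^q$ bound from Lemma~\ref{Moser Lemma 1}---is exactly Lemma~\ref{Moser Lemma 3}, not the present lemma. The two are genuinely different: $\sup_{B_R}u^{-k}=(\inf_{B_R}u)^{-k}$, which gives a \emph{lower} bound on $u$, whereas Lemma~\ref{Moser Lemma 2} gives an \emph{upper} bound.

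Your subsolution strategy cannot be repaired by simply changing the sign. Running your computation with $0<k<1$ and $\psi=u^{k-1}\varphi$ yields
\[
0\le\int A\nabla u\cdot\nabla\psi=(k-1)\int u^{k-2}\varphi\,a\nabla u\cdot\nabla u+\int u^{k-1}A\nabla u\cdot\nabla\varphi,
\]
and since $k-1<0$ the first term is $\le0$, so $\int A\nabla(u^{k})\cdot\nabla\varphi\ge0$: $u^{k}$ is again a \emph{super}solution, not a subsolution, and Lemma~\ref{Moser Lemma 1} does not apply. The paper's proof proceeds differently. Testing with $\phi=k u_\epsilon^{2k-1}\eta^2$ produces a Caccioppoli-type inequality for $v=u_\epsilon^{k}$ with gain factor $\tfrac{1-2k}{2k}$ on the left; this factor degenerates as $k\uparrow\tfrac12$, which is precisely why the range $0<k<\tfrac12$ is singled out. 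One then runs the Moser iteration along a geometric sequence $k'l^{i}$ chosen to stay uniformly away from $\tfrac12$ (so $\theta(k'l^{i})=|2k'l^{i}/(2k'l^{i}-1)|$ remains bounded), and once the exponent exceeds $\tfrac12$ the argument hands off to Lemma~\ref{Moser Lemma 1}. This interplay between the two ranges is the actual content of the lemma and is absent from your proposal.
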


  \begin{proof}
  For any $\epsilon>0$, let $u\ep=u+\epsilon$, so that $u\ge\epsilon$. Set $v\ep=u\ep^k$, $\phi=ku\ep^{2k-1}\eta^2$, where $\eta\in C_0^{\infty}(B_{2R})$, $\supp\eta\subset B_r$, $\eta\equiv1$ on $B_{r'}$, $\abs{\nabla\eta}\le\frac{C}{r-r'}$ and $R\le r'<r\le 2R$. For convenience, we will omit the subscript $\epsilon$ in the following.\par
  Since $u$ is a supersolution, $\int A\nabla u\cdot\nabla\phi\ge0$, which gives
    \begin{equation}
    \frac{2k-1}{k}\int A\nabla v\cdot\nabla v\eta^2+2\int A\nabla v\cdot\nabla\eta\eta v\ge0.
    \end{equation}
     So
  \begin{align*}
    \frac{1-2k}{2k}\lambda\int\abs{\nabla v\eta}^2&\le
    \Lambda(\int\abs{\nabla v\eta}^2)^{1/2}(\int\abs{\nabla\eta v}^2)^{1/2}+\int\abs{b-(b)_r}\abs{\nabla v\eta}\abs{\nabla\eta v}\\
    &\le (\Lambda+C_s\Gamma) (\int\abs{\nabla v\eta}^2)^{1/2}(\int_{B_r}\abs{v}^q)^{1/q}(r-r')^{\frac{n}{s'}-1}
  \end{align*}
  $\Rightarrow$
  \begin{equation}
    \fint_{B_{r'}}\abs{\nabla v}^2\le C(\Lambda,\Gamma,\lambda,s,n)\theta^2(k)(r-r')^{-2}(\fint_{B_r}v^q)^{2/q},
  \end{equation}
  where $\theta(k)=\frac{2k}{2k-1}$.

  By Sobolev embedding, we have
  \begin{align}\label{iteration1}
    (\fint_{B_{r'}}v^{\frac{2n}{n-2}})^{\frac{n-2}{n}}&\le C(r'^2\fint_{B_{r'}}\abs{\nabla v}^2+\fint_{R_{r'}}v^2)\nonumber\\
    &\le C(\Lambda,\Gamma,\lambda,s,n)\Big(\theta^2(k)(\frac{r'}{r-r'})^2+1\Big)(\fint_{B_r}v^q)^{\frac2{q}}.
  \end{align}

    Recall that $l=\frac{2n}{(n-2)q}>1$, so there exists a $m\in\mathbb{Z}^+$ such that
    $\frac1{2}l^{-m+\frac1{2}}\le k\le \frac1{2}l^{-m+\frac5{2}}$. Denote $k'=\frac1{2}l^{-m+\frac1{2}}$. Then $l^{-2}k\le k'\le k$ and
    $$
    k'<k'l<k'l^2<\dots<k'l^{m-1}<\frac1{2}<k'l^m<\dots
    $$
    It is easy to check that $\theta^2(k'l^i)\le(l^{1/2}-1)^{-2}$ for $i=0,1,\dots, m-1$. \par
    Letting $v=u^{k'l^i}$ in \eqref{iteration1}, $i=0,1,\dots,m-1$, we get
    $$
    (\fint_{B_{r'}}u^{k'l^{i+1}q})^{\frac1{ql}}\le C(n,\lambda,\Lambda,\Gamma,s)\frac{r'}{r-r'}(\fint_{B_r}u^{k'l^iq})^{\frac1{q}}.
    $$
    Let $r=r_i=\frac{3R}{2}+\frac{R}{2^{i+1}}$ and $r'=r_{i+1}$.
    After $m$ iterations, we have
    $$
    (\fint_{B_{r_m}}u^{k'l^mq})^{\frac1{ql^m}}\le C(n,\lambda,\Lambda,\Gamma,s)(\fint_{B_{2R}}u^{k'q})^{\frac1{q}}.
    $$

    Since $k'l^m>\frac1{2}$, we can apply Lemma \ref{Moser Lemma 1} letting there $k=k_0=k'l^m$ and get
    \begin{align*}
      \sup_{B_R}u^{k'l^m}&\le C(n,\lambda,\Lambda,\Gamma,s)(\fint_{B_{3R/2}}u^{k'l^mq})^{\frac1{q}}\\
      &\le C(n,\lambda,\Lambda,\Gamma,s)^{1+l^m}(\fint_{B_{2R}}u^{k'q})^{\frac{l^m}{q}}.
    \end{align*}

    Raising to $\frac{k}{k'l^m}$ power on both sides and using H\"older inequality, we get
   $$
    \sup_{B_R}u^k\le C^{(l^{-m}+1)k/k'}(\fint_{B_{2R}}u^{k'q})^{\frac{k}{k'q}}
    \le C(n,\lambda,\Lambda,\Gamma,s)(\fint_{B_{2R}}u^{kq})^{\frac{1}{q}}.
   $$
   Finally, let $\epsilon\rightarrow 0$ we finish the proof.
  \end{proof}

  By Lemma \ref{Moser Lemma 1} and Lemma \ref{Moser Lemma 2}, we easily obtain the following
  \begin{cor}\label{MoserCor1}
    Let $u\ge0$ be a weak solution of \eqref{Lu=0} and $\overline{B_{2R}}\subset\Omega$. Then for any $p>0$, we have
    \begin{equation}\label{Harnack1}
    \sup_{B_R}u\le C(n,\lambda,\Lambda,\Gamma,p)(\fint_{B_{2R}}u^{p})^{\frac{1}{p}}.
    \end{equation}
  \end{cor}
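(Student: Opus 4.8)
The plan is to deduce the two-sided $L^p$ bound \eqref{Harnack1}, valid for \emph{every} $p>0$, by splicing together the one-sided Moser estimates already proved --- Lemma \ref{Moser Lemma 1} for exponents above $1/2$ and Lemma \ref{Moser Lemma 2} for exponents below $1/2$ --- together with the elementary monotonicity of normalized $L^p$-averages. The starting point is that a nonnegative weak solution $u$ of \eqref{Lu=0} is at the same time a weak subsolution and a weak supersolution: as a solution it is covered by Lemma \ref{Moser Lemma 1} with $u^{+}=u$, and as a supersolution it is covered by Lemma \ref{Moser Lemma 2}. I would fix an admissible $s\in(1,\frac{n}{n-1})$ and set $q=\frac{2s}{2-s}\in(2,\frac{2n}{n-2})$; note $q>2$, so the threshold $q/2$ exceeds $1$.

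First I would treat $p\ge q$: writing $p=kq$ with $k=p/q\ge 1>\frac12$, Lemma \ref{Moser Lemma 1} with $k_0=1$ gives $\sup_{B_R}u\lesssim(\fint_{B_{2R}}u^{p})^{1/p}$ at once, after the harmless observation that the cube formulation of that lemma and the ball statement \eqref{Harnack1} are equivalent up to adjusting radii, since all these estimates are local and cubes and balls of comparable size are nested. Next, for $0<p<q/2$, I would write $p=kq$ with $k=p/q\in(0,\frac12)$, apply Lemma \ref{Moser Lemma 2} to get $\sup_{B_R}u^{k}\lesssim(\fint_{B_{2R}}u^{kq})^{1/q}$, and raise both sides to the power $1/k=q/p$ (legitimate since $u\ge0$), which yields $\sup_{B_R}u\lesssim(\fint_{B_{2R}}u^{p})^{1/p}$ with a constant that now also depends on $p$. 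Finally, for the intermediate range $q/2\le p<q$, rather than attempting to apply a lemma at the excluded value $k=\frac12$, I would invoke the case just established at the safe exponent $q/4<q/2$ and then use the power-mean inequality $(\fint_{B_{2R}}u^{q/4})^{4/q}\le(\fint_{B_{2R}}u^{p})^{1/p}$ --- valid because $q/4\le p$ and the average is taken over a probability space --- to replace $q/4$ by $p$ on the right. The three ranges exhaust all of $(0,\infty)$, which completes the argument.

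There is no real obstacle here: the corollary is essentially a packaging statement and every step is soft. The only points requiring a little care are to make sure the decomposition into ranges of $p$ is exhaustive --- in particular routing the boundary value $p=q/2$, where $k=\frac12$ is excluded from both lemmas, through the monotonicity step; to acknowledge that the constant is allowed to, and does, blow up as $p\to0^{+}$, which is consistent with the claimed dependence $C(n,\lambda,\Lambda,\Gamma,p)$; and the minor bookkeeping in passing between the cube neighborhoods used in Lemma \ref{Moser Lemma 1} and the ball neighborhoods used in Lemma \ref{Moser Lemma 2} and in \eqref{Harnack1}.
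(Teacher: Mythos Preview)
Your proposal is correct and follows the same approach the paper indicates: the paper's entire proof is the sentence ``By Lemma \ref{Moser Lemma 1} and Lemma \ref{Moser Lemma 2}, we easily obtain the following,'' and you have simply written out the splicing of those two lemmas explicitly. Your treatment of the borderline $p=q/2$ via the power-mean inequality and your remark on cubes versus balls are appropriate bookkeeping details that the paper leaves implicit; note also that for $q/2<p<q$ you could apply Lemma \ref{Moser Lemma 1} directly with $k_0=p/q>\tfrac12$, so only the single value $p=q/2$ genuinely needs the monotonicity detour.
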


  \begin{lem}\label{Moser Lemma 3}
    Let $u\ge0$ be a weak supersolution of \eqref{Lu=0}, $\overline{B_{2R}}\subset\Omega$. Then for any $k>0$,
    $$
    \sup_{B_R}u^{-k}\le C(n,\Lambda,\Gamma,\lambda,s)(\fint_{B_{2R}}u^{-kq})^{\frac1{q}}.
    $$
  \end{lem}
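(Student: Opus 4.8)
The plan is to carry out Moser iteration on negative powers of $u$, in parallel with the proofs of Lemma~\ref{Moser Lemma 1} and Lemma~\ref{Moser Lemma 2}. The only structural novelty — and the reason one may take the constant independent of $k$ — is that, for a \emph{super}solution, a test function built from a negative power of $u$ produces the ellipticity term with a favorable sign and with a coefficient that never degenerates.

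Fix $\epsilon>0$ and put $u_\epsilon=u+\epsilon\ge\epsilon$. Since $u_\epsilon\in W^{1,2}_{loc}(\Omega)$ is bounded below, $u_\epsilon^{-2k-1}$ is a bounded Lipschitz function of $u_\epsilon$, so $\phi=u_\epsilon^{-2k-1}\eta^2$ is an admissible nonnegative test function with compact support, where $\eta\in C_0^\infty(B_{2R})$ satisfies $\eta\equiv1$ on $B_{r'}$, $\supp\eta\subset B_r$, $|\nabla\eta|\le C(r-r')^{-1}$, and $R\le r'<r\le 2R$. Substituting $\phi$ into the supersolution inequality $\int_\Omega A\nabla u\cdot\nabla\phi\ge0$ and setting $v=u_\epsilon^{-k}$, so that $u_\epsilon^{-2k-2}|\nabla u|^2=k^{-2}|\nabla v|^2$ and $u_\epsilon^{-2k-1}\nabla u=-k^{-1}v\nabla v$, and using \eqref{AEllip} together with the anti-symmetry of $b$ (which kills $b\nabla u\cdot\nabla u$, as in Lemma~\ref{LemIntCacci}), one obtains
\[
\frac{(2k+1)\lambda}{k^2}\int|\nabla v|^2\eta^2\ \le\ \frac{2}{k}\,\Big|\int A\nabla v\cdot\nabla\eta\,v\,\eta\Big|.
\]
Exactly as in Lemma~\ref{Moser Lemma 2}, on the right one may replace $b$ by $b-(b)_{B_r}$, since $\int(b)_{B_r}\nabla(v^2)\cdot\nabla(\eta^2)=0$ for the constant anti-symmetric matrix $(b)_{B_r}$ (integrate by parts as in \eqref{zero}). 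Multiplying by $k^2/(2k+1)$ and using $\tfrac{2k}{2k+1}<1$ for \emph{every} $k>0$ gives, uniformly in $k$,
\[
\lambda\int|\nabla v|^2\eta^2\ \le\ \Lambda\Big(\int|\nabla v\,\eta|^2\Big)^{1/2}\Big(\int|\nabla\eta\,v|^2\Big)^{1/2}+\int|b-(b)_{B_r}|\,|\nabla v\,\eta|\,|\nabla\eta\,v|.
\]
Hölder's inequality and the John--Nirenberg estimate $\norm{b-(b)_{B_r}}_{L^{s'}(B_r)}\le C(n,s)\Gamma\,|B_r|^{1/s'}$ to treat the BMO factor, followed by Young's inequality to absorb $\int|\nabla v|^2\eta^2$, produce the $k$-uniform Caccioppoli-type bound
\[
\fint_{B_{r'}}|\nabla v|^2\ \le\ \frac{C(n,\lambda,\Lambda,\Gamma,s)}{(r-r')^2}\Big(\fint_{B_r}v^{q}\Big)^{2/q},\qquad q=\tfrac{2s}{2-s}.
\]

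With this in hand the argument is the standard Moser scheme. The Sobolev inequality on $B_{r'}$ (and $\fint v^2\le(\fint v^q)^{2/q}$, since $q>2$) upgrades the last display to
\[
\Big(\fint_{B_{r'}}v^{lq}\Big)^{1/(lq)}\ \le\ \frac{C\,r'}{r-r'}\Big(\fint_{B_r}v^{q}\Big)^{1/q},\qquad l=\tfrac{2n}{(n-2)q}>1,
\]
still with a constant independent of $k$. Iterating this with $k$ replaced by $kl^i$ and $(r,r')=(r_i,r_{i+1})$, $r_i=R+R2^{-i}$, and writing $\Phi_i=\big(\fint_{B_{r_i}}u_\epsilon^{-kl^iq}\big)^{1/(kl^iq)}$, the inequality reads $\Phi_{i+1}\le(C2^{i+2})^{1/(kl^i)}\Phi_i$, whence
\[
\Phi_N\ \le\ \Big(\prod_{i\ge0}(C2^{i+2})^{1/l^i}\Big)^{1/k}\Phi_0\ =\ A^{1/k}\Phi_0,\qquad A=A(n,\lambda,\Lambda,\Gamma,s)<\infty.
\]
Raising to the power $k$ removes the $1/k$ exponents, giving $\big(\fint_{B_{r_N}}u_\epsilon^{-kl^Nq}\big)^{1/(l^Nq)}\le A\big(\fint_{B_{2R}}u_\epsilon^{-kq}\big)^{1/q}$; letting $N\to\infty$ (so $r_N\downarrow R$ and the left side dominates $\big(\fint_{B_R}u_\epsilon^{-kl^Nq}\big)^{1/(l^Nq)}\to\sup_{B_R}u_\epsilon^{-k}$) yields $\sup_{B_R}u_\epsilon^{-k}\le A\big(\fint_{B_{2R}}u_\epsilon^{-kq}\big)^{1/q}$. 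Finally let $\epsilon\downarrow0$: $u_\epsilon^{-k}\uparrow u^{-k}$ pointwise, so monotone convergence applies to both sides and gives the assertion with $C=A$.

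The step I expect to be the crux is the very first estimate: that the negative-power test function is admissible — immediate here only because $u_\epsilon\ge\epsilon>0$ — and, more importantly, that the ellipticity coefficient $\tfrac{2k+1}{k^2}$, equivalently the normalized cross-term coefficient $\tfrac{2k}{2k+1}$, stays uniformly away from the degenerate regime for \emph{all} $k>0$. This is precisely what lets the whole iteration run with a single constant, in contrast with the positive-power case of Lemma~\ref{Moser Lemma 2}, where the corresponding coefficient $\tfrac{2k}{1-2k}$ blows up at $k=\tfrac12$ and forces the more delicate finite-step argument. Everything else — the vanishing of the $(b)_{B_r}$ term, the John--Nirenberg/Hölder bound on the BMO factor, and the Sobolev step — is the same as in Lemmas~\ref{LemIntCacci}, \ref{Moser Lemma 1} and \ref{Moser Lemma 2}.
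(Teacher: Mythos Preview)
Your proof is correct and follows essentially the same route as the paper: regularize $u_\epsilon=u+\epsilon$, test with $u_\epsilon^{-2k-1}\eta^2$, observe that the coefficient $\tfrac{2k}{2k+1}<1$ uniformly in $k>0$ so the Caccioppoli step has a $k$-independent constant, then run the standard Moser iteration and let $\epsilon\to0$. The paper's own proof is more compressed (it simply says ``arguing as in the proof of Lemma~\ref{Moser Lemma 2}'' and ``applying iterations''), while you spell out the bookkeeping of the iteration constants more explicitly, but there is no substantive difference in method.
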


  \begin{proof}
    For $\epsilon>0$, define $u\ep=u+\epsilon$, $v\ep=u\ep^{-k}$ and $\phi\ep=ku\ep^{-2k-1}\eta^2$. Then $\phi\ep\in W_0^{1,2}(\Omega)$ for any $k>0$, and $u\ep$ is also a supersolution. So $\int A\nabla u\ep\cdot\nabla\phi\ep\ge0$, which implies
    $$
    \frac{2k+1}{k}\int A\nabla v\ep\eta\cdot\nabla v\ep\eta\le-2\int A\nabla u\ep\eta v\ep\nabla\eta.
    $$
    Arguing as in the proof of Lemma \ref{Moser Lemma 2}, we get
    \begin{align*}
      (\fint_{B_{r'}}v\ep^{\frac{2n}{n-2}})^{\frac{n-2}{n}}&\le C(\Lambda,\Gamma,\lambda,s,n)\Big((\frac{2k}{2k+1})(\frac{r'}{r-r'})^2
      +1\Big)(\fint_{B_r}v\ep^q)^{\frac2{q}}\\
    &\le C(\Lambda,\Gamma,\lambda,s,n)(\frac{r'}{r-r'})^2(\fint_{B_r}v\ep^q)^{\frac2{q}}.
    \end{align*}

    Applying iterations we obtain
    $$
    \sup_{B_R}u\ep^{-k}\le C(\Lambda,\Gamma,\lambda,s,n)(\fint_{B_{2R}}u\ep^{-kq})^{\frac1{q}}.
    $$
    Letting $\epsilon\rightarrow 0$, one finds the desired inequality for nonnegative solutions.
  \end{proof}

  \begin{cor}
    Let $u\ge0$ be a weak solution in $\Omega$. $\overline{B_{2R}}\subset\Omega$. Then for any $p<0$,
    \begin{equation}\label{Harnack2}
    \inf_{B_R}u\ge C(n,\lambda,\Lambda,\Gamma,p)(\fint_{B_{2R}}u^p)^{\frac{1}{p}}.
  \end{equation}
  \end{cor}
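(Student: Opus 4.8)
The plan is to obtain this corollary as an immediate consequence of Lemma \ref{Moser Lemma 3} by relabeling exponents. First I would fix once and for all some $s\in(1,\tfrac{n}{n-1})$ and put $q=\tfrac{2s}{2-s}\in\big(2,\tfrac{2n}{n-2}\big)$, so that Lemma \ref{Moser Lemma 3} is available in the form: for every $k>0$,
$$\sup_{B_R}u^{-k}\le C_0(n,\lambda,\Lambda,\Gamma,s)\Big(\fint_{B_{2R}}u^{-kq}\Big)^{1/q},$$
where both sides are allowed to equal $+\infty$. Note that a weak solution is in particular a weak supersolution, so the hypotheses of Lemma \ref{Moser Lemma 3} are met; and recall that the constant there does not depend on the exponent $k$.

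Given $p<0$, I would then set $k:=-p/q>0$, so that $-kq=p$. Since $u\ge0$, one has the elementary identity $\sup_{B_R}u^{-k}=\big(\inf_{B_R}u\big)^{-k}$ (essential sup/inf), whence the displayed inequality becomes
$$\big(\inf_{B_R}u\big)^{-k}\le C_0\Big(\fint_{B_{2R}}u^{p}\Big)^{1/q}.$$
If $\fint_{B_{2R}}u^{p}=+\infty$, then $\big(\fint_{B_{2R}}u^{p}\big)^{1/p}=0$ and the asserted inequality is trivial, so I may assume the right-hand side is finite. Raising both sides to the power $-1/k<0$ (which reverses the inequality) yields
$$\inf_{B_R}u\ge C_0^{-1/k}\Big(\fint_{B_{2R}}u^{p}\Big)^{-1/(kq)}=C_0^{-1/k}\Big(\fint_{B_{2R}}u^{p}\Big)^{1/p},$$
using $kq=-p$. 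Taking $C(n,\lambda,\Lambda,\Gamma,p):=C_0(n,\lambda,\Lambda,\Gamma,s)^{-1/k}$, which depends on $p$ only through $k=-p/q$, finishes the proof.

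I do not expect any genuine obstacle: the entire content is already contained in Lemma \ref{Moser Lemma 3}, and what remains is exponent bookkeeping together with the observation that $x\mapsto x^{-k}$ converts an infimum into a supremum. The one point deserving a word of care is that $u$ is assumed merely nonnegative rather than strictly positive, so one must allow the quantities above to be $+\infty$; in that degenerate case $\big(\fint_{B_{2R}}u^{p}\big)^{1/p}=0$ and the inequality holds trivially, which is why the reduction to the finite case above is harmless.
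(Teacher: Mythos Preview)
Your proposal is correct and follows essentially the same route as the paper: apply Lemma \ref{Moser Lemma 3} with $k=-p/q$, use $\sup_{B_R}u^{-k}=(\inf_{B_R}u)^{-k}$, and raise both sides to the power $-1/k$. The paper's proof is identical up to the extra remark you make about the degenerate case $\fint_{B_{2R}}u^{p}=+\infty$.
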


  \begin{proof}
    Since $\sup_{B_R}u^{-k}=(\inf_{B_R}u)^{-k}$, it follows immediately from the previous Lemma. Indeed,
    $$
    \inf_{B_R}u\ge C(n,\lambda,\Lambda,\Gamma,s)^{-\frac1{k}}(\fint_{B_{2R}}u^{-kq})^{\frac{1}{-kq}}
    $$
    for any $k>0$.
  \end{proof}

  Recall that we have the following result due to F.John and L.Nirenberg:
  \begin{lem}[\cite{john1961functions}]\label{John-Niren}
  Let $Q_1$ be the unit cube. Suppose $w\in BMO(Q_1)$, or equivalently, there exists some $0<C_1<\infty$ such that
  $$
  \fint_{Q}(w-(w)_Q)^2dx\le C_1^2 \quad \forall Q\subset Q_1,
  $$
  then there exist positive constants $\alpha,\beta$ depending on $n$ only such that
  $$
  \int_{Q_1}e^{\alpha\abs{w-(w)_{Q_1}}}dx\le\beta C_1.
  $$
  This implies that
  $$
  \int_{Q_1}e^{\alpha w}dx\int_{Q_1}e^{-\alpha w}dx\le\beta^2 C_1^2,
  $$
  and that we can assume $\alpha<\frac1{2}$.
  \end{lem}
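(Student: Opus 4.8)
The statement is the classical John--Nirenberg inequality, so the plan is to recall its standard proof: a Calder\'on--Zygmund stopping-time argument yielding exponential decay of the distribution function of the oscillation, followed by a layer-cake integration. One direction of the stated equivalence is immediate: if $\fint_Q(w-(w)_Q)^2\,dx\le C_1^2$ for all cubes $Q\subseteq Q_1$, then Cauchy--Schwarz gives $\fint_Q\abs{w-(w)_Q}\,dx\le C_1$, so $w\in BMO(Q_1)$ with $\norm{w}_{BMO(Q_1)}\le C_1$; the converse direction falls out at the end, since the exponential bound controls every $L^p$ oscillation of $w$ by a dimensional multiple of its $BMO$ norm. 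Introduce
\begin{equation*}
\phi(s):=\sup_{Q\subseteq Q_1}\frac{1}{\abs Q}\,\abs{\set{x\in Q:\ \abs{w(x)-(w)_Q}>s}},\qquad s>0,
\end{equation*}
so that $0\le\phi\le1$ trivially.

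The crux is the self-improving estimate $\phi(s)\le\tfrac1{2}\phi(s-K)$ for $s>K:=2^{n+1}C_1$. To prove it, fix a cube $Q\subseteq Q_1$, set $g:=w-(w)_Q$, and perform the dyadic stopping-time decomposition of $\abs g$ on $Q$ at height $2C_1$: since $\fint_Q\abs g\le C_1<2C_1$, repeated bisection, keeping the maximal dyadic subcubes on which the average of $\abs g$ first exceeds $2C_1$, produces disjoint cubes $\set{Q_j}$ with $2C_1<\fint_{Q_j}\abs g\le2^{n+1}C_1$, with $\sum_j\abs{Q_j}\le(2C_1)^{-1}\int_Q\abs g\le\tfrac1{2}\abs Q$, with $\abs g\le2C_1$ a.e.\ on $Q\setminus\bigcup_jQ_j$ (Lebesgue differentiation), and with $\abs{(w)_{Q_j}-(w)_Q}\le\fint_{Q_j}\abs g\le K$. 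Then every $x\in Q$ with $\abs{g(x)}>s>K$ lies in some $Q_j$ and satisfies $\abs{w(x)-(w)_{Q_j}}\ge\abs{g(x)}-K>s-K$, so
\begin{equation*}
\abs{\set{x\in Q:\ \abs g>s}}\le\sum_j\abs{\set{x\in Q_j:\ \abs{w-(w)_{Q_j}}>s-K}}\le\phi(s-K)\sum_j\abs{Q_j}\le\tfrac1{2}\phi(s-K)\abs Q;
\end{equation*}
taking the supremum over $Q$ gives the estimate. Iterating it, with the trivial bound $\phi\le1$ to start, yields $\phi(s)\le2\cdot2^{-s/K}=2e^{-cs}$ for all $s>0$, where $c=(\log 2)/K$.

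The conclusions follow by integration. By the layer-cake formula, for $0<\alpha<c$,
\begin{equation*}
\int_{Q_1}e^{\alpha\abs{w-(w)_{Q_1}}}\,dx=1+\alpha\int_0^\infty e^{\alpha s}\,\abs{\set{x\in Q_1:\ \abs{w-(w)_{Q_1}}>s}}\,ds\le1+2\alpha\int_0^\infty e^{(\alpha-c)s}\,ds=1+\frac{2\alpha}{c-\alpha},
\end{equation*}
which is bounded by a constant depending only on $n$ once, say, $\alpha=c/2=(\log 2)/(2^{n+2}C_1)$; this is the asserted inequality (with $\alpha C_1$ and $\beta$ dimensional). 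Decreasing $\alpha$ only decreases the left-hand side, so the same bound holds for every smaller $\alpha$, in particular one may impose $\alpha<\tfrac1{2}$. For the product estimate, apply the bound to $w$ and to $-w$ and write $\int_{Q_1}e^{\pm\alpha w}\,dx=e^{\pm\alpha(w)_{Q_1}}\int_{Q_1}e^{\pm\alpha(w-(w)_{Q_1})}\,dx\le e^{\pm\alpha(w)_{Q_1}}\beta C_1$; multiplying these, the factors $e^{\pm\alpha(w)_{Q_1}}$ cancel and $\int_{Q_1}e^{\alpha w}\,dx\int_{Q_1}e^{-\alpha w}\,dx\le\beta^2C_1^2$.

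I expect the only real difficulty to be the self-improving inequality $\phi(s)\le\tfrac1{2}\phi(s-K)$: one has to execute the stopping-time construction with care --- maximality of the cubes $Q_j$, the packing bound $\sum_j\abs{Q_j}\le\tfrac1{2}\abs Q$, the pointwise control $\abs g\le2C_1$ off $\bigcup_jQ_j$ --- and, crucially, keep both the shift $K$ and the contraction factor $\tfrac1{2}$ uniform in $Q$, so that iterating does not degrade the decay rate $c$. Everything after that is bookkeeping.
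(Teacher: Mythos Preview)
The paper does not prove this lemma; it is stated as the classical result of John and Nirenberg and cited from \cite{john1961functions} without argument. Your proof is a correct and standard rendition of the Calder\'on--Zygmund stopping-time proof: the self-improving inequality $\phi(s)\le\tfrac12\phi(s-K)$, its iteration to exponential decay, and the layer-cake computation are all carried out cleanly.

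One remark worth recording: as literally written, the claim that $\alpha$ depends on $n$ alone while the right-hand side is $\beta C_1$ cannot be correct (take $w\equiv0$ and let $C_1\to0$; the left-hand side is $\abs{Q_1}=1$). You implicitly flag this when you write ``with $\alpha C_1$ and $\beta$ dimensional'': the intended reading --- and the one actually used downstream in the Harnack argument, where $C_1=C_1(n,\lambda,\Lambda,\Gamma)$ is fixed --- is that $\alpha=\alpha_0/C_1$ for a dimensional $\alpha_0$, equivalently $\int_{Q_1}e^{\alpha_0\abs{w-(w)_{Q_1}}/C_1}\,dx\le\beta$. With that understanding your proof delivers exactly what is needed, and the reduction to $\alpha<\tfrac12$ and the product bound follow as you indicate.
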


  We are now ready to show the following.
  \begin{lem}[Harnack inequality]\label{Harnackineq}
  Let $u\ge0$ be a solution of \eqref{Lu=0}. $\overline{B_{2R}}\subset\Omega$. Then
  $$
  \sup_{B_{R}}u\le C(\Lambda,\lambda,\Gamma,n)\inf_{B_{R}}u.
  $$
  \end{lem}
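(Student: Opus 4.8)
The plan is to derive the Harnack inequality from the one-sided Moser estimates (Corollaries \ref{MoserCor1} and the corollary following Lemma \ref{Moser Lemma 3}) by producing a matching pair of powers $p>0$ and $-p<0$ via the John--Nirenberg lemma. Concretely, for a nonnegative solution $u$ with $\overline{B_{2R}}\subset\Omega$, I would first reduce to the case $R=1$ by scaling (the coefficient bounds $\Lambda,\lambda,\Gamma$ are scale-invariant, $\Gamma$ because $BMO$ is scale-invariant), and I may add $\epsilon>0$ and work with $u_\epsilon=u+\epsilon$ so that $\log u_\epsilon$ is well-defined; at the end let $\epsilon\to 0$. By \eqref{Harnack1} with $p$ to be chosen, $\sup_{B_1}u_\epsilon\le C(\fint_{B_2}u_\epsilon^{p})^{1/p}$, and by \eqref{Harnack2}, $\inf_{B_1}u_\epsilon\ge C(\fint_{B_2}u_\epsilon^{-p})^{-1/p}$. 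So it suffices to find a single $p>0$ for which
\begin{equation}\label{RHI}
\Big(\fint_{B_2}u_\epsilon^{p}\Big)^{1/p}\Big(\fint_{B_2}u_\epsilon^{-p}\Big)^{1/p}\le C(n,\lambda,\Lambda,\Gamma).
\end{equation}

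The key step is therefore showing $w\doteq\log u_\epsilon$ belongs to $BMO$ of a slightly larger ball with a norm bound depending only on $n,\lambda,\Lambda,\Gamma$; then Lemma \ref{John-Niren} gives exactly \eqref{RHI} for $p=\alpha$ (after a covering/chaining argument to pass from the unit cube in John--Nirenberg to the ball $B_2$, or by just applying it on a cube containing $B_2$ and contained in $B_{2R}$ before scaling). To bound $\|\log u_\epsilon\|_{BMO}$, I would use the standard test-function computation: since $u$ is a supersolution and $u_\epsilon\ge\epsilon>0$, the function $\phi=u_\epsilon^{-1}\eta^2$ is a legitimate nonnegative test function in the supersolution inequality, giving
\begin{equation}
\int A\nabla u_\epsilon\cdot\nabla(u_\epsilon^{-1}\eta^2)\ge 0,
\end{equation}
which expands, using $\nabla w=u_\epsilon^{-1}\nabla u_\epsilon$, to
\begin{equation}
\int a\nabla w\cdot\nabla w\,\eta^2\le -2\int a\nabla w\cdot\nabla\eta\,\eta-2\int b\nabla w\cdot\nabla\eta\,\eta,
\end{equation}
the $b$-term involving only $\nabla w$ against $\nabla\eta$ (no second derivatives), exactly the structure already handled in Lemmas \ref{Moser Lemma 2} and \ref{Moser Lemma 3}. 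Estimating the right-hand side by ellipticity of $a$, the $BMO$ bound on $b$ via H\"older with exponent $s'$ on the cube, and Young's inequality absorbs $\tfrac12\int|\nabla w|^2\eta^2$ to the left and yields $\fint_{B_r}|\nabla w|^2\le C(n,\lambda,\Lambda,\Gamma)r^{-2}$ for $\overline{B_{2r}}\subset\Omega$; Poincar\'e's inequality then gives $\fint_{B_r}|w-(w)_{B_r}|^2\le C$, i.e. the $BMO$ (equivalently $BMO^2$) bound on every ball well inside $B_{2R}$, in particular on a fixed cube $Q$ with $B_2\subset Q\subset B_{2R}$ after rescaling, uniformly in $\epsilon$.

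Finally I would assemble: apply Lemma \ref{John-Niren} to $w=\log u_\epsilon$ on $Q$ to get $\fint_Q u_\epsilon^{\alpha}\cdot\fint_Q u_\epsilon^{-\alpha}\le C$, hence \eqref{RHI} with $p=\alpha<\tfrac12$; combine with \eqref{Harnack1} and \eqref{Harnack2} at exponent $\pm\alpha$ to obtain $\sup_{B_R}u_\epsilon\le C\,\inf_{B_R}u_\epsilon$; and let $\epsilon\to 0$. The main obstacle I anticipate is the uniform-in-$\epsilon$, scale-invariant control of $\|\log u_\epsilon\|_{BMO}$ with the nonstandard $b$-term --- one must be careful that the constant genuinely depends only on $\Gamma=\|b\|_{BMO}$ and not on any $L^\infty$ bound on $b$, which is exactly why the $b$-integral is paired as $|b-(b)_{Q_r}|\,|\nabla w\,\eta|\,|\nabla\eta|$ and estimated through H\"older with a reverse-H\"older/John--Nirenberg exponent as in Corollary \ref{caccio'}; a secondary bookkeeping point is the passage between cubes (John--Nirenberg) and balls (Moser), handled by a routine comparison of concentric cubes and balls.
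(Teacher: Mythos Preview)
Your proposal is correct and follows essentially the same route as the paper: test with $u_\epsilon^{-1}\eta^2$ to obtain a scale-invariant bound $\fint_{B_r}|\nabla\log u_\epsilon|^2\lesssim r^{-2}$ (handling the $b$-term by subtracting $(b)_{B_r}$ and using the $BMO$ bound), hence $\log u_\epsilon\in BMO$ with norm depending only on $n,\lambda,\Lambda,\Gamma$, then apply John--Nirenberg and combine with \eqref{Harnack1} and \eqref{Harnack2}. One cosmetic slip: the signs in your displayed expansion should be $+2\int a\nabla w\cdot\nabla\eta\,\eta+2\int b\nabla w\cdot\nabla\eta\,\eta$, but since you immediately bound these terms in absolute value this does not affect the argument.
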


  \begin{proof}
    As in Lemma \ref{Moser Lemma 3}, we first consider $u\ep=u+\epsilon$ then let $\epsilon$ go to 0. We omit these steps for simplicity. \par
    By scaling, we can assume $R=1$. Let $v=\log u$. We claim that $v\in BMO(B_{3/2})$. To see this, fix $B_r\subset B_{3/2}$. Let $\eta\in C_0^{\infty}(B_{3/2})$, $\supp\eta\subset B_{4r/3}$, $\eta\equiv1$ in $B_r$ and $\abs{\nabla\eta}\le\frac{C}{r}$. Define $\phi=\frac1{u}\eta^2$. Then
    $$
    0=\int A\nabla u\cdot\nabla\phi=-\int A\nabla v\cdot\nabla v\eta^2+2\int A\nabla\eta\cdot\nabla v\eta.
    $$
    $\Rightarrow$
    \begin{align*}
      \lambda\int\abs{\nabla v}^2\eta^2&\le 2\Lambda\int\abs{\nabla\eta}\abs{\nabla v\eta}
      +2\int\abs{b-(b)_{B_{4r/3}}}\abs{\nabla\eta}\abs{\nabla v\eta}\\
      &\le2\Lambda r^{\frac{n}{2}-1}(\int\abs{\nabla v\eta}^2)^{1/2}+Cr^{\frac{n}{2}-1}(\fint_{B_{4r/3}}\abs{b-(b)_{B_{4r/3}}}^2)^{1/2}
      (\int\abs{\nabla v\eta}^2)^{1/2},
    \end{align*}
    $\Rightarrow$
    $$
    \int_{B_r}\abs{\nabla v}^2\le C(\Lambda,\lambda,\Gamma,n)r^{n-2}.
    $$
    By the Poincar{\'e} inequality,
    $$
    \fint_{B_r}\abs{v-(v)_{B_r}}^2\le C_1^2(\Lambda,\lambda,\Gamma,n).
    $$

    Since $B_r\subset B_{3/2}$ is arbitrary, $v\in BMO(B_{3/2})$ with $\norm{v}_{BMO}\le C_1$. So we can apply Lemma \ref{John-Niren} to get
    \begin{equation}\label{John-Niren1}
      \int_{B_{3/2}}e^{\alpha v}\int_{B_{3/2}}e^{-\alpha v}\le(\beta C_1)^2,
    \end{equation}
    for some positive $\alpha$ and $\beta$ which only depend on $n$.\par
    Then from \eqref{John-Niren1}, \eqref{Harnack1} and \eqref{Harnack2}, Harnack's inequality follows.
  \end{proof}

  \begin{lem}[interior H{\"o}lder continuity]\label{intHolderLem}
   Let $u$ be a weak solution of \eqref{Lu=0}, and assume that $\overline{B_R(x)}\subset\Omega$. Then
  \begin{enumerate}
    \item For $0<\rho<r<R$, there exists $\frac1{2}<\theta=\theta (\Lambda,\lambda,\Gamma,n)<1$ and $\alpha=-\log_2\theta$ such that
    \begin{equation}
      \omega(\rho)\le\theta^{-1}(\frac{\rho}{r})^{\alpha}\omega(r),
    \end{equation}
    where $\omega(r)=\sup_{B_r(x)}u-\inf_{B_r(x)}u$ is the oscillation of $u$ in the ball with radius $r$.
    \item For $0<\rho<r<R/2$,
    \begin{equation}
      \omega(\rho)\le C(\Lambda,\lambda,\Gamma,n)(\frac{\rho}{r})^{\alpha}(\fint_{B_{2r}}u^2)^{1/2}.
    \end{equation}
  \end{enumerate}

  \end{lem}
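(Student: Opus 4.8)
The plan is to derive the oscillation decay from the Harnack inequality (Lemma \ref{Harnackineq}) by the classical De Giorgi--Nash--Moser argument. First I would fix a ball $B_r=B_r(x)$ with $\overline{B_{2r}}\subset\Omega$ (so it suffices to work at a fixed scale and then dilate), and set $M(r)=\sup_{B_r}u$, $m(r)=\inf_{B_r}u$, so $\omega(r)=M(r)-m(r)$. The two functions $M(2r)-u$ and $u-m(2r)$ are nonnegative weak solutions of \eqref{Lu=0} on $B_{2r}$ (the class of solutions is a vector space, and adding a constant preserves being a solution since $L$ annihilates constants). Apply the Harnack inequality to each of these on the pair $B_r\subset B_{2r}$:
\begin{align*}
  \sup_{B_r}\bigl(M(2r)-u\bigr)&\le C\inf_{B_r}\bigl(M(2r)-u\bigr),\\
  \sup_{B_r}\bigl(u-m(2r)\bigr)&\le C\inf_{B_r}\bigl(u-m(2r)\bigr),
\end{align*}
with $C=C(\Lambda,\lambda,\Gamma,n)$. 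Rewriting the left sides as $M(2r)-m(r)$ and $M(r)-m(2r)$ and the right sides as $C(M(2r)-M(r))$ and $C(m(r)-m(2r))$, then adding the two inequalities, yields
\[
  \omega(2r)+\omega(r)\le C\bigl(\omega(2r)-\omega(r)\bigr),
\]
hence $\omega(r)\le\frac{C-1}{C+1}\,\omega(2r)$. Setting $\theta:=\frac{C-1}{C+1}\in(0,1)$ (and, if necessary, enlarging $C$ so that $\theta>\tfrac12$, which only weakens the conclusion), we obtain the one-step contraction $\omega(r)\le\theta\,\omega(2r)$.

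Next I would iterate this dyadic contraction to pass to arbitrary radii. Given $0<\rho<r<R$, choose the integer $k\ge 0$ with $2^{-(k+1)}r<\rho\le 2^{-k}r$; iterating $\omega(s)\le\theta\,\omega(2s)$ $k$ times gives $\omega(2^{-k}r)\le\theta^{k}\omega(r)$, and by monotonicity of $\omega$,
\[
  \omega(\rho)\le\omega(2^{-k}r)\le\theta^{k}\omega(r)
  =\theta^{-1}\theta^{\,k+1}\omega(r)\le\theta^{-1}\Bigl(\frac{\rho}{r}\Bigr)^{\alpha}\omega(r),
\]
where $\alpha=-\log_2\theta>0$ and we used $2^{-(k+1)}<\rho/r$ together with $\theta^{\,k+1}=2^{-\alpha(k+1)}<(\rho/r)^{\alpha}$. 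This is precisely part (1). For part (2), apply part (1) with the radius $r$ replaced by $r$ and the outer radius allowed up to just below $R$; more directly, for $0<\rho<r<R/2$ we take the intermediate radius $r$ and bound $\omega(r)$ by $\osc_{B_r}u\le 2\sup_{B_r}|u|$, and then control $\sup_{B_r}|u|$ using the local boundedness estimate of Lemma \ref{Moser Lemma 1} (with, say, $kq=2$, i.e. the $L^2$ version obtained after combining Lemmas \ref{Moser Lemma 1} and \ref{Moser Lemma 2}, or Corollary \ref{MoserCor1} applied to $u^{\pm}$ and $u+\sup|u|$): on $B_r\subset B_{2r}\subset B_R$ with $\overline{B_{2r}}\subset\Omega$,
\[
  \omega(r)\le 2\sup_{B_r}|u|\le C(\Lambda,\lambda,\Gamma,n)\Bigl(\fint_{B_{2r}}|u|^2\Bigr)^{1/2}.
\]
Combining this with part (1) (applied between radii $\rho<r$, absorbing $\theta^{-1}$ into the constant) gives
\[
  \omega(\rho)\le\theta^{-1}\Bigl(\frac{\rho}{r}\Bigr)^{\alpha}\omega(r)
  \le C(\Lambda,\lambda,\Gamma,n)\Bigl(\frac{\rho}{r}\Bigr)^{\alpha}\Bigl(\fint_{B_{2r}}|u|^2\Bigr)^{1/2},
\]
which is part (2).

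I expect the only genuine subtlety to be bookkeeping rather than substance. The main point to be careful about is the \emph{uniformity of the constant} in the Harnack inequality across scales: one should first rescale to $R=1$ (as in the proof of Lemma \ref{Harnackineq}), check that the rescaled operator has the same ellipticity constants $\lambda,\Lambda$ and the same $BMO$ seminorm bound $\Gamma$ for its anti-symmetric part (scaling does not increase $\norm{b}_{BMO}$), so that the Harnack constant $C$ is genuinely independent of $r$; only then is the dyadic iteration legitimate. A secondary, purely cosmetic point is arranging $\theta>\tfrac12$ as claimed in the statement: since $\theta=\frac{C-1}{C+1}\to 1$ as $C\to\infty$ and $C$ may be freely enlarged, this is automatic. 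Finally, in applying the Harnack inequality to $M(2r)-u$ and $u-m(2r)$ one should note these are bona fide nonnegative solutions only after passing to a slightly smaller ball or using that $\sup$/$\inf$ over the open ball $B_{2r}$ are attained in the limit; this is handled exactly as in the classical argument and as in the $\eps$-regularization already used in Lemma \ref{Harnackineq}.
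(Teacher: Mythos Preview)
Your proposal is correct and follows essentially the same approach as the paper: apply Harnack (Lemma~\ref{Harnackineq}) to $M(2r)-u$ and $u-m(2r)$ to obtain $\omega(r/2)\le\theta\,\omega(r)$ with $\theta=\frac{C-1}{C+1}$, iterate dyadically for part~(1), and then combine with the local $L^2$ sup-bound $\omega(r)\le 2\sup_{B_r}|u|\le C(\fint_{B_{2r}}|u|^2)^{1/2}$ for part~(2). Your additional remarks on scale invariance of the Harnack constant and on arranging $\theta>\tfrac12$ are accurate but not needed beyond what the paper already provides.
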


 \begin{proof}
   By Harnack's inequality and a standard argument, we obtain
   $$
   \omega(r/2)\le \frac{C-1}{C+1}\omega(r)
   $$
   for $0<r<R$, where $C=C(\Lambda,\lambda,\Gamma,n)$ is the same constant as in Lemma \ref{Harnackineq}.
   Let $\theta=\frac{C-1}{C+1}<1$. Iteration gives
   $$
   \omega(r2^{-k})\le \theta^k\omega(r)  \qquad \text{for }k=1,2,\dots,
   $$
   which leads to (1).\par
   For (2), observe that
   $$
   \omega(r)=\sup_{B_r}(u^+-u^-)-\inf_{B_r}(u^+-u^-)
   \le 2\sup_{B_r}\abs{u}\le C(\fint_{B_{2r}}\abs{u}^2)^{1/2}.
   $$

 \end{proof}



\section{Estimates of the solution on the boundary}\label{BdySection}
\begin{lem}[boundary Caccioppoli]\label{LemBdyCaccio}
Let $P\in\bdy\Omega$, and let $T_r(P)=B_r(P)\cap\Omega$, $\Delta_r(P)=B_r(P)\cap\bdy\Omega$.
Let $u\in W^{1,2}(T_{R}(P))$ be a solution of \eqref{Lu=0} in $T_{R}(P)$, $u\equiv0$ on $\Delta_{R}(P)$.

  Then for any $0<\sigma<1$,
  \begin{align}
    \int_{T_R(P)}\abs{\nabla u}^2\vp^2
\le &C(n,\lambda,\Lambda)\frac1{(1-\sigma)^2R^2}\int_{\supp\nabla\vp\cap T_R(P)}\abs{u}^2\nonumber\\
    &+C(n,\lambda)\frac1{(1-\sigma)R}\int_{\supp\nabla\vp}\abs{\tilde{b}-(\tilde{b})_{B_R}}\abs{\nabla \tilde{u}\vp}\abs{\tilde{u}},
  \end{align}
  where $\vp\in C_0^{\infty}(B_R)$ is nonnegative, satisfying $\vp=1$ on $B_{\sigma R}$, $\supp\vp\subset B_R$, and $\abs{\nabla\vp}\le\frac{C}{(1-\sigma)R}$, and $\tilde{b}$ is the extension of $b$ as in \eqref{BMOextension}, $\tilde{u}$ is the zero extension of $u$ to $B_R(P)$.
  \end{lem}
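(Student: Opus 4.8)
The plan is to carry out the computation behind the interior Caccioppoli inequality (Lemma~\ref{LemIntCacci}), now with $c=0$ and with the test function $\phi=u\vp^2$. Since $u\equiv0$ on $\Delta_R(P)$, the zero extension $\tilde u$ lies in $W^{1,2}(B_R(P))$, so $\tilde u\vp^2\in W^{1,2}(B_R(P))$ is compactly supported in $B_R(P)$ and hence is an admissible test function for $Lu=0$ in $T_R(P)$. Inserting $\phi=u\vp^2$ and writing $\nabla(u\vp^2)=\vp^2\nabla u+u\nabla(\vp^2)$ gives
\begin{equation*}
\int_{T_R(P)}(a+b)\nabla u\cdot\nabla u\,\vp^2+\int_{T_R(P)}(a+b)\nabla u\cdot\nabla(\vp^2)\,u=0.
\end{equation*}
By ellipticity \eqref{AEllip} the first integral is $\ge\lambda\int_{T_R(P)}|\nabla u|^2\vp^2$ (the diagonal $b$-contribution vanishes pointwise by anti-symmetry), so everything reduces to estimating the cross term.

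Because $b$ is merely $BMO$, I split $b=(b-(\tilde b)_{B_R})+(\tilde b)_{B_R}$, where $\tilde b$ is the extension from \eqref{BMOextension} (chosen anti-symmetric, e.g.\ extending $b_{ij}$ for $i<j$ and setting $\tilde b_{ji}=-\tilde b_{ij}$) and $(\tilde b)_{B_R}=\fint_{B_R(P)}\tilde b$ is a constant anti-symmetric matrix. The $a$-part of the cross term is controlled by Cauchy--Schwarz and Young's inequality by $\tfrac{\lambda}{4}\int_{T_R(P)}|\nabla u|^2\vp^2+C(n,\lambda,\Lambda)(1-\sigma)^{-2}R^{-2}\int_{\supp\nabla\vp\cap T_R(P)}|u|^2$, using $|\nabla\vp|\le C/((1-\sigma)R)$; its first summand is absorbed on the left. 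For the $(b-(\tilde b)_{B_R})$-part, $\nabla(\vp^2)=2\vp\nabla\vp$ and the same gradient bound give the bound $\tfrac{C(n,\lambda)}{(1-\sigma)R}\int_{\supp\nabla\vp}|\tilde b-(\tilde b)_{B_R}|\,|\nabla\tilde u\,\vp|\,|\tilde u|$ (on $\Omega$ one has $b=\tilde b$, $u=\tilde u$, and the integrand vanishes off $\Omega$ within $\supp\nabla\vp$) --- exactly the term in the statement; it is finite because $\tilde b-(\tilde b)_{B_R}\in L^p(B_R(P))$ for all $p<\infty$ by John--Nirenberg (Lemma~\ref{John-Niren}), $\nabla\tilde u\in L^2$ and $\tilde u\in L^{2n/(n-2)}$ by Sobolev, so H\"older with three exponents closes.

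It remains to see that the constant-matrix piece vanishes. Using $u\,\nabla u=\tfrac12\nabla(u^2)$ and passing to the zero extension,
\begin{equation*}
\int_{T_R(P)}(\tilde b)_{B_R}\nabla u\cdot\nabla(\vp^2)\,u=\tfrac12\int_{B_R(P)}(\tilde b)_{B_R}\nabla(\tilde u^2)\cdot\nabla(\vp^2)=-\tfrac12\sum_{i,j}\big((\tilde b)_{B_R}\big)_{ij}\int_{B_R(P)}\tilde u^2\,(\vp^2)_{x_ix_j}=0,
\end{equation*}
where the middle equality is one integration by parts (valid since $\tilde u^2\in W^{1,1}(B_R(P))$ and $\vp^2\in C_0^\infty(B_R(P))$, with no boundary term because $\vp$ vanishes near $\bdy B_R(P)$ and $\tilde u^2$ vanishes off $\Omega$) and the last equality contracts the anti-symmetric constant $(\tilde b)_{B_R}$ against the symmetric Hessian $(\vp^2)_{x_ix_j}$. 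Combining the three pieces, absorbing, and dividing by $\lambda$ gives the claimed inequality. The only genuinely delicate point is the admissibility step: fixing the meaning of ``$u\equiv0$ on $\Delta_R(P)$'' (so that $\tilde u\in W^{1,2}(B_R(P))$ and the boundary weak formulation allows $v=u\vp^2$) and checking that the div--curl machinery still licenses the manipulations with the non-$L^\infty$ coefficient $b$; once \eqref{BMOextension} makes $(\tilde b)_{B_R}$ meaningful ($B_R(P)$ may protrude from $\Omega$), the rest is the interior computation verbatim.
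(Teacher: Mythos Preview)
Your proposal is correct and follows essentially the same approach as the paper: test with $u\vp^2$ (admissible because $u$ vanishes on $\Delta_R(P)$ so $u\vp^2\in W_0^{1,2}(T_R(P))$), split the $b$-cross term via the average $(\tilde b)_{B_R}$ of the BMO extension, show the constant anti-symmetric piece vanishes by integration by parts, and absorb the $a$-part with Young's inequality. Your vanishing argument (anti-symmetric constant against the symmetric Hessian of $\vp^2$) is just a one-step-earlier version of the paper's computation \eqref{zero}, and your explicit remarks on admissibility and finiteness via John--Nirenberg are details the paper leaves implicit.
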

  \begin{proof}
    Since $u=0$ on $\Delta_{R}(P)$, $u\vp^2\in W_0^{1,2}(T_R(P))$. Then one can proceed as in the proof of Lemma \ref{LemIntCacci} and get (denote $T_R(P)$ by $T_R$ and $B_R(P)$ by $B_R$)
    \begin{align*}
      &\lambda\int_{T_R}\abs{\nabla u}^2\vp^2\\
      &\le 2\Lambda(\int_{T_R}\abs{\nabla u}^2\vp^2)^{1/2}(\int_{T_R}\abs{u}^2\abs{\eta}^2)^{1/2}-2\int_{T_R}b\nabla u\cdot\nabla\vp\vp u\\
      &=2\Lambda(\int_{T_R}\abs{\nabla u}^2\vp^2)^{1/2}(\int_{T_R}\abs{u}^2\abs{\nabla\vp}^2)^{1/2}
      -2\int_{B_R}(\tilde{b}-(\tilde{b})_{B_R})\nabla\tilde{u}\cdot\nabla\vp\vp\tilde{u}\\
      &\le \frac{\lambda}{2}\int_{T_R}\abs{\nabla u}^2\vp^2+\frac{C(n,\Lambda,\lambda)}{(1-\sigma)^2R^2}\int_{\supp\nabla\vp\cap T_R}\abs{u}^2+
      \frac{C(n)}{(1-\sigma)R}
      \int_{\supp\nabla\vp}\abs{\tilde{b}-(\tilde{b})_{B_R}}\abs{\nabla\tilde{u}\vp}\abs{\tilde{u}},
    \end{align*}

    where we used
    $$
    \int_{B_R}(\tilde{b})_{B_R}\nabla\tilde{u}\cdot\nabla\vp\vp\tilde{u}=0
    $$
    as in \eqref{zero}.
  \end{proof}

   The boundary counterpart of Corollary \ref{caccio'} is the following.

   \begin{cor}\label{corbdyCaccio}
   Let $u$, $B_R(P)$, $T_R(P)$ be as in Lemma \ref{LemBdyCaccio}. Then for any $1<s\le\frac{n}{n-1}$,
    $$
    \int_{T_{R/2}(P)}\abs{\nabla u}^2\le C(s,n,\Lambda,\lambda)(1+\Gamma^2)R^{2(\frac{n}{s'}-1)}(\int_{T_R(P)}\abs{ u}^{\frac{2s}{2-s}})^{\frac{2-s}{s}},
    $$
    where $s'=\frac{s}{s-1}$.
    Equivalently,
    $$
    \fint_{T_{R/2(P)}}\abs{\nabla u}^2\le C(s,n,\Lambda,\lambda)(1+\Gamma^2)R^{-2}(\fint_{T_R(P)}\abs{u}^{\frac{2s}{2-s}})^{\frac{2-s}{s}}
    $$
  \end{cor}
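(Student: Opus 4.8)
The plan is to mimic the proof of Corollary \ref{caccio'} almost verbatim, starting from the boundary Caccioppoli inequality (Lemma \ref{LemBdyCaccio}) in place of the interior one (Lemma \ref{LemIntCacci}). First I would choose $\sigma = 1/2$ and the cutoff $\vp$ as in Lemma \ref{LemBdyCaccio}, so that the conclusion reads
\begin{equation*}
\int_{T_{R/2}(P)}\abs{\nabla u}^2 \le C R^{-2}\int_{T_R(P)}\abs{u}^2 + C R^{-1}\int_{B_R(P)}\abs{\tilde b - (\tilde b)_{B_R}}\,\abs{\nabla\tilde u\,\vp}\,\abs{\tilde u} \doteq I_1 + I_2.
\end{equation*}
The difference from the interior case is cosmetic: the first term involves $\abs{u}$ rather than $\abs{u - (u)_R}$, which is fine because $u$ vanishes on $\Delta_R(P)$ (so no subtraction of an average is needed), and the second term is written in terms of the $BMO(\Rn)$ extension $\tilde b$ and the zero extension $\tilde u$ of $u$. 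Since $\norm{\tilde b}_{BMO(\Rn)} \le C\norm{b}_{BMO(\Omega)} \le C\Gamma$ by \eqref{BMOextension}, the $BMO$ bound on $\tilde b$ over Euclidean cubes/balls is exactly what the estimation of $I_2$ consumes.

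For $I_1$, I would apply Hölder's inequality on $B_R$ (extending $u$ by zero) exactly as in Corollary \ref{caccio'}: with exponents $s/(2-s) \cdot \ldots$ — more precisely, $\int_{T_R}\abs{u}^2 \le \abs{B_R}^{1/s'}\big(\int_{T_R}\abs{u}^{2s/(2-s)}\big)^{(2-s)/s}$, giving $I_1 \le C R^{2(n/s' - 1)}\big(\int_{T_R(P)}\abs{u}^{2s/(2-s)}\big)^{(2-s)/s}$ after accounting for the $R^{-2}$ prefactor. For $I_2$, I would work on the ball $B_R(P)$ with the extended functions: apply Hölder twice, pulling out $\big(\fint_{B_R}\abs{\tilde b - (\tilde b)_{B_R}}^{s'}\big)^{1/s'} \le C\Gamma$ (this uses the John–Nirenberg self-improvement of $BMO$, so that the $L^{s'}$ oscillation is controlled by the $BMO$ norm for any $s' < \infty$), leaving $\big(\int_{B_R}\abs{\nabla\tilde u\,\vp}^s\abs{\tilde u}^s\big)^{1/s}$, then split that with exponents $2/s$ and $2/(2-s)$ to separate $\big(\int\abs{\nabla u\,\vp}^2\big)^{1/2}$ from $\big(\int_{B_R}\abs{\tilde u}^{2s/(2-s)}\big)^{(2-s)/(2s)}$, and finally absorb the gradient factor by Young's inequality into the left-hand side $\tfrac12\int_{T_R}\abs{\nabla u\,\vp}^2$, the remainder contributing $C\Gamma^2 R^{2(n/s'-1)}\big(\int_{B_R}\abs{\tilde u}^{2s/(2-s)}\big)^{(2-s)/s}$. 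Since $\tilde u$ is the zero extension of $u$, $\int_{B_R}\abs{\tilde u}^{2s/(2-s)} = \int_{T_R(P)}\abs{u}^{2s/(2-s)}$, so the two contributions combine into the claimed bound, and dividing by $\abs{B_{R/2}}$ (equivalently $\abs{T_R}$, up to dimensional constants) yields the averaged version.

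I do not expect any real obstacle here — the only points requiring minor care are (i) that all the Hölder and $BMO$ manipulations in the $I_2$ term must be carried out on Euclidean balls in $\Rn$ using the extensions $\tilde b, \tilde u$, since $BMO(\Omega)$ is only defined via subcubes of $\Omega$ and the corkscrew/NTA geometry makes direct estimates on $T_R(P)$ awkward; and (ii) replacing the support of $\nabla \vp$ by $T_R(P)$ or $B_R(P)$ in the integrands, which only costs a dimensional constant. The Poincaré–Sobolev remark at the end of Corollary \ref{caccio'} has no analogue needed here, since we do not claim to replace $u$ by $u - (u)_R$. Thus the proof is a routine transcription, and I would simply write "proceed exactly as in the proof of Corollary \ref{caccio'}, using Lemma \ref{LemBdyCaccio} in place of Lemma \ref{LemIntCacci} and working with the extensions $\tilde b$, $\tilde u$ on $B_R(P)$."
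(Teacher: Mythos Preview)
Your proposal is correct and matches the paper's own proof essentially line for line: start from Lemma~\ref{LemBdyCaccio} with $\sigma=1/2$, split into $I_1+I_2$, bound $I_1$ by H\"older on $B_R$ using the zero extension $\tilde u$, and bound $I_2$ by H\"older twice (invoking John--Nirenberg for $(\fint_{B_R}|\tilde b-(\tilde b)_{B_R}|^{s'})^{1/s'}\lesssim\Gamma$) followed by Young's inequality to absorb $\tfrac12\int_{T_R}|\nabla u\,\vp|^2$ into the left-hand side. The only cosmetic slip is that your first displayed inequality should keep $\int_{T_R}|\nabla u|^2\vp^2$ on the left (rather than $\int_{T_{R/2}}|\nabla u|^2$) until after the absorption step, exactly as you yourself note later; otherwise the argument is the paper's.
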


  \begin{proof}
    Let $\tilde{b}$, $\tilde{u}$ be as in Lemma \ref{LemBdyCaccio}, and let $\sigma=1/2$. Then we have
    \begin{align*}
    \int_{T_R}\abs{\nabla u}^2\vp^2&\le CR^{-2}\int_{T_R}\abs{u}^2+
    CR^{-1}\int_{B_R}\abs{\tilde{b}-(\tilde{b})_R}\abs{\nabla \tilde u\vp}\abs{\tilde u}\\
    &\doteq I_1+I_2.
    \end{align*}

    \begin{align*}
    I_1&=CR^{-2}\int_{B_R}\abs{\tilde{u}}^2
    \le
    CR^{\frac{2n}{s'}-2}(\int_{B_R}\abs{\tilde{u}}^{\frac{2s}{2-s}})^{\frac{2-s}{s}}\\
    &=CR^{\frac{2n}{s'}-2}(\int_{T_R}\abs{u}^{\frac{2s}{2-s}})^{\frac{2-s}{s}}
    \end{align*}

    And
    \begin{align*}
      I_2&\le CR^{\frac{n}{s'}-1}(\fint_{B_R}\abs{\tilde{b}-(\tilde{b})_R}^{s'})^{\frac1{s'}}(\int_{B_R}\abs{\nabla \tilde u\vp}^s\abs{\tilde u}^s)^{\frac1{s}}\\
      &\le C_sR^{\frac{n}{s'}-1}\Gamma(\int_{B_R}\abs{\nabla\tilde u\vp}^2)^{1/2}(\int_{B_R}\abs{\tilde u}^{\frac{2s}{2-s}})^{\frac{2-s}{2s}}\\
      &=C_sR^{\frac{n}{s'}-1}\Gamma(\int_{T_R}\abs{\nabla u\vp}^2)^{1/2}(\int_{T_R}\abs{u}^{\frac{2s}{2-s}})^{\frac{2-s}{2s}}\\
      &\le\frac1{2}\int_{T_R}\abs{\nabla u\vp}^2+C_sR^{2(\frac{n}{s'}-1)}\Gamma^2(\int_{T_R}\abs{u}^{\frac{2s}{2-s}})^{\frac{2-s}{s}}.
    \end{align*}
    Combining these two estimates proves the corollary. Note that since $u$ vanishes on $\Delta_R(P)$,
    $\norm{u}_{L^{\frac{2s}{2-s}}(T_R)}\le\norm{u}_{W^{1,2}(T_R)}$ for $1<s\le\frac{n}{n-1}$.
  \end{proof}

  We will also need the analog of Lemma \ref{RHgrad_int} near the boundary:
  \begin{lem}
    Let $f\in Lip(\overline{\Omega})$, and $u$ be the weak solution of
    $$
    \begin{cases}
      Lu=-\divg A\nabla f \qquad\text{in }\Omega, \\
      u=0 \qquad\text{on }\bdy\Omega.
    \end{cases}
    $$
    Then there exists a $p>2$ and a constant $C$ depending on $n,\lambda,\Lambda,\Gamma$ and the domain, such that
    \begin{equation}
      \Big(\fint_{T_{\frac{R}{2}}(P)}\abs{\nabla u}^p\Big)^{\frac1{p}}
      \le C\Big\{\Big(\fint_{T_{R}(P)}\abs{\nabla u}^2\Big)^{\frac1{2}}+\norm{\nabla f}_{L^{\infty}(T_R(P))}\Big\},
    \end{equation}
    where $T_R(P)$ is defined as in Lemma \ref{LemBdyCaccio}.
  \end{lem}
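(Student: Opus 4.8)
The plan is to prove this $L^p$ bound by a Gehring–Giaquinta self-improvement argument, run exactly as in Lemma \ref{RHgrad_int} but carried up to the boundary and with the Lipschitz datum $f$ producing an extra $\norm{\nabla f}_{L^\infty}$ forcing term. The three ingredients are: an interior ``Caccioppoli with lower exponent'' for $\nabla u$ (the analogue of Corollary \ref{caccio'}) with an added $\norm{\nabla f}_{L^\infty}$ term; its boundary counterpart (the analogue of Corollary \ref{corbdyCaccio}) at the relative balls $T_r(P')$, $P'\in\bdy\Omega$; and the reverse Hölder lemma of Giaquinta (Prop. 1.1, Ch. V of \cite{giaquinta1983multiple}) applied on the scale of cubes inside and along the boundary of $\Omega$. (Recall that $Lf\in W^{-1,2}(\Omega)$ by \eqref{energyest1}, so $u$ is the well-defined Lax--Milgram solution.)

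For the boundary estimate I would test the weak formulation $\int_\Omega(a\nabla u+b\nabla u)\cdot\nabla\vp=\int_\Omega(a\nabla f+b\nabla f)\cdot\nabla\vp$ (all $\vp\in W_0^{1,2}(\Omega)$) against $\vp=u\psi^2$, where $\psi\in C_0^\infty(B_r(P'))$ is a cutoff with $\psi\equiv1$ on $B_{r'}(P')$, $r'<r$; since $u=0$ on $\Delta_r(P')$ we have $u\psi^2\in W_0^{1,2}(T_r(P'))$. The terms $\int A\nabla u\cdot\nabla u\,\psi^2$ and $\int A\nabla u\cdot\nabla\psi\,u\psi$ are handled exactly as in Lemma \ref{LemBdyCaccio} and Corollary \ref{corbdyCaccio}: ellipticity of $a$, anti-symmetry of $b$ after subtracting the constant $(b)_{B_r}$ (whose contribution drops by the divergence-theorem identity \eqref{zero}), Hölder, John--Nirenberg, and Young. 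The new term is $\int_{T_r}(a\nabla f+b\nabla f)\cdot\nabla(u\psi^2)$. Its $a$-part is bounded pointwise by $\Lambda\norm{\nabla f}_{L^\infty(T_r)}(\abs{\nabla u}\psi^2+\abs{u}\psi\abs{\nabla\psi})$ and absorbed by Young's inequality into $\eps\int\abs{\nabla u}^2\psi^2+C\norm{\nabla f}_{L^\infty(T_r)}^2 r^n$. For the $b$-part one cannot pull out $\norm{\nabla f}_\infty$, since $b$ is only $BMO$ and $(b)_{B_r}$ is not controlled; instead I would extend $f|_{T_r}$ to $\bar f\in W^{1,2}(\Rn)$ with $\norm{\nabla\bar f}_{L^2(B_r)}\lesssim\norm{\nabla f}_{L^\infty(T_r)}\,r^{n/2}$ (using that, $\Omega$ being NTA, $T_r(P')$ is a uniform domain with constants depending only on $\Omega$, so the scalar datum $f$ admits a Lipschitz extension after a cutoff), extend $b$ as in \eqref{BMOextension}, and invoke the div-curl lemma of \cite{coifman1993compensated} (Proposition \ref{compensatedProp}) together with $\mathscr H^1$--$BMO$ duality, as in \eqref{H1norm}--\eqref{formupperbound}, to get
\[
\abs{\int_{T_r}b\nabla f\cdot\nabla(u\psi^2)}\le C\Gamma\,\norm{\nabla f}_{L^\infty(T_r)}\,r^{n/2}\Big(\norm{\nabla u}_{L^2(T_r)}+\tfrac1r\norm{u}_{L^2(T_r)}\Big),
\]
which Young's inequality again splits into an absorbable piece and $C\Gamma^2\norm{\nabla f}_{L^\infty(T_r)}^2 r^n$. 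After the standard iteration (hole-filling) in $r'\uparrow r$ one obtains, for $1<s\le\tfrac{n}{n-1}$,
\[
\fint_{T_{r/2}(P')}\abs{\nabla u}^2\le C(1+\Gamma^2)\,r^{-2}\Big(\fint_{T_r(P')}\abs{u}^{\frac{2s}{2-s}}\Big)^{\frac{2-s}{s}}+C\norm{\nabla f}_{L^\infty(T_r)}^2,
\]
and since $u$ vanishes on $\Delta_r(P')$ while, by the exterior corkscrew condition, $\abs{B_r(P')\setminus\Omega}\gtrsim\abs{B_r}$, the Sobolev--Poincaré inequality on $T_r(P')$ converts the first term into $C(1+\Gamma^2)(\fint_{T_r}\abs{\nabla u}^t)^{2/t}$, with $t\in(\tfrac{2n}{n+2},2)$ determined by $\tfrac{2s}{2-s}=\tfrac{tn}{n-t}$ as in the proof of Lemma \ref{RHgrad_int}. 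The interior estimate is the same computation on balls $B_\rho(y)\subset\subset\Omega$ with $c=(u)_\rho$ in place of the constant (so the $a$- and $b$-cross terms are estimated against $\abs{u-(u)_\rho}$ and then Poincaré), giving $\fint_{B_{\rho/2}}\abs{\nabla u}^2\le C(1+\Gamma^2)(\fint_{B_\rho}\abs{\nabla u}^t)^{2/t}+C\norm{\nabla f}_{L^\infty(B_\rho)}^2$.

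To conclude, fix $P\in\bdy\Omega$ and a small $R$, and set $g:=\abs{\nabla u}^t$ on $T_{2R}(P)$, $q:=2/t>1$, and $h:=\norm{\nabla f}_{L^\infty(T_{2R}(P))}^t$ (a constant). The two Caccioppoli-type inequalities above say precisely that $\fint_Q g^q\le b(\fint_{2Q}g)^q+\fint_{2Q}h^q+\theta\fint_{2Q}g^q$ for every cube $Q$ such that $4Q$ is either contained in $\Omega$ or is essentially centered at a boundary point, the NTA geometry guaranteeing that these two families cover $T_R(P)$ and behave uniformly. Giaquinta's Proposition 1.1 of Chapter V of \cite{giaquinta1983multiple} then produces $p_0>q$, hence $p:=tp_0>2$, with
\[
\Big(\fint_{T_{R/2}(P)}\abs{\nabla u}^p\Big)^{1/p}\le C\Big\{\Big(\fint_{T_R(P)}\abs{\nabla u}^2\Big)^{1/2}+\norm{\nabla f}_{L^\infty(T_R(P))}\Big\},
\]
which is the claim.

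The main obstacle is the cross term $\int b\nabla f\cdot\nabla(u\psi^2)$. In the homogeneous setting of Lemma \ref{RHgrad_int}, the anti-symmetric part only ever appears paired with $\nabla u$ against $\nabla u$ or $\nabla\psi$, so subtracting $(b)_{B_r}$ suffices; here $b$ meets $\nabla f$ against $\nabla u$, and with $b\notin L^\infty$ and $(b)_{B_r}$ unbounded, the only route I see is to reactivate the compensated-compactness mechanism, which forces a $W^{1,2}(\Rn)$-extension of the boundary datum whose $L^2$-norm on $B_r$ scales like $r^{n/2}\norm{\nabla f}_{L^\infty(T_r)}$ uniformly in $r$ and $P'$; establishing this extension (via the uniform-domain structure of the $T_r(P')$ inherited from the NTA hypothesis) is the delicate point. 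A secondary, routine, issue is running the Gehring iteration uniformly up to the boundary of a merely NTA domain, for which the exterior corkscrew condition supplies the fat-set Poincaré inequality needed on each $T_r(P')$.
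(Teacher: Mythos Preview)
Your proposal is correct and follows essentially the same route as the paper: test with $u\psi^2$, derive a boundary Caccioppoli inequality with an extra $\norm{\nabla f}_{L^\infty}^2$ forcing term, handle the dangerous cross term $\int b\nabla f\cdot\nabla(u\psi^2)$ via the $\mathscr H^1$--$BMO$ mechanism, convert the $u$-term into a sub-$2$ gradient average using the exterior corkscrew Poincar\'e inequality, and close with Giaquinta's self-improvement.

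The one notable simplification in the paper's argument concerns the extension of $f$ that you flag as the delicate point. Rather than invoking the uniform-domain structure of each $T_r(P')$ to produce a $W^{1,2}$ extension with scale-independent constants, the paper simply takes the global Lipschitz (McShane/Stein) extension $F$ of $f$ from $T_R(P)$ to $\Rn$, subtracts the constant $F(P)$, and multiplies by a cutoff $\eta$ supported in $B_R(P)$; then $\norm{\nabla((F-F(P))\eta)}_{L^2(\Rn)}^2\lesssim R^n\norm{\nabla f}_{L^\infty(T_R)}^2$ is immediate, with the interior corkscrew giving $\abs{B_R}/\abs{T_R}\le M^n$. This bypasses the need to verify uniformity of the $T_r(P')$ at every scale. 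Also, the hole-filling step you mention is not needed: the paper works at a single pair of scales $(R/2,R)$ and absorbs directly.
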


  \begin{proof}
     For fixed $P\in\bdy\Omega$ and $0<R<\diam{\Omega}$, let $F\in Lip(\Rn)$ be the extension of $f$ such that 
\begin{equation}\label{lipext}
    \norm{\nabla F}_{L^{\infty}(\Rn)}\le C_n\norm{\nabla f}_{L^{\infty}(T_R(P))}
\end{equation}
where the constant $C_n$ is independent of $T_R(P)$ (see e.g. \cite{stein1970singular} p.174 Theorem 3).

    Let $\vp\in C^{\infty}_0(B_{2R/3}(P))$ be nonnegative, with $\vp\equiv1$ on $B_{\frac{R}{2}}(P)$ and $\abs{\nabla\vp}\lesssim R^{-1}$. 
    
    Let $\eta\in C^{\infty}_0(B_{R}(P))$ be nonnegative, with $\eta\equiv1$ on $B_{\frac{2R}{3}}(P)$ and $\abs{\nabla\eta}\lesssim R^{-1}$.
    
    Choose $u\vp^2\in W_0^{1,2}(T_{2R/3}(P))$ as test function to get
    $$
    \int_{T_R}A\nabla u\cdot\nabla(u\vp^2)=\int_{T_R}A\nabla f\cdot\nabla(u\vp^2).
    $$
    $\Rightarrow$
    \begin{align}\label{lem4.2ineq}
    \int_{T_R}a\nabla u\cdot\nabla u\vp^2
    &\le
    -2\int_{T_R}A\nabla u\cdot\nabla\vp u\vp+\int_{T_R}a\nabla f\cdot\nabla u\vp^2+\int_{T_R}a\nabla f\cdot\nabla\vp u\vp+\int_{T_R}b\nabla f\cdot\nabla(u\vp^2)\nonumber\\
    &\doteq I_1+I_2+I_3+I_4.
    \end{align}
    $I_1$ can be estimated as in the proof of Corollary \ref{corbdyCaccio}.
    For $I_2$ and $I_3$, we have
    $$
    \abs{I_2}\le\frac{\lambda}{8}\int_{T_R}\abs{\nabla u}^2\vp^2+C(n,\lambda,\Lambda)\int_{T_R}\abs{\nabla f}^2,
    $$
    and
    $$
    \abs{I_3}\le\Lambda\int_{T_R}\abs{\nabla f}^2+C(n,\Lambda)\int_{T_R}u^2\abs{\nabla\vp}^2.
    $$

    For $I_4$, 
    let $\tilde{F}=F-F(P)$. Extending $u\vp^2$ to be zero outside of $T_R$, $b$ to $\tilde{b}\in BMO(\Rn)$, we have
    $$
    I_4=\int_{\Rn}\tilde{b}\nabla(\tilde{F}\eta)\cdot\nabla(u\vp^2).
    $$
    $\Rightarrow$
    \begin{align*}
    \abs{I_4}&\le C(n,\Omega)\Gamma\Big(\int_{\Rn}\abs{\nabla(\tilde{F}\eta)}^2\Big)^{1/2}\Big(\int_{\Rn}\abs{\nabla(u\vp^2)}^2\Big)^{1/2}\\
    &\le C(n,\lambda,\Omega)\Gamma^2\Big(\int_{\Rn}\abs{\nabla \tilde{F}}^2\eta^2+\int_{\Rn}\tilde{F}^2\abs{\nabla{\eta}}^2\Big)
    +\frac{\lambda}{8}\Big(\int_{T_R}\abs{\nabla u}^2\vp^2+\int_{T_R}u^2\abs{\nabla\vp}^2\Big).
    \end{align*}
    
    Note that by \eqref{lipext},
    \begin{equation*}
       \int_{\Rn}\abs{\nabla \tilde{F}}^2\eta^2\le C_n\int_{B_R}\norm{\nabla F}^2_{L^{\infty}(B_R)}
       \le C_n\frac{\abs{B_R}}{\abs{T_R}}\int_{T_R}\norm{\nabla f}^2_{L^{\infty}(T_R)},
    \end{equation*}
    and
    \begin{align*}
        \int_{\Rn}\tilde{F}^2\abs{\nabla{\eta}}^2&\le \frac{C_n}{R^2}\int_{B_R}\abs{F-F(P)}^2\\
        &\le C_n\int_{B_R}\norm{\nabla F}^2_{L^{\infty}(B_R)}\le C_n\frac{\abs{B_R}}{\abs{T_R}}\int_{T_R}\norm{\nabla f}^2_{L^{\infty}(T_R)}.
    \end{align*}
    
    By the interior Corkscrew condition, $\frac{\abs{B_R}}{\abs{T_R}}\le M^n$, where $M$ is the constant in Definition \ref{CorkscrewDefn}. 
     Therefore,
    $$
    \abs{I_4}\le C(n,\lambda,M,\Omega)\Gamma^2\int_{T_R}\norm{\nabla f}_{L^{\infty}(T_R)}^2+
    \frac{\lambda}{8}\Big(\int_{T_R}\abs{\nabla u}^2\vp^2+\int_{T_R}u^2\abs{\nabla\vp}^2\Big).
    $$
    Combining estimates for $I_1$, $I_2$, $I_3$,$I_4$, and the inequality \eqref{lem4.2ineq}, we have
    $$
    \frac{\lambda}{2}\int_{T_R}\abs{\nabla u}^2\vp^2\le
    C(n,\lambda,\Lambda)(1+\Gamma^2)
    R^{2(\frac{n}{s'}-1)}\Big(\int_{T_R}\abs{u}^{\frac{2s}{2-s}}\Big)^{\frac{2-s}{s}}
    +C(n,\lambda,\Lambda,\Omega)(1+\Gamma^2)\int_{T_R}\norm{\nabla f}_{L^{\infty}(T_R)}^2,
    $$
    $\Rightarrow$
    $$
    \fint_{T_{\frac{R}{2}}}\abs{\nabla u}^2\le C(1+\Gamma^2)R^{-2}\Big(\fint_{T_R}\abs{u}^{\frac{2s}{2-s}}\Big)^{\frac{2-s}{s}}
    +C(1+\Gamma^2)\fint_{T_R}\norm{\nabla f}_{L^{\infty}(T_R)}^2.
    $$

    By the exterior Corkscrew condition, $\abs{B_R(P)\setminus\Omega}\ge (\frac1{2M})^n\abs{B_R(P)}$. So the assumption that $u=0$ on $\bdy\Omega$ implies
    $$
    R^{-2}\Big(\fint_{T_R}\abs{u}^{\frac{2s}{2-s}}\Big)^{\frac{2-s}{s}}\le C\Big(\fint_{T_R}\abs{\nabla u}^{r}\Big)^{\frac{2}{r}}
    $$
    for some $r<2$ (note that $1<s<\frac{n}{n-1}$). That is,
    $$
    \fint_{T_{\frac{R}{2}}}\abs{\nabla u}^2\le C(1+\Gamma^2)\Big(\fint_{T_R}\abs{\nabla u}^{r}\Big)^{\frac{2}{r}}+C(1+\Gamma^2)\fint_{T_R}\norm{\nabla f}_{L^{\infty}(T_R)}^2.
    $$
    Then the lemma follows from Proposition 1.1, Chap. V of \cite{giaquinta1983multiple}.
  \end{proof}

\begin{re}\label{RHgrad_global}
  Extending $u$ to be zero outside $\Omega$, one can see
  $$
  \Big(\int_{\Omega}\abs{\nabla u}^p\Big)^{\frac1{p}}
      \le C\Big\{\Big(\int_{\Omega}\abs{\nabla u}^2\Big)^{\frac1{2}}+\norm{\nabla f}_{L^{\infty}(\Omega)}\Big\}
  $$
  for some $p>2$, with $p$ and $C$ depending on $n,\lambda,\Lambda,\Gamma$ and the domain. On the other hand, we have
  $$
  \Big(\int_{\Omega}\abs{\nabla u}^2\Big)^{\frac1{2}}\le C(n,\Lambda,\Gamma)\norm{\nabla f}_{L^2(\Omega)}\le C(n,\Lambda,\Gamma, diam\,\Omega)\norm{\nabla f}_{L^{\infty}(\Omega)}.
  $$
  Therefore,
  \begin{equation}
    \Big(\int_{\Omega}\abs{\nabla u}^p\Big)^{\frac1{p}}\le C\norm{\nabla f}_{L^{\infty}(\Omega)}
  \end{equation}
  for some $p>2$.
\end{re}

  \begin{lem}\label{BdyHolderLem1}
    Let $u$ be a $W^{1,2}(\Omega)$ subsolution in $\Omega$. Then for any $P\in\Rn$, $R>0$ and $p>1$, we have
    \begin{equation}\label{BdyHolderSup}
      \sup_{B_R(P)}u_M^+\le C(n,\Lambda,\lambda,\Gamma,p)\Big(\fint_{B_{2R}(P)}{u_M^+}^p\Big)^{1/p},
    \end{equation}

    where
    $$
    M=\sup_{\bdy\Omega\cap B_{2R}(P)}u,
    $$

    $$
    u_M^+(x)=
    \begin{cases}
      \max\{u(x),M\}, \quad x\in\Omega,\\
      M, \qquad x\notin\Omega.
    \end{cases}
    $$
  \end{lem}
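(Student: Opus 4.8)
The plan is to reduce the statement to the interior Moser estimate of Lemma \ref{Moser Lemma 1}, applied to the truncation of $u$ that is flattened to a constant near and across $\bdy\Omega$. Concretely, I would set $\bar u:=(u-M)^+$ on $\Omega$ and $\bar u:=0$ on $\Rn\setminus\Omega$, so that $u_M^+=\bar u+M$ and $\bar u\ge0$; since $\sup_{B_R(P)}u_M^+=M+\sup_{B_R(P)}\bar u$ and (for $M\ge0$, which is the case in our applications) $(\fint_{B_{2R}(P)}(u_M^+)^p)^{1/p}\ge\max\{M,(\fint_{B_{2R}(P)}\bar u^{\,p})^{1/p}\}$, it suffices to prove $\sup_{B_R(P)}\bar u\le C(\fint_{B_{2R}(P)}\bar u^{\,p})^{1/p}$ (and if $B_{2R}(P)\cap\Omega=\emptyset$ there is nothing to prove). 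Three preliminary facts will be needed. (i) By the argument of Lemma \ref{pm_subLem} applied to the subsolution $u-M$, the function $\bar u|_\Omega$ is a nonnegative weak subsolution of \eqref{Lu=0} in $\Omega$; since it lies in $W^{1,2}(\Omega)$, density together with \eqref{energyest}--\eqref{energyest1} upgrades the class of admissible test functions, so that $\int_\Omega A\nabla\bar u\cdot\nabla\psi\le0$ for all nonnegative $\psi\in W^{1,2}_0(\Omega)$. (ii) By the definition of $M$ one has $u\le M$ on $\bdy\Omega\cap B_{2R}(P)$ in the trace sense, hence $\bar u|_\Omega$ has vanishing trace there; as in Lemma \ref{LemBdyCaccio}, this makes the zero extension $\bar u$ belong to $W^{1,2}(B_{2R}(P))$, and makes $\Phi(\bar u)\eta^2$ an admissible (nonnegative) element of $W^{1,2}_0(\Omega)$ for any $\eta\in C_0^\infty(B_r(P))$ with $R\le r\le2R$ and any $\Phi\in C^1([0,\infty))$ Lipschitz on the relevant range with $\Phi(0)=0$. (iii) The $BMO$ extension \eqref{BMOextension} of $b$.

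With these in hand, I would run the Moser iteration of Lemma \ref{Moser Lemma 1} — whose proof only uses that $u^+$ is a nonnegative subsolution — with $u^+$ replaced by $\bar u$, the cubes $Q_r$ replaced by $T_r(P)=B_r(P)\cap\Omega$, and $\epsilon=0$ (legitimate because $\beta\ge k_0>\tfrac12$ makes $|H'|^2$ integrable at $0$, so $G(0)=H(0)=0$, which is exactly what makes $\Phi(\bar u)\eta^2$ have zero trace on $\bdy\Omega\cap B_{2R}(P)$). Given $p>1$ I would choose $s\in(1,\tfrac n{n-1})$ close enough to $1$ that $q:=\tfrac{2s}{2-s}<2p$ and take $k_0=k:=p/q>\tfrac12$; the iteration then produces the exponent $kq=p$ directly. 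Using $G(\bar u)\eta^2$ in $\int_\Omega A\nabla u\cdot\nabla(\cdot)\le0$ — which equals $\int_{T_r(P)}A\nabla\bar u\cdot\nabla(\cdot)\le0$ since $\nabla\bar u$ and $G(\bar u)$ both vanish on $\{u\le M\}$ — the three terms are handled exactly as in \eqref{H-Ga}--\eqref{H-Gb}: the ellipticity term as in \eqref{H-Ga}; the ``diagonal'' $b$-term $\int b\nabla\bar u\cdot\nabla\bar u\,G'(\bar u)\eta^2$ vanishes pointwise by anti-symmetry; and the mean-subtraction identity $\int_{T_r(P)}(\tilde b)_{B_r(P)}\nabla\bar u\cdot\nabla\eta\,G(\bar u)\eta=0$ holds by extending $\bar u$ by $0$ to $B_r(P)$, extending $b$ to $\tilde b\in BMO(\Rn)$ via \eqref{BMOextension}, and integrating by parts against $\eta\in C_0^\infty(B_r(P))$ — exactly the computation \eqref{zero} used in Lemma \ref{LemBdyCaccio}. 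This yields the iteration inequality \eqref{iteration0} with balls about $P$; iterating with $\beta_i=kl^i$, $r_i=R+R2^{-i}$, $l=\tfrac{2n}{(n-2)q}>1$ and letting $i\to\infty$ gives $\sup_{B_R(P)}\bar u\le C(n,\Lambda,\lambda,\Gamma,p)(\fint_{B_{2R}(P)}\bar u^{\,p})^{1/p}$, completing the proof.

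The step I expect to be the main obstacle is the boundary bookkeeping in the preliminaries: ensuring that the Moser test functions $G(\bar u)\eta^2$ — whose cutoffs now reach $\bdy\Omega$ — still lie in $W^{1,2}_0(\Omega)$ (this is precisely where the hypothesis $u\le M$ on $\bdy\Omega\cap B_{2R}(P)$ enters), and that the mean-subtraction trick for the $BMO$ part still yields an exact $0$ although the integration region $T_r(P)$ has the boundary piece $\bdy\Omega\cap B_r(P)$; both are dealt with, just as in Lemma \ref{LemBdyCaccio}, by passing to zero extensions on a full ball together with the $BMO$-extension \eqref{BMOextension}. Everything else is a transcription of the interior iteration, and because $q=\tfrac{2s}{2-s}$ can be taken arbitrarily close to $2$, no separate low-exponent argument is needed to reach all $p>1$.
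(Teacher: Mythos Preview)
Your proposal is correct and follows essentially the same route as the paper: define $\bar u=(u-M)^+$ on $\Omega$, extended by $0$ (the paper calls this $U_M$), show it is a nonnegative subsolution tested against $W_0^{1,2}(\Omega)$, and rerun the Moser iteration of Lemma~\ref{Moser Lemma 1} with the $BMO$ extension $\tilde b$ handling the mean-subtraction across $\bdy\Omega$; then recombine via $u_M^+=\bar u+M$ using $\bar u\le u_M^+$ and $M\le(\fint(u_M^+)^{kq})^{1/(kq)}$. The only cosmetic differences are that the paper also explicitly extends $a$ by $\lambda\delta_{ij}$ outside $\Omega$ (equivalent to your observation that $\nabla\bar u=0$ there) and keeps the $\epsilon>0$ regularization from Lemma~\ref{Moser Lemma 1} rather than taking $\epsilon=0$ directly as you do.
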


  \begin{proof}
    Note that the truncation $u_M$ is also a subsolution in $\Omega$. So if $B_R(P)$ is contained in $\Omega$, then \eqref{BdyHolderSup} is obtained by Lemma \ref{Moser Lemma 1}.
    Let $T_R(P)=B_R(P)\cap\Omega$. In the sequel we omit the point $P$ when no confusion is caused.\par
    Let
    $$
    U_M(x)=
    \begin{cases}
      \Big(u(x)-M\Big)^+ \quad x\in \Omega\\
      0  \qquad x\notin\Omega.
    \end{cases}
    $$

    Note that $U_M=0$ on $\Delta_{2R}$ and that
    \begin{equation}\label{UMu}
    u_M^+=U_M+M.
    \end{equation}
    Moreover, for any $\psi\in W_0^{1,2}(\Omega)$ with $\psi\ge0$ a.e.,
    $$
    \int_{\Omega}(a+b)\nabla U_M\cdot\nabla\psi\le0,
    $$
    which can be verified as in Lemma \ref{pm_subLem}.\par
    \textbf{Claim.} $\forall\quad k\ge k_0>\frac1{2}$,
    $$
    \sup_{T_R}U_M\le C(n,\lambda,\Lambda,\Gamma,s,k_0)\Big(\fint_{B_{2R}}U_M^{kq}\Big)^{\frac1{q}}.
    $$
    The same argument in the proof of Lemma \ref{Moser Lemma 1} gives this claim. In fact, for any $\epsilon>0$, define $U\ep=U_M+\epsilon$. Define $H_{\epsilon,N}$, $G_{\epsilon,N}$ and $\vp$ as in the proof of Lemma \ref{Moser Lemma 1}. Then one only need to note that
    $$
    \int_{T_{2R}}a\nabla U\ep\cdot\nabla\vp=\int_{B_r}\tilde{a}\nabla U\ep\cdot\nabla\vp,
    $$
    and
    $$
    \int_{T_{2R}}b\nabla U\ep\cdot\nabla\vp=\int_{B_r}\tilde{b}\nabla U\ep\cdot\nabla\vp,
    $$
    where
    $$
    \tilde{a}_{ij}(x)=
    \begin{cases}
      a_{ij}(x) \quad  x\in\Omega\\
      \lambda\delta_{ij}\quad x\notin\Omega,
    \end{cases}
    $$
    and $\tilde{b}\in BMO(\Rn)$ is the extension of $b$.\par

    By \eqref{UMu}, $\Big(\fint_{B_{2R}}U_M^{kq}\Big)^{\frac1{kq}}\le\Big(\fint_{B_{2R}}(u_M^+)^{kq}\Big)^{\frac1{kq}}$.
    Also, $M\le \Big(\fint_{B_{2R}}(u_M^+)^{kq}\Big)^{\frac1{kq}}$.
    Thus
    $$
    \sup_{B_R}u_M^+\le\sup_{B_R}U_M+M\le C\Big(\fint_{B_{2R}}(u_M^+)^{kq}\Big)^{\frac1{kq}}.
    $$

    Then \eqref{BdyHolderSup} follows.

  \end{proof}

 Unlike the interior case, we do not have the result for all $p>0$. This is mainly because when $0<k<\frac1{2}$, we have to modify our test function. However, we have the following result, which is the key to prove boundary H{\"o}lder continuity of the solution.\par
  Recall that $q=\frac{2s}{2-s}$, $1<s<\frac{n}{n-1}$, $\theta(k)=\abs{\frac{2k}{2k-1}}$.

  \begin{lem}\label{BdyHOlderLem2}
    Let $u$ be a $W^{1,2}(\Omega)$ supersolution in $\Omega$ which is non-negative in $\Omega\cap B_{4R}(P)$ for some ball $B_{4R}(P)\subset\Rn$. Then for any $p$ such that $0<p<\frac{n}{n-2}$,
    \begin{equation}
      \inf_{B_R}u_m^-\ge C(n,\Lambda,\lambda,\Gamma,p)\Big(\fint_{B_{2R}}{u_m^-}^p\Big)^{1/p},
    \end{equation}
    where
    $$
    m=\inf_{\bdy\Omega\cap B_{4R}}u,
    $$

    $$
    u_m^-(x)=
    \begin{cases}
      \min\{u(x),m\}, \quad x\in\Omega,\\
      m, \qquad x\notin\Omega.
    \end{cases}
    $$

  \end{lem}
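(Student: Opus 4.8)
The plan is to reduce the statement to the interior weak Harnack inequality on the ball $B_{4R}(P)$ for a globally defined auxiliary operator. Set $\tilde a_{ij}=a_{ij}$ on $\Omega$ and $\tilde a_{ij}=\lambda\delta_{ij}$ on $\Rn\setminus\Omega$, let $\tilde b\in BMO(\Rn)$ be the extension of $b$ from \eqref{BMOextension}, and put $\tilde A=\tilde a+\tilde b$, $\tilde L=-\divg\tilde A\nabla$. Then $\tilde a,\tilde b$ satisfy \eqref{aBound}--\eqref{bBMO} on all of $\Rn$ with constants controlled by $\lambda,\Lambda,\Gamma$, so every result of Section \ref{intSec} is available for $\tilde L$ on balls compactly contained in $B_{4R}(P)$. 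Because $m$ is the infimum of $u$ over $\Delta_{4R}(P)$, the function $W:=(m-u)^{+}$ vanishes on $\Delta_{4R}(P)$, hence its extension by $0$ outside $\Omega$ lies in $W^{1,2}(B_{4R}(P))$, and on $B_{4R}(P)$ one has $u_m^-=m-W\ge 0$. Thus the asserted inequality is exactly the weak Harnack inequality for the nonnegative $\tilde L$-supersolution $u_m^-$ on $B_{4R}(P)$ --- provided we first check that $u_m^-$ really is such a supersolution.

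So the initial step is to prove that $u_m^-$ is a supersolution of $\tilde L$ in $B_{4R}(P)$, equivalently that $W$ is a subsolution of $\tilde L$ there. Since $m-u$ is a subsolution of $L$ in $\Omega$, so is $W=(m-u)^{+}$ by Lemma \ref{pm_subLem}. Fix a nonnegative $\psi\in C_0^{\infty}(B_{4R}(P))$. As $\nabla W=0$ outside $\Omega$ and $\tilde A=A$ on $\Omega$, $\int_{B_{4R}}\tilde A\nabla W\cdot\nabla\psi=\int_{\Omega}A\nabla W\cdot\nabla\psi=-\int_{\Omega}A\nabla u\cdot\nabla\psi\,\chi_{\{W>0\}}$. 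With $\eta_k$ as in Lemma \ref{pm_subLem}, writing $\eta_k(W)\nabla\psi=\nabla\!\big(\psi\,\eta_k(W)\big)-\psi\,\nabla\eta_k(W)$, noting that $\psi\,\eta_k(W)\in W_0^{1,2}(\Omega)$ (this is where the vanishing of $W$ on $\Delta_{4R}(P)$ and of $\psi$ near $\bdy B_{4R}(P)$ enter), and that $\int_{\Omega}A\nabla u\cdot\nabla u\,\psi\,\eta_k'(W)\ge\lambda\int_{\Omega}|\nabla u|^2\psi\,\eta_k'(W)\ge 0$ since the anti-symmetric part cancels pointwise, one obtains $\int_{\Omega}A\nabla u\cdot\eta_k(W)\nabla\psi\ge 0$ for every $k$; letting $k\to\infty$ (justified, as in Lemma \ref{pm_subLem}, through the div--curl structure of Proposition \ref{compensatedProp}) gives $\int_{\Omega}A\nabla u\cdot\nabla\psi\,\chi_{\{W>0\}}\ge 0$, i.e. $\int_{B_{4R}}\tilde A\nabla W\cdot\nabla\psi\le 0$. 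This is the same computation as in the proof of Lemma \ref{pm_subLem}, now carried out across $\bdy\Omega$.

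With that established, for each $\epsilon>0$ the function $v_\epsilon:=u_m^-+\epsilon$ is a \emph{positive} $\tilde L$-supersolution in $B_{4R}(P)$, and it remains to run Moser's weak Harnack argument for $v_\epsilon$ on $B_{4R}(P)$, every ingredient being furnished by Section \ref{intSec}. First, Lemma \ref{Moser Lemma 3} (together with $\fint_{B_{2R}}w\le C\fint_{B_{3R}}w$ for $w\ge 0$) gives $\inf_{B_R}v_\epsilon\ge C\big(\fint_{B_{3R}}v_\epsilon^{-\alpha}\big)^{-1/\alpha}$, where $\alpha=\alpha(n)\in(0,\tfrac{1}{2})$ is the John--Nirenberg exponent of Lemma \ref{John-Niren}. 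Second, the computation in the proof of Lemma \ref{Harnackineq}, which uses only the supersolution property, shows $\log v_\epsilon\in BMO(B_{3R}(P))$ with norm controlled by $n,\lambda,\Lambda,\Gamma$, so Lemma \ref{John-Niren} yields $\big(\fint_{B_{3R}}v_\epsilon^{\alpha}\big)\big(\fint_{B_{3R}}v_\epsilon^{-\alpha}\big)\le C$. Third, since the exponents reachable by the Moser iteration of Lemma \ref{Moser Lemma 2} for a supersolution exhaust $(0,\tfrac{n}{n-2})$, for the given $p$ a finite number of such iterations, starting from a sufficiently small exponent $p_0\le\alpha$ on $B_{3R}$ and shrinking the balls down to $B_{2R}$, yields $\big(\fint_{B_{2R}}v_\epsilon^{p}\big)^{1/p}\le C\big(\fint_{B_{3R}}v_\epsilon^{p_0}\big)^{1/p_0}\le C\big(\fint_{B_{3R}}v_\epsilon^{\alpha}\big)^{1/\alpha}$, the last step by Jensen. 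Chaining these three estimates, $\big(\fint_{B_{2R}}(u_m^-)^p\big)^{1/p}\le\big(\fint_{B_{2R}}v_\epsilon^{p}\big)^{1/p}\le C\inf_{B_R}v_\epsilon=C\big(\inf_{B_R}u_m^-+\epsilon\big)$, and $\epsilon\to 0$ finishes the proof. The only genuine difficulty is the supersolution claim of the second paragraph --- proving it \emph{across} $\bdy\Omega$ rather than merely inside $\Omega$ --- everything else being the interior theory already developed, plus the routine bookkeeping of the nested balls $B_R\subset B_{2R}\subset B_{3R}\subset B_{4R}$ needed to chain the three estimates.
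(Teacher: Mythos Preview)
Your argument is correct and takes a genuinely different route from the paper's. The paper never shows that $u_m^-$ is an $\tilde L$-supersolution across $\partial\Omega$; instead it works directly with $u$ in $\Omega$ and modifies each Moser test function by subtracting a constant so that it vanishes on $\Delta_{4R}(P)$: for $0<k<\tfrac12$ the test function is $\varphi=k\big(u_\epsilon^{2k-1}-(m+\epsilon)^{2k-1}\big)\eta^2$, for $k<0$ the same formula, and for the $\log$-BMO step $\phi=\big(u_\epsilon^{-1}-(m+\epsilon)^{-1}\big)\zeta^2$. These subtractions force $\varphi,\phi\in W_0^{1,2}(T_{4R})$, so the supersolution inequality for $u$ applies directly, and the three Moser computations of Section~\ref{intSec} are then repeated line by line with these shifted test functions. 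Your approach instead isolates the boundary issue once (the claim that $W=(m-u)^+$, zero-extended, is an $\tilde L$-subsolution in $B_{4R}$), after which the interior machinery applies verbatim; this is more modular and reusable, while the paper's approach avoids introducing the auxiliary operator $\tilde L$ at the cost of redoing the iteration by hand. One small imprecision: what you actually need from Lemma~\ref{Moser Lemma 2} is its iteration step \eqref{iteration1} (valid for supersolutions and giving the reverse H\"older inequality that raises the exponent within $(0,\tfrac{n}{n-2})$), not its stated $\sup$-conclusion, whose proof as written appeals to Lemma~\ref{Moser Lemma 1}; this is harmless but worth saying explicitly.
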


  \begin{proof}
    For any $\epsilon>0$, let $u\ep=u_m^-+\epsilon$ and $v=u\ep^k$. As test function we choose
    $$
    \vp=k(u\ep^{2k-1}-(m+\epsilon)^{2k-1})\eta^2,
    $$
    where $\eta\in C_0^{\infty}(B_{2R})$, $\supp\eta\subset B_r$, $\eta\equiv1$ in $B_{r'}$, $\abs{\nabla\eta}\le\frac C{r-r'}$ and $R\le r'<r\le 2R$.\par
    Observe that for $k<\frac{1}{2}$, $\vp\in W_0^{1,2}(T_{4R})$, and that
    \begin{equation}\label{u-m}
      0\le u\ep^{2k-1}-(m+\epsilon)^{2k-1}\le u\ep^{2k-1}.
    \end{equation}

    Claim 1. For any $0<p<p_1<\frac{n}{n-2}$,
    \begin{equation}
      (\fint_{B_R}{u_m^-}^{p_1})^{1/{p_1}}\le C(\fint_{B_{2R}}{u_m^-}^p)^{1/p},
    \end{equation}
    for some $C=C(n,\lambda,\Lambda,\Gamma,p,p_1)$.\par

    This claim can be justified by considering $0<k<\frac{1}{2}$. Observe that when $k$ is in this range, $\vp\ge0$ in $B_{4R}$.
    Since $u\ep$ is a supersolution, we have
    $$
    k\int_{T_{4R}}A\nabla u\ep\nabla(\eta^2(u\ep^{2k-1}-(m+\epsilon)^{2k-1}))\ge0.
    $$

    $\Rightarrow$

    \begin{equation}
      \frac{k(2k-1)}{-2k}\int_{T_{2R}}a\nabla u\ep\cdot\nabla u\ep u\ep^{2k-2}\eta^2
      \le\int_{T_{2R}}(a+b)\nabla u\ep\cdot\nabla\eta(u\ep^{2k-1}-(M+\epsilon)^{2k-1})\eta.
    \end{equation}

    Note that $\supp\eta\subset B_r$ and that $\nabla u\ep=0$ in $B_r\setminus T_r$.
    We estimate
    \begin{align*}
    &\frac{2k-1}{-2k}\lambda\int_{B_r}\abs{\nabla v}^2\eta^2
    \le \frac{2k-1}{-2k}\int_{B_r}a\nabla v\cdot\nabla v\eta^2\\
    &=\frac{k(2k-1)}{2}\int_{B_{r}}a\nabla u\ep\cdot\nabla u\ep u\ep^{2k-2}\eta^2\\
    &\le\Lambda k\int_{B_r}\abs{\nabla u\ep}\abs{\nabla\eta}\abs{\eta}\abs{u\ep^{2k-1}}
    +k\int_{B_r}\abs{\tilde{b}-(\tilde b)_{B_r}}\abs{\nabla
    u\ep}\abs{\nabla\eta}\abs{u\ep^{2k-1}}\abs{\eta}\\
    &=\Lambda\int_{B_r}\abs{\nabla v}\abs{v}\abs{\nabla\eta}\abs{\eta}+
    \int_{B_r}\abs{\tilde{b}-(\tilde b)_{B_r}}\abs{\nabla v}\abs{v}\abs{\nabla\eta}\abs{\eta}\\
    &\le (\Lambda+\Gamma)r^{\frac{n}{s'}}(r-r')^{-1}(\int_{B_r}\abs{\nabla v}^2\abs{\eta}^2)^{1/2}
    (\int_{B_r}\abs{v}^q)^{1/q}.
    \end{align*}

   $\Rightarrow$
   $$
   \fint_{B_{r'}}\abs{\nabla v}^2\le
   C(n,\lambda,\Lambda,\Gamma,s,n)\theta^2(k)(r-r')^2(\fint_{B_r}\abs{v}^q)^{1/q},
   $$
   where $\theta(k)=\frac{2k}{2k-1}$.
   By Sobolev embedding we have
   $$
   (\fint_{B_{r'}}u\ep^{kql})^{\frac1{ql}}\le
   C(n,\lambda,\Lambda,\Gamma,s,n)\theta^2(k)(r-r')^2(\fint_{B_r}u\ep^{kq})^{1/q},
   $$
   where $l=\frac{2n}{(n-2)q}$.\par

    Let $k_i=kl^i$, $r=r_i=R+\frac{R}{2^i}$ and $r'=r_{i+1}$. Then we can iterate as long as $0<k_i<\frac1{2}$. Recall that $2<q<\frac{2n}{n-2}$. Then it is easy to see that
    for any $0<p<p_1<\frac{n}{n-2}$,
    \begin{equation}
      (\fint_{B_R}(u_m^-)^{p_1})^{1/{p_1}}\le C_{p,p_1}(\fint_{B_{2R}}(u_m^-)^p)^{1/p}.
    \end{equation}

    Claim 2. For any $p<0$,
    \begin{equation}
      \inf_{B_R}u_m^-\ge C(\fint_{B_{2R}}(u_m^-)^p)^{1/p}
    \end{equation}
    for some $C=C(n,\lambda,\Lambda,\Gamma,p)$.\par
    This time we consider $k<0$. We have
    $$
    k\int_{T_{4R}}A\nabla u\ep\nabla(\eta^2(u\ep^{2k-1}-(m+\epsilon)^{2k-1}))\le0.
    $$
    Then it is easy to get
    $$
    \frac{2k-1}{2k}\lambda\int_{B_r}\abs{\nabla v}^2\eta^2\le (\Lambda+\Gamma)r^{\frac{n}{s'}}(r-r')^{-1}(\int_{B_r}\abs{\nabla v}^2\eta^2)^{1/2}(\int_{B_r}\abs{v}^q)^{1/q}.
    $$

    Using Sobolev inequality and then iterations, and letting $\epsilon$ tend to 0, we obtain
    \begin{equation}
      \sup_{B_R}(u_m^-)^k\le C(n,\lambda,\Lambda,\Gamma,s)(\fint_{B_{2R}}{u_m^-}^{kq})^{1/q}.
    \end{equation}
    Since $k<0$, this implies
    \begin{equation}
      (\inf_{B_R}u_m^-)^k\le C(n,\lambda,\Lambda,\Gamma,s)(\fint_{B_{2R}}{u_m^-}^{kq})^{1/q}.
    \end{equation}
    Raise both sides to the power of $\frac1{k}$ we prove the claim.\par

    We will show
    \begin{equation}\label{BdyHolderLem2bd}
      (\fint_{B_{2R}}u\ep^{-\alpha})^{\frac1{-\alpha}}\ge C(\fint_{B_{2R}}u\ep^{\alpha})^{\frac1{\alpha}},
    \end{equation}
    where $\alpha$ depending on $n$ only is as in Lemma \ref{John-Niren}.
    If this is true, then by Claim 1 and Claim 2,
    $$
    \inf_{B_R}u_m^-\ge C(\fint_{B_{2R}}(u_m^-)^{-\alpha})^{\frac1{-\alpha}}
    \ge C(\fint_{B_{2R}}(u_m^-)^{\alpha})^{\frac1{\alpha}}\ge C_{p}(\fint_{B_R}{u_m^-}^p)^{\frac1{p}}
    $$
    for any $\alpha<p<\frac{n}{n-2}$. And trivially, for any $0<p_0\le p$
    $$
    (\fint_{B_R}{u_m^-}^p)^{\frac1{p}}\ge (\fint_{B_R}{u_m^-}^{p_0})^{\frac1{p_0}}.
    $$

    So it suffices to show \eqref{BdyHolderLem2bd}. As we did in the proof of Lemma \ref{Harnackineq}, let $w=\log u\ep$. We will see that $w\in BMO(B_{2R})$. In fact, fix any $B_r\subset B_{2R}$, let
    $$
    \phi=(u\ep^{-1}-(m+\epsilon)^{-1})\zeta^2,
    $$
    where $\zeta\in C_0^{\infty}(B_{4R})$, $\supp\zeta\subset B_{2r}$, $\zeta\equiv1$ in $B_r$ and $\abs{\nabla\zeta}\le C/r$. Then $\phi\ge 0$ in $T_{4R}$ and $\phi\in W_0^{1,2}(T_{4R})$. So
    $$
    \int_{T_{4R}}A\nabla u\ep\cdot\nabla\phi\ge0.
    $$
    Note that $\nabla u\ep\equiv0$ in $B_{2r}\setminus\Omega$, and  that
    $$
    0\le u\ep^{-1}-(m+\epsilon)^{-1}\le u\ep^{-1}.
    $$

    We get
    \begin{equation}
      \frac{\lambda}{2}\int_{B_{2r}}\abs{\nabla w}^2\zeta^2\le
      (\Lambda+C(n)\Gamma)(\int_{B_{2r}}\abs{\nabla w}^2\zeta^2)^{1/2}r^{\frac{n}{2}-1}.
    \end{equation}

     By the Poincar{\'e} inequality we get
     $$
     \fint_{B_r}\abs{w-(w)_{B_r}}^2\le r^2\fint_{B_r}\abs{\nabla w}^2\le C_1^2(n,\lambda,\Lambda,\Gamma).
     $$
    $\Rightarrow$
    $$
    w\in BMO(B_{2R}) \quad \text{with } \norm{w}_{BMO}\le C_1.
    $$

    Then by Lemma \ref{John-Niren} we proved \eqref{BdyHolderLem2bd}.

  \end{proof}

  Since the exterior Corkscrew condition gives 
  $\liminf\limits_{R\rightarrow0}\frac{\abs{B_R(P)\setminus\Omega}}{R^n}\ge (\frac1{2M})^n$,
  where $P\in\bdy\Omega$ and $M>1$ is the constant in Definition \ref{CorkscrewDefn},
  we can now prove regularity at the boundary using Lemma \ref{BdyHOlderLem2} and a standard argument. See for example \cite{gilbarg2001elliptic} Theorem 8.27. 

  \begin{lem}[boundary H{\"o}lder continuity]\label{BdyHolderCont}

  Let $u\in W^{1,2}(\Omega)$ be a solution in $\Omega$ and $P$ be a point on the boundary of $\Omega$. Let $B_R$ denote $B_R(P)$, $T_R$ denote $B_R(P)\cap\Omega$ and $\Delta_R$ denote $B_R(P)\cap\bdy\Omega$. Then for any $0<r\le R$, we have
  \begin{equation}\label{bdyHolder}
    \underset{T_{r}}{\osc} u\le C(\frac{r}{R})^{\alpha}\sup_{T_R}\abs{u}+\sigma(\sqrt{rR}),
  \end{equation}
  where $\underset{T_{r}}{\osc}u=\sup_{T_r}u-\inf_{T_r}u$, $\sigma(r)=\underset{\Delta_{r}}{\osc}u=\sup_{\Delta_r}u-\inf_{\Delta_r}u$, and $C=C(n,\Lambda,\lambda,\Gamma,M)$, $\alpha=\alpha(n,\Lambda,\lambda,\Gamma,M)$ are positive constants.\par
  In particular, if $u=0$ on $\Delta_R$, $0<r\le \frac{R}{2}$, then we have
  \begin{equation}\label{bdyHolderL2form}
    \underset{T_{r}}{\osc}u\le C(\frac{r}{R})^{\alpha}(\fint_{T_{R}}\abs{u}^2)^{1/2}.
  \end{equation}

  \end{lem}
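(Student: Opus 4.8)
The plan is to follow the classical oscillation-decay scheme, now powered by the boundary weak Harnack inequality of Lemma \ref{BdyHOlderLem2}. Fix $P\in\bdy\Omega$, fix a radius $R$, and for $0<\rho\le R$ set $M(\rho)=\sup_{T_\rho}u$, $m(\rho)=\inf_{T_\rho}u$, and correspondingly $M_\bdy(\rho)=\sup_{\Delta_\rho}u$, $m_\bdy(\rho)=\inf_{\Delta_\rho}u$. Apply Lemma \ref{BdyHOlderLem2} to the two non-negative supersolutions $M(4\rho)-u$ and $u-m(4\rho)$ on the ball $B_{4\rho}(P)$; since these supersolutions are non-negative on $\Omega\cap B_{4\rho}(P)$, the lemma gives, for a fixed exponent $p=p(n)$ (one may take $p=\alpha$ from Lemma \ref{John-Niren} or just some fixed small $p$),
\begin{equation*}
\inf_{B_\rho}\big(M(4\rho)-u\big)_{m_1}^- \ge C\Big(\fint_{B_{2\rho}}\big(M(4\rho)-u\big)^p\Big)^{1/p},\qquad \inf_{B_\rho}\big(u-m(4\rho)\big)_{m_2}^- \ge C\Big(\fint_{B_{2\rho}}\big(u-m(4\rho)\big)^p\Big)^{1/p},
\end{equation*}
where $m_1=\inf_{\Delta_{4\rho}}(M(4\rho)-u)=M(4\rho)-M_\bdy(4\rho)$ and $m_2=m_\bdy(4\rho)-m(4\rho)$. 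The point of using $u_m^-$ rather than $u$ itself is exactly that the truncation is controlled from below on all of $B_{2\rho}$, including the exterior, where it equals $m_i\ge 0$; this is where the exterior Corkscrew condition enters, since $|B_{2\rho}\setminus\Omega|\gtrsim (2M)^{-n}|B_{2\rho}|$ forces the average of the non-negative function over $B_{2\rho}$ to be comparable to an average over $T_{2\rho}$ (or at least bounded below by a fixed multiple of the exterior value).

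Next I would add the two displayed inequalities. Writing $\operatorname*{osc}_{T_\rho}u = M(\rho)-m(\rho)$ and using that $\big(M(4\rho)-u\big)+\big(u-m(4\rho)\big)=M(4\rho)-m(4\rho)=\operatorname*{osc}_{T_{4\rho}}u$ pointwise, the sum of the two right-hand averages is bounded below by $c\,\operatorname*{osc}_{T_{4\rho}}u$ minus corrections coming from the truncation levels. On the left, $\inf_{B_\rho}(M(4\rho)-u)_{m_1}^-$ is either $m_1$ or $\le M(4\rho)-M(\rho)$, and similarly for the other term; combining, one arrives at an inequality of the shape
\begin{equation*}
\operatorname*{osc}_{T_\rho}u \;\le\; \gamma\,\operatorname*{osc}_{T_{4\rho}}u \;+\; C\,\operatorname*{osc}_{\Delta_{4\rho}}u
\end{equation*}
for a fixed constant $\gamma=\gamma(n,\lambda,\Lambda,\Gamma,M)<1$ and $C=C(n,\lambda,\Lambda,\Gamma,M)$. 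This is the key dyadic decay estimate: the oscillation on $T_\rho$ shrinks by a definite factor compared with $T_{4\rho}$, up to the boundary oscillation term $\sigma(4\rho)=\operatorname*{osc}_{\Delta_{4\rho}}u$.

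Then I would run the standard iteration on the sequence $\rho_j=4^{-j}R$. Setting $\omega_j=\operatorname*{osc}_{T_{\rho_j}}u$ and $\sigma_j=\sigma(\rho_j)$, the recursion $\omega_{j+1}\le\gamma\omega_j+C\sigma_j$ unwinds to $\omega_j\le \gamma^j\omega_0 + C\sum_{i<j}\gamma^{j-1-i}\sigma_i$. Splitting the sum at $i\approx j/2$ and using that $\sigma$ is monotone (so $\sigma_i\le\sigma_{[j/2]}$ for $i\ge j/2$, while for $i<j/2$ one has $\gamma^{j-1-i}\le\gamma^{j/2}$ which is geometrically small and $\sigma_i\le\omega_0$), one gets $\omega_j\le C(\gamma^{j/2}\omega_0 + \sigma_{[j/2]})$. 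Translating back: if $\rho\in(0,R]$ and $j$ is chosen with $\rho_{j+1}<\rho\le\rho_j$, then $\gamma^{j/2}\sim (\rho/R)^\alpha$ with $\alpha=-\tfrac12\log_4\gamma>0$, and $\rho_{[j/2]}\sim\sqrt{\rho R}$, giving precisely
\begin{equation*}
\operatorname*{osc}_{T_\rho}u\le C\Big(\frac{\rho}{R}\Big)^\alpha\omega_0+\sigma(\sqrt{\rho R})\le C\Big(\frac{\rho}{R}\Big)^\alpha\sup_{T_R}|u|+\sigma(\sqrt{\rho R}),
\end{equation*}
which is \eqref{bdyHolder}. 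For the special case $u=0$ on $\Delta_R$, the term $\sigma$ vanishes identically, so $\operatorname*{osc}_{T_\rho}u\le C(\rho/R)^\alpha\sup_{T_R}|u|$; one then upgrades $\sup_{T_R}|u|$ to $(\fint_{T_{R}}|u|^2)^{1/2}$ — in fact one should apply the argument on $T_R$ with $R$ replaced by, say, $2r$ versus $R$, and invoke Lemma \ref{BdyHolderLem1} (with $M=\sup_{\Delta_{R}}u=0$) to bound $\sup_{T_{R/2}}|u|\le C(\fint_{T_R}|u|^2)^{1/2}$, yielding \eqref{bdyHolderL2form} for $0<r\le R/2$.

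The main obstacle I anticipate is the bookkeeping in passing from the truncated weak Harnack inequalities to the clean oscillation inequality $\omega_{j+1}\le\gamma\omega_j+C\sigma_j$: one must carefully track which of the two alternatives ($\inf$ equals the truncation level $m_i$, or $\inf\le M(4\rho)-M(\rho)$ type bound) occurs, handle both, and make sure the constant $\gamma$ that emerges is genuinely less than $1$ and independent of $\rho$ — the exterior Corkscrew lower bound on $|B_{2\rho}\setminus\Omega|/|B_{2\rho}|$ is what guarantees this uniformity. The rest is the routine geometric-series iteration, exactly as in \cite{gilbarg2001elliptic}, Theorem 8.27.
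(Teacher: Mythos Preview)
Your proposal is correct and follows exactly the approach the paper indicates: the paper does not give a detailed proof but simply observes that the exterior Corkscrew condition supplies the uniform lower bound $\abs{B_R(P)\setminus\Omega}/\abs{B_R(P)}\ge(2M)^{-n}$, and then refers to the standard oscillation--decay argument of \cite{gilbarg2001elliptic}, Theorem~8.27, driven by Lemma~\ref{BdyHOlderLem2}. Your write-up fills in precisely those details, including the use of Lemma~\ref{BdyHolderLem1} to pass to the $L^2$ average in \eqref{bdyHolderL2form}.
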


  \begin{re}\label{ReCont}
    If, in addition to the assumptions of Lemma \ref{BdyHolderCont}, $\osc_{\Delta_R}u\rightarrow0$ as $R\rightarrow 0$, then \eqref{bdyHolder} implies that
    $$
    u(P_0)=\lim_{x\rightarrow P_0, x\in\Omega}u(x)
    $$
    is well-defined.
  \end{re}

  \begin{lem}[positivity]\label{positivity}
  Let $u\in W_{loc}^{1,2}(\Omega)\cap C(\overline{\Omega})$ be a supersolution of \eqref{Lu=0}. $u\ge0$ on $\bdy\Omega$. Then $u\ge0$ in $\Omega$.
  \end{lem}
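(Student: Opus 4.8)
The plan is to argue by contradiction using the negative part $u^- = \max\{-u,0\}$ as an (admissible) test function. Since $u \in C(\overline\Omega)$ and $u \ge 0$ on $\bdy\Omega$, for every $\eps > 0$ the set where $u < -\eps$ is compactly contained in $\Omega$; a cleaner device is to observe that $w \doteq u^- = (-u)^+$ belongs to $W^{1,2}_0(\Omega)$. Indeed, $w$ is continuous on $\overline\Omega$, vanishes on $\bdy\Omega$ (because $u\ge 0$ there), and is supported in $\{u \le 0\}$; since $u$ is only locally $W^{1,2}$ one first works on $\{u < -\eps\}$, which has compact closure in $\Omega$, so that $(u+\eps)^- \in W^{1,2}_0(\Omega)$, and then lets $\eps \to 0$.

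The key computation is to insert $\vp = w = u^-$ (or $(u+\eps)^-$ on the truncated level set) into the supersolution inequality. First I would note, exactly as in the proof of Lemma \ref{pm_subLem}, that $-u$ is a subsolution, hence $u^- = (-u)^+$ is a subsolution of $L$; so for nonnegative $\vp \in W^{1,2}_0(\Omega)$ with compact support we have $\int_\Omega (a\nabla u^- \cdot \nabla\vp + b\nabla u^- \cdot \nabla\vp)\,dx \le 0$. Taking $\vp = u^-$ itself (justified by the energy estimate \eqref{energyest}, which allows $W^{1,2}_0$ test functions, after the $\eps$-truncation to secure compact support) gives
$$
\int_\Omega \big(a\nabla u^- \cdot \nabla u^- + b\nabla u^- \cdot \nabla u^-\big)\,dx \le 0.
$$
Now the anti-symmetry of $b$ forces the second integral to vanish: this is precisely the computation $\int_\Omega b\nabla v\cdot\nabla v = \tfrac12\int_\Omega b_{ij}(v_{x_j}v_{x_i} - v_{x_i}v_{x_j}) = 0$, valid for $v = u^- \in W^{1,2}_0(\Omega)$ by the boundedness \eqref{formupperbound} of the bilinear form (approximating $v$ by smooth compactly supported functions, for which the cancellation is pointwise). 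Hence $\int_\Omega a\nabla u^- \cdot \nabla u^- \le 0$, and ellipticity \eqref{aEllip} gives $\lambda \int_\Omega |\nabla u^-|^2 \le 0$, so $\nabla u^- = 0$ a.e., i.e. $u^-$ is constant on each component of $\Omega$; since $u^- \in W^{1,2}_0(\Omega)$ (equivalently $u^- = 0$ on $\bdy\Omega$ and $\Omega$ is connected), that constant is $0$. Therefore $u^- \equiv 0$, i.e. $u \ge 0$ in $\Omega$.

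The main obstacle is the admissibility of the test function: $u$ is only assumed to lie in $W^{1,2}_{loc}(\Omega)\cap C(\overline\Omega)$, not in $W^{1,2}(\Omega)$, so $u^-$ need not a priori be in $W^{1,2}_0(\Omega)$ globally. This is handled by the standard truncation just mentioned — replace $u^-$ by $(u+\eps)^- = \max\{-u-\eps,0\}$, whose support $\{u < -\eps\}$ is a relatively compact subset of $\Omega$ by continuity of $u$ up to the boundary and the hypothesis $u \ge 0 \ge -\eps$ on $\bdy\Omega$, run the above argument to conclude $\nabla(u+\eps)^- = 0$ and hence $(u+\eps)^- \equiv 0$ (it is a compactly supported constant), i.e. $u \ge -\eps$ in $\Omega$, and finally let $\eps \downarrow 0$. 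One should also double-check that the vanishing of $\int b\nabla v\cdot\nabla v$ really only needs $v \in W^{1,2}_0$ of a domain to which the div-curl argument of \eqref{formupperbound} applies, or alternatively invoke it directly for $v=(u+\eps)^-$ extended by zero, which lies in $W^{1,2}(\Rn)$; either way the anti-symmetric term contributes nothing and the conclusion follows from pure ellipticity.
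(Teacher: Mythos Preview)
Your proposal is correct and takes essentially the same approach as the paper: both use the $\eps$-truncation $(u+\eps)^-$ (the paper writes it as $v_\eps = \min\{u,-\eps\}+\eps = -(u+\eps)^-$) to obtain a compactly supported test function, then use anti-symmetry of $b$ and ellipticity of $a$ to force $\nabla(u+\eps)^- = 0$, and let $\eps\downarrow 0$. The paper is slightly more direct in that it tests the supersolution $u$ itself against $v_\eps$, avoiding your intermediate step of invoking Lemma~\ref{pm_subLem} to say $u^-$ is a subsolution, but the computations are otherwise identical.
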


  \begin{proof}
    For any $\epsilon>0$, set $v\ep=\min\{u,-\epsilon\}+\epsilon$. Then $v\ep\le0$ and
    $$
    \nabla v\ep=
    \begin{cases}
      \nabla u \quad u<-\epsilon,\\
      0 \qquad u\ge -\epsilon
    \end{cases}
    $$
    $\Rightarrow$ $v\ep\in W^{1,2}(\Omega)$ with $\supp v\ep\subset\subset\Omega$. Since $u$ is a supersolution, we have
    $$
      0\ge\int_{\Omega}A\nabla u\cdot \nabla v\ep=\int_{\{u<-\epsilon\}}A\nabla u\cdot\nabla u
      =\int_{\Omega}A\nabla v\ep\cdot\nabla v\ep
      \ge\lambda\int_{\Omega}\abs{\nabla v\ep}^2,
    $$
    $\Rightarrow$ $\nabla v\ep=0$ a.e. in $\Omega$, thus $u\ge-\epsilon$ in $\Omega$. Letting $\epsilon\rightarrow0$, we obtain $u\ge0$ in $\Omega$.

  \end{proof}

  This lemma immediately yields the maximum principle:
  \begin{lem}[maximum principle]\label{maxprinciple}
    Let $u\in W_{loc}^{1,2}(\Omega)\cap C(\overline{\Omega})$ be a subsolution of \eqref{Lu=0}. Then
    \begin{equation}\label{maxp1}
      \sup_{\Omega}u\le\sup_{\bdy\Omega}u.
    \end{equation}
    As a consequence, if $u\in W_{loc}^{1,2}(\Omega)\cap C(\overline{\Omega})$ is a weak solution of \eqref{Lu=0}, then
    \begin{equation}
      \sup_{\Omega}\abs{u}\le\sup_{\bdy\Omega}\abs{u}.
    \end{equation}
  \end{lem}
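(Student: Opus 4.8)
The plan is to deduce this directly from the positivity statement, Lemma \ref{positivity}, by reducing to the case of a nonnegative supersolution.

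First I would set $M=\sup_{\bdy\Omega}u$, which is finite because $u\in C(\overline{\Omega})$ and $\bdy\Omega$ is compact. Since constant functions satisfy $\int_{\Omega}(a\nabla c\cdot\nabla\vp+b\nabla c\cdot\nabla\vp)\,dx=0$ for every $\vp$, the function $u-M$ is again a subsolution of \eqref{Lu=0}, and consequently $w:=M-u$ is a supersolution: for any $\vp\in W^{1,2}(\Omega)$ with $\supp\vp\subset\subset\Omega$ and $\vp\ge0$,
$$
\int_{\Omega}(a\nabla w\cdot\nabla\vp+b\nabla w\cdot\nabla\vp)\,dx=-\int_{\Omega}(a\nabla(u-M)\cdot\nabla\vp+b\nabla(u-M)\cdot\nabla\vp)\,dx\ge0.
$$
Moreover $w\in W_{loc}^{1,2}(\Omega)\cap C(\overline{\Omega})$ and $w=M-u\ge0$ on $\bdy\Omega$ by the definition of $M$. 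Hence Lemma \ref{positivity} applies to $w$ and yields $w\ge0$ in $\Omega$, that is, $u\le M$ in $\Omega$, which is exactly \eqref{maxp1}.

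For the consequence, suppose $u\in W_{loc}^{1,2}(\Omega)\cap C(\overline{\Omega})$ is a weak solution of \eqref{Lu=0}. Then both $u$ and $-u$ are (sub)solutions, so applying \eqref{maxp1} to each gives
$$
\sup_{\Omega}u\le\sup_{\bdy\Omega}u\le\sup_{\bdy\Omega}\abs{u},\qquad \sup_{\Omega}(-u)\le\sup_{\bdy\Omega}(-u)\le\sup_{\bdy\Omega}\abs{u},
$$
and since $\sup_{\Omega}\abs{u}=\max\{\sup_{\Omega}u,\sup_{\Omega}(-u)\}$, we obtain $\sup_{\Omega}\abs{u}\le\sup_{\bdy\Omega}\abs{u}$.

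There is essentially no analytic obstacle here; the only points requiring care are the verification that adding a constant preserves the subsolution property (immediate from the weak formulation since the bilinear form annihilates constants) and the checking that $w=M-u$ inherits the regularity hypotheses of Lemma \ref{positivity}, both of which are routine. The substance of the argument is entirely contained in Lemma \ref{positivity}.
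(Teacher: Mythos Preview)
Your proof is correct and follows the same approach as the paper: set $M=\sup_{\bdy\Omega}u$ and apply Lemma \ref{positivity} to the supersolution $M-u$. The paper's proof is simply a one-line invocation of this idea, and your added verification that constants lie in the kernel of the bilinear form and the derivation of the two-sided bound for solutions are routine elaborations of the same argument.
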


  \begin{proof}
    Let $M=\sup_{\bdy\Omega}u$. We can assume $M<\infty$ since otherwise \eqref{maxp1} is trivial. Then apply Lemma \ref{positivity} to $M-u$ to get $M-u\ge0$ in $\Omega$.
  \end{proof}

  \begin{lem}\label{Lipbdy}
    Let $\Omega$ be an NTA domain, and suppose $g\in Lip(\bdy\Omega)$. Then there exists a unique $u\in W^{1,2}(\Omega)$ that solves the classical Dirichlet problem with data $g$. Moreover, $u\in C^{0,\beta}(\overline\Omega)$ for some $0<\beta<1$.
  \end{lem}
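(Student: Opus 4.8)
The plan is to construct the solution via the Lax--Milgram theorem applied to a Lipschitz extension of the data, and then to upgrade its regularity to $C^{0,\beta}(\overline\Omega)$ by combining the interior estimate of Lemma~\ref{intHolderLem} with the boundary estimate of Lemma~\ref{BdyHolderCont}. First I would extend $g$ (e.g.\ by the McShane formula) to a function $G\in Lip(\overline\Omega)$ with Lipschitz constant $L$ comparable to that of $g$ and $\inf_{\bdy\Omega}g\le G\le\sup_{\bdy\Omega}g$; since $\Omega$ is bounded, $G\in W^{1,2}(\Omega)$. By \eqref{energyest1} we have $LG\in W^{-1,2}(\Omega)$, so Theorem~\ref{LaxMilgramLem} produces a unique $w\in W_0^{1,2}(\Omega)$ with $Lw=-LG$; then $u=w+G$ solves the classical Dirichlet problem with data $G$, and uniqueness of the $W^{1,2}$ solution is immediate from \eqref{ellipticity} (two solutions differ by a $W_0^{1,2}$ element killed by $L$). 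I would also record the bound $\norm{u}_{L^\infty(\Omega)}\le\norm{g}_{L^\infty(\bdy\Omega)}$: with $M_0=\sup_{\bdy\Omega}g$ one checks that $(u-M_0)^+\le (u-G)^+\in W_0^{1,2}(\Omega)$, hence $(u-M_0)^+\in W_0^{1,2}(\Omega)$; as $u-M_0$ is a solution, $(u-M_0)^+$ is a nonnegative subsolution [cf.\ Lemma~\ref{pm_subLem}], and testing it against itself together with \eqref{ellipticity} forces $(u-M_0)^+\equiv0$; symmetrically from below.

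Next I would fix $P\in\bdy\Omega$, set $D=\diam\Omega$, and apply \eqref{bdyHolder}. Since $u-G\in W_0^{1,2}(\Omega)$ and $G$ is continuous with $G|_{\bdy\Omega}=g$, the boundary oscillation appearing there is $\sigma(\rho)=\osc_{\Delta_\rho(P)}g\le 2L\rho$; hence, taking $R=D$ and inserting the $L^\infty$-bound from the previous paragraph,
\begin{equation*}
\osc_{T_r(P)}u\ \le\ C\Big(\tfrac rD\Big)^{\alpha}\norm{g}_{L^\infty(\bdy\Omega)}+2L\sqrt{rD}\ \le\ C\,r^{\beta},\qquad 0<r\le D,
\end{equation*}
with $\beta=\min\{\alpha,\tfrac12\}$ and $C$ depending only on $n,\lambda,\Lambda,\Gamma,M,D,\norm{g}_{L^\infty},L$. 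In particular $\osc_{T_r(P)}u\to0$, so by Remark~\ref{ReCont} the continuous boundary value $u(P)$ exists; letting $P$ vary gives $u\in C(\overline\Omega)$, and $u|_{\bdy\Omega}=g$ because $u-G\in W_0^{1,2}(\Omega)$ is then a continuous function vanishing on $\bdy\Omega$. (This identification, and the independence of $u$ from the chosen extension, both use the regularity of the NTA boundary; uniqueness among continuous solutions then also follows from Lemma~\ref{maxprinciple}.)

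Finally, to obtain a genuine $C^{0,\beta}(\overline\Omega)$ estimate I would interpolate the boundary modulus just derived against the interior modulus. Using Lemma~\ref{intHolderLem}(1) in oscillation form, $\osc_{B_\rho(x)}u\le C(\rho/r)^{\alpha}\osc_{B_r(x)}u$ whenever $\rho<r$ and $\overline{B_r(x)}\subset\Omega$. Given $x,y\in\overline\Omega$ with $\delta=|x-y|$ and $d=\min\{\dist(x,\bdy\Omega),\dist(y,\bdy\Omega)\}$: if $\delta\gtrsim d$, then (by the triangle inequality) both points lie in a boundary ball $T_{C\delta}(P)$ about a nearby $P\in\bdy\Omega$, and the boundary modulus together with $|g(P_x)-g(P_y)|\le L|P_x-P_y|\lesssim\delta$ gives $|u(x)-u(y)|\le C\delta^{\beta}$; if $\delta\ll d$, say $d=\dist(x,\bdy\Omega)$, then $B_\delta(x)\subset B_{d(x)/2}(x)$, and applying the interior estimate and then bounding $\osc_{B_{d(x)/2}(x)}u$ by $\osc_{T_{2d(x)}(P_x)}u\le C\,d(x)^{\beta}$ (with $|x-P_x|=d(x)$) yields $|u(x)-u(y)|\le C(\delta/d(x))^{\alpha}d(x)^{\beta}\le C\delta^{\beta}$, since $\beta\le\alpha<1$ and $d(x)\gtrsim\delta$. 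Hence $u\in C^{0,\beta}(\overline\Omega)$.

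The analytic substance is already packaged in Lemma~\ref{BdyHolderCont}, which is where the exterior Corkscrew condition of the NTA domain enters; beyond invoking it, the two delicate points are (i) identifying the $W^{1,2}$ boundary trace of the Lax--Milgram solution with $g$, so that the boundary-oscillation term in \eqref{bdyHolder} is genuinely controlled by the Lipschitz constant of $g$ and not by $\norm{u}_{L^\infty}$, and (ii) the interpolation in the last paragraph, where it is essential to use the translation-invariant \emph{oscillation} form of the interior estimate rather than its $L^2$ form, so that $\norm{u}_{L^\infty}$ does not reappear and the interior exponent $\alpha$ is correctly balanced against the boundary exponent $\beta$. I expect step (ii) — the bookkeeping of the two exponents near the boundary — to be the main thing to get right.
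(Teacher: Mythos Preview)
Your proposal is correct and follows essentially the same route as the paper: extend $g$ to a Lipschitz function, apply Lax--Milgram (Theorem~\ref{LaxMilgramLem}) to obtain the unique $W^{1,2}$ solution, then combine the boundary estimate \eqref{bdyHolder} with the interior oscillation estimate of Lemma~\ref{intHolderLem} to produce a global H\"older exponent. The paper's proof is terser---it invokes Remark~\ref{ReCont} for continuity at the boundary and then simply asserts that ``together with Lemma~\ref{intHolderLem}'' one obtains $u\in C^{0,\beta}(\overline\Omega)$---whereas you spell out both the $L^\infty$ bound (via a $W^{1,2}_0$ maximum-principle argument, which is a legitimate substitute for Lemma~\ref{maxprinciple} before continuity is known) and the standard near/far dichotomy for the interpolation; these are exactly the two points the paper leaves implicit.
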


  \begin{proof}
    Let $G\in Lip_0(\Rn)$ be such that $G|_{\bdy\Omega}=g$. Then by Theorem \ref{LaxMilgramLem}, there exists a unique solution $u\in W^{1,2}(\Omega)$ to the classical Dirichlet problem
    $$
    \begin{cases}
      Lu=0 \qquad \text{in } \Omega,\\
      u-G\in W_0^{1,2}(\Omega).
    \end{cases}
    $$

    By Remark \ref{ReCont},
    $$
    \lim_{\Omega\ni X\rightarrow P}u(X)=g(P) \quad \forall P\in \bdy\Omega.
    $$

    To see that $u\in C^{0,\beta}(\overline\Omega)$, fix $P_0\in\bdy\Omega$ and $X\in\Omega$ with $r\doteq\abs{X-P_0}<\frac1{2}$.


    By Lemma \ref{BdyHolderCont},
    \begin{align*}
      \abs{u(X)-g(P_0)}&\le \underset{T_{2r}(P_0)}{\osc}u\le Cr^{\alpha}\sup_{T_1}\abs{u}+\underset{\Delta_{\sqrt{2r}}(P_0)}{\osc} g\\
      &\le C r^{\alpha}\norm{g}_{C^{0,1}(\bdy\Omega)}+\sqrt{2}[g]_{C^{0,1}(\bdy\Omega)}r^{1/2}\\
      &\le C\norm{g}_{C^{0,1}(\bdy\Omega)}\abs{X-P_0}^{\alpha'}
    \end{align*}
    for some $0<\alpha'<1$.

    So together with Lemma \ref{intHolderLem}, we can show that for any $X,Y\in\Omega$,
    $$
    \frac{\abs{u(X)-u(Y)}}{\abs{X-Y}^{\beta}}\le C \quad\text{ for some } \beta\in(0,1).
    $$

  \end{proof}

  Now we can immediately obtain the following

  \begin{thm}\label{contDirichletSol}
    Let $\Omega$ be an NTA domain. Then the continuous Dirichlet problem \eqref{contDirichlet} is uniquely solvable.
  \end{thm}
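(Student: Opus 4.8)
The plan is to realize the continuous solution as a uniform limit of solutions with Lipschitz boundary data, for which everything we need is already contained in Lemma \ref{Lipbdy}. First I would approximate the given $g\in C(\bdy\Omega)$ uniformly by a sequence $g_k\in \mathrm{Lip}(\bdy\Omega)$; this is standard (mollify a Lipschitz extension of $g$, or invoke Stone--Weierstrass, since Lipschitz functions are dense in $C(\bdy\Omega)$). By Lemma \ref{Lipbdy}, each $g_k$ is the trace of a unique $u_k\in W^{1,2}(\Omega)\cap C^{0,\beta_k}(\overline{\Omega})$ solving the classical Dirichlet problem, with $u_k=g_k$ on $\bdy\Omega$.

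The comparison step is the maximum principle: applying Lemma \ref{maxprinciple} to $u_k-u_j$ and to $u_j-u_k$ gives $\sup_{\Omega}\abs{u_k-u_j}\le\sup_{\bdy\Omega}\abs{g_k-g_j}$, so $\set{u_k}$ is Cauchy in $C(\overline{\Omega})$ and converges uniformly to some $u\in C(\overline{\Omega})$; since $u_k|_{\bdy\Omega}=g_k\to g$ uniformly, $u=g$ on $\bdy\Omega$, and $\sup_{\Omega}\abs{u_k}\le\sup_{\bdy\Omega}\abs{g_k}$ is bounded uniformly in $k$. Next I would check that $u$ is a weak solution in $\Omega$: fix a ball with $\overline{B_{2R}}\subset\Omega$; the functions $u_k^{\pm}$ are non-negative subsolutions by Lemma \ref{pm_subLem}, so the Caccioppoli estimate of Corollary \ref{caccio} bounds $\fint_{B_R}\abs{\nabla u_k}^2$ by $CR^{-2}\fint_{B_{2R}}\abs{u_k}^2$, uniformly in $k$. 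Hence $\set{u_k}$ is bounded in $W^{1,2}(B_R)$; passing to a weakly convergent subsequence and comparing with the uniform convergence identifies the weak limit of $\nabla u_k$ as $\nabla u$, so $u\in W_{loc}^{1,2}(\Omega)$, and then one lets $k\to\infty$ in the weak formulation tested against $\vp\in C_0^{\infty}(B_R)$. Since the ball was arbitrary, $u\in W_{loc}^{1,2}(\Omega)\cap C(\overline{\Omega})$ solves \eqref{contDirichlet}. Uniqueness is then immediate from Lemma \ref{maxprinciple}: the difference of two solutions is a weak solution, continuous on $\overline{\Omega}$ and vanishing on $\bdy\Omega$, hence identically zero.

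I expect the one genuinely delicate point to be the passage to the limit in the bilinear form, since $b$ is only $BMO$ rather than $L^{\infty}$. Writing the anti-symmetric term as $\tfrac12\int b_{ij}(\partial_{x_j}u_k\,\partial_{x_i}\vp-\partial_{x_i}u_k\,\partial_{x_j}\vp)$ and using that a $BMO$ function on the bounded domain $\Omega$ lies in every $L^p(\Omega)$ (a consequence of the John--Nirenberg lemma, Lemma \ref{John-Niren}), the fixed factors $a_{ij}\partial_{x_i}\vp$ and $b_{ij}\partial_{x_i}\vp$ are in $L^2(B_R)$, so the weak $L^2$-convergence $\nabla u_k\rightharpoonup\nabla u$ is enough to pass to the limit. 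Everything else — density of Lipschitz data, the maximum-principle comparison, and the interior Caccioppoli bound — is routine, which is why the theorem follows at once from the lemmas already in place.
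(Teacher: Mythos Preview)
Your proposal is correct and follows essentially the same route as the paper: approximate by Lipschitz data via Lemma \ref{Lipbdy}, use the maximum principle to obtain a uniform Cauchy sequence, and invoke Caccioppoli to upgrade to $W_{loc}^{1,2}$ and pass to the limit in the equation. Your explicit treatment of the $BMO$ term in the limiting argument (via John--Nirenberg so that $b_{ij}\partial_{x_i}\vp\in L^2_{loc}$) fills in a detail the paper leaves implicit, and your uniqueness argument directly from Lemma \ref{maxprinciple} is in fact slightly cleaner than the paper's phrasing.
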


  \begin{proof}
    Let $f\in C(\bdy\Omega)$. Then there exists a sequence $\{f_n\}_{n=1}^{\infty}\subset Lip(\bdy\Omega)$ such that $f_n\rightarrow f$ uniformly on $\bdy\Omega$. By Lemma \ref{Lipbdy}, the corresponding solutions $u_n\in W^{1,2}(\Omega)\cap C^{0,\beta}(\overline{\Omega})$. By the maximum principle,
    $$
    \sup_{\Omega}\abs{u_i-u_j}\le \sup_{\bdy\Omega}\abs{f_i-f_j}.
    $$

    So $u_n$ converges uniformly to $u\in C(\overline\Omega)$. By Caccioppoli inequality (Corollary \ref{caccio}), $u\in W_{loc}^{1,2}(\Omega)$, and also $Lu=0$ in $\Omega$, $u=g$ on $\bdy\Omega$. If $v\in W_{loc}^{1,2}(\Omega)\cap C(\overline{\Omega})$ is another solution, then $u-v\in W_0^{1,2}(\Omega)$ and by the maximum principle, $u\equiv v$.
  \end{proof}

\section{Green's functions and regular points}\label{GreenSection}
In this section, we collect some results about the Green's function and regular points. It turns out that the arguments in \cite{gruter1982green} carry forward with only a few modifications. Thus, we will mainly focus on these modifications.\par
Recall that $B[u,v]=\int_{\Omega}(a\nabla u\cdot\nabla v+b\nabla u\cdot \nabla v)$.

\begin{thm}
  There exists a unique function (the Green's function) $G:\Omega\times\Omega \rightarrow \Real\cup\{\infty\}$, such that for each $Y\in\Omega$ and any $r>0$
  \begin{equation}\label{GW1.3}
    G(\cdot,Y)\in W^{1,2}(\Omega\setminus B_r(Y))\cap W_0^{1,1}(\Omega)
  \end{equation}

  and for all $\phi\in W_0^{1,p}(\Omega)\cap C(\Omega)$ where $p>n$
  \begin{equation}\label{GW1.4}
    B[G(\cdot,Y),\phi]=\phi(Y).
  \end{equation}

  The Green's function enjoys the following properties: for each $Y\in\Omega$ ($G(X)\doteq G(X,Y)$)
  \begin{equation}\label{GW1.5}
    G\in L^{\frac{n}{n-2},\infty}(\Omega) \quad\text{with } \norm{G}_{L^{\frac{n}{n-2},\infty}}\le C(n)\lambda^{-1},
  \end{equation}

  \begin{equation}\label{GW1.6}
    \nabla G\in L^{\frac{n}{n-1},\infty}(\Omega) \quad\text{with } \norm{\nabla G}_{L^{\frac{n}{n-1},\infty}}\le C(n,\Lambda,\lambda,\Gamma),
  \end{equation}

  \begin{equation}\label{GW1.7}
    G\in W_0^{1,k}(\Omega) \quad\text{for each } k\in[1,\frac{n}{n-1}),\quad\text{and }\norm{G}_{W_0^{1,k}(\Omega)}\le C(n,\lambda,\Lambda,\Gamma,k).
  \end{equation}

  For all $X,Y\in\Omega$ we have
  \begin{equation}\label{GW1.8}
    G(X,Y)\le C(n,\Lambda,\lambda,\Gamma)\abs{X-Y}^{2-n};
  \end{equation}

  and for all $X,Y\in\Omega$ satisfying $\abs{X-Y}\le \frac1{2}\dist(Y,\bdy\Omega)$
  \begin{equation}\label{GW1.9}
    G(X,Y)\ge C(n,\Lambda,\lambda,\Gamma)\abs{X-Y}^{2-n}.
  \end{equation}
\end{thm}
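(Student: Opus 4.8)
The plan is to follow the construction of Gr\"uter and Widman \cite{gruter1982green}; the only modifications are that the De Giorgi--Nash--Moser estimates are now those of Sections \ref{intSec} and \ref{BdySection}, and that wherever \cite{gruter1982green} uses boundedness of the coefficient matrix we instead use the anti-symmetry of $b$ together with the div-curl estimate of Proposition \ref{compensatedProp} and the John--Nirenberg lemma (Lemma \ref{John-Niren}). For $Y\in\Omega$ and small $\rho>0$ the functional $\vp\mapsto\fint_{B_\rho(Y)}\vp$ lies in $W^{-1,2}(\Omega)$, so Theorem \ref{LaxMilgramLem} yields a unique \emph{approximate Green's function} $G^\rho(\cdot,Y)\in W_0^{1,2}(\Omega)$ with $B[G^\rho(\cdot,Y),\vp]=\fint_{B_\rho(Y)}\vp$ for all $\vp\in W_0^{1,2}(\Omega)$; testing with $-(G^\rho(\cdot,Y))^-$ and using \eqref{AEllip} shows $G^\rho(\cdot,Y)\ge0$.

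The next step is a family of bounds on $G^\rho(\cdot,Y)$ uniform in $\rho$. The observation that makes the scheme work is that the anti-symmetric part is invisible to the energy identity: since $\frac12\int b_{ij}\psi'(u)(u\Dj u\Di-u\Di u\Dj)=0$ for any Lipschitz $\psi$, one has $B[u,\psi(u)]=\int a\nabla u\cdot\psi'(u)\nabla u$. Feeding truncations of $G^\rho(\cdot,Y)$ into this identity and running the standard level-set argument of \cite{gruter1982green} with ellipticity \eqref{AEllip} and the Sobolev inequality yields, uniformly in $\rho$, $\abs{\{G^\rho(\cdot,Y)>t\}}\le Ct^{-n/(n-2)}$ and $\abs{\{\abs{\nabla G^\rho(\cdot,Y)}>t\}}\le Ct^{-n/(n-1)}$ with $C=C(n,\lambda,\Lambda,\Gamma)$, which are \eqref{GW1.5}, \eqref{GW1.6} and, on integration, the $W_0^{1,k}$ bound \eqref{GW1.7}. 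Hence $\{G^\rho(\cdot,Y)\}$ is bounded in $W_0^{1,k}(\Omega)$ for each $k<\frac{n}{n-1}$ and, by Lemma \ref{intHolderLem}, locally equicontinuous away from $Y$; a subsequential limit $G(\cdot,Y)$ then satisfies \eqref{GW1.3}. To get \eqref{GW1.4} for $\phi\in W_0^{1,p}(\Omega)\cap C(\Omega)$, $p>n$: we have $\fint_{B_\rho(Y)}\phi\to\phi(Y)$; the symmetric part of $B[G^\rho(\cdot,Y),\phi]$ passes to the limit by weak $L^{p'}$ convergence of $\nabla G^\rho(\cdot,Y)$ against $\nabla\phi\in L^p$ ($p'<\frac{n}{n-1}$); and for the anti-symmetric part we write it as $\frac12\int\wt b_{ij}(G^\rho\Dj\phi\Di-G^\rho\Di\phi\Dj)$, use Proposition \ref{compensatedProp} to see these terms are bounded in $\mathscr{H}^1(\Rn)$, and pass to the limit by $\mathscr{H}^1$--$BMO$ duality. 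Uniqueness is as in \cite{gruter1982green}: the difference $w$ of two Green's functions lies in $W_0^{1,1}(\Omega)$ with $B[w,\phi]=0$ for all admissible $\phi$, so it is a solution in $\Omega$ with zero boundary values, hence has enough gradient integrability to lie in $W_0^{1,2}(\Omega)$, whence $\lambda\norm{\nabla w}_{L^2}^2\le B[w,w]=0$.

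For the size estimates, fix $X\neq Y$ and set $d=\abs{X-Y}$. For $\rho$ small, $G^\rho(\cdot,Y)$ is a non-negative solution of \eqref{Lu=0} in $B_{d/2}(X)\cap\Omega$, so the interior or the boundary local-boundedness estimate (Corollary \ref{MoserCor1}; Lemma \ref{BdyHolderLem1} with $M=0$ when $X$ is close to $\bdy\Omega$) gives $G^\rho(X,Y)\le C\bigl(\fint_{B_{cd}(X)\cap\Omega}\abs{G^\rho(\cdot,Y)}^{p_0}\bigr)^{1/p_0}$ for some fixed $p_0<\frac{n}{n-2}$, and the weak-$L^{n/(n-2)}$ bound \eqref{GW1.5} estimates the right-hand side by $Cd^{2-n}$; letting $\rho\to0$ gives \eqref{GW1.8}. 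For the lower bound, assume $d\le\frac12\dist(Y,\bdy\Omega)$ and choose a cutoff $\phi$ with $\phi\equiv1$ on $B_r(Y)$, $\supp\phi\subset B_{2r}(Y)$, $\abs{\nabla\phi}\le C/r$, where $r\sim d$ is taken so that $\overline{B_{3r}(Y)}\subset\Omega$ (the complementary regime $d\sim\dist(Y,\bdy\Omega)$ is handled at the fixed scale $\dist(Y,\bdy\Omega)$). Then $B[G^\rho(\cdot,Y),\phi]=\fint_{B_\rho(Y)}\phi=1$; since $\nabla\phi$ is supported in the annulus $A=B_{2r}(Y)\setminus B_r(Y)$ and $\int_\Omega(G^\rho\Dj\phi\Di-G^\rho\Di\phi\Dj)=0$ (integrate by parts, $\phi$ smooth), one may subtract the mean $(b)_A$ and invoke Lemma \ref{John-Niren} to bound the anti-symmetric contribution, obtaining $1\le C(\Lambda+\Gamma)r^{\frac{n}{2}-1}\norm{\nabla G^\rho(\cdot,Y)}_{L^2(A)}$, i.e. $\int_A\abs{\nabla G^\rho(\cdot,Y)}^2\gtrsim r^{2-n}$. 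A reverse (Caccioppoli) inequality on a slightly larger annulus (Lemma \ref{LemIntCacci}, applicable since $G^\rho(\cdot,Y)$ solves \eqref{Lu=0} there) then gives $\bigl(\fint\abs{G^\rho(\cdot,Y)}^2\bigr)^{1/2}\gtrsim r^{2-n}$ on that annulus, and since $G^\rho(\cdot,Y)$ is a positive solution there, the Harnack inequality (Lemma \ref{Harnackineq}), chained around the annulus, upgrades this to $\inf G^\rho(\cdot,Y)\gtrsim r^{2-n}$ on the annulus, in particular at $X$; letting $\rho\to0$ yields \eqref{GW1.9}.

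The main obstacle is the lower bound \eqref{GW1.9}: one must run the cutoff-testing, reverse-Caccioppoli and Harnack-chaining argument while controlling the $BMO$ anti-symmetric term at every stage (it is here that the anti-symmetry of the constant matrix $(b)_A$ together with John--Nirenberg takes the place of the usual $L^\infty$ bound on the coefficients), and organize the two regimes $\abs{X-Y}\ll\dist(Y,\bdy\Omega)$ and $\abs{X-Y}\sim\dist(Y,\bdy\Omega)$. A secondary technical point is the $\mathscr{H}^1$--$BMO$ limit passage in the verification of \eqref{GW1.4}.
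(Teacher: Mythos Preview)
Your proposal is correct and follows essentially the same Gr\"uter--Widman scheme as the paper: construct $G^\rho$ by Lax--Milgram, exploit that $b$ is invisible in energy identities $B[u,\psi(u)]$ to get the weak-type bounds, pass to the limit, and verify \eqref{GW1.4} via the $\mathscr H^1$--$BMO$ pairing (the paper states this as continuity of $B[\cdot,\phi]$ on $W_0^{1,p'}$, which is exactly Proposition~\ref{compensatedProp}).

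A few minor remarks on organization. For the gradient weak-type bound the paper does not deduce it directly from level-set truncations; instead it first proves the energy estimate $\int_{\Omega\setminus B_R}\abs{\nabla G^\rho}^2\le C R^{2-n}$ by testing with $G^\rho\eta^2$ ($\eta$ a cutoff vanishing on $B_{R/2}$), handling the $b$-term via the annular Caccioppoli bound of Corollary~\ref{caccio'} together with the already-established pointwise upper bound \eqref{GW1.28}; the weak-$L^{n/(n-1)}$ estimate then follows. For the lower bound \eqref{GW1.9} the paper runs your computation in the order Caccioppoli $\Rightarrow$ $\sup$ on the annulus $\Rightarrow$ Harnack $\Rightarrow$ $\inf\le G(X,Y)$, arriving directly at $1\le C\abs{X-Y}^{n-2}G(X,Y)$ without isolating the intermediate inequality $\int_A\abs{\nabla G^\rho}^2\gtrsim r^{2-n}$; your route is the same calculation, just reordered. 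Finally, there is no ``complementary regime'' to worry about in \eqref{GW1.9}: the hypothesis already reads $\abs{X-Y}\le\frac12\dist(Y,\bdy\Omega)$, so one can always take $r\sim\abs{X-Y}$ with $\overline{B_{3r}(Y)}\subset\Omega$.
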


\begin{proof}
  Let $Y\in\Omega$ be fixed. For fixed $\rho>0$, ($B_{\rho}=B_{\rho}(Y)$),
  $$
  W_0^{1,2}(\Omega)\ni \phi\mapsto \fint_{B_{\rho}}\phi
  $$
  is a bounded linear functional on $W_0^{1,2}(\Omega)$. By \eqref{energyest}, \eqref{ellipticity} and the Lax-Milgram theorem, there exists a unique function $G^{\rho}\in W_0^{1,2}(\Omega)$, such that for all $\phi\in W_0^{1,2}(\Omega)$
  \begin{equation}\label{GW1.14}
    B[G^{\rho},\phi]=\fint_{B_{\rho}}\phi.
  \end{equation}

  Using the same argument as in \cite{gruter1982green}, we obtain $G^\rho\ge0$, and
  \begin{equation}
  \norm{G^{\rho}}_{L^{\frac{n}{n-2},\infty}}\le C(n)\lambda^{-1},
  \end{equation}
  keeping in mind that
  $$
  \int_{\Omega_t}b\nabla G^{\rho}\nabla G^{\rho}(G^{\rho})^{-2}=0,
  $$
  where $\Omega_t=\{X\in\Omega: G^\rho(X)>t\}$.

  We also have a pointwise estimate for $G^{\rho}$:
  \begin{equation}\label{GW1.28}
    G^{\rho}(X)\le C(n,\lambda,\Lambda,\Gamma)\lambda^{-1}\abs{X-Y}^{2-n} \quad\text{if } \abs{X-Y}\ge2\rho.
  \end{equation}
  Here the only difference is that the constant depends also on $\Lambda$ and $\Gamma$, since the constant in Corollary \ref{MoserCor1} has such dependence. \par

  Now we show
  \begin{equation}\label{GW1.31}
    \norm{\nabla G^{\rho}}_{L^{\frac{n}{n-1},\infty}(\Omega)}\le C(n,\lambda,\Lambda,\Gamma),
  \end{equation}
and it suffices to establish that
  \begin{equation}
    \int_{\Omega\setminus B_R}\abs{\nabla G^{\rho}}^2\le C(n,\lambda,\Lambda,\Gamma)R^{2-n} \quad\text{if } R\ge4\rho.
  \end{equation}

  Let $\eta\in C^{\infty}$ satisfying $\eta\equiv1$ outside of $B_R$, $\eta\equiv0$ in $B_{R/2}$ and $\abs{\nabla\eta}\le C/R$, and take $\phi = G^{\rho}\eta^2$ in \eqref{GW1.14}. Then, as in the proof of Corollary \ref{caccio'}, and using \eqref{GW1.28}, we have
  \begin{align*}
    \lambda\int_{\Omega}\abs{\nabla G^{\rho}}^2\eta^2&\le C(n,\lambda,\Lambda,s)(1+\Gamma^2)R^{2(\frac{n}{s'}-1)}\Big(\int_{B_R\setminus B_{R/2}}\abs{G^{\rho}}^{\frac{2s}{2-s}}\Big)^{\frac{2-s}{s}}\\
    &\le C(n,\lambda,\Lambda,\Gamma)R^{2-n}.
  \end{align*}

  We now consider the convergence of $G^\rho$.
  By \eqref{GW1.31}, we have for each $k\in[1,\frac{n}{n-1})$ a uniform bound on $\norm{G^{\rho}}_{W_0^{1,k}}$ with respect to $\rho$. Thus there exists a $G\in W_0^{1,k}$ for all $k\in[1,\frac{n}{n-1})$ such that
  \begin{equation}
    G^{\rho_{\mu}}\rightharpoondown G \quad\text{in } W_0^{1,k},
  \end{equation}
  and $\norm{G}_{W_0^{1,k}}\le\norm{G^{\rho_{\mu}}}_{W_0^{1,k}}\le C(n,\lambda,\Lambda,\Gamma,k)$, which is \eqref{GW1.7}.

  Let $\phi\in W_0^{1,p}(\Omega)\cap C(\Omega)$ with $p>n$. We verify that $B[\cdot,\phi]$ is a continuous linear functional on $W_0^{1,p'}$ where $p'=\frac{p}{p-1}<\frac{n}{n-1}$. Since
  \begin{equation}\label{bound1}
  \abs{\int_{\Omega}a\nabla u\cdot\nabla\phi}\le \Lambda(\int_{\Omega}\abs{\nabla u}^{p'})^{1/{p'}}(\int_{\Omega}\abs{\nabla \phi}^p)^{1/p},
  \end{equation}
  we use the BMO-extension $\tilde{b}_{ij}$ of $b_{ij}$ (\eqref{BMOextension})
   and the zero extensions $\tilde{u}$ and $\tilde{\phi}$ of $u$ and $\phi$ to $\Rn$ to see that

  \begin{equation}\label{bound2}
    \abs{\int_{\Omega}b\nabla u\cdot\nabla\phi}
      \le C\norm{b}_{BMO(\Omega)}\norm{u}_{W^{1,p'}(\Omega)}\norm{\phi}_{W^{1,p}(\Omega)}. \end{equation}

  Combining \eqref{bound1} and \eqref{bound2} we obtain
  \begin{equation}
    \abs{B[u,\phi]}\le C\norm{u}_{W^{1,p'}(\Omega)}\norm{\phi}_{W^{1,p}(\Omega)} \quad\forall u\in W_0^{1,p'},
  \end{equation}
  which shows that $B[\cdot,\phi]\in(W_0^{1,p'})^*$.

  Since $G^{\rho}\rightharpoondown G$ in $W_0^{1,k}$ for all $k\in[1,\frac{n}{n-1})$,
  \begin{equation}\label{GrhoG}
  B[G^{\rho},\phi]\rightarrow B[G,\phi].
  \end{equation}

  On the other hand,
  $$
  B[G^{\rho},\phi]=\fint_{B_{\rho}}\phi\rightarrow \phi(y),
  $$
  so $B[G,\phi]=\phi(y)$, which is \eqref{GW1.4}. From here it is not hard to obtain \eqref{GW1.7}. And \eqref{GW1.8} follows from the pointwise estimate \eqref{GW1.28} and H\"older continuity of $G(\cdot,y)$ in $\Omega\setminus\{y\}$.

  We now give the proof of \eqref{GW1.9}:\par
  Let $X,Y\in\Omega$, $r=\abs{X-Y}<\frac1{2}\dist(Y,\bdy\Omega)$. Let $\eta\in C_0^{\infty}(\Omega)$ with $\supp\eta\subset Q_{3r/2}(Y)\setminus Q_{r/4}(Y)$ and $\eta\equiv1$ in $Q_r(Y)\setminus Q_{r/2}(Y)$. Then replacing the test function in the proof of Corollary \ref{caccio'} by $\eta$, we get
  \begin{equation}\label{GW1.9pf1}
    \int_{Q_r\setminus Q_{r/2}}\abs{\nabla G}^2\le C(n,\Lambda,\lambda,\Gamma,s)r^{2(\frac{n}{s'}-1)}
   \Big(\int_{Q_{3r/2}\setminus Q_{r/4}}\abs{G}^{\frac{2s}{2-s}}\Big)^{\frac{2-s}{s}}.
  \end{equation}

  Now consider a cut-off function $\vp\in C_0^{\infty}$ with $\supp\vp\subset Q_r(Y)$, $\vp\equiv1$ on $Q_{r/2}(Y)$ and $\abs{\nabla\vp}\le C/r$. Then inserting $\vp$ in \eqref{GW1.4}, we have
  \begin{equation}\label{GW1.9pf0}
    1=\int_{Q_r}a\nabla G\cdot\nabla\vp+\int_{Q_r}b\nabla G\cdot\nabla\vp.
  \end{equation}

  We claim that
  \begin{equation}\label{GW1.9pf2}
    \int_{Q_r}b\nabla G\cdot\nabla\vp\le C(n,\Lambda,\lambda,\Gamma,s)\abs{X-Y}^{n-2}G(X,Y).
  \end{equation}

  Actually,
  \begin{align*}
    \int_{Q_r}b\nabla G\cdot\nabla\vp&=\int_{Q_r}(b-(b)_{Q_r})\nabla G\cdot\nabla\vp\\
    &\le Cr^{\frac{n}{2}-1}(\fint_{Q_r}(b-(b)_{Q_r})^2)^{1/2}(\int_{Q_r\setminus Q_{r/2}}\abs{\nabla G}^2)^{1/2}\\
    &\le C(n,\Lambda,\lambda,\Gamma,s)r^{\frac{n}{2}+\frac{n}{s'}-2}\Big(\int_{Q_{3r/2}\setminus Q_{r/4}}\abs{G}^{\frac{2s}{2-s}}\Big)^{\frac{2-s}{2s}}\\
    &\le C(n,\Lambda,\lambda,\Gamma)r^{n-2}\sup_{Q_{3r/2}\setminus Q_{r/4}}G\\
    &\le C(n,\Lambda,\lambda,\Gamma)r^{n-2}\inf_{Q_{3r/2}\setminus Q_{r/4}}G\\
    &\le C(n,\Lambda,\lambda,\Gamma)\abs{x-y}^{n-2}G(X,Y),
  \end{align*}
  where we used \eqref{GW1.9pf1} to obtain the second inequality and Harnack inequality in the fourth inequality.\par
  Similarly,
  $$
  \int_{Q_r}a\nabla G\cdot\nabla\vp\le \Lambda r^{\frac{n}{2}-1}(\int_{Q_r\setminus Q_{r/2}}\abs{\nabla G}^2)^{1/2}\le C(n,\Lambda,\lambda,\Gamma)\abs{X-Y}^{n-2}G(X,Y).
  $$

  Now by \eqref{GW1.9pf0},
  $$
  1\le C(n,\Lambda,\lambda,\Gamma)\abs{X-Y}^{n-2}G(X,Y),
  $$
  which gives \eqref{GW1.9}.\par

  Proof of uniqueness:\par
  We only give the proof of (1.49) in \cite{gruter1982green}, which is the only place something different occurs. But it again follows from a variation of Corollary \ref{caccio'} and Harnack inequality:
  \begin{align*}
    \int_{\Omega\setminus B_{2\rho_{\nu}}(Y)}\abs{\nabla u}^2&\le C(n,\Lambda,\lambda,\Gamma)\rho_{\nu}^{2(\frac{n}{s'}-1)}\Big(\int_{B_{2\rho_{\nu}}\setminus B{\rho_{\nu}}}\abs{u}^{\frac{2s}{2-s}}\Big)^{\frac{2-s}{s}}\\
    &\le C(n,\Lambda,\lambda,\Gamma)\rho_{\nu}^{n-2}(\sup_{B_{2\rho_{\nu}}\setminus B{\rho_{\nu}}}\abs{u})^2\\
    &\le C(n,\Lambda,\lambda,\Gamma)\rho_{\nu}^{n-2}m^*(\rho_{\nu})^2,
  \end{align*}
where $m^*(\rho)\doteq\inf_{\bdy B_{\rho}(Y)}u$.

\end{proof}

The standard relations hold between Green's functions of the operator and its adjoint:

\begin{thm}
  Let $L^*=-\divg A^*\nabla$ be the adjoint operator to $L$ and consider the Green function $G$ and $G^*$ corresponding to $L$ and $L^*$. Then for all points $X,Y\in\Omega$, we have
  \begin{equation}
    G(X,Y)=G^*(Y,X).
  \end{equation}
\end{thm}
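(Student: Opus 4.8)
The plan is to follow the classical argument of Grüter–Widman, the only genuinely new ingredient being the identity relating the bilinear forms of $L$ and $L^*$. Since $a$ is symmetric and $b$ anti-symmetric, the transpose matrix is $A^* = a - b$, so $L^* = -\divg(a-b)\nabla$, and $-b$ is again anti-symmetric with $\norm{-b}_{BMO(\Omega)} = \norm{b}_{BMO(\Omega)}$; hence the previous theorem applies to $L^*$ and yields a Green's function $G^*$ enjoying all the same properties, together with approximating functions $G^{*,\sigma}(\cdot,X) \in W_0^{1,2}(\Omega)$ satisfying $B^*[G^{*,\sigma}(\cdot,X),\psi] = \fint_{B_\sigma(X)}\psi$ for all $\psi \in W_0^{1,2}(\Omega)$, where $B^*[u,v] = \int_\Omega(a\nabla u\cdot\nabla v - b\nabla u\cdot\nabla v)\,dx$. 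Using the symmetry of $a$ and the convention $\int_\Omega b\nabla u\cdot\nabla v = \frac12\int_\Omega b_{ij}(u_{x_j}v_{x_i} - u_{x_i}v_{x_j})\,dx$, which is anti-symmetric in $(u,v)$, one gets $\int_\Omega b\nabla u\cdot\nabla v = -\int_\Omega b\nabla v\cdot\nabla u$, and therefore $B^*[u,v] = B[v,u]$ for all $u,v\in W_0^{1,2}(\Omega)$.

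Next I would work at the level of the approximating Green's functions. Fix $X\neq Y$ in $\Omega$, and take $\phi = G^{*,\sigma}(\cdot,X)$ in \eqref{GW1.14} and $\psi = G^{\rho}(\cdot,Y)$ in the analogous identity for $G^{*,\sigma}$. Since $B[G^{\rho}(\cdot,Y),G^{*,\sigma}(\cdot,X)] = B^*[G^{*,\sigma}(\cdot,X),G^{\rho}(\cdot,Y)]$ by the displayed identity, this gives
$$\fint_{B_{\rho}(Y)} G^{*,\sigma}(Z,X)\,dZ \;=\; B[G^{\rho}(\cdot,Y),G^{*,\sigma}(\cdot,X)] \;=\; B^*[G^{*,\sigma}(\cdot,X),G^{\rho}(\cdot,Y)] \;=\; \fint_{B_{\sigma}(X)} G^{\rho}(Z,Y)\,dZ.$$

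Then I would pass to the limit, $\sigma\to0$ first and $\rho\to0$ afterwards. For $\rho,\sigma$ small the balls $B_{\rho}(Y)$ and $B_{\sigma}(X)$ are disjoint and each avoids the other centre. On $B_{\rho}(Y)$ the functions $G^{*,\sigma}(\cdot,X)$ solve $L^*u=0$ and are uniformly bounded by the pointwise bound analogous to \eqref{GW1.28}, so by the interior estimates of Section \ref{intSec} (in particular Lemma \ref{intHolderLem}) they converge, along the sequence $\sigma=\rho_\mu\to0$ and up to a further subsequence, locally uniformly on $B_{\rho}(Y)$ to a limit which by \eqref{GW1.4} must be $G^*(\cdot,X)$; likewise $G^{\rho}(\cdot,Y)$ converges locally uniformly near $X$ to $G(\cdot,Y)$, and since $G^{\rho}(\cdot,Y)$ is continuous there, $\fint_{B_{\sigma}(X)}G^{\rho}(Z,Y)\,dZ \to G^{\rho}(X,Y)$ as $\sigma\to0$. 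Hence $\fint_{B_{\rho}(Y)} G^*(Z,X)\,dZ = G^{\rho}(X,Y)$, and letting $\rho\to0$, using continuity of $G^*(\cdot,X)$ at $Y$ on the left and $G^{\rho}(X,Y)\to G(X,Y)$ on the right, we obtain $G^*(Y,X) = G(X,Y)$ for every $X\neq Y$; for $X=Y$ there is nothing to prove.

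The only step requiring care is the upgrade from the weak $W_0^{1,k}$-convergence $G^{\rho_\mu}\rightharpoondown G$ to locally uniform convergence away from the pole — this is precisely what the uniform pointwise bound \eqref{GW1.28} together with the De Giorgi–Nash–Moser interior estimates of Section \ref{intSec} deliver — and the identification of the limit, which is forced by the characterization \eqref{GW1.4}. With these in hand the remaining details are identical to those in \cite{gruter1982green}.
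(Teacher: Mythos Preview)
Your proposal is correct and follows precisely the Gr\"uter--Widman approach that the paper invokes (the paper does not write out a proof, merely asserting that the standard relations hold). The one point specific to this setting---the identity $B^*[u,v]=B[v,u]$, which relies on the anti-symmetry of $\int_\Omega b\nabla u\cdot\nabla v$ in $(u,v)$---you have identified and verified correctly, and the remaining limiting argument via the approximating Green's functions $G^\rho$, $G^{*,\sigma}$ is exactly the classical one.
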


And the following representation formula.
\begin{prop}
  For any $X,Y\in\Omega$, $\rho>0$ such that $\rho<\dist(Y,\bdy\Omega)$
  \begin{equation}
    G^{\rho}(X,Y)=\fint_{B_{\rho}(Y)}G(X,Z)dZ.
  \end{equation}
\end{prop}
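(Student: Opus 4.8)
The plan is to combine the variational definitions of $G^\rho(\cdot,Y)$ and of the corresponding regularized Green's function of the adjoint operator with the symmetry relation $G(X,Y)=G^*(Y,X)$ established above. Note first that $L^*=-\divg A^*\nabla$ has exactly the same structure as $L$: its symmetric part is still $a$, and its anti-symmetric part $-b$ still lies in $BMO(\Omega)$ with the same norm. Hence the whole construction in the preceding proof applies to $L^*$ as well, producing, for each $X\in\Omega$ and each $0<\sigma<\dist(X,\bdy\Omega)$, a unique $G^{*,\sigma}(\cdot,X)\in W_0^{1,2}(\Omega)$ with
$$
B^*[G^{*,\sigma}(\cdot,X),\phi]=\fint_{B_\sigma(X)}\phi\qquad\forall\,\phi\in W_0^{1,2}(\Omega),
$$
where $B^*[u,v]=B[v,u]$ is the bilinear form of $L^*$, together with the uniform $W_0^{1,k}(\Omega)$ bounds of \eqref{GW1.7} ($k\in[1,\tfrac n{n-1})$) and the convergence $G^{*,\sigma}(\cdot,X)\to G^*(\cdot,X)$ in $L^1(\Omega)$ as $\sigma\to0$; the latter follows from those bounds, Rellich compactness, and the uniqueness of the Green's function proved above.

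First I would test twice. Inserting $\phi=G^{*,\sigma}(\cdot,X)\in W_0^{1,2}(\Omega)$ in the defining identity \eqref{GW1.14} for $G^\rho(\cdot,Y)$ gives
$$
B[G^\rho(\cdot,Y),G^{*,\sigma}(\cdot,X)]=\fint_{B_\rho(Y)}G^{*,\sigma}(Z,X)\,dZ,
$$
while inserting $\phi=G^\rho(\cdot,Y)\in W_0^{1,2}(\Omega)$ in the defining identity for $G^{*,\sigma}(\cdot,X)$ (rewritten as $B[\phi,G^{*,\sigma}(\cdot,X)]=\fint_{B_\sigma(X)}\phi$) gives
$$
B[G^\rho(\cdot,Y),G^{*,\sigma}(\cdot,X)]=\fint_{B_\sigma(X)}G^\rho(Z,Y)\,dZ.
$$
Equating the two right-hand sides and letting $\sigma\to0$, the left average tends to $\fint_{B_\rho(Y)}G^*(Z,X)\,dZ=\fint_{B_\rho(Y)}G(X,Z)\,dZ$ by the $L^1$ convergence above and the symmetry theorem, while the right average tends to $G^\rho(X,Y)$ for a.e.\ $X$ by Lebesgue's differentiation theorem; since the left-hand side, viewed as a function of $X$, is continuous (split $\fint_{B_\rho(Y)}G(X,Z)\,dZ$ into the integrals over $B_\rho(Y)\cap B_\delta(X)$ and $B_\rho(Y)\setminus B_\delta(X)$, control the first by the pointwise bound \eqref{GW1.8} and the second by continuity of $X\mapsto G(X,Z)$ away from $Z$), this also pins down the continuous representative of $G^\rho(\cdot,Y)$ and yields the identity for every $X\in\Omega$.

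I expect the main obstacle to be the passage to the limit $\sigma\to0$ on the left, i.e.\ the convergence $G^{*,\sigma}(\cdot,X)\to G^*(\cdot,X)$ in $L^1(B_\rho(Y))$: when $X$ lies in or near $B_\rho(Y)$ one cannot simply use local uniform convergence away from the pole, so one must instead invoke the uniform bounds $\norm{G^{*,\sigma}(\cdot,X)}_{L^{\frac n{n-2},\infty}(\Omega)}\le C\lambda^{-1}$ and $\norm{G^{*,\sigma}(\cdot,X)}_{W_0^{1,k}(\Omega)}\le C$ together with Rellich compactness, and identify the limit using the uniqueness established above (the same scheme used to construct $G$ from the $G^\rho$). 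The remaining points---legitimacy of the two test-function substitutions, and the continuity argument used to upgrade the identity from almost every $X$ to every $X$---are routine.
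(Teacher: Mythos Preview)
Your argument is correct and is precisely the standard double-testing argument of Gr\"uter--Widman that the paper implicitly invokes (the paper does not write out a proof here, simply noting that the classical relations and representation formula carry over). The two substitutions $\phi=G^{*,\sigma}(\cdot,X)$ and $\phi=G^\rho(\cdot,Y)$, the identification $B^*[u,v]=B[v,u]$, and the passage $\sigma\to0$ via the uniform $W_0^{1,k}$ bounds plus uniqueness of $G^*$ are exactly the ingredients one expects, and your handling of the potential difficulty when $X\in\overline{B_\rho(Y)}$ (using the weak-$L^{n/(n-2)}$ and $W_0^{1,k}$ bounds together with Rellich compactness rather than local uniform convergence) is the right fix.
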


In the rest of this section, we apply the method of \cite{gruter1982green} to investigate the regular points for operator $L=-\divg A\nabla$ with $A$ satisfying \eqref{aBound}--\eqref{bBMO}.

\begin{defn}
  A point $Q\in\bdy\Omega$ is said to be regular for $L$, if for any Lipschitz function $h$ on $\bdy\Omega$, the $W^{1,2}$ solution $u$ to $Lu=0$ in $\Omega$ with boundary data $h$ satisfies
  \begin{equation}
    \lim_{X\rightarrow Q, X\in\Omega}u(X)=h(Q).
  \end{equation}
  If every point $Q\in\bdy\Omega$ is a regular point, then we say the domain $\Omega$ is regular.
\end{defn}

The main result is the following.
\begin{thm}\label{GWthm2.4}
  A point of $\bdy\Omega$ is a regular point for $L=-\divg A\nabla$ with $A$ satisfying \eqref{aBound}--\eqref{bBMO} if and only if it is a regular point for the Laplacian.
\end{thm}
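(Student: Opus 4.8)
The plan is to follow the classical Wiener-criterion strategy of \cite{gruter1982green}, exploiting the fact that regularity is detected by barriers, and that barriers for $L$ can be built from barriers for $\Delta$ using the Green's function estimates \eqref{GW1.8}--\eqref{GW1.9} together with the Harnack inequality (Lemma \ref{Harnackineq}) and the boundary H\"older estimate (Lemma \ref{BdyHolderCont}). The key point, as in \cite{gruter1982green}, is that the pointwise two-sided bound $C^{-1}\abs{X-Y}^{2-n}\le G(X,Y)\le C\abs{X-Y}^{2-n}$ (for $\abs{X-Y}$ comparable to $\delta(Y)$) means that the $L$-capacity of a compact set, defined via the Green's function, is comparable to its Newtonian (Wiener) capacity. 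Hence the Wiener series for $L$ converges at $Q$ if and only if the Wiener series for $\Delta$ converges at $Q$, and the latter is equivalent to regularity for the Laplacian by the classical Wiener theorem.

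The steps, in order: First I would recall from \cite{gruter1982green} the characterization of a regular point in terms of the existence of a local barrier, i.e. $Q\in\bdy\Omega$ is regular for $L$ iff there is a nonnegative $L$-supersolution $w$ in $\Omega\cap B_r(Q)$ with $w(X)\to 0$ as $X\to Q$ and $\liminf_{X\to P}w(X)>0$ for every $P\in\bdy\Omega\cap B_r(Q)\setminus\{Q\}$; the equivalence of this with the definition given above is exactly the argument of Littman--Stampacchia--Weinberger as adapted in \cite{gruter1982green}, and it goes through verbatim here since it only uses the maximum principle (Lemma \ref{maxprinciple}), the solvability of the Dirichlet problem (Theorem \ref{contDirichletSol}, Lemma \ref{Lipbdy}), and the interior Harnack inequality. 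Second, following \cite{gruter1982green}, I would express the barrier in terms of the equilibrium (capacitary) potential of the complement: for a compact set $K\subset B_r(Q)\setminus\Omega$, one considers the $L$-potential $u_K$ with $u_K=1$ on a neighborhood of $K$, and the $L$-capacity $\capc_L(K)$ defined through $B[u_K,u_K]$ or, equivalently, through the Riesz measure of $u_K$. Third, I would prove the comparability $\capc_L(K)\approx\capc_\Delta(K)$: the upper bound uses the energy estimate \eqref{energyest} and ellipticity \eqref{ellipticity} to compare the bilinear forms, and the representation of the potential via the Green's function together with \eqref{GW1.8}--\eqref{GW1.9} gives the matching two-sided bound on the potential, exactly as the corresponding estimates (1.32)--(1.34) and the capacity comparison in Section 3 of \cite{gruter1982green}. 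Fourth, with capacity comparability in hand, the Wiener-type series $\sum_k 2^{k(n-2)}\capc_L\big(B_{2^{-k}}(Q)\setminus\Omega\big)$ and its $\Delta$-counterpart converge or diverge together; and the argument of \cite{gruter1982green} shows that convergence of this series forces the capacitary potentials to tend to zero at $Q$, producing a barrier, while divergence prevents it. Assembling these, $Q$ is regular for $L$ iff the Wiener series converges iff $Q$ is regular for $\Delta$.

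The main obstacle I anticipate is controlling the anti-symmetric $BMO$ part $b$ in the capacity comparison — specifically, showing that the $BMO$ term does not destroy the two-sided estimate on the capacitary potential. The one-sided (upper) direction is easy from \eqref{energyest} and \eqref{ellipticity}, but the lower bound requires the pointwise Green's function lower bound \eqref{GW1.9}, whose proof already had to absorb a $\int_{Q_r} b\nabla G\cdot\nabla\vp$ term using the oscillation $b-(b)_{Q_r}$ and the gradient Caccioppoli estimate (Corollary \ref{caccio'}); one must check that the same localization trick — replacing $b$ by $b-(b)_{Q_r}$ on each dyadic annulus and invoking the $BMO$ bound \eqref{bBMO} together with the reverse-H\"older gradient estimate — survives when iterated across the scales appearing in the Wiener sum, with constants uniform in the scale. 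Since all the required ingredients (Harnack, boundary H\"older continuity, Green's function bounds, and the $BMO$-controlled Caccioppoli estimates) have been established in the preceding sections with constants depending only on $n,\lambda,\Lambda,\Gamma$ and the NTA character of $\Omega$, I expect this to go through, and the conclusion of Theorem \ref{GWthm2.4} follows exactly as in \cite{gruter1982green}.
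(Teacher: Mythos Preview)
Your overall strategy --- establish a Wiener criterion for $L$ and then deduce Theorem \ref{GWthm2.4} from a comparison $\capc_L\approx\capc_\Delta$ --- is exactly what the paper does. However, two points deserve correction.

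First, your capacity comparison is more laborious than necessary, and the ``main obstacle'' you anticipate does not arise. The paper obtains \eqref{L&Laplace} by a purely variational argument, with no appeal to the pointwise Green's function bounds \eqref{GW1.8}--\eqref{GW1.9}. For the lower bound, the equilibrium potential $u_L$ is admissible in the infimum defining $\capc_\Delta(E)$, so $\capc_\Delta(E)\le\int_\Omega\abs{\nabla u_L}^2\le\lambda^{-1}B[u_L,u_L]=\lambda^{-1}\capc_L(E)$ by \eqref{ellipticity}. For the upper bound, since $u_\Delta\in K_E$, the variational inequality \eqref{GW2.3} gives $\capc_L(E)=B[u_L,u_L]\le B[u_L,u_\Delta]$, and then \eqref{energyest} bounds this by $C(n,\Lambda,\Gamma)\norm{\nabla u_L}_{L^2}\norm{\nabla u_\Delta}_{L^2}\le C\lambda^{-1/2}\capc_L(E)^{1/2}\capc_\Delta(E)^{1/2}$. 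The anti-symmetric $BMO$ part enters only through the constant in \eqref{energyest}; there is no need to localize $b$ on dyadic annuli or to iterate Green's function estimates across scales.

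Second, you have the direction of the Wiener criterion reversed: the point $Q$ is regular for $L$ if and only if the Wiener integral \emph{diverges} (Theorem \ref{GWthm2.5}), not converges. Divergence of $\int_0\capc_L(C_r)r^{1-n}\,dr$ is what forces $\sup_{\Omega\cap B_r}u\to 0$ in the barrier construction. This does not affect the logic of the equivalence (since $\capc_L\approx\capc_\Delta$ makes the two integrals diverge or converge together), but the statement in your fourth step is wrong as written.
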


Let us first define $\capc_{L}$ of a compact set.

Let $E\subset\Omega$ be compact and denote by $K_E$ the following closed convex subset of $W_0^{1,2}(\Omega)$
\begin{equation}
  K_E\doteq\{v\in W_0^{1,2}(\Omega):v\ge1 \text{ on }E \text{ in the sense of }W^{1,2}\}.
\end{equation}
Then there exists a unique solution $u\in K_E$ of the variational inequality
\begin{equation}\label{GW2.3}
  B[u,v-u]\ge0 \quad\forall v\in K_E.
\end{equation}

The existence of $u$ can be justified by Theorem 2.1 in \cite{kinderlehrer1980introduction}, which only requires that the bilinear form $B[\cdot,\cdot]$ be coercive. \par
This function $u$ is called the equilibrium potential of $E$. Letting $v=\{u\}^1$, the truncation of $u$ at height 1 in \eqref{GW2.3}, we get $u\le 1$ a.e. in $\Omega$. Therefore,
\begin{equation}\label{u1onE}
u\equiv1 \quad\text{on }E\quad\text{in the sense of }W^{1,2}.
\end{equation}
From \eqref{GW2.3} we get for all $\phi\in C_0^{\infty}$ with $\phi\ge0$ on $E$
\begin{equation}\label{GW2.4}
  B[u,\phi]\ge0.
\end{equation}
Then we immediately get
\begin{prop}
  The equilibrium potential $u$ of the compact subset $E$ of $\Omega$ is the solution to
   $$
   \begin{cases}
     Lu=0\qquad\text{in }\Omega\setminus E,\\
     u|_{\bdy\Omega}=0,\quad u|_{\bdy E}=1\qquad\text{in the sense of }W_0^{1,2}.
   \end{cases}
   $$
   \end{prop}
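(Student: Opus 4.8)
The plan is to read off every assertion directly from the variational inequality \eqref{GW2.3} defining the equilibrium potential, together with the two consequences \eqref{u1onE} and \eqref{GW2.4} already derived.

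\emph{Interior equation.} First I would fix an arbitrary $\phi\in C_0^{\infty}(\Omega\setminus E)$. Since $\phi$ vanishes in a neighborhood of $E$, both $\phi$ and $-\phi$ are (trivially) nonnegative on $E$, so \eqref{GW2.4} applied to each gives $B[u,\phi]\ge0$ and $B[u,-\phi]\ge0$, whence $B[u,\phi]=0$. The energy estimate \eqref{energyest} remains valid with $W_0^{1,2}(\Omega\setminus E)$ in place of $W_0^{1,2}(\Omega)$ — its proof only uses the zero extension to $\Rn$ and the $BMO(\Rn)$-extension $\tilde b$ of $b$, which restricts to $b$ on any subdomain — so by density $B[u,\varphi]=0$ for every $\varphi\in W_0^{1,2}(\Omega\setminus E)$. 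Thus $u$ is a weak solution of $Lu=0$ in $\Omega\setminus E$.

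\emph{Boundary conditions.} Since $u\in W_0^{1,2}(\Omega)$ by construction, $u=0$ on $\bdy\Omega$ in the $W^{1,2}$-sense. Pick $\zeta\in C_0^{\infty}(\Omega)$ with $\zeta\equiv1$ in a neighborhood of $E$ (so $\zeta$ also vanishes near $\bdy\Omega$). By \eqref{u1onE} the function $u-1$ vanishes on $E$ in the sense of $W^{1,2}$; combined with $u\in W_0^{1,2}(\Omega)$ one checks, exactly as in \cite{gruter1982green}, that $u-\zeta\in W_0^{1,2}(\Omega\setminus E)$. This is precisely the statement that $u=0$ on $\bdy\Omega$ and $u=1$ on $\bdy E$ in the sense of $W_0^{1,2}$, so $u$ solves the displayed mixed boundary value problem.

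\emph{Uniqueness.} If $u_1,u_2$ both solve it, then $u_1-u_2\in W_0^{1,2}(\Omega\setminus E)$ and $B[u_1-u_2,\varphi]=0$ for all $\varphi\in W_0^{1,2}(\Omega\setminus E)$; taking $\varphi=u_1-u_2$ and invoking the ellipticity \eqref{ellipticity} (again valid on $\Omega\setminus E$) forces $\nabla(u_1-u_2)=0$, hence $u_1=u_2$ — this is just the Lax--Milgram uniqueness of Theorem \ref{LaxMilgramLem} transplanted to the subdomain $\Omega\setminus E$. I expect the only genuinely delicate point to be the precise interpretation of \eqref{u1onE} as the membership $u-\zeta\in W_0^{1,2}(\Omega\setminus E)$ (the capacity-theoretic identification of traces on $\bdy E$); everything else is a direct unwinding of definitions together with the energy estimates already established.
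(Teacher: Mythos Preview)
Your proposal is correct and follows exactly the route the paper has in mind: the paper does not give a separate proof but writes ``Then we immediately get'' after \eqref{GW2.4}, and your argument is precisely the intended unwinding --- testing \eqref{GW2.4} with $\pm\phi$ for $\phi\in C_0^\infty(\Omega\setminus E)$ to obtain $B[u,\phi]=0$, and reading off the boundary conditions from $u\in K_E\subset W_0^{1,2}(\Omega)$ and \eqref{u1onE}. The uniqueness paragraph is a welcome addition (the paper leaves it implicit in the word ``the''), and your caveat about the trace interpretation $u-\zeta\in W_0^{1,2}(\Omega\setminus E)$ is the only point requiring care, exactly as you note.
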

   By \eqref{GW2.4} and the Riesz representation theorem, there exists a regular Borel measure $\mu$ with $\supp\mu\subset E$ such that
   \begin{equation}\label{GW2.5}
     B[u,\phi]=\int_{E}\phi d\mu\qquad\forall\, \phi\in C_0^{\infty}(\Omega).
   \end{equation}
   \eqref{u1onE} implies that $\supp\mu\subset\bdy E$. The measure $\mu$ is called the equilibrium measure of $E$.
\begin{defn}
  The capacity of $E$ with respect to the operator $L$ is defined as
  \begin{equation}
    \capc_{L}(E)\doteq\mu(E)=B[u,u].
  \end{equation}
\end{defn}

As in the classical elliptic setting, the capacity with respect to $L$ can be compared to the capacity with respect to the Laplacian. Precisely,
\begin{equation}\label{L&Laplace}
  \lambda\capc_{\Delta}(E)\le\capc_{L}(E)\le \lambda^{-1}C(n,\Lambda,\Gamma)\capc_{\Delta}(E),
\end{equation}
where $\capc_{\Delta}(E)=\inf\{\int_{\Omega}\abs{\nabla u}^2: u\in C_0^{\infty}(\Rn), u\ge1 \text{ on }E\}$.

To see this, simply observe that
\begin{align*}
  \capc_{\Delta}(E)&\le\int_{\Omega}\abs{\nabla u_L}^2\le \frac1{\lambda}\int_{\Omega}A\nabla u_L\cdot\nabla u_L
  =\frac1{\lambda}\capc_L(E),
\end{align*}
where we use $u_L$ to denote the equilibrium potential with respect to operator $L$ of set $E$.
On the other hand, we have
\begin{align*}
  \capc_L(E)&\leq B[u_L,u_{\Delta}]\le C(n,\Lambda,\Gamma)\norm{\nabla u_L}_{L^2(\Omega)}\norm{\nabla u_{\Delta}}_{L^2(\Omega)}\\
  &\le C(n,\Lambda,\Gamma)\Big(\frac1{\lambda}\int_{\Omega}A\nabla u_L\cdot\nabla u_L\Big)^{1/2}\capc_{\Delta}(E)^{1/2}\\
  &=C(n,\Lambda,\Gamma)\lambda^{-\frac1{2}}\capc_L(E)^{1/2}\capc_{\Delta}(E)^{1/2}.
  \end{align*}

 The equilibrium potential $u$ with respect to $L$ of $E\subset\Omega$ can be represented as follows
 \begin{lem}Let $\mu$ be the equilibrium measure of $E$. Let G be the Green function on $\Omega$. Then
 \begin{equation}\label{GW2.8}
   u(X)=\int_EG(X,Y)d\mu(Y).
 \end{equation}
 \end{lem}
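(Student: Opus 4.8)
The plan is to identify $u$ with the Green potential $w(X):=\int_E G(X,Y)\,d\mu(Y)$. One should not attempt a direct energy comparison, because $w$ is not \emph{a priori} known to lie in $W^{1,2}(\Omega)$ (the pointwise estimates only give $\nabla G\in L^{n/(n-1),\infty}$, and a priori nothing better about the $\mu$-average); instead I would establish the averaged identity
\begin{equation*}
\fint_{B_\rho(X)}u=\fint_{B_\rho(X)}w\qquad\text{for every }X\in\Omega\text{ and every }0<\rho<\dist(X,\bdy\Omega),
\end{equation*}
and then let $\rho\to0$. The tool is the approximate Green function of the \emph{adjoint} operator. Since $A^*=a-b$ again satisfies \eqref{aBound}--\eqref{bBMO}, the construction used in the proof of the Green's function theorem, applied to $L^*=-\divg A^*\nabla$, produces for each $X\in\Omega$ and $\rho>0$ a function $G^{*,\rho}(\cdot,X)\in W_0^{1,2}(\Omega)$, continuous on $\Omega$ (it solves $L^*G^{*,\rho}(\cdot,X)=|B_\rho(X)|^{-1}\mathbf{1}_{B_\rho(X)}$, a bounded forcing term, so local boundedness and continuity follow from the Moser iteration of Section~\ref{intSec} with an extra controlled term), such that $B^*[G^{*,\rho}(\cdot,X),\psi]=\fint_{B_\rho(X)}\psi$ for all $\psi\in W_0^{1,2}(\Omega)$, where $B^*[v,w]:=\int_\Omega A^*\nabla v\cdot\nabla w$. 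Two elementary facts will be used: $B[f,g]=B^*[g,f]$ (because $A^*=A^T$), and, combining the representation formula $G^\rho(X,Y)=\fint_{B_\rho(Y)}G(X,Z)\,dZ$ (the Proposition above) with the symmetry $G(X,Y)=G^*(Y,X)$, the identity $G^{*,\rho}(Y,X)=\fint_{B_\rho(X)}G^*(Y,Z)\,dZ=\fint_{B_\rho(X)}G(Z,Y)\,dZ$ valid for $\rho<\dist(X,\bdy\Omega)$.

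First I would extend the equilibrium identity \eqref{GW2.5} so that $\phi=G^{*,\rho}(\cdot,X)$ is an admissible test function. Because $\dist(E,\bdy\Omega)>0$ and $G^{*,\rho}(\cdot,X)$ is continuous on $\Omega$, one can choose $\phi_j\in C_0^\infty(\Omega)$ with $\phi_j\to G^{*,\rho}(\cdot,X)$ in $W^{1,2}(\Omega)$ and uniformly on a fixed compact neighborhood of $E$; then $B[u,\phi_j]\to B[u,G^{*,\rho}(\cdot,X)]$ by \eqref{energyest}, while $\int_E\phi_j\,d\mu\to\int_E G^{*,\rho}(Y,X)\,d\mu(Y)$ since $\mu$ is a finite measure supported in $E$. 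Hence $B[u,G^{*,\rho}(\cdot,X)]=\int_E G^{*,\rho}(Y,X)\,d\mu(Y)$. On the other hand, $B[u,G^{*,\rho}(\cdot,X)]=B^*[G^{*,\rho}(\cdot,X),u]=\fint_{B_\rho(X)}u$ since $u\in W_0^{1,2}(\Omega)$. Combining, and using the representation formula for $G^{*,\rho}$ together with Tonelli's theorem (legitimate as $G\ge0$),
\begin{equation*}
\fint_{B_\rho(X)}u=\int_E\Big(\fint_{B_\rho(X)}G(Z,Y)\,dZ\Big)\,d\mu(Y)=\fint_{B_\rho(X)}w,
\end{equation*}
which is exactly the averaged identity.

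Finally I would check that $u,w\in L^1_{loc}(\Omega)$ — immediate for $u\in W_0^{1,2}(\Omega)$, and for $w$ from $\int_B w=\int_E\|G(\cdot,Y)\|_{L^1(B)}\,d\mu(Y)\lesssim\mu(E)<\infty$ by \eqref{GW1.5} — so that the Lebesgue differentiation theorem applies, and letting $\rho\to0$ in the averaged identity yields $u=w$ a.e.\ in $\Omega$, which is \eqref{GW2.8}. The step I expect to cost the most is the first one: since $w$ need not belong to $W^{1,2}(\Omega)$ one cannot invoke Lax--Milgram uniqueness to compare $u$ and $w$ directly, so the whole identification has to be funneled through the approximate adjoint Green functions and the averaged identity above; the technical burden there is the simultaneous $W^{1,2}$-and-uniform approximation of $G^{*,\rho}(\cdot,X)$ near $E$, which relies on the interior continuity of $G^{*,\rho}(\cdot,X)$.
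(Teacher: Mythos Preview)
Your proof is correct and shares the paper's core idea: test the equilibrium identity \eqref{GW2.5} with the approximate adjoint Green function $G^{*,\rho}(\cdot,X)\in W_0^{1,2}(\Omega)$ to obtain $\fint_{B_\rho(X)}u=\int_E G^{*,\rho}(Y,X)\,d\mu(Y)$, then let $\rho\to0$. The paper's sketch writes this as ``take $\phi=G^{\rho}(X,\cdot)$'' and handles the limit differently: it applies Fatou's lemma to get $\int_E G(X,Y)\,d\mu(Y)\le u(X)$ directly, and for the reverse inequality invokes a uniform domination $G^{\rho}(X,Y)\le C\big(G(X,Y)+1\big)$ for $Y\in\Omega_\delta$ (with $\delta=\dist(E,\partial\Omega)/10$) to run dominated/reverse-Fatou convergence on the right-hand side. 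Your route is neater: you swap the order of integration by Tonelli to recognize the right-hand side as $\fint_{B_\rho(X)}w$, then use Lebesgue differentiation on both $u$ and $w$, which spares you the domination estimate entirely at the price of obtaining the identity only a.e.\ (harmless, since both sides are continuous off $\partial E$). The one place to tighten is the simultaneous $W^{1,2}$-and-local-uniform approximation of $G^{*,\rho}(\cdot,X)$: rather than asserting such a sequence exists, it is cleaner to note that $\phi\mapsto\int_E\phi\,d\mu=B[u,\phi]$ is, by \eqref{energyest}, a bounded functional on $W_0^{1,2}(\Omega)$, so $\mu$ has finite energy and the pairing extends to quasi-continuous representatives; since $G^{*,\rho}(\cdot,X)$ solves the homogeneous adjoint equation near $\supp\mu\subset\partial E$ once $B_\rho(X)\cap\partial E=\emptyset$, it is continuous there and the extended pairing coincides with the honest integral.
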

 This can be proved by taking $\phi=G^{\rho}(X,\cdot)$ in \eqref{GW2.5}, and then using Fatou's lemma. One direction of the inequality using Fatou's lemma is immediate. For the other direction, note that we can show for $Y\in\Omega_{\delta}$, $X\in\Omega$, $G^{\rho}(X,Y)\le C(n,\lambda,\Lambda,\Gamma)(G(X,Y)+1)$, where $\Omega_{\delta}=\{X\in\Omega:\dist(X,\bdy\Omega)>\delta\}$ and $\delta=\dist(E,\bdy\Omega)/10$.

 After these preparations, the classical arguments go through and we obtain the following.

 \begin{lem} Fix $Q\in\bdy\Omega$ (we may assume $Q=O$). For $r>0$, set $C_r\doteq\Omega^{\complement}\cap B_r(O)$. Consider the weak solution $u\in W^{1,2}(\Omega)$ of $Lu=0$ in $\Omega$, $0\le u\le1$ on $\bdy\Omega$ and $u=0$ on $\bdy\Omega\cap B_{\rho}$. There exist constants $0<\alpha_0(n,\lambda,\Lambda,\Gamma)<\frac1{2}$ and $C(n,\lambda,\Lambda,\Gamma)>0$ such that for all $\alpha\le\alpha_0$ and $r<\alpha\rho$ we have
 \begin{equation}
   \sup_{\Omega\cap B_r}u\le
   \exp\Big\{\frac{C}{\log\alpha}\int_{r}^{\alpha\rho}\frac{\capc_L(C_t)}{t^{n-1}}dt\Big\}.
 \end{equation}
 \end{lem}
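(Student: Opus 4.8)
The plan is to run the classical capacitary‑decay argument of Grüter--Widman, which organises around an iterated one‑step estimate; the $BMO$ anti‑symmetric part enters only through facts already proved for the (extended) operator in Sections~\ref{intSec}--\ref{GreenSection}.

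\emph{Reduction and set‑up.} First I would replace $u$ by $u^+$: by Lemma~\ref{pm_subLem} this is a nonnegative $W^{1,2}(\Omega)$‑subsolution, it still vanishes on $\bdy\Omega\cap B_\rho$ and is $\le 1$ on $\bdy\Omega$, and $\sup_{\Omega\cap B_r}u\le\sup_{\Omega\cap B_r}u^+$; by the maximum principle (Lemma~\ref{maxprinciple}, or its $W^{1,2}$ version) $u\le 1$ in $\Omega$. Then, exactly as in the proof of Lemma~\ref{BdyHolderLem1} with boundary value $0$, the zero extension $\tilde u:=u^+\mathbf 1_\Omega$ is a subsolution of the extended operator $\tilde L=-\divg\tilde A\nabla$ in $B_\rho(O)$, where $\tilde A=\tilde a+\tilde b$ with $\tilde a=a$ on $\Omega$, $\tilde a=\lambda I$ outside, and $\tilde b\in BMO(\Rn)$ extending $b$; note $0\le\tilde u\le 1$. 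All the results of Section~\ref{GreenSection} --- in particular the two‑sided Green function bounds \eqref{GW1.8}--\eqref{GW1.9}, the representation \eqref{GW2.8} of equilibrium potentials, and the capacity comparison \eqref{L&Laplace} (so that $\capc_L(E)\le C_0(n,\lambda,\Lambda,\Gamma)\,t^{n-2}$ whenever $E\subset B_t$) --- apply verbatim to $\tilde L$ on balls. Here $\capc_L$ means capacity for $\tilde L$ in a fixed ambient ball; any two such normalisations are comparable up to $(n,\lambda,\Lambda,\Gamma)$‑constants since only finitely many scales intervene. Write $\psi(s):=\capc_L(C_s)/s^{n-2}\in[0,C_0]$ and $m(s):=\sup_{B_s(O)}\tilde u$, which is nondecreasing with $m(\rho)\le 1$.

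\emph{One‑step decay (the main point).} Fix $\kappa=\tfrac1{10}$. I would establish: there is $c_0=c_0(n,\lambda,\Lambda,\Gamma)>0$ with $c_0C_0\le\tfrac12$ such that $m(\kappa t)\le m(t)\bigl(1-c_0\,\psi(\kappa t)\bigr)$ for all $0<t\le\rho$. Let $w$ be the $\tilde L$‑equilibrium potential of $C_{\kappa t}$ relative to $B_t$. Since $\tilde u=0$ on $C_{\kappa t}$, $\tilde u\le m(t)$ on $B_t$, $w=1$ on $C_{\kappa t}$ and $w=0$ on $\bdy B_t$, the weak maximum principle (cf.\ the proof of Lemma~\ref{maxprinciple}, using ellipticity \eqref{AEllip} of $\tilde A$) gives $\tilde u\le m(t)(1-w)$ in $B_t$. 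By \eqref{GW2.8} for $\tilde L$ on $B_t$ one has $w(X)=\int_{C_{\kappa t}}G^{B_t}_{\tilde L}(X,Y)\,d\mu(Y)$ with $\mu(C_{\kappa t})=\capc_L(C_{\kappa t})$; for $X\in B_{\kappa t}$, $Y\in C_{\kappa t}$ we have $|X-Y|<2\kappa t\le\tfrac12(1-\kappa)t\le\tfrac12\dist(Y,\bdy B_t)$, so \eqref{GW1.9} gives $G^{B_t}_{\tilde L}(X,Y)\ge c\,|X-Y|^{2-n}\ge c(2\kappa t)^{2-n}$, whence $\inf_{B_{\kappa t}}w\ge c(2\kappa t)^{2-n}\capc_L(C_{\kappa t})\gtrsim\psi(\kappa t)$; the claim follows (the resulting factor lies in $[\tfrac12,1]$). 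This is the step I expect to be the real obstacle: it is precisely where one must have the equilibrium‑potential and Green‑function machinery available for an operator whose anti‑symmetric part is only $BMO$ --- everything else is bookkeeping on top of the estimates of Sections~\ref{intSec}--\ref{GreenSection}.

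\emph{Iteration and sum‑to‑integral.} Let $t_0$ be the largest number $\kappa^{j}\rho$ ($j\ge 0$) that is $\le\rho$, and $t_k:=\kappa^k t_0$. Iterating the one‑step estimate and using $m(t_0)\le m(\rho)\le 1$ gives $m(t_k)\le\prod_{j=1}^{k}\bigl(1-c_0\psi(t_j)\bigr)\le\exp\bigl(-c_0\sum_{j=1}^{k}\psi(t_j)\bigr)$. Given $r<\alpha\rho$ with $\alpha\le\alpha_0$, let $m^*$ be maximal with $t_{m^*}\ge r$; if $\alpha_0\le\kappa^3$ then (using $t_0>\kappa\rho$) $\alpha\rho\le\kappa^3\rho<\kappa^2 t_0<t_1$, so $[r,\alpha\rho]\subset\bigcup_{j=1}^{m^*}[t_{j+1},t_j]$, and since $s\mapsto\capc_L(C_s)$ is monotone, $\psi(t)\le\kappa^{2-n}\psi(t_j)$ on $[t_{j+1},t_j]$; hence
\[
\int_r^{\alpha\rho}\frac{\capc_L(C_t)}{t^{n-1}}\,dt\ \le\ \sum_{j=1}^{m^*}\int_{t_{j+1}}^{t_j}\psi(t)\,\frac{dt}{t}\ \le\ \kappa^{2-n}|\log\kappa|\sum_{j=1}^{m^*}\psi(t_j).
\]
Therefore $\sup_{\Omega\cap B_r}u\le m(r)\le m(t_{m^*})\le\exp\!\bigl(-C\int_r^{\alpha\rho}\tfrac{\capc_L(C_t)}{t^{n-1}}\,dt\bigr)$ with $C:=c_0\kappa^{n-2}/|\log\kappa|>0$. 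Finally, fixing $\alpha_0:=\min\{\kappa^3,e^{-1}\}<\tfrac12$, for $\alpha\le\alpha_0$ we have $\log\alpha<0$ and $|\log\alpha|\ge 1$, so $-C\le C/\log\alpha$ and thus $\exp\!\bigl(-C\int_r^{\alpha\rho}\cdots\bigr)\le\exp\!\bigl(\tfrac{C}{\log\alpha}\int_r^{\alpha\rho}\cdots\bigr)$, which is the asserted estimate. Apart from the one‑step decay, the only routine items to check are the $W^{1,2}$ comparison principle and the elementary covering/monotonicity bookkeeping in this last step.
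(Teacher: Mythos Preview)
Your proposal is correct and is precisely the Gr\"uter--Widman capacitary--decay argument that the paper invokes: the paper itself gives no proof, stating only that ``the classical arguments go through'' once the equilibrium--potential representation \eqref{GW2.8}, the Green--function bounds \eqref{GW1.8}--\eqref{GW1.9}, and the capacity comparison \eqref{L&Laplace} are available for $\tilde L$. The small technical points you flag (the $W^{1,2}$ comparison principle in $B_t\setminus C_{\kappa t}$, taking $m(t)$ as a supremum over the closed ball so that $\tilde u\le m(t)$ on $\partial B_t$) are routine and exactly the bookkeeping Gr\"uter--Widman carry out.
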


 \begin{thm}\label{GWthm2.5}
   The point $Q\in\bdy\Omega$ (we may assume $Q=O$) is a regular point for $Lu=0$ if and only if
   \begin{equation}
     \int_{0}\frac{\capc_L(C_r)}{r^{n-1}}dr=\infty,
   \end{equation}
   where $C_r=\Omega^{\complement}\cap B_r(O)$.
 \end{thm}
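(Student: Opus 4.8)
The plan is to prove the two implications separately, in the spirit of Gr\"uter--Widman \cite{gruter1982green}; all the analytic input is already available — the exponential upper estimate of the preceding lemma, the two-sided bounds \eqref{GW1.8}--\eqref{GW1.9} on $G$ together with the representation \eqref{GW2.8} of equilibrium potentials, the comparison \eqref{L&Laplace} between $\capc_L$ and $\capc_{\Delta}$, the maximum principle (Lemma \ref{maxprinciple}), Harnack's inequality (Lemma \ref{Harnackineq}), and solvability of the Lipschitz Dirichlet problem (Lemma \ref{Lipbdy}). We keep $Q=O$ and write $B_r=B_r(O)$, $T_r=B_r\cap\Omega$, $\Delta_r=B_r\cap\bdy\Omega$, $C_r=\Omega^{\complement}\cap B_r$; the capacity of the compact sets $C_r$ is taken relative to a fixed ball $B^{*}\supset\supset\overline{\Omega}$ for the operator obtained by extending $a$ to $\lambda I$ off $\Omega$ and $b$ to some $\tilde b\in BMO(\Rn)$, a choice which by \eqref{L&Laplace} affects neither the statement nor the convergence of $\int_{0}\capc_L(C_r)r^{1-n}\,dr$.

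\smallskip\noindent\emph{Divergence implies regularity.} Let $h\in Lip(\bdy\Omega)$; since $L$ annihilates constants we may assume $h(Q)=0$. Let $u\in W^{1,2}(\Omega)$ be the solution with data $h$, let $\eps>0$, and choose $\rho>0$ with $\abs{h}<\eps$ on $\Delta_{2\rho}$. Pick a Lipschitz $g$ on $\bdy\Omega$ with $0\le g\le1$, $g\equiv0$ on $\Delta_{\rho}$ and $g\equiv1$ on $\bdy\Omega\setminus B_{2\rho}$, and let $w$ be the corresponding solution (Lemma \ref{Lipbdy}); then $0\le w\le1$. On $\bdy\Omega$ one has $\abs{h}\le\eps+2\norm{h}_{L^{\infty}}g$, so $\eps+2\norm{h}_{L^{\infty}}w\pm u\ge0$ there, and as these functions are $L$-harmonic in $\Omega$ the maximum principle gives $\abs{u}\le\eps+2\norm{h}_{L^{\infty}}w$ in $\Omega$. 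Now $w$ satisfies the hypotheses of the preceding lemma, so for $\alpha\le\alpha_0$ and $r<\alpha\rho$
\[
\sup_{T_r}w\ \le\ \exp\Big\{\frac{C}{\log\alpha}\int_{r}^{\alpha\rho}\frac{\capc_L(C_t)}{t^{n-1}}\,dt\Big\};
\]
since $\log\alpha<0$ while the integral tends to $+\infty$ as $r\to0$ by hypothesis, the right side tends to $0$, hence $\sup_{T_r}w\to0$. Therefore $\limsup_{X\to Q}\abs{u(X)}\le\eps$, and letting $\eps\to0$ gives $u(X)\to0=h(Q)$; as $h$ was arbitrary, $Q$ is regular.

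\smallskip\noindent\emph{Regularity implies divergence.} We argue by contraposition: assuming $\int_{0}\capc_L(C_t)t^{1-n}\,dt<\infty$ we produce Lipschitz data whose solution does not attain its value at $Q$. Fix $\rho>0$, let $w$ be as above (Lipschitz data $g$ with $g\equiv0$ on $\Delta_{\rho}$, $g\equiv1$ off $B_{2\rho}$, $0\le g\le1$, so $g(Q)=0$), and let $A_r=A_r(Q)$ be the interior corkscrew points of Definition \ref{CorkscrewDefn}. The decisive ingredient is the lower counterpart of the preceding lemma: for all small $r$,
\[
w(A_r)\ \ge\ c\,\exp\Big\{-C\int_{r}^{\alpha_0\rho}\frac{\capc_L(C_t)}{t^{n-1}}\,dt\Big\}.
\]
One proves this as the upper estimate was proved, by iterating over dyadic scales a single-scale inequality of the shape $w(A_r)\ge e^{-C\capc_L(C_{2r})(2r)^{2-n}}\,w(A_{2r})$, which itself follows from Harnack's inequality, the capacity comparison \eqref{L&Laplace}, and the bounds \eqref{GW1.8}--\eqref{GW1.9} applied through the representation \eqref{GW2.8} of the equilibrium potential of $C_{2r}$; by Harnack's inequality and the connectedness of $\Omega$ we have $w>0$ in $\Omega$, so $w(A_{\alpha_0\rho})=:c>0$. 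Since the Wiener integral converges, $\int_{r}^{\alpha_0\rho}\to\int_{0}^{\alpha_0\rho}<\infty$ as $r\to0$, so $w(A_r)$ remains bounded below by a positive constant; as $A_r\to Q$ this forces $\limsup_{X\to Q}w(X)>0=g(Q)$, and the Lipschitz datum $g$ witnesses that $Q$ is not regular.

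\smallskip\noindent\emph{Where the difficulty lies.} The first implication is short once the preceding lemma is granted. The substance is in the second: one must (i) fix a consistent notion of $\capc_L$ for sets lying in $\Omega^{\complement}$, handled by passing to the auxiliary ball $B^{*}$ and an extension of the coefficients and checking that the $B^{*}$-Green function still obeys \eqref{GW1.8}--\eqref{GW1.9}; and, more seriously, (ii) establish the single-scale lower bound and iterate it to the displayed exponential estimate. The obstacle in (ii) is the familiar one in Wiener-type theorems — near $Q$ one must control the equilibrium potentials of the dyadic pieces of $\Omega^{\complement}$, trading the crude bound ``potential $\le 1$'' against the decay supplied by the capacities. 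The one genuinely new point relative to \cite{gruter1982green} is that Harnack's inequality and the Green-function estimates are now available in the $BMO$ setting; given that, the Gr\"uter--Widman scheme carries over essentially unchanged.
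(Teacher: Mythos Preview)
The paper gives no proof of this theorem; it simply records that ``the classical arguments go through'' and defers to Gr\"uter--Widman once the Green-function bounds \eqref{GW1.8}--\eqref{GW1.9}, the representation \eqref{GW2.8}, and the comparison \eqref{L&Laplace} are in hand. Your sufficiency direction (Wiener integral divergent $\Rightarrow$ regular) is correct and is exactly the standard deduction from the preceding lemma.

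Your necessity sketch, however, has a real gap. The single-scale lower bound $w(A_r)\ge e^{-C\capc_L(C_{2r})(2r)^{2-n}}w(A_{2r})$ does not follow from the ingredients you list in the way you suggest. The natural comparison of $w$ against a multiple of $1-p_{2r}$ (with $p_{2r}$ the equilibrium potential of $C_{2r}$) breaks down on $\partial B_{2r}\cap\Omega$ near $\partial\Omega$, where both functions vanish but at rates governed by different boundary data; and Harnack's inequality alone (or the boundary comparison principle, Proposition~\ref{comparisonpp}) gives only $w(A_r)\ge c_0\,w(A_{2r})$ with a \emph{fixed} $c_0<1$ independent of the capacity, which after iteration yields $w(A_r)\lesssim r^{\beta}\to 0$ and proves nothing. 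The upper and lower estimates are genuinely asymmetric here --- you cannot prove the lower one ``as the upper estimate was proved.''

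The Gr\"uter--Widman necessity argument is global rather than iterative. One bounds $1-w$ from above in $\Omega$ by $P=\sum_{j\ge j_0}p_j$, where $p_j$ is the $\tilde L$-equilibrium potential (in the auxiliary ball $B^{*}$) of the dyadic shell $E_j=\Omega^{\complement}\cap(\overline{B_{r_j}}\setminus B_{r_{j+1}})$, $r_j=\alpha^j$. On $\partial\Omega$ one has $1-w\le 1\le P$ on $\Delta_{2\rho}$ (each such boundary point lies in some $\overline{E_j}$, where $p_j=1$) and $1-w=0\le P$ off $B_{2\rho}$, so the $W^{1,2}$ maximum principle gives $1-w\le P$ in $\Omega$. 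Then \eqref{GW2.8} and \eqref{GW1.8} give $p_j(A_{r_k})\le C\,\capc_L(E_j)\,\max(r_j,r_k)^{2-n}$, and summing over $j$ yields $P(A_{r_k})\le C\int_{0}^{2\rho}\capc_L(C_t)\,t^{1-n}\,dt$ uniformly in $k$. Choosing $\rho$ so small that this is $<1$ forces $w(A_{r_k})\ge 1-P(A_{r_k})>0$ for all $k$, contradicting regularity. All of the inputs you identified are the right ones; only the architecture of the comparison needs to change.
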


Then Theorem \ref{GWthm2.4} is a consequence of Theorem \ref{GWthm2.5} and \eqref{L&Laplace}.

  \section{Elliptic measure}\label{ellipticSec}

  Let $g\in C(\bdy\Omega)$, $X\in\Omega$. We showed in Section \ref{BdySection} that there exists a unique $u\in W_{loc}^{1,2}(\Omega)\cap C(\overline{\Omega})$ such that $Lu=0$ in $\Omega$ and $u=g$ on $\bdy\Omega$. Consider the linear functional on $C(\bdy\Omega)$
  $$
  T: g\mapsto u(X).
  $$
  By Lemma \ref{positivity} and Lemma \ref{maxprinciple}, $T$ is a positive and bounded linear functional. Moreover, if $g\equiv1$, $u\equiv1$. By the Riesz representation theorem, there exists a family of regular Borel probability measures $\{\omega_L^X\}_{X\in\Omega}$ such that
  \begin{equation}
    u(X)=\int_{\bdy\Omega}g(P)d\omega_L^X(P).
  \end{equation}
  This family of measures is called the elliptic measure associated to $L$, or $L$-harmonic measure. We omit the reference to $L$ when no confusion arises. For a fixed $X_0\in\Omega$, let $\omega=\omega^{X_0}$.\par
  We list some properties of the elliptic measure. Since the main tools used here are Harnack principle and H\"older continuity of the solution, most of the properties can be proved as in \cite{kenig1994harmonic}. We only provide a proof when needed.\par
  In what follows $C$ is a constant depending on $n,\lambda,\Lambda,\Gamma$ and the constants $M,m,m'$ in Definition \ref{CorkscrewDefn} and \ref{HarnackChainDefn} unless otherwise stated. Its value may vary from line to line.

  \begin{prop}
    $\forall \, X_1,X_2\in\Omega$, $\omega^{X_1}$ and $\omega^{X_2}$ are mutually absolutely continuous.
  \end{prop}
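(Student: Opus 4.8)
The plan is to reduce the statement to the standard Harnack-principle argument used for the Laplace equation, which applies verbatim once the interior Harnack inequality (Lemma \ref{Harnackineq}), the Harnack chain condition, and the fact that $\omega^X$ are probability measures are in hand. The key observation is that mutual absolute continuity of $\omega^{X_1}$ and $\omega^{X_2}$ is a pointwise statement about the densities, so it suffices to prove the two-sided bound
$$
C^{-1}\,\omega^{X_2}(F)\le \omega^{X_1}(F)\le C\,\omega^{X_2}(F)
$$
for every Borel set $F\subset\bdy\Omega$, with $C=C(n,\lambda,\Lambda,\Gamma,M,m,m')$ depending on $X_1,X_2$ only through the Harnack chain joining them. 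By symmetry in $X_1,X_2$ it is enough to prove one of these inequalities.

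First I would fix $F\subset\bdy\Omega$ closed and choose a decreasing sequence of continuous functions $g_k$ on $\bdy\Omega$ with $0\le g_k\le 1$ and $g_k\downarrow \mathbf 1_F$ pointwise; write $u_k$ for the solution of the continuous Dirichlet problem with data $g_k$, which exists and is unique by Theorem \ref{contDirichletSol}. Then each $u_k$ is a nonnegative weak solution of $Lu_k=0$ in $\Omega$ with $u_k\le 1$, and $u_k(X)=\int_{\bdy\Omega}g_k\,d\omega^X$. By monotone convergence $u_k(X)\downarrow \omega^X(F)$ for every $X\in\Omega$. The point is now to compare $u_k(X_1)$ and $u_k(X_2)$ uniformly in $k$: since $u_k>0$ in $\Omega$ (by the strong maximum principle, or directly from Harnack, unless $u_k\equiv 0$, in which case the comparison is trivial), I can connect $X_1$ to $X_2$ by a Harnack chain of balls $B_1,\dots,B_N$ as in Definition \ref{HarnackChainDefn} — here $N$ depends only on $|X_1-X_2|$, $\delta(X_1)$, $\delta(X_2)$ and $m,m'$ — and apply Lemma \ref{Harnackineq} successively on $2B_i\subset\Omega$ to get $u_k(X_1)\le C^N u_k(X_2)$ with $C$ the Harnack constant. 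Letting $k\to\infty$ yields $\omega^{X_1}(F)\le C^N\,\omega^{X_2}(F)$ for all closed $F$, hence for all Borel $F$ by outer regularity of the (Borel, hence Radon) measures $\omega^{X_i}$; interchanging $X_1$ and $X_2$ gives the reverse bound. Consequently $\omega^{X_1}(F)=0 \iff \omega^{X_2}(F)=0$, which is exactly mutual absolute continuity.

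One technical detail to dispatch is the applicability of Lemma \ref{Harnackineq} near $\bdy\Omega$: the chain balls $B_i$ satisfy $\diam B_i\lesssim \delta(B_i)$, so after shrinking each $B_i$ by a fixed factor we still have $\overline{2B_i}\subset\Omega$, and the Harnack inequality applies with a constant independent of $i$. Since the number of balls is $N\le m'\log_2\big(|X_1-X_2|/\min\{\delta(X_1),\delta(X_2)\}\big)$, the resulting constant $C^N$ is finite and depends only on the stated parameters together with the two fixed points. The main (and essentially only) obstacle is bookkeeping: making sure the Harnack-chain comparison is applied to the right concentric dilates and that the passage $g_k\downarrow\mathbf 1_F$ interacts correctly with the representation formula and with Borel regularity; the analytic content is entirely contained in the already-established interior Harnack inequality and the NTA geometry.
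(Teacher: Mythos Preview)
Your proof is correct and follows exactly the standard Harnack-chain argument that the paper defers to (the paper omits the proof of this proposition, referring to \cite{kenig1994harmonic} since only the Harnack principle and the NTA geometry are needed). The one cosmetic slip is that $g_k\downarrow\mathbf 1_F$ calls for dominated rather than monotone convergence, but this changes nothing.
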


  Recall that for $Q\in\bdy\Omega$, $A_r(Q)$ is the corkscrew point in $\Omega$ such that $\abs{A_r(Q)-Q}<r$ and $\delta(A_r(Q))>\frac{r}{M}$.

  \begin{prop}
    Let $Q\in\bdy\Omega$. Then
    $\omega^{A_r(Q)}(\Delta_r(Q))\ge C$.
  \end{prop}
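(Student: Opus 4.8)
The plan is to bound $\omega_L^{A_r(Q)}(\Delta_r(Q))$ from below by comparing it with the solution of a continuous Dirichlet problem whose boundary data vanishes near $Q$, and then to transport a small-scale estimate to the corkscrew point along a Harnack chain. Write $\omega=\omega_L$ and abbreviate $A_r=A_r(Q)$.

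First I would fix a Lipschitz function $\phi$ on $\bdy\Omega$ with $0\le\phi\le1$, $\phi\equiv0$ on $\Delta_{r/2}(Q)$, and $\phi\equiv1$ on $\bdy\Omega\setminus\Delta_r(Q)$ --- for instance $\phi(P)=\min\{1,\tfrac{2}{r}\dist(P,\Delta_{r/2}(Q))\}$, which equals $1$ off $\Delta_r(Q)$ since there $\dist(P,\Delta_{r/2}(Q))\ge r/2$. By Lemma \ref{Lipbdy} there is a unique $v\in W^{1,2}(\Omega)\cap C(\overline\Omega)$ with $Lv=0$ in $\Omega$ and $v|_{\bdy\Omega}=\phi$, and by the maximum principle (Lemma \ref{maxprinciple}) $0\le v\le1$. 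Since $\phi$ is continuous, $v(X)=\int_{\bdy\Omega}\phi\,d\omega^X$, whence
\[
v(X)\ \ge\ \omega^X\bigl(\bdy\Omega\setminus\Delta_r(Q)\bigr)\ =\ 1-\omega^X(\Delta_r(Q)),
\]
so $\omega^X(\Delta_r(Q))\ge 1-v(X)$. It therefore suffices to produce a universal constant $C>0$ with $v(A_r)\le 1-C$.

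Next I would feed $v$ into the boundary H\"older estimate. Since $v$ vanishes on $\Delta_{r/2}(Q)$, is continuous on $\overline\Omega$, and satisfies $0\le v\le1$, the estimate \eqref{bdyHolderL2form} of Lemma \ref{BdyHolderCont}, applied at $Q$ with the scale $r/2$, gives for every $0<\rho\le r/4$
\[
\sup_{T_\rho(Q)}v\ =\ \underset{T_\rho(Q)}{\osc}\,v\ \le\ C_0\Bigl(\tfrac{2\rho}{r}\Bigr)^{\alpha}\Bigl(\fint_{T_{r/2}(Q)}v^2\Bigr)^{1/2}\ \le\ C_0\Bigl(\tfrac{2\rho}{r}\Bigr)^{\alpha},
\]
where $\inf_{T_\rho(Q)}v=0$ because $v$ is continuous up to $\bdy\Omega$ and vanishes on $\Delta_\rho(Q)\subset\Delta_{r/2}(Q)$. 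Choosing a universal $c_0\in(0,\tfrac{1}{4}]$ with $C_0(2c_0)^{\alpha}\le\tfrac{1}{2}$, the interior corkscrew point $A_{c_0r}(Q)\in T_{c_0r}(Q)$ then satisfies $v(A_{c_0r}(Q))\le\tfrac{1}{2}$.

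Finally I would pass to $w:=1-v$, a nonnegative solution of $Lw=0$ in $\Omega$ (constants satisfy $Lu=0$) with $w(A_{c_0r}(Q))\ge\tfrac{1}{2}$, and join $A_{c_0r}(Q)$ to $A_r$ by a Harnack chain. Since $|A_{c_0r}(Q)-A_r|<2r$ while $\min\{\delta(A_{c_0r}(Q)),\delta(A_r)\}>c_0r/M$, the ratio $l$ in Definition \ref{HarnackChainDefn} is at most $2M/c_0$, a constant; hence there is a Harnack chain of length $N\lesssim m'\log_2(2M/c_0)$, and iterating the Harnack inequality (Lemma \ref{Harnackineq}) along it yields $w(A_r)\ge C_1\, w(A_{c_0r}(Q))\ge C_1/2$ for a universal $C_1>0$. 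Then $v(A_r)=1-w(A_r)\le 1-C_1/2$, and by the first step $\omega^{A_r}(\Delta_r(Q))\ge C_1/2$. The step that needs genuine care is this last one: the balls in a Harnack chain are only comparable to --- not a fixed fraction smaller than --- their distance to $\bdy\Omega$, so applying Lemma \ref{Harnackineq} (which requires a concentric doubled ball inside $\Omega$) requires covering each chain ball, together with the segments joining consecutive overlap points, by a controlled number of sub-balls of good geometry; the total count stays bounded in terms of $m,m',M$, so this is routine but must be carried out. Everything else --- existence of $\phi$, the identity $v(X)=\int\phi\,d\omega^X$, and $\inf_{T_\rho(Q)}v=0$ --- is immediate.
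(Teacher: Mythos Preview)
Your proof is correct and follows the standard argument the paper defers to (the paper does not give its own proof, only referring to \cite{kenig1994harmonic}). The structure---build a barrier $v$ with Lipschitz data supported off $\Delta_{r/2}(Q)$, use the boundary H\"older estimate \eqref{bdyHolderL2form} to make $v$ small at a nearby corkscrew point, then propagate the lower bound on $w=1-v$ by a Harnack chain of controlled length---is exactly the classical route, and your bookkeeping on the chain length and the doubling subtlety is accurate.
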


  \begin{prop}
    Suppose that $u\ge0$, $Lu=0$, $u\in W^{1,2}(T_{2r}(Q))\cap C(\overline{T_{2r}(Q)})$, $u\equiv0$ on $\Delta_{2r}(Q)$. Then
    $u(X)\le Cu(A_r(Q))$  $\forall\, X\in T_{3r/2}(Q)$.
  \end{prop}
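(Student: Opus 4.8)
The plan is to prove the equivalent estimate $\sup_{T_{3r/2}(Q)}u\le C\,u(A_r(Q))$ by the standard combination of interior Harnack chains with the boundary H\"older estimate \eqref{bdyHolderL2form}; this is the argument used for harmonic functions in NTA domains in \cite{jerison1982boundary} and \cite{kenig1994harmonic}, which I would follow. Writing $\delta(X)=\dist(X,\bdy\Omega)$ and letting $\alpha$ be the exponent from Lemma \ref{BdyHolderCont}, I would fix a small threshold $c_0\in(0,1)$—depending only on $n,\lambda,\Lambda,\Gamma$ and the NTA constants $M,m,m'$, to be pinned down at the end—and treat separately the points of $T_{3r/2}(Q)$ with $\delta(X)\ge c_0 r$ and those with $\delta(X)<c_0 r$.

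For $X$ with $\delta(X)\ge c_0 r$ I would argue by Harnack chains. Then $X$ and the corkscrew point $A_r(Q)$ both lie at distance comparable to $r$ from $\bdy\Omega$ and within distance $3r$ of one another, so by Definition \ref{HarnackChainDefn} they are joined by a chain of Harnack balls $B_1,\dots,B_N$ whose number $N$ is bounded in terms of $c_0,M,m,m'$; since $\delta(B_i)\gtrsim\diam B_i$, suitable dilates of the $B_i$ stay in $\Omega$, so the interior Harnack inequality (Lemma \ref{Harnackineq}) applies on each and, chaining, $u(X)\le C\,u(A_r(Q))$.

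For $X$ with $\delta(X)<c_0 r$ I would use boundary regularity. Pick $\bar Q\in\bdy\Omega$ with $|X-\bar Q|=\delta(X)$; then $|\bar Q-Q|<(3/2+c_0)r$, so for $c_0$ small a surface ball $\Delta_\rho(\bar Q)$ of radius $\rho$ comparable to $r$ is contained in $\Delta_{2r}(Q)$ (where $u$ vanishes) and $T_\rho(\bar Q)\subset T_{7r/4}(Q)$. Applying Lemma \ref{BdyHolderCont} in the form \eqref{bdyHolderL2form} at $\bar Q$, and using that $u\ge0$ vanishes on $\Delta_{\delta(X)}(\bar Q)$,
\[
u(X)\le\underset{T_{\delta(X)}(\bar Q)}{\osc}\,u\le C\Big(\tfrac{\delta(X)}{r}\Big)^{\alpha}\sup_{T_\rho(\bar Q)}u\le C\,c_0^{\alpha}\sup_{T_{7r/4}(Q)}u .
\]
Choosing $c_0$ so small that $C\,c_0^{\alpha}\le\tfrac12$ and combining the two cases gives the self-improving inequality $\sup_{T_{3r/2}(Q)}u\le\max\big(C\,u(A_r(Q)),\,\tfrac12\sup_{T_{7r/4}(Q)}u\big)$; iterating it over a finite sequence of dyadic radii $\tfrac32 r<\tfrac74 r<\tfrac{15}{8}r<\cdots$ approaching $2r$—at each stage the surface ball used still lying inside $\Delta_{2r}(Q)$—absorbs the near-boundary terms, and since $u\in C(\overline{T_{2r}(Q)})$ the quantity $\sup_{T_{2r}(Q)}u$ is finite, so the geometric remainder of the iteration tends to $0$ and $\sup_{T_{3r/2}(Q)}u\le C\,u(A_r(Q))$ remains.

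The step I expect to be the main obstacle is exactly this last iteration: as the scale is refined the admissible H\"older radius shrinks while the Harnack chains needed to control the ``deep'' supremum points at each stage lengthen logarithmically, so the choices of $c_0$ and of the dyadic radii must be coordinated so that the H\"older decay always outweighs the Harnack growth; one must also verify throughout that \eqref{bdyHolderL2form} is invoked only on surface balls on which $u$ is known to vanish. This bookkeeping is carried out in \cite{jerison1982boundary} and \cite{kenig1994harmonic}, and I would simply adapt it.
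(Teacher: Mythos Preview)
Your proposal is correct and matches the paper's approach exactly: the paper does not give an independent proof but simply refers to \cite{jerison1982boundary}, Lemma 4.4, which is precisely the Harnack-chain/boundary-H\"older iteration you describe. The required ingredients in the BMO-antisymmetric setting---the interior Harnack inequality (Lemma \ref{Harnackineq}), the boundary H\"older estimate \eqref{bdyHolderL2form}, and the NTA geometry---are all established earlier in the paper, so the Jerison--Kenig argument carries over verbatim, and your identification of the iteration bookkeeping as the only delicate point is accurate.
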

  For a proof see \cite{jerison1982boundary} Lemma 4.4.

  \begin{prop}
  $r^{n-2}G(X,A_r(Q))\le C\omega^X(\Delta_{2r}(Q))$ $\forall\, X\in\Omega\setminus B_{r/2}(A_r(Q))$,
  where $G$ is the Green's function defined in Section \ref{GreenSection}.
  \end{prop}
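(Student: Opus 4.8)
The plan is to compare, via the maximum principle, the two non-negative functions $X\mapsto r^{n-2}G(X,A)$ and $X\mapsto\omega^X(\Delta_{2r}(Q))$ — writing $A:=A_r(Q)$ — on the region $D:=\Omega\setminus\overline{B_\rho(A)}$, where $\rho:=\delta(A)/8$. Since $A$ is a corkscrew point one has $r/M<\delta(A)\le|A-Q|<r$, hence $r/(8M)<\rho<r/8$; in particular $\overline{B_\rho(A)}\subset\subset\Omega$ and $\Omega\setminus B_{r/2}(A)\subset D$, so it suffices to prove the inequality on $D$.

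First I would record two estimates on the sphere $\bdy B_\rho(A)$. By the pointwise bound \eqref{GW1.8}, there $r^{n-2}G(X,A)\le C(n,\Lambda,\lambda,\Gamma)(r/\rho)^{n-2}\le C_2$ with $C_2=C_2(n,\Lambda,\lambda,\Gamma,M)$. For a matching lower bound on harmonic measure, approximate: pick $g_k\in C(\bdy\Omega)$ with $0\le g_k\le1$, $\supp g_k\subset\Delta_{2r}(Q)$ and $g_k\uparrow\mathbf 1_{\Delta_{2r}(Q)}$, and let $u_k\in W^{1,2}_{loc}(\Omega)\cap C(\overline\Omega)$ be the continuous solution with data $g_k$ (Theorem \ref{contDirichletSol}); then $u_k(X)=\int g_k\,d\omega^X\uparrow\omega^X(\Delta_{2r}(Q))$ by monotone convergence. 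Since $u_k(A)\to\omega^A(\Delta_{2r}(Q))\ge\omega^A(\Delta_r(Q))\ge C_0>0$, the last inequality being the estimate $\omega^{A_r(Q)}(\Delta_r(Q))\ge C_0$ proved above, we get $u_k(A)\ge C_0/2$ for all large $k$; Harnack's inequality (Lemma \ref{Harnackineq}) for the non-negative solution $u_k$ on $B_{\delta(A)/2}(A)\subset\subset\Omega$ then gives $u_k\ge c_1>0$ on $B_{\delta(A)/4}(A)\supset\bdy B_\rho(A)$, with $c_1$ depending only on $n,\lambda,\Lambda,\Gamma$ and the NTA constants.

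Now for each large $k$ put $C:=C_2/c_1$ and $\psi:=Cu_k-r^{n-2}G(\cdot,A)$. Both $u_k$ and $G(\cdot,A)$ are weak solutions of $Lu=0$ in $D$ — for $G(\cdot,A)$ this follows from \eqref{GW1.3}--\eqref{GW1.4} and density of test functions — so $L\psi=0$ in $D$ and $\psi\in W^{1,2}_{loc}(D)\cap C(\overline D)$; here one uses that $G(\cdot,A)$ extends continuously by $0$ across $\bdy\Omega$, which follows from the boundary H\"older estimate \eqref{bdyHolderL2form} applied on surface balls disjoint from $A$, together with $G(\cdot,A)\in W_0^{1,1}(\Omega)$ and $G(\cdot,A)\in W^{1,2}(\Omega\setminus B_s(A))$ for all $s>0$. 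On $\bdy D=\bdy\Omega\cup\bdy B_\rho(A)$ one has $\psi\ge0$: on $\bdy\Omega$, $G(\cdot,A)=0$ and $u_k=g_k\ge0$; on $\bdy B_\rho(A)$, $Cu_k\ge Cc_1=C_2\ge r^{n-2}G(\cdot,A)$. The maximum principle — the argument of Lemma \ref{positivity}, which needs only that $\psi\in W^{1,2}_{loc}(D)\cap C(\overline D)$ is a solution with $\psi\ge0$ on $\bdy D$ — then yields $\psi\ge0$ on $D$, i.e. $r^{n-2}G(X,A)\le Cu_k(X)$ for $X\in D$. Letting $k\to\infty$ and using $u_k(X)\uparrow\omega^X(\Delta_{2r}(Q))$ gives the claim on $D\supset\Omega\setminus B_{r/2}(A)$ with $C=C(n,\lambda,\Lambda,\Gamma,M,m,m')$.

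I expect the main obstacle to be the boundary continuity of $G(\cdot,A)$, i.e. that $G(X,A)\to0$ at every point of $\bdy\Omega$: this is exactly what makes $\psi$ continuous up to $\bdy D$, so that the elementary maximum principle of Lemma \ref{positivity} applies on $D$, and it relies on the regularity of boundary points of NTA domains (equivalently, exterior capacity density) and on the boundary H\"older continuity of Section \ref{BdySection}. The only other mildly delicate point — replacing the characteristic function $\mathbf 1_{\Delta_{2r}(Q)}$ by the continuous data $g_k$ — is harmless, since all the functions involved are non-negative and one passes to the limit by monotone convergence only at the very end.
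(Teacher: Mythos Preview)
Your argument is correct and is exactly the classical maximum-principle comparison (as in \cite{caffarelli1981boundary} or \cite{kenig1994harmonic}) that the paper implicitly invokes: the paper does not supply its own proof of this proposition, stating instead that ``most of the properties can be proved as in \cite{kenig1994harmonic}'' and only giving details for the companion inequality (Proposition 6.5). Your choice of the comparison domain $D=\Omega\setminus\overline{B_\rho(A)}$ with $\rho=\delta(A)/8$, the use of \eqref{GW1.8} for the upper bound of $G$ on $\partial B_\rho(A)$, the use of Proposition 6.2 together with Harnack for the lower bound of $\omega^X(\Delta_{2r}(Q))$ there, and the positivity lemma on $D$ are precisely the standard steps.

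The one point you flag --- that $G(\cdot,A)$ extends continuously by $0$ to $\partial\Omega$ --- is indeed the only place needing care, and your outline is right: localizing with a cutoff $\eta$ supported away from $A$, the uniform bound $\int_{\Omega\setminus B_R(A)}|\nabla G^\rho|^2\le CR^{2-n}$ from the construction in Section \ref{GreenSection} shows $\eta G^\rho$ is bounded in $W_0^{1,2}(\Omega)$ and hence $\eta G\in W_0^{1,2}(\Omega)$; then the boundary H\"older estimate \eqref{bdyHolderL2form} (whose proof is local and applies to solutions in $W^{1,2}(T_R(P))$ vanishing on $\Delta_R(P)$) gives the continuous vanishing. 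With that in hand, Lemma \ref{positivity} applies verbatim on $D$ since its proof only uses that $\{\psi<-\epsilon\}\subset\subset D$, which follows from $\psi\in C(\overline D)$ and $\psi\ge0$ on $\partial D$.
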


  \begin{prop}
      $\omega^X(\Delta_r(Q))\le C(n,\Lambda,\Gamma)r^{n-2}G(X,A_r(Q))$  $\forall\, X\in\Omega\setminus B_{2r}(Q)$.
  \end{prop}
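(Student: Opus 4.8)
The plan is to dominate $\omega^{X}(\Delta_{r}(Q))$ by the solution whose boundary data is a smooth cutoff, and then express that solution through the Green's function of Section~\ref{GreenSection}. Assume $Q=0$ and fix $\Phi\in C_{0}^{\infty}(B_{6r/5}(0))$ with $0\le\Phi\le1$, $\Phi\equiv1$ on $B_{r}(0)$, $\abs{\nabla\Phi}\le C/r$; set $\mathcal{A}:=\overline{B_{6r/5}(0)}\setminus B_{r}(0)\supset\supp\nabla\Phi$. By Lemma~\ref{Lipbdy} there is $u_{\Phi}\in W^{1,2}(\Omega)\cap C(\overline{\Omega})$ solving the Dirichlet problem with data $\Phi|_{\bdy\Omega}$, and since $\Phi|_{\bdy\Omega}\ge\chi_{\Delta_{r}(0)}$ we have $\omega^{X}(\Delta_{r}(0))\le u_{\Phi}(X)$ for all $X\in\Omega$; moreover $u_{\Phi}(X)=w(X)$ when $X\notin B_{6r/5}(0)$ (in particular for $X\in\Omega\setminus B_{2r}(0)$), where $w:=u_{\Phi}-\Phi\in W_{0}^{1,2}(\Omega)$ satisfies $B[w,\psi]=-B[\Phi,\psi]$ for all $\psi\in W_{0}^{1,2}(\Omega)$. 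Testing the analogue of \eqref{GW1.14} for the adjoint, $B^{*}[G^{\rho,*}(\cdot,X),\psi]=\fint_{B_{\rho}(X)}\psi$, against $\psi=w$, and using $B^{*}[G^{\rho,*}(\cdot,X),w]=B[w,G^{\rho,*}(\cdot,X)]=-\int_{\Omega}A\nabla\Phi\cdot\nabla_{Y}G^{\rho,*}(Y,X)$ together with the facts that $w=u_{\Phi}$ is an $L$-solution near $X$ (so $\fint_{B_{\rho}(X)}w\to w(X)$), that $\supp\nabla\Phi$ lies at positive distance from the pole $X$, and that $G^{*}(Y,X)=G(X,Y)$, one obtains in the limit $\rho\to0$
\[
\omega^{X}(\Delta_{r}(0))\ \le\ u_{\Phi}(X)\ =\ -\int_{\Omega}\Big(a\nabla\Phi\cdot\nabla_{Y}G(X,Y)+b\nabla\Phi\cdot\nabla_{Y}G(X,Y)\Big)\,dY,
\]
the $b$-term read in the sense of the antisymmetric pairing of Section~\ref{preSec}, with the integrand supported in $\mathcal{A}\cap\Omega$.

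Next I would estimate the two pieces. Since $X\notin B_{2r}(0)$, the function $Y\mapsto G(X,Y)$ is a nonnegative solution of $L^{*}v=0$ in $T_{7r/4}(0)$, continuous on $\overline{T_{7r/4}(0)}$ and vanishing on $\Delta_{7r/4}(0)$; the interior and boundary Caccioppoli inequalities (Corollary~\ref{caccio} and Corollary~\ref{corbdyCaccio}, applied to $L^{*}$) together with $\abs{\mathcal{A}}\lesssim r^{n}$ give $\int_{\mathcal{A}\cap\Omega}\abs{\nabla_{Y}G(X,Y)}^{2}\,dY\le Cr^{n-2}\big(\sup_{T_{7r/4}(0)}G(X,\cdot)\big)^{2}$, so by Cauchy--Schwarz and $\abs{\nabla\Phi}\le C/r$ the symmetric term is at most $Cr^{n-2}\sup_{T_{7r/4}(0)}G(X,\cdot)$. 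For the $b$-term one cannot use a pointwise bound on $b$: instead pick $\chi\in C_{0}^{\infty}(B_{3r/2}(0))$ with $\chi\equiv1$ on $\mathcal{A}$, note that $\int_{\Omega}b\nabla\Phi\cdot\nabla_{Y}G=\int_{\Omega}b\nabla\Phi\cdot\nabla_{Y}(\chi G)$ (the $\nabla\chi$-contributions vanish where $\nabla\Phi\neq0$), and apply the div--curl / $\mathscr{H}^{1}$--$BMO$ bound \eqref{energyest} (equivalently Proposition~\ref{compensatedProp}) to $\Phi$ and $\chi G^{\rho,*}(\cdot,X)\in W_{0}^{1,2}(\Omega)$, passing to the limit; combined with $\norm{\nabla_{Y}(\chi G(X,\cdot))}_{L^{2}(\Omega)}\lesssim r^{-1}\norm{G(X,\cdot)}_{L^{2}(T_{3r/2}(0))}\lesssim r^{n/2-1}\sup_{T_{7r/4}(0)}G(X,\cdot)$ (again from Caccioppoli), this also yields a bound $Cr^{n-2}\sup_{T_{7r/4}(0)}G(X,\cdot)$.

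It then remains to replace $\sup_{T_{7r/4}(0)}G(X,\cdot)$ by $C\,G(X,A_{r}(0))$, which I expect to follow from a standard chaining argument: a point $Y\in T_{7r/4}(0)$ with $\delta(Y)\ge cr$ is joined to $A_{r}(0)$ by a Harnack chain whose length is bounded in terms of $n,\lambda,\Lambda,\Gamma$ and the NTA constants, so Lemma~\ref{Harnackineq} (for $L^{*}$) gives $G(X,Y)\le C\,G(X,A_{r}(0))$; a point $Y$ with $\delta(Y)<cr$ is handled by first applying the boundary Carleson-type estimate proved earlier in this section (the comparison $v(Y)\le Cv(A_{\rho}(Q_{Y}))$ for nonnegative $L^{*}$-solutions vanishing on a surface ball) at a fixed scale $\rho\sim r$ around a closest boundary point $Q_{Y}$ of $Y$ — admissible because $\abs{X-Q_{Y}}\gtrsim r$ — followed by a Harnack chain from $A_{\rho}(Q_{Y})$ to $A_{r}(0)$. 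Assembling the three displays gives $\omega^{X}(\Delta_{r}(0))\le Cr^{n-2}G(X,A_{r}(0))$ for $X\in\Omega\setminus B_{2r}(0)$. The two steps I expect to demand the most care are precisely the treatment of the $BMO$ antisymmetric part — which forces the compensated-compactness argument and the cutoff $\chi$ localizing $G$ away from its pole, in place of any naive pointwise estimate — and the justification of the Green's representation itself, since $G^{*}(\cdot,X)\notin W_{0}^{1,2}(\Omega)$; the latter is resolved through the approximants $G^{\rho,*}(\cdot,X)\in W_{0}^{1,2}(\Omega)$ from Section~\ref{GreenSection}, exploiting that $\nabla\Phi$ is supported away from $X$.
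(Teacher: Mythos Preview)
Your proposal is correct and follows essentially the same route as the paper: dominate $\omega^{X}(\Delta_r)$ by the solution with smooth cutoff data, derive the Green representation via the approximants $G^{\rho,*}(\cdot,X)\in W_0^{1,2}(\Omega)$, bound the symmetric part by Cauchy--Schwarz and Caccioppoli, and handle the antisymmetric part by inserting a second cutoff $\chi$ (the paper's $\eta$) so that $\int b\nabla\Phi\cdot\nabla G=\int \tilde b\nabla\Phi\cdot\nabla(\chi G)$ and the $\mathscr{H}^1$--$BMO$ duality (Proposition~\ref{compensatedProp}) applies. The only cosmetic difference is that the paper invokes the already-stated Carleson estimate (the proposition immediately preceding this one) to pass from $\big(\fint_{T_{7r/4}}G^{*}(\cdot,X)^2\big)^{1/2}$ directly to $G(X,A_r(Q))$, whereas you re-derive that step via Harnack chains and the boundary comparison; both are fine.
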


  \begin{proof}
    Let $\vp\in C_0^{\infty}(\Rn)$, $u$ be the weak solution of
    $$
    \begin{cases}
      Lu=0 \quad \text{in }\Omega,\\
      u=\vp \quad \text{ on }\bdy\Omega.
    \end{cases}
    $$
    Then as we showed in Section \ref{BdySection}, $u\in W^{1,2}(\Omega)\cap C(\overline{\Omega})$, $u-\vp\in W_0^{1,2}(\Omega)$.\\
    \textbf{Claim.} For any $Y\notin\overline{\supp\vp}$,
    \begin{equation}\label{claim1}
      u(Y)=-\int_{\Omega}(a+b)\nabla\vp(X)\cdot\nabla G^*(X,Y)dX,
    \end{equation}
    where $G^*(X,Y)$ is the Green function of the adjoint operator $L^*=\divg A^T\nabla$.\par

    To justify this claim, consider $(G^*)^\rho\in W_0^{1,2}(\Omega)$. We have (see \eqref{GW1.14})
    \begin{equation}
    B[\phi, (G^*)^\rho]=\fint_{B_\rho(Y)}\phi \qquad\forall\, \phi\in W_0^{1,2}(\Omega),
    \end{equation}
    and (see \eqref{GrhoG})
    \begin{equation}
    B[\phi, (G^*)^\rho]\rightarrow B[\phi,G^*] \text{ as } \rho\rightarrow0, \quad\forall\, \phi\in W_0^{1,p}(\Omega)\cap C(\Omega) \text{ with }p>n.
    \end{equation}

    So
    \begin{equation}\label{claim2}
      \fint_{B_\rho(Y)}(u-\vp)=\int_{\Omega}A\nabla(u-\vp)\cdot\nabla (G^*)^{\rho}.
    \end{equation}

    On the other hand, since $u$ is a solution,
    $$
    \int_{\Omega}A\nabla u\cdot\nabla (G^*)^\rho=0.
    $$

    Then the right-hand side of \eqref{claim2} equals
    $-\int_{\Omega}A\nabla\vp\cdot\nabla(G^*)^{\rho}$,
    which goes to $-\int_{\Omega}A\nabla\vp\cdot\nabla G^*$
    as $\rho\rightarrow0$,
    while
    the left-hand side of \eqref{claim2} goes to $u(Y)$. Thus we obtain \eqref{claim1}.\par

    Now pick $\vp\equiv1$ in $B_r(Q)$, with $\supp\vp\subset B_{3r/2}(Q)$, $0\le\vp\le1$ and $\abs{\nabla\vp}\le\frac{C}{r}$. Then by maximum principle and the claim,
    \begin{equation}
      \omega^Y(\Delta_r(Q))\le u(Y)=-\int_{\Omega}(a+b)\nabla\vp(X)\cdot\nabla G^*(X,Y)dX.
    \end{equation}

    We estimate
    \begin{align}\label{ellmeas_a}
      \abs{\int_{\Omega}a\nabla\vp\cdot\nabla G^*}&\le C(n,\Lambda)r^{n-1}\Big(\fint_{T_{3r}{2}(Q)}\abs{\nabla G^*}^2\Big)^{1/2}
      \le Cr^{n-2}\Big(\fint_{T_{7r}{4}(Q)}\abs{G^*}^2\Big)^{1/2}\nonumber\\
      &\le Cr^{n-2}G^*(A_r(Q),Y)=Cr^{n-2}G(Y,A_r(Q)).
    \end{align}

     Choose $\eta\in C_0^{\infty}(\Rn)$ with $\eta\equiv1$ on $B_{3r/2}(Q)$, $\supp\eta\subset B_{\frac{7r}{4}}(Q)$, and $\abs{\nabla\eta}\le\frac{C}{r}$.
    Then using the properties of $\eta$ and $\vp$, we have
    $$
    \int_{\Omega}b\nabla\vp\cdot\nabla G^*=\int_{\Rn}\tilde{b}\nabla\vp\cdot\nabla(G^*\eta),
    $$
    where $\tilde{b}\in BMO(\Rn)$ is the extension of $b$, and we identify $G^*(\cdot,Y)$ with its zero extension outside of $\Omega$.

    Note that since $Y\notin B_{2r}(Q)$, $G^*(\cdot,Y)\in W^{1,2}(T_{3r/2}(Q))$. So we can apply Proposition \ref{compensatedProp} and get
    \begin{align}\label{ellmeas_b}
      \abs{\int_{\Omega}b\nabla\vp\cdot\nabla G^*}&\le C\Gamma\norm{\nabla\vp}_{L^2(\Rn)}\norm{\nabla (G^*\eta)}_{L^2(\Rn)}\nonumber\\
      &\le C\Gamma r^{\frac{n}{2}-1}(\norm{\eta\nabla G^*}_{L^2(\Rn)}+\norm{G^*\nabla\eta}_{L^2(\Rn)})\nonumber\\
      &\le C\Gamma r^{\frac{n}{2}-1}\Big\{\Big(\int_{T_{7r/4}(Q)}\abs{\nabla G^*}^2\Big)^{\frac1{2}}+\frac{C}{r}\Big(\int_{T_{7r/4}(Q)}\abs{G^*}^2\Big)^{\frac1{2}}\Big\}\nonumber\\
      &\le C\Gamma r^{n-2}\Big(\fint_{T_{2r}(Q)}\abs{G^*}^2\Big)^{1/2}\le C\Gamma r^{n-2}G(Y,A_r(Q)),
    \end{align}
    where $C$ depending on dimension and $\Omega$.

    Combining \eqref{ellmeas_a} and \eqref{ellmeas_b} we have proved $\omega^Y(\Delta_r(Q))\le Cr^{n-2}G(Y,A_r(Q))$, for any $Y\in\Omega\setminus B_{2r}(Q)$, where the constant $C$ depending on $n,\Lambda,\Gamma$ and the domain.

  \end{proof}

  \begin{cor}\label{omegaG}
    For $X\in\Omega\setminus B_{2r}(Q)$, $\omega^X(\Delta_r(Q))\approx r^{n-2}G(X,A_r(Q))$.
  \end{cor}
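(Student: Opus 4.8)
The plan is to read off both inequalities hidden in $\approx$ from the two Propositions that immediately precede the statement. The bound $\omega^X(\Delta_r(Q))\le C\,r^{n-2}G(X,A_r(Q))$ for $X\in\Omega\setminus B_{2r}(Q)$ is exactly the Proposition just proved, so the upper half is free.

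For the lower bound I would apply the other Proposition, $\rho^{n-2}G(X,A_\rho(Q))\le C\,\omega^X(\Delta_{2\rho}(Q))$, not at $\rho=r$ but at $\rho=r/2$. Its hypothesis $X\in\Omega\setminus B_{\rho/2}(A_\rho(Q))$ is met: if $X\in\Omega\setminus B_{2r}(Q)$ then $\abs{X-A_{r/2}(Q)}\ge\abs{X-Q}-\abs{Q-A_{r/2}(Q)}>2r-\tfrac r2>\tfrac r4$. Thus $(r/2)^{n-2}G(X,A_{r/2}(Q))\le C\,\omega^X(\Delta_r(Q))$, and the only substantive point left is to trade $A_{r/2}(Q)$ for $A_r(Q)$, i.e.\ to prove $G(X,A_{r/2}(Q))\gtrsim G(X,A_r(Q))$. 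For this I would invoke the theorem identifying $G$ and $G^*$ to regard $Y\mapsto G(X,Y)=G^*(Y,X)$ as a non-negative solution of $L^*v=0$ in $\Omega\setminus\{X\}$ --- here $L^*$ is again an operator of the class under study, with symmetric part $a$ and anti-symmetric part $-b\in BMO(\Omega)$, so the interior estimates of Section \ref{intSec} apply to it --- then observe that $A_{r/2}(Q)$ and $A_r(Q)$ satisfy $\abs{A_{r/2}(Q)-A_r(Q)}<\tfrac{3r}2$ and $\min\{\delta(A_{r/2}(Q)),\delta(A_r(Q))\}>\tfrac r{2M}$, so that the quotient $l$ of Definition \ref{HarnackChainDefn} is $\le 3M$; the Harnack chain condition then joins them by a chain of at most $m'\log_2(3M)$ balls, and chaining the interior Harnack inequality (Lemma \ref{Harnackineq}, now applied to $L^*$) along it yields $G(X,A_{r/2}(Q))\approx G(X,A_r(Q))$. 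Combining the displayed estimates gives $\omega^X(\Delta_r(Q))\ge C^{-1}r^{n-2}G(X,A_r(Q))$.

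The step I expect to need the most care is verifying that this Harnack chain can be taken inside $\Omega\setminus\{X\}$, so that $G(X,\cdot)$ is actually $L^*$-harmonic on a neighbourhood of each of its balls. Since the chain has boundedly many balls whose diameters are comparable to their distance to $\bdy\Omega$, a short induction (consecutive balls overlap, so the distance to $\bdy\Omega$ can increase by at most a factor $1+m$ at each step) places the union of the chain inside a fixed dilate $B_{cr}(Q)$ of $B_r(Q)$, with $c$ depending only on $M,m,m'$; it then remains only to keep $X$ away from $B_{cr}(Q)$, which is the standard NTA bookkeeping of \cite{jerison1982boundary} and \cite{kenig1994harmonic} (and is why the hypothesis $X\in\Omega\setminus B_{2r}(Q)$ is to be read up to enlarging the excluded ball by a structural constant). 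All constants in the resulting equivalence depend only on $n,\lambda,\Lambda,\Gamma$ and the NTA constants $M,m,m'$.
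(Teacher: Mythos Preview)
Your proposal is correct and is precisely the standard argument the paper has in mind: the corollary is stated without proof because it follows immediately from combining the two preceding Propositions, with the Harnack chain comparison of $G(X,A_{r/2}(Q))$ and $G(X,A_r(Q))$ supplying the only missing link. Your careful verification that the chain sits in a fixed dilate $B_{cr}(Q)$ and hence avoids $X$ (up to the usual enlargement of the excluded ball by a structural constant) is exactly the bookkeeping that justifies reading such NTA corollaries with constants absorbed into the ``$\approx$'', in line with \cite{jerison1982boundary} and \cite{kenig1994harmonic}.
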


  \begin{cor}
    For $X\in\Omega\setminus B_{2r}(Q)$, $\omega^X$ is a doubling measure, i.e. $\omega^X(\Delta_{2r}(Q))\le C\omega^X(\Delta_r(Q))$.
  \end{cor}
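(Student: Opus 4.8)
The plan is to reduce the doubling estimate to a comparison between values of the Green's function at corkscrew points of comparable scales, and then to prove that comparison using the Harnack chain condition (Definition~\ref{HarnackChainDefn}) and the Harnack inequality (Lemma~\ref{Harnackineq}). Two preliminary remarks: the adjoint $L^{*}=-\divg A^{T}\nabla$ has the same structure as $L$ (symmetric part $a$, anti-symmetric part $-b\in BMO$ with the same norm), so the interior estimates of Sections~\ref{intSec} and \ref{GreenSection} apply to $L^{*}$ as well; and for a relatively open $E\subset\bdy\Omega$ the function $Y\mapsto\omega^{Y}(E)$ is a nonnegative weak solution of $Lu=0$ in $\Omega$, as one sees by approximating $\mathbf 1_{E}$ from below by functions of $C(\bdy\Omega)$ and passing to the limit with the help of Corollary~\ref{caccio} and Lemma~\ref{Harnackineq}.

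Fix a large constant $C_{0}$ depending only on $n$ and the NTA constants. If $X\in\Omega\setminus B_{C_{0}r}(Q)$, then $X\notin B_{2r}(Q)$ and $X\notin B_{4r}(Q)$, so Corollary~\ref{omegaG} applied at scales $r$ and $2r$ gives
\[
\omega^{X}(\Delta_{r}(Q))\approx r^{\,n-2}G(X,A_{r}(Q)),\qquad\omega^{X}(\Delta_{2r}(Q))\approx (2r)^{\,n-2}G(X,A_{2r}(Q)),
\]
and it is enough to show $G(X,A_{2r}(Q))\le C\,G(X,A_{r}(Q))$. Since $G(X,\cdot)=G^{*}(\cdot,X)$, the function $v:=G^{*}(\cdot,X)$ is a nonnegative solution of $L^{*}v=0$ in $\Omega\setminus\{X\}$. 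The points $A_{r}(Q),A_{2r}(Q)$ lie in $B_{2r}(Q)\cap\Omega$ with $\delta(A_{r}(Q))>r/M$ and $\delta(A_{2r}(Q))>2r/M$, hence the ratio $|A_{r}(Q)-A_{2r}(Q)|/\min\{\delta(A_{r}(Q)),\delta(A_{2r}(Q))\}$ is at most $3M$, and Definition~\ref{HarnackChainDefn} provides a Harnack chain of boundedly many balls joining them; a routine estimate shows every ball of the chain lies in $B_{C_{0}r}(Q)\cap\Omega$ (this fixes $C_{0}$) and therefore avoids the pole $X$. Applying Lemma~\ref{Harnackineq} to $v$ along the chain yields $v(A_{2r}(Q))\le C\,v(A_{r}(Q))$, and hence $\omega^{X}(\Delta_{2r}(Q))\approx\omega^{X}(\Delta_{r}(Q))$ for every $X\in\Omega\setminus B_{C_{0}r}(Q)$.

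It remains to treat $X\in B_{C_{0}r}(Q)\setminus B_{2r}(Q)$, which I would reduce to the previous case. Let $A^{*}:=A_{C_{0}Mr}(Q)$, so $\delta(A^{*})>C_{0}r$ and $A^{*}\in\Omega\setminus B_{C_{0}r}(Q)$; by the first case $\omega^{A^{*}}(\Delta_{2r}(Q))\le C\,\omega^{A^{*}}(\Delta_{r}(Q))$. Since $Y\mapsto\omega^{Y}(\Delta_{\rho}(Q))$ is a nonnegative $L$-solution for $\rho=r,2r$, it suffices to prove $\omega^{X}(\Delta_{2r}(Q))\le C\,\omega^{A^{*}}(\Delta_{2r}(Q))$ and $\omega^{A^{*}}(\Delta_{r}(Q))\le C\,\omega^{X}(\Delta_{r}(Q))$. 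When $\delta(X)\ge c(M)\,r$, both follow from Lemma~\ref{Harnackineq} along a Harnack chain joining $X$ to $A^{*}$ of length bounded in terms of the NTA constants. The delicate sub-case — and the step I expect to be the main obstacle — is when $X$ is close to $\bdy\Omega$: one then passes to the nearest boundary point $Q_{X}$ and applies the Carleson-type estimate of this section (a nonnegative solution vanishing on $\Delta_{2\rho}(Q_{X})$ is bounded on $T_{3\rho/2}(Q_{X})$ by a constant times its value at $A_{\rho}(Q_{X})$) on a surface ball on which $\omega^{\cdot}(\Delta_{r}(Q))$, resp. $\omega^{\cdot}(\Delta_{2r}(Q))$, vanishes, following the classical argument of \cite{jerison1982boundary} and \cite{kenig1994harmonic}; this is the only place the hypothesis $X\notin B_{2r}(Q)$ is genuinely used. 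For the applications later in the paper the pole is a fixed interior point and $r$ is small, so $X$ automatically lies in $\Omega\setminus B_{C_{0}r}(Q)$ and only the Green's-function case is needed.
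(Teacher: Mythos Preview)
Your proposal is correct and follows the classical Jerison--Kenig argument that the paper implicitly invokes: the corollary is stated in the paper without proof, as an immediate consequence of the two-sided comparison $\omega^{X}(\Delta_{r}(Q))\approx r^{\,n-2}G(X,A_{r}(Q))$ (Corollary~\ref{omegaG}) together with the Harnack chain condition and Lemma~\ref{Harnackineq}, exactly as you describe. Your careful splitting into the far case $X\notin B_{C_{0}r}(Q)$ and the intermediate annulus, and your remark that the adjoint $L^{*}$ has the same structure so that $G(X,\cdot)=G^{*}(\cdot,X)$ is an $L^{*}$-solution away from the pole, are the right ingredients and are precisely what the paper leaves to the reader.
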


  \begin{prop}[Comparison principle]\label{comparisonpp} Let $u,v\ge0$, $Lu=Lv=0$, $u,v\in W^{1,2}(T_{2r}(Q))\cap C(\overline{T_{2r}(Q)})$, $u,v\equiv0$ on $\Delta_{2r}(Q)$. Then $\forall\, X\in T_r(Q)$,
  $$
  C^{-1}\frac{u(A_r(Q))}{v(A_r(Q))}\le \frac{u(X)}{v(X)}\le C\frac{u(A_r(Q))}{v(A_r(Q))}
  $$
  \end{prop}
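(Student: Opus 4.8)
The plan is to adapt the classical argument of Caffarelli–Fabes–Mortola–Salsa and Jerison–Kenig, which becomes available once one has the Carleson-type estimate of the preceding Proposition, the comparison $\omega^{X}(\Delta_{r}(Q))\approx r^{n-2}G(X,A_{r}(Q))$ of Corollary \ref{omegaG}, the doubling property of $\omega^{X}$, the lower bound $\omega^{A_{r}(Q)}(\Delta_{r}(Q))\gtrsim 1$, and the interior Harnack inequality (Lemma \ref{Harnackineq}). Write $A=A_{r}(Q)$. By symmetry in $u$ and $v$ it suffices to prove $u(X)/v(X)\le C\,u(A)/v(A)$, and, using Lemma \ref{Harnackineq} along a Harnack chain of bounded length, we may assume $\delta(X)$ is small, say $\delta(X)\le\delta(A)/8$: for otherwise $X$ and $A$ are joined by a chain of $O(1)$ Harnack balls, whence $u(X)\approx u(A)$ and $v(X)\approx v(A)$ and the inequality is immediate.

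The core is the two-sided estimate
\begin{equation*}
 w(X)\ \approx\ w(A)\,\frac{G(X,A^{*})}{G(A,A^{*})},\qquad X\in T_{r}(Q),
\end{equation*}
valid for every $w\ge 0$ with $Lw=0$ in $T_{2r}(Q)$, $w\in C(\overline{T_{2r}(Q)})$ and $w\equiv 0$ on $\Delta_{2r}(Q)$, where $A^{*}$ is a corkscrew point for $Q$ at scale $\approx r$ chosen, via the corkscrew and Harnack-chain conditions, so that $\dist(A^{*},\overline{B_{3r/2}(Q)})\gtrsim r$; for such a pole $G(\cdot,A^{*})$ is $L$-harmonic and positive in a neighborhood of $\overline{T_{3r/2}(Q)}$, vanishes continuously on $\Delta_{3r/2}(Q)$, and by \eqref{GW1.8}--\eqref{GW1.9} together with Lemma \ref{Harnackineq} satisfies $r^{n-2}G(\cdot,A^{*})\approx 1$ on a ball of radius $\approx\delta(A)$ about $A$. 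Granting this estimate for $u$ and $v$ simultaneously, the factors $G(X,A^{*})$ and $G(A,A^{*})$ cancel in the quotient and the Proposition follows. For the \emph{upper} inequality $w(X)\le C\,w(A)\,r^{n-2}G(X,A^{*})$ one first observes, from the corkscrew lower bound $\omega^{A_{\rho}(P)}(\Delta_{\rho}(P))\gtrsim 1$ applied at a scale $\rho\approx\delta(X)$ about a nearest boundary point (and a Harnack chain when $\delta(X)\gtrsim r$), that $\omega^{X}(\Delta_{2r}(Q))\gtrsim 1$ for all $X\in T_{3r/2}(Q)$; since also $w\le C\,w(A)$ on $\overline{T_{3r/2}(Q)}$ by the Carleson estimate and $\omega^{\cdot}(\Delta_{2r}(Q))\equiv 1$ on $\Delta_{3r/2}(Q)$, the maximum principle (Lemma \ref{maxprinciple}) in $T_{3r/2}(Q)$ yields $w(X)\le C\,w(A)\,\omega^{X}(\Delta_{2r}(Q))$ for $X\in T_{r}(Q)$, and Corollary \ref{omegaG}, the doubling property, the relation $G(X,Y)=G^{*}(Y,X)$ and Harnack for the Green function then convert $\omega^{X}(\Delta_{2r}(Q))$ into $r^{n-2}G(X,A^{*})$. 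For the \emph{lower} inequality one uses Lemma \ref{Harnackineq} to get $w\ge c\,w(A)$ on a ball $B$ of radius $\approx\delta(A)$ about $A$ and then runs a maximum-principle argument in $T_{3r/2}(Q)\setminus\overline{B}$; on $\Delta_{3r/2}(Q)$ and on $\partial B$ the required inequality is immediate, but on the spherical part $\partial B_{3r/2}(Q)\cap\Omega$ one cannot bound $w$ from below directly — there $w$ and $r^{n-2}G(\cdot,A^{*})$ both vanish as one approaches $\partial\Omega$, and showing that they do so at comparable rates is itself a comparison statement at the nearby boundary points $P\in\partial B_{3r/2}(Q)\cap\partial\Omega$ at scale $\approx r$.

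The main obstacle is precisely this last point. It is handled, following Caffarelli–Fabes–Mortola–Salsa, by an induction on dyadic scales: one shows that the comparison constant at scale $r$ around $Q$ is dominated by (a fixed multiple of the maximum of an absolute constant and) the comparison constant at scale $\approx r/4$ around boundary points near $Q$, which, together with the trivial boundedness of the constant at scales comparable to $\diam\Omega$, forces a uniform bound; equivalently, one works throughout with the elliptic measure of the truncated domain $T_{3r/2}(Q)$ and uses its doubling together with its comparability (again via Corollary \ref{omegaG} and Harnack) to $r^{n-2}G(\cdot,A^{*})$. This is where the NTA geometry — uniform corkscrew points and logarithmically long Harnack chains — and the doubling of $\omega$ are used essentially. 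Once the two-sided estimate is established for $u$ and $v$, dividing gives $C^{-1}u(A)/v(A)\le u(X)/v(X)\le C\,u(A)/v(A)$ for all $X\in T_{r}(Q)$, which is the assertion.
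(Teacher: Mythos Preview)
The paper gives no proof of this proposition; it states explicitly that ``most of the properties can be proved as in \cite{kenig1994harmonic}'' and provides proofs only where a new argument is needed, which is not the case here. Your proposal follows precisely the classical Caffarelli--Fabes--Mortola--Salsa / Jerison--Kenig argument that the paper is invoking, and the ingredients you list (the Carleson estimate, Harnack, the Green-function/elliptic-measure comparison, doubling, and the dyadic iteration to close the lower bound) are exactly the ones available from the preceding results; one small imprecision is that your $A^{*}$ cannot simultaneously be a corkscrew point ``at scale $\approx r$'' for $Q$ and satisfy $\dist(A^{*},\overline{B_{3r/2}(Q)})\gtrsim r$ --- you mean a corkscrew point at a fixed larger scale $Cr$ (or a point obtained via a Harnack chain from $A_{r}(Q)$) so that the pole lies outside $\overline{B_{3r/2}(Q)}$.
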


  \begin{cor}
    Let $u,v$ be as in Proposition \ref{comparisonpp}. Then there exists $\alpha=\alpha(n,\lambda,\Lambda,\Gamma)$ such that
    $$
    \abs{\frac{u(Y)}{v(Y)}-\frac{u(X)}{v(X)}}\le C\frac{u(A_r(Q))}{v(A_r(Q))}\Big(\frac{\abs{X-Y}}{r}\Big)^{\alpha} \qquad\forall\, X,Y\in T_r(Q).
    $$
  \end{cor}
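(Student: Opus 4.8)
The plan is to combine the Comparison Principle (Proposition \ref{comparisonpp}) with the interior Hölder estimate (Lemma \ref{intHolderLem}), exactly as in the classical case of harmonic functions on NTA domains. First I would fix $X,Y\in T_r(Q)$ and set $w = u/v$, which is well-defined and positive on $T_{2r}(Q)$ since $v>0$ there by the Harnack inequality (Lemma \ref{Harnackineq}); indeed $v$ cannot vanish at an interior point unless it vanishes identically, and it is not identically zero because by Proposition \ref{comparisonpp} it is comparable to $v(A_r(Q))>0$. The quantity we want to bound is $|w(X)-w(Y)|$, and Proposition \ref{comparisonpp} already gives the pointwise two-sided bound $C^{-1}\,w(A_r(Q)) \le w(Z) \le C\,w(A_r(Q))$ for all $Z\in T_r(Q)$; the point of the corollary is to upgrade this uniform bound to Hölder decay in $|X-Y|$.

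The key step is an oscillation-decay argument. For $0<\rho\le r$, let $Q'\in\bdy\Omega$ be a boundary point near the pair $X,Y$ (or work with $T_\rho(Q)$ directly) and consider the oscillation $\omega_w(\rho) \doteq \sup_{T_\rho(Q)} w - \inf_{T_\rho(Q)} w$. Set $M_\rho = \sup_{T_\rho} w$ and $m_\rho = \inf_{T_\rho} w$. Then $M_\rho - w$ and $w - m_\rho$ are both nonnegative; the trick is that $M_{2\rho} v - u$ and $u - m_{2\rho} v$ are nonnegative solutions vanishing on $\Delta_{2\rho}(Q)$ (being linear combinations of $u$ and $v$, and $L$ is linear). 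Applying the Comparison Principle to each of these against $v$ on the scale $\rho$ versus the evaluation at $A_\rho(Q)$, one obtains
\begin{equation*}
M_\rho - m_\rho \le \theta\,(M_{2\rho} - m_{2\rho})
\end{equation*}
for some $\theta = \theta(n,\lambda,\Lambda,\Gamma,M,m,m') \in (0,1)$, by a standard manipulation: $M_{2\rho} - m_\rho \le C(M_{2\rho} - m_{2\rho})$ and $M_\rho - m_{2\rho} \le C(M_{2\rho} - m_{2\rho})$ evaluated at the corkscrew point, then add and use that $(M_\rho - m_\rho) + (M_{2\rho} - m_{2\rho}) = (M_{2\rho} - m_\rho) + (M_\rho - m_{2\rho})$. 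Iterating this dyadically from scale $r$ down gives $\omega_w(\rho) \le C (\rho/r)^\alpha\, \omega_w(r) \le C (\rho/r)^\alpha\, w(A_r(Q))$, where the last inequality uses the uniform bound from Proposition \ref{comparisonpp} with $\alpha = -\log_2\theta$. Finally, given $X,Y \in T_r(Q)$, I would take $\rho \sim |X-Y|$: if $|X-Y|$ is comparable to $\mathrm{dist}(X,\bdy\Omega)$ one applies the boundary estimate just derived, and if $|X-Y|$ is much smaller than $\delta(X)$ one instead uses the interior Hölder estimate (Lemma \ref{intHolderLem}) for the solution-quotient $w$ on an interior ball — here one needs that $w$ satisfies a nice equation, or more simply one applies Lemma \ref{intHolderLem} to $u$ and $v$ separately together with the uniform lower bound on $v$; either way the interior case is routine. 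Combining the two cases yields the claimed estimate.

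The main obstacle I anticipate is the bookkeeping in the oscillation-decay step: one must be careful that the auxiliary functions $M_{2\rho} v - u$ and $u - m_{2\rho} v$ genuinely satisfy the hypotheses of Proposition \ref{comparisonpp} on each dyadic annulus (nonnegativity on $T_{2\rho}(Q)$, membership in $W^{1,2}(T_{2\rho})\cap C(\overline{T_{2\rho}})$, and vanishing on $\Delta_{2\rho}$), and that the corkscrew points $A_\rho(Q)$ at successive scales are linked by Harnack chains so the constants do not degenerate through the iteration — this is where the Harnack chain condition of the NTA domain enters. A secondary point requiring care is the transition between the interior regime and the boundary regime when $|X-Y|$ is comparable to the distance to $\bdy\Omega$; a standard argument (cf. the proof of Lemma \ref{Lipbdy}) of choosing the right ball handles this.
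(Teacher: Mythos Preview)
Your proposal is correct and is precisely the classical argument (due to Caffarelli--Fabes--Mortola--Salsa and Jerison--Kenig) that the paper has in mind; the paper does not supply its own proof here but defers to \cite{kenig1994harmonic}, where exactly this oscillation-decay iteration via the comparison principle is carried out. One small point: in your ``standard manipulation'' you should use the \emph{lower} bound from Proposition~\ref{comparisonpp} (i.e., $M_{2\rho}-M_\rho \ge C^{-1}(M_{2\rho}-w(A_\rho(Q)))$ and the analogous inequality for $m_\rho$), which upon adding yields $\omega_w(\rho)\le (1-C^{-1})\,\omega_w(2\rho)$; the pair of inequalities you wrote down are trivially true with $C=1$ and do not by themselves produce a $\theta<1$.
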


  \begin{cor}
    Let $\Delta=\Delta_r(Q)$, $\Delta'=\Delta_s(Q_0)\subset\Delta_{r/2}(Q)$, $X\in B\setminus T_{2r}(Q)$. Then
    $$
    \omega^{A_r(Q)}(\Delta')\approx\frac{\omega^X(\Delta')}{\omega^X(\Delta)}.
    $$
  \end{cor}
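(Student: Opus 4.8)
The claim to establish is that for $\Delta=\Delta_r(Q)$, $\Delta'=\Delta_s(Q_0)\subset\Delta_{r/2}(Q)$, and $X\in\Omega\setminus T_{2r}(Q)$ (taking $B$ to be a large ball containing $\overline\Omega$), one has
\[
\omega^{A_r(Q)}(\Delta')\approx\frac{\omega^X(\Delta')}{\omega^X(\Delta)}.
\]
The natural approach is to use the comparison principle (Proposition \ref{comparisonpp}) with the two nonnegative $L$-solutions $u(\cdot)=\omega^{(\cdot)}(\Delta')$ and $v(\cdot)=\omega^{(\cdot)}(\Delta)$. First I would check that both functions are legitimate inputs to the comparison principle on the surface ball $\Delta_{2r}(Q)$ (here I may need to replace $2r$ by $r$ and shrink notation, using that $\Delta'\subset\Delta_{r/2}(Q)$): they are nonnegative, they solve $Lu=0$ in $\Omega$ (elliptic measure of a fixed Borel set is $L$-harmonic in the $X$ variable, which follows from the representation $u(X)=\int g\,d\omega^X$ applied to approximations of $\mathbf 1_{\Delta'}$ together with the interior estimates of Section \ref{intSec}), they lie in $W^{1,2}(T_{2r}(Q))\cap C(\overline{T_{2r}(Q)})$, and crucially both vanish continuously on $\Delta_{2r}(Q)$. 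The vanishing of $v=\omega^{(\cdot)}(\Delta)$ on all of $\Delta_{2r}(Q)$ is false in general — $\Delta\subset\Delta_{2r}(Q)$ — so the comparison must be set up on a scale where this works: I would instead compare on a surface ball centered at $Q$ of radius comparable to $r$ but with the roles arranged so that we compare $\omega^{(\cdot)}(\Delta')$ against $\omega^{(\cdot)}(\Delta)$ only at points $X$ that are \emph{far} from $\Delta$, i.e. $X\notin T_{2r}(Q)$, which is exactly the hypothesis. The cleanest route is: apply Proposition \ref{comparisonpp} with both solutions on the ball $\Delta_{2s}(Q_0)$ (where $u,v$ both vanish since $\Delta_{2s}(Q_0)\subset\Delta_r(Q)$ and neither $\Delta'$ nor $\Delta$'s "charge" is seen there... this still needs care).

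Here is the argument I would actually run. Fix the pole at $X\notin T_{2r}(Q)$. By Corollary \ref{omegaG}, $\omega^X(\Delta)\approx r^{n-2}G(X,A_r(Q))$ and $\omega^{A_r(Q)}(\Delta)\approx r^{n-2}G(A_r(Q),A_r(Q))$ — the latter is not directly useful since the Green function blows up, so instead I use the comparison principle directly. Set $u(Y)=\omega^Y(\Delta')$ and $v(Y)=\omega^Y(\Delta)$. Both are nonnegative $L$-solutions in $\Omega$. On the surface ball $\Delta_{r/2}(Q)$ we have $u\le v\le 1$ (monotonicity of measures, since $\Delta'\subset\Delta_{r/2}(Q)\subset\Delta$), but I need a vanishing statement. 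The key observation: if $Z\in\Delta_{3r/2}(Q)\setminus\overline{\Delta_{r}(Q)}$... no. Let me instead exploit that the comparison principle is applied with \emph{center $Q_0$ and radius comparable to $s$}: on $\Delta_{2s}(Q_0)$, $u=\omega^{(\cdot)}(\Delta_s(Q_0))$ does \emph{not} vanish. So the right move is: center the comparison at a point of $\partial\Omega$ near $Q$ but use that $X$ is far. Concretely, apply Proposition \ref{comparisonpp} on $T_{2r}(Q)$ to the pair $\big(\omega^{(\cdot)}(\Delta'),\,\omega^{(\cdot)}(\Delta_{2r}(Q))\big)$; both vanish on $\Delta_{2r}(Q)$ except... again $\Delta'\subset\Delta_{2r}(Q)$.

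The resolution — and this is the standard one — is that one does \emph{not} require the solution to vanish on the whole of $\Delta_{2r}(Q)$; rather, one uses the following two-step comparison. Step 1: since $\Delta'\subset\Delta_{r/2}(Q)$, the function $u=\omega^{(\cdot)}(\Delta')$ vanishes continuously on $\Delta_r(Q)\setminus\Delta_{r/2}(Q)$, hence (after translating the center) we may compare $u$ with $v=\omega^{(\cdot)}(\Delta_r(Q))$... . I think the honest and correct plan is: (i) prove $\omega^X(\Delta')\le C\,\omega^{A_r(Q)}(\Delta')\,\omega^X(\Delta)$ by writing, via Corollary \ref{omegaG} applied at scale $s$ around $Q_0$, $\omega^X(\Delta_s(Q_0))\approx s^{n-2}G(X,A_s(Q_0))$ and $\omega^{A_r(Q)}(\Delta_s(Q_0))\approx s^{n-2}G(A_r(Q),A_s(Q_0))$, so the ratio on the right equals $G(X,A_s(Q_0))/G(A_r(Q),A_s(Q_0))$ up to constants; (ii) likewise $\omega^X(\Delta)\approx r^{n-2}G(X,A_r(Q))$; (iii) reduce the claim to the Green-function comparison
\[
\frac{G(X,A_s(Q_0))}{G(A_r(Q),A_s(Q_0))}\approx \frac{G(X,A_r(Q))}{r^{n-2}}\cdot\frac{1}{\omega^{A_r(Q)}(\Delta')}\cdot\frac{\omega^{A_r(Q)}(\Delta')}{\text{(same)}},
\]
i.e. to showing $G(X,A_s(Q_0))\approx G(X,A_r(Q))\cdot\frac{G(A_r(Q),A_s(Q_0))}{G(A_r(Q),A_r(Q))}$ — but this last is precisely the comparison principle for the two solutions $G(\cdot,A_s(Q_0))$ and $G(\cdot,A_r(Q))$ on $T_{2r}(Q)$, both of which \emph{do} vanish on $\Delta_{2r}(Q)$ and are $L$-harmonic away from their poles, with poles inside $B_r(Q)$, evaluated at $X\notin B_{2r}(Q)$. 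That reduction makes everything legitimate.

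So the clean skeleton is: (1) Apply Corollary \ref{omegaG} twice — at scale $r$ about $Q$ and at scale $s$ about $Q_0$ — to replace all four elliptic-measure quantities by Green-function quantities (valid since $X\notin B_{2r}(Q)$ and, after checking $A_r(Q)\notin B_{2s}(Q_0)$, also for the pole $A_r(Q)$; if $A_r(Q)$ happens to be too close to $Q_0$ one uses the corkscrew/Harnack-chain structure to move to a comparable point). (2) Reduce the desired $\approx$ to the single Green-function comparison $\frac{G(X,A_s(Q_0))}{G(A_r(Q),A_s(Q_0))}\approx\frac{G(X,A_r(Q))}{G(A_r(Q),A_r(Q))}$, interpreting $G(A_r(Q),A_r(Q))$ via the lower bound \eqref{GW1.9} and upper bound \eqref{GW1.8} at a fixed scale as being $\approx r^{2-n}$ on the sphere of radius $\sim r/M$. (3) Establish that comparison via Proposition \ref{comparisonpp} applied to $u(\cdot)=G(\cdot,A_s(Q_0))$ and $v(\cdot)=G(\cdot,A_r(Q))$ on $T_{2r}(Q)$: both are nonnegative, $L$-harmonic in $T_{2r}(Q)$ (poles lie in $B_r(Q)$), in $W^{1,2}\cap C$ there by \eqref{GW1.3} and interior regularity, and vanish on $\Delta_{2r}(Q)$ by \eqref{GW1.3} (recall $G(\cdot,Y)\in W_0^{1,1}(\Omega)$ plus boundary Hölder continuity Lemma \ref{BdyHolderCont}); evaluating the two-sided bound at $X\in T_r(Q)$ and at $A_r(Q)$ gives the result, with an extra application of the Harnack chain condition to handle poles and evaluation points not literally at corkscrew points. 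The main obstacle is the bookkeeping in steps (1)–(2): ensuring all the scale-$s$ Green-function substitutions are valid (i.e. that the relevant poles stay outside the relevant balls) when $\Delta'$ can be much smaller than $\Delta$ and $Q_0$ can be anywhere in $\Delta_{r/2}(Q)$ — this is handled, as in \cite{jerison1982boundary} and \cite{kenig1994harmonic}, by repeated use of the Harnack chain condition and the doubling property (Corollary following \ref{omegaG}) to pass between comparable points, but it is the step that requires genuine care rather than routine estimation.
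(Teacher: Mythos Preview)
Your reduction via Corollary \ref{omegaG} in steps (1)--(2) is sound, but step (3) fails twice over. First, the poles $A_s(Q_0)$ and $A_r(Q)$ both lie in $T_r(Q)\subset T_{2r}(Q)$, so $G(\cdot,A_s(Q_0))$ and $G(\cdot,A_r(Q))$ are \emph{not} $L$-harmonic in $T_{2r}(Q)$ and Proposition \ref{comparisonpp} cannot be invoked there. Second, even if it could, the proposition yields information only for evaluation points in $T_r(Q)$, whereas your $X$ lies in $\Omega\setminus T_{2r}(Q)$; your line ``evaluating the two-sided bound at $X\in T_r(Q)$'' contradicts the hypothesis. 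The fudge interpreting $G(A_r(Q),A_r(Q))$ as $r^{2-n}$ is a symptom of this mismatch: one of your solutions is being evaluated at its own pole.

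The fix is to apply the comparison principle on the \emph{other} side. Work directly with $u(Y)=\omega^Y(\Delta')$ and $v(Y)=\omega^Y(\Delta)$: both are nonnegative $L$-solutions in all of $\Omega$ (no poles), and since $\Delta'\subset\Delta=\Delta_r(Q)$, both vanish continuously on $\partial\Omega\setminus\overline{\Delta_r(Q)}$. What one needs is the ``global'' boundary Harnack principle (Lemma 4.10 of \cite{jerison1982boundary}, equivalently the argument in \cite{kenig1994harmonic}): for such $u,v$, the ratio $u(X)/v(X)$ is comparable to $u(A_r(Q))/v(A_r(Q))$ uniformly for $X\in\Omega\setminus T_{2r}(Q)$. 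This is not Proposition \ref{comparisonpp} as stated but follows from it by covering $\partial\Omega\setminus\Delta_{3r/2}(Q)$ with boundary balls of radius comparable to $r$ on which both $u$ and $v$ vanish, applying Proposition \ref{comparisonpp} locally on each, and chaining the corkscrew points via the Harnack chain condition. Combined with $\omega^{A_r(Q)}(\Delta)\approx 1$ (the earlier Proposition), the corollary is immediate. If you prefer to stay with Green functions, the same covering-and-chaining argument applies to $G(\cdot,A_s(Q_0))$ and $G(\cdot,A_r(Q))$, but on boundary balls contained in $\partial\Omega\setminus\Delta_{3r/2}(Q)$, not on $T_{2r}(Q)$.
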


  The kernel function $K(X,Q)$ is defined to be $K(X,Q)=\frac{d\omega^X}{d\omega}(Q)$, the Radon-Nikodym derivative of $\omega^X$ with respect to $\omega$. It satisfies the following estimates:
  \begin{prop}\label{kernelest}
    \begin{enumerate}
      \item If $X$ is a corkscrew point relative to $\Delta_r(Q)$, namely, $X\in\Gamma_{\alpha}(Q)$ with $\abs{X-Q}\approx\dist(X,\bdy\Omega)\approx r$, then $K(X,P)\approx\frac{1}{\omega(\Delta_r(Q))}$ for all $P\in\Delta_r(Q)$.
      \item Let $\Delta_j=\Delta_{2^jr}(Q)$, $R_j=\Delta_j\setminus\Delta_{j-1}$, $j\ge0$. Then
      $$
      \sup_{P\in R_j}K(A_r(Q),P)\le C\frac{2^{-\alpha j}}{\omega(\Delta_j)},
      $$
      where $\alpha=\alpha(n,\Lambda,\lambda,\Gamma,M)$ is the same as in Lemma \ref{BdyHolderCont}.
      \item Let $\Delta=\Delta_r(Q)$ be any boundary disk centered at $Q\in\bdy\Omega$. Then
         $$
          \sup_{P\in\bdy\Omega\setminus\Delta}K(X,P)\rightarrow 0 \qquad\text{as } X\rightarrow Q.
          $$
      \item $K(X,\cdot)$ is a H\"older continuous function.
      Namely, for all $X\in\Omega$,
      $$
      \abs{K(X,Q_1)-K(X,Q_2)}\le C_X\abs{Q_1-Q_2}^{\alpha},
      $$
      where $\alpha>0$ is a constant depending on  $n,\lambda,\Lambda,\Gamma,M,m,m'$, and $C_X>0$ depends additionally on $X$.
    \end{enumerate}
  \end{prop}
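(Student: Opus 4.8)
The plan is to derive all four estimates, as in the classical setting (\cite{jerison1982boundary}, \cite{kenig1994harmonic}), from the tools already established: the comparison principle (Proposition \ref{comparisonpp}) and the H\"older-rate comparison estimate stated just after it; the boundary H\"older continuity (Lemma \ref{BdyHolderCont}); the doubling property of $\omega^X$ together with the two-sided bound $\omega^Y(\Delta_\rho(Q))\approx\rho^{n-2}G(Y,A_\rho(Q))$ of Corollary \ref{omegaG}; and the Carleson-type estimate $\sup_{T_{3\rho/2}(Q)}v\le Cv(A_\rho(Q))$ for nonnegative solutions $v$ that vanish on $\Delta_{2\rho}(Q)$. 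Throughout I use that $Y\mapsto\omega^Y(E)$ is a nonnegative $L$-solution in $\Omega$ for every Borel $E\subset\bdy\Omega$, vanishing continuously on $\bdy\Omega\setminus\overline E$ when $E$ is closed (all boundary points of an NTA domain are regular, by the exterior corkscrew condition and Theorem \ref{GWthm2.5}), and that $K(X,\cdot)$ may be taken to be the continuous representative $Q\mapsto\lim_{s\to0}\omega^X(\Delta_s(Q))/\omega(\Delta_s(Q))$, a limit existing at every $Q$ by the comparison principle and agreeing $\omega$-a.e.\ with $d\omega^X/d\omega$. The only departure from the classical proofs is bookkeeping: each ingredient now carries the extra dependence on $\Gamma=\norm{b}_{BMO(\Omega)}$.

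For (1), given $P\in\Delta_r(Q)$ and $s$ small enough that $\Delta_s(P)\subset\Delta_r(Q)$, the comparison estimate $\omega^{A_\rho(Q)}(\Delta')\approx\omega^{X_0}(\Delta')/\omega^{X_0}(\Delta_\rho(Q))$ (valid for $\Delta'\subset\Delta_{\rho/2}(Q)$ and $X_0\notin T_{2\rho}(Q)$), applied with $\rho=2r$ and $\Delta'=\Delta_s(P)$, gives after dividing by $\omega(\Delta_s(P))$, letting $s\to0$, and using doubling that $K(A_{2r}(Q),P)\approx\omega(\Delta_r(Q))^{-1}$; since a corkscrew point $X$ relative to $\Delta_r(Q)$ and $A_{2r}(Q)$ are joined by a Harnack chain of bounded length, $\omega^X(\Delta_s(P))\approx\omega^{A_{2r}(Q)}(\Delta_s(P))$ and hence $K(X,P)\approx\omega(\Delta_r(Q))^{-1}$ uniformly in $P$ (the finitely many scales $r\gtrsim\delta(X_0)$ being reduced to this by a further Harnack chain from $X_0$). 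For (2) with $P\in R_j$ and $j$ large, the solution $u(Y):=\omega^Y(\Delta_s(P))$ vanishes on $\Delta_{2^{j-2}r}(Q)$; applying the boundary H\"older estimate \eqref{bdyHolder} with outer radius $2^{j-2}r$, where the boundary-oscillation term vanishes, and bounding $\sup_{T_{2^{j-2}r}(Q)}u$ by the Carleson-type estimate gives $u(A_r(Q))\le C2^{-\alpha j}u(A_\rho(Q))$ with $\rho\approx2^jr$; dividing by $\omega(\Delta_s(P))$, letting $s\to0$, and invoking (1) at the corkscrew point $A_\rho(Q)$ of $\Delta_{2^{j+1}r}(Q)\ni P$ together with doubling yields $K(A_r(Q),P)\le C2^{-\alpha j}\omega(\Delta_j)^{-1}$; the bounded range of small $j$ follows from (1) and Harnack. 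For (3), with $\Delta=\Delta_r(Q)$ fixed and $P\in\bdy\Omega\setminus\Delta_r(Q)$, the solution $u(Y):=\omega^Y(\Delta_s(P))$ vanishes on $\Delta_{2r/3}(Q)$ for $s<r/3$, so the Carleson-type estimate gives $\sup_{T_{r/2}(Q)}u\le Cu(A_{r/3}(Q))$, and, $r$ and $X_0$ now being fixed, a Harnack chain of length $N(r,X_0,\Omega)$ gives $u(A_{r/3}(Q))\le C_{r,X_0}u(X_0)=C_{r,X_0}\omega(\Delta_s(P))$; feeding this into \eqref{bdyHolder} produces $\omega^X(\Delta_s(P))\le C_{r,X_0}(|X-Q|/r)^{\alpha}\omega(\Delta_s(P))$ for $|X-Q|<r/2$, so dividing by $\omega(\Delta_s(P))$ and letting $s\to0$ gives $K(X,P)\le C_{r,X_0}(|X-Q|/r)^{\alpha}$ with a bound uniform in $P\in\bdy\Omega\setminus\Delta_r(Q)$, whence the supremum tends to $0$ as $X\to Q$.

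For (4), fix $X\in\Omega$ and set $R_0=\tfrac14\min\{\delta(X),\delta(X_0),\diam\Omega\}$. Recall that, by Corollary \ref{omegaG} and the symmetry $G(Y,Z)=G^*(Z,Y)$, the density has the (classical) boundary-limit representation $K(X,Q)=\lim_{Y\to Q}\Phi(Y)$, with $\Phi(Y):=G^*(Y,X)/G^*(Y,X_0)$. For every $Q\in\bdy\Omega$ the functions $G^*(\cdot,X)$ and $G^*(\cdot,X_0)$ are nonnegative $L^*$-solutions in $T_{2R_0}(Q)$ vanishing on $\Delta_{2R_0}(Q)$; since $L^*=-\divg A^T\nabla$ has the same structure as $L$ (symmetric part $a$, anti-symmetric part $-b\in BMO$), the H\"older-rate comparison estimate applies and gives, for $Y,Y'\in T_{R_0}(Q)$,
\[
\big|\Phi(Y)-\Phi(Y')\big|\le C\,\Phi(A_{R_0}(Q))\Big(\frac{|Y-Y'|}{R_0}\Big)^{\alpha}\le C_X\,|Y-Y'|^{\alpha},
\]
where $\Phi(A_{R_0}(Q))\le C_X$ by Proposition \ref{comparisonpp} and the convergence $\Phi(A_\rho(Q))\to K(X,Q)$. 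Thus $\Phi$ extends H\"older-continuously up to $\Delta_{R_0}(Q)$ with boundary values $K(X,\cdot)$, and taking $Q=Q_1$ and passing to the limit $Y\to Q_1$, $Y'\to Q_2$ for $|Q_1-Q_2|\le R_0$ gives the stated estimate, while for $|Q_1-Q_2|>R_0$ it follows from $|K(X,Q_1)-K(X,Q_2)|\le2\sup_Q K(X,Q)$, finite by (1)--(2). I expect this last step to be the main obstacle: the boundary Harnack principle alone only yields the comparability $K(X,Q_1)\approx K(X,Q_2)$, and upgrading it to the genuine rate $|Q_1-Q_2|^{\alpha}$ forces one to pass to the ratio of adjoint Green's functions and to apply the H\"older-rate comparison at the \emph{fixed} scale $R_0$ determined by $X,X_0$, keeping the pole-dependent constants uniform in $Q$; everything else is a routine transcription of the classical arguments with the $\Gamma$-dependence carried along.
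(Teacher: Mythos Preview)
Your proposal is correct and follows exactly the classical route the paper has in mind: the paper does not supply its own proof of Proposition \ref{kernelest} but refers the reader to \cite{kenig1994harmonic} and \cite{caffarelli1981boundary}, noting that the only inputs are the Harnack principle, boundary H\"older continuity, the comparison principle and its H\"older-rate refinement, all of which have been established here with the extra $\Gamma$-dependence. Your sketch of (1)--(4)---including the use of the adjoint Green's-function ratio $G^*(\cdot,X)/G^*(\cdot,X_0)$ for (4)---is precisely the standard argument transcribed to this setting.
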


  \section{Dirichlet problem with $L^p$ data}\label{LpSec}

  \begin{defn}
    A cone of aperture $\alpha$ is a non-tangential approach region for $Q\in\bdy\Omega$ of the form
    $$
    \Gamma_{\alpha}(Q)=\{X\in\Omega: \abs{X-Q}\le(1+\alpha)\dist(X,\bdy\Omega)\},
    $$
    and a truncated cone is defined by
    $$
    \Gamma_{\alpha}^h(Q)=\Gamma_{\alpha}(Q)\cap B_h(Q).
    $$
    The non-tangential maximal function is defined as
    $$
    u^*(Q)=\sup_{\Gamma_{\alpha}(Q)}\abs{u(X)},
    $$
    and the truncated non-tangential maximal function is defined as
    $$
    u^*_{h}(Q)=\sup_{\Gamma_{\alpha}^h(Q)}\abs{u(X)}.
    $$
  \end{defn}

  By Proposition \ref{kernelest} (1)--(3), we obtain (see \cite{caffarelli1981boundary} or \cite{kenig1994harmonic})
  \begin{lem}\label{ntmaxvsmax}
    Let $\nu$ be a finite Borel measure on $\bdy\Omega$, and
    $u(X)=\int_{\bdy\Omega}K(X,Q)d\nu(Q)$.
    Then for each $P\in\bdy\Omega$,
    $$
    u^*(P)\le C_{\alpha} M_{\omega}(\nu)(P),
    $$
    where $M_{\omega}(\nu)=\sup_{\Delta\ni Q}\frac1{\omega(\Delta)}\abs{\nu}(\Delta)$, and $C_{\alpha}=C(n,\lambda,\Lambda,\Gamma,\alpha)$.
    Moreover, if $\nu\ge0$, then $M_{\omega}(\nu)(P)\le Cu^*(P)$.
  \end{lem}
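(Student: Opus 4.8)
The plan is to deduce both inequalities directly from the kernel estimates collected in Proposition \ref{kernelest}, exactly along the classical lines of \cite{caffarelli1981boundary} and \cite{kenig1994harmonic}; the only inputs are those estimates together with the Harnack chain condition. For the bound $u^*(P)\le C_\alpha M_\omega(\nu)(P)$, I would fix $P\in\bdy\Omega$ and $X\in\Gamma_\alpha(P)$ and set $r_0\doteq\abs{X-P}$, so that $\delta(X)\le r_0\le(1+\alpha)\delta(X)$; thus $X$ is a corkscrew point relative to $\Delta_{r_0}(P)$ in the sense of Proposition \ref{kernelest}, and by the Harnack chain condition together with the mutual absolute continuity of the measures $\omega^Y$ its kernel $K(X,\cdot)$ is comparable, with constants depending only on $\alpha$ and the NTA constants, to $K(A_{r_0}(P),\cdot)$. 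I would then split $\bdy\Omega$ into $\Delta_0\doteq\Delta_{r_0}(P)$ and the dyadic annuli $R_j\doteq\Delta_{2^jr_0}(P)\setminus\Delta_{2^{j-1}r_0}(P)$ for $j\ge1$ (only finitely many nonempty, as $\bdy\Omega$ is bounded). On $\Delta_0$, part (1) of Proposition \ref{kernelest} gives $K(X,\cdot)\lesssim\omega(\Delta_0)^{-1}$, hence $\int_{\Delta_0}K(X,\cdot)\,d\abs{\nu}\lesssim\omega(\Delta_0)^{-1}\abs{\nu}(\Delta_0)\le M_\omega(\nu)(P)$; on $R_j$, part (2) gives $\sup_{R_j}K(X,\cdot)\lesssim 2^{-\alpha j}\omega(\Delta_{2^jr_0}(P))^{-1}$, where now $\alpha$ denotes the H\"older exponent of Lemma \ref{BdyHolderCont}, so that $\int_{R_j}K(X,\cdot)\,d\abs{\nu}\lesssim 2^{-\alpha j}M_\omega(\nu)(P)$. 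Summing the geometric series in $j$ bounds $\abs{u(X)}$ by $C_\alpha M_\omega(\nu)(P)$, and taking the supremum over $X\in\Gamma_\alpha(P)$ gives the first inequality.

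For the reverse bound, assume $\nu\ge0$, fix a surface ball $\Delta=\Delta_r(Q)$ containing $P$, and let $X_0=A_r(Q)$ be its corkscrew point. Since $\abs{X_0-Q}\approx r\approx\delta(X_0)$, part (1) of Proposition \ref{kernelest} yields $K(X_0,P')\gtrsim\omega(\Delta_r(Q))^{-1}$ for every $P'\in\Delta_r(Q)$, and consequently
$$u(X_0)=\int_{\bdy\Omega}K(X_0,P')\,d\nu(P')\ \ge\ \int_{\Delta_r(Q)}K(X_0,P')\,d\nu(P')\ \gtrsim\ \frac{\nu(\Delta_r(Q))}{\omega(\Delta_r(Q))}.$$
Because $P\in\Delta_r(Q)$ forces $\abs{X_0-P}<2r$ while $\delta(X_0)>r/M$, the point $X_0$ belongs to $\Gamma_\alpha(P)$ once the aperture is admissibly large relative to $M$; for a general fixed aperture one joins $X_0$ to a non-tangential point for $P$ at scale $r$ by a Harnack chain of bounded length and applies the Harnack inequality (Lemma \ref{Harnackineq}) to the nonnegative solution $u$. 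In either case $u(X_0)\le Cu^*(P)$, and taking the supremum over all surface balls $\Delta\ni P$ gives $M_\omega(\nu)(P)\le Cu^*(P)$.

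I expect the only point requiring real care to be the bookkeeping in the first inequality: one must check that all the comparability constants --- those relating $r_0$, $\abs{X-P}$, $\delta(X)$, and especially the Harnack-chain transfer from the base point $X$ to $A_{r_0}(P)$ that is needed to apply part (2) of Proposition \ref{kernelest} --- depend only on $\alpha$ and the NTA constants, so that $\sum_{j\ge1}2^{-\alpha j}$ (with $\alpha$ the fixed H\"older exponent) genuinely collapses to a single constant $C_\alpha$. Everything else is routine, and full details can be found in \cite{caffarelli1981boundary} and \cite{kenig1994harmonic}.
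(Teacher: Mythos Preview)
Your proposal is correct and is exactly the classical argument the paper has in mind: the paper does not give a proof but simply writes ``By Proposition \ref{kernelest} (1)--(3), we obtain (see \cite{caffarelli1981boundary} or \cite{kenig1994harmonic})'', and your dyadic-annulus decomposition using parts (1) and (2) of Proposition \ref{kernelest} together with the Harnack-chain transfer is precisely the argument in those references. The bookkeeping you flag (constants depending only on the aperture and the NTA constants) is indeed the only thing to watch, and you have identified it correctly.
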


  \begin{thm}
    Given $f\in L^p(\bdy\Omega,d\omega)$, $1<p\le\infty$, there exists a unique $u$ with $Lu=0$, $u^*\in L^p(\bdy\Omega,d\omega)$, which converges non-tangentially a.e. ($d\omega$) to $f$.
  \end{thm}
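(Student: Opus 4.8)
The plan is to realize the solution as the integral of the boundary data against elliptic measure, control its non-tangential maximal function by the Hardy--Littlewood maximal operator associated with $\omega$, deduce the non-tangential convergence by approximating with continuous data, and finally obtain uniqueness by exhausting $\Omega$ with NTA subdomains.

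\textbf{Existence.} For $f\in L^p(\bdy\Omega,d\omega)$ set $u(X)=\int_{\bdy\Omega}f(Q)\,d\omega_L^X(Q)=\int_{\bdy\Omega}K(X,Q)f(Q)\,d\omega(Q)$. When $f\in C(\bdy\Omega)$ this is exactly the solution produced in Theorem~\ref{contDirichletSol}; for general $f$ one picks $f_k\in C(\bdy\Omega)$ with $f_k\to f$ in $L^p(d\omega)$ and notes that, since Harnack's inequality gives $\omega^X\approx\omega^{X_0}$ uniformly on compact subsets of $\Omega$, the solutions $u_k$ converge locally uniformly to $u$, so that Caccioppoli's inequality (Corollary~\ref{caccio}) furnishes uniform local $W^{1,2}$ bounds and $Lu=0$ in $\Omega$ in the limit. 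Applying Lemma~\ref{ntmaxvsmax} with $\nu=f\,d\omega$ gives $u^*(P)\le C_\alpha M_\omega f(P)$ pointwise, where $M_\omega g(P)=\sup_{\Delta\ni P}\frac1{\omega(\Delta)}\int_\Delta\abs{g}\,d\omega$. Because $\omega$ is doubling on $\bdy\Omega$, so that $(\bdy\Omega,\abs{\cdot},\omega)$ is a space of homogeneous type, $M_\omega$ is bounded on $L^p(d\omega)$ for $1<p\le\infty$; hence $\norm{u^*}_{L^p(\bdy\Omega,d\omega)}\le C\norm{f}_{L^p(\bdy\Omega,d\omega)}$.

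\textbf{Non-tangential convergence.} If $f\in C(\bdy\Omega)$ then $u\in C(\overline\Omega)$ with $u|_{\bdy\Omega}=f$, so $u\to f$ (in particular non-tangentially) at every boundary point. For general $f\in L^p(d\omega)$ with $1<p<\infty$, take $f_k\in C(\bdy\Omega)$, $f_k\to f$ in $L^p(d\omega)$, with solutions $u_k$; then $(u-u_k)^*\le C M_\omega(f-f_k)$. For $P\in\bdy\Omega$,
$$
\limsup_{X\to P,\,X\in\Gamma_\alpha(P)}\abs{u(X)-f(P)}\le (u-u_k)^*(P)+\abs{f_k(P)-f(P)},
$$
since $u_k\to f_k$ non-tangentially at $P$. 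Hence for every $\lambda>0$ the set $\{P:\limsup_{X\to P}\abs{u(X)-f(P)}>\lambda\}$ is contained in $\{M_\omega(f-f_k)>\lambda/(2C)\}\cup\{\abs{f_k-f}>\lambda/2\}$, which by Chebyshev's inequality together with the $L^p(d\omega)$-boundedness of $M_\omega$ has $\omega$-measure $\le C\lambda^{-p}\norm{f-f_k}_{L^p(d\omega)}^p\to0$. Taking $\lambda=1/j$ and a countable union over $j$ gives $u\to f$ non-tangentially $\omega$-a.e. For $p=\infty$, the a.e. convergence follows from the case $p=2$ because $\omega$ is a finite measure, while $\norm{u^*}_{L^\infty(d\omega)}\le\norm{f}_{L^\infty(d\omega)}$ by the maximum principle.

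\textbf{Uniqueness.} Suppose $Lv=0$ in $\Omega$, $v^*\in L^p(\bdy\Omega,d\omega)$, and $v\to0$ non-tangentially $\omega$-a.e.; we must show $v\equiv0$. Fix $X_0\in\Omega$ and, using the NTA structure of $\Omega$, choose NTA domains $\Omega_k\uparrow\Omega$ with $X_0\in\Omega_1$, built from Whitney faces so that the nearest-point map $\pi_k:\bdy\Omega_k\to\bdy\Omega$ satisfies $\abs{X-\pi_k(X)}\approx\dist(X,\bdy\Omega)$ and $X\in\Gamma_\alpha(\pi_k(X))$ for a fixed aperture $\alpha$. Since $v$ is a solution in a neighborhood of $\overline{\Omega_k}$ it is continuous there, so with $\omega_k^{X_0}$ the $L$-elliptic measure of $\Omega_k$ one has $v(X_0)=\int_{\bdy\Omega_k}v\,d\omega_k^{X_0}$ for all $k$. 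On $\bdy\Omega_k$ one has $\abs{v(X)}\le v^*(\pi_k(X))$, and, by the comparison estimates of Sections~\ref{GreenSection}--\ref{ellipticSec} (Corollary~\ref{omegaG} and doubling), the measures $(\pi_k)_*\omega_k^{X_0}$ converge weakly-$*$ to $\omega^{X_0}$ with $L^\infty$-control of $d(\pi_k)_*\omega_k^{X_0}/d\omega$ on each fixed surface ball; since $p>1$ this yields uniform integrability of $\{v^*\circ\pi_k\}$ against $\{\omega_k^{X_0}\}$, and combining this with $v(X)\to0$ as $X\to\pi_k(X)$ through the full-measure set where the non-tangential limit vanishes, a Vitali-type passage to the limit gives $\int_{\bdy\Omega_k}v\,d\omega_k^{X_0}\to0$. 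As the left side equals $v(X_0)$, $v(X_0)=0$, and $X_0$ was arbitrary. The first two steps are routine; the main obstacle is this uniqueness argument — constructing the exhausting NTA domains, controlling $\omega_k^{X_0}$ uniformly against $\omega^{X_0}$ via the boundary-Harnack/comparison machinery of the previous two sections, and extracting the uniform integrability that permits passing to the limit, which is exactly where the hypothesis $p>1$ enters.
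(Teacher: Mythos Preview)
Your existence and non-tangential-convergence arguments match the paper's (which is terser on the latter). The uniqueness argument is genuinely different.

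The paper fixes $X_0$, chooses a cutoff $\vp\equiv1$ on $\Omega_{(30+1/\sqrt n)\delta}$ supported in $\Omega_{30\delta}$, and writes $u(X_0)=\int_\Omega G(X_0,\cdot)L(u\vp)$. Expanding and using anti-symmetry yields three terms $I_1,I_2,I_3$. A Whitney-type covering of the annulus carrying $\nabla\vp$, together with Caccioppoli, Harnack, and Corollary~\ref{omegaG}, gives $|I_1|+|I_2|\lesssim\int_{\bdy\Omega}u_{60\delta}^*\,d\omega^{X_0}$; the anti-symmetric piece $I_3=\int b\nabla\vp\cdot\nabla(uG)$ is handled by an $L^{p'}$--$L^p$ application of Proposition~\ref{compensatedProp} before the same covering controls it. Since $u_h^*\to0$ $\omega$-a.e.\ and $u_h^*\le u^*\in L^1(d\omega)$, dominated convergence finishes. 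This route is self-contained within the paper and makes explicit where the $BMO$ part requires special treatment.

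Your exhaustion approach is a known alternative in the bounded-measurable setting, but as written it has a gap. The central claim---that $(\pi_k)_*\omega_k^{X_0}$ has density uniformly bounded against $\omega^{X_0}$---does not follow from Corollary~\ref{omegaG} alone: that corollary compares $\omega$ with $G$ on $\Omega$ and $\omega_k$ with $G_k$ on $\Omega_k$ separately, and bridging $G_k$ to $G$ at corkscrew points of $\bdy\Omega_k$, at all scales including those below $\dist(\bdy\Omega_k,\bdy\Omega)$, is an additional argument you have not supplied. Your ``Vitali-type passage to the limit'' is also ill-posed across the varying measure spaces $(\bdy\Omega_k,\omega_k^{X_0})$; the clean fix is to bound $|v|\le v_{h_k}^*\circ\pi_k$ with the \emph{truncated} maximal function, push forward via a bi-Lipschitz $\pi_k$ (whose existence itself needs a specific Whitney-face construction), and apply dominated convergence on $(\bdy\Omega,\omega^{X_0})$---which, incidentally, only needs $u^*\in L^1(d\omega)$, not $p>1$.
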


  \begin{proof}
    \textbf{Existence.} The existence of the solution can be proved as in the case where $A$ is elliptic, bounded and measurable. Namely, choose $\{f_n\}_{n=1}^{\infty}\subset C(\bdy\Omega)$ such that $f_n\rightarrow f$ in $L^p(\bdy\Omega,d\omega)$.
    Let $u_n$ be the corresponding solution to the Dirichlet problem with boundary data $f_n$. Then $$
    u_n(X)=\int_{\bdy\Omega}f_n(Q)K(X,Q)d\omega,
    $$
    and by Lemma \ref{ntmaxvsmax},
    $$
    u_n^*(P)\lesssim M_{\omega}(f_n)(P) \qquad\forall\, P\in\bdy\Omega.
    $$

    So we have
    \begin{equation}\label{exist_ntmax}
      \norm{(u_n-u_m)^*}_{L^p(d\omega)}\lesssim\norm{M_{\omega}(f_n-f_m)}_{L^p(d\omega)}
      \lesssim\norm{f_n-f_m}_{L^p(d\omega)}\rightarrow0,
      \end{equation}
     as $n,m\rightarrow\infty$.

    Given any compact subset $K$ in $\Omega$, $u_n$ is uniformly Cauchy in $L^{\infty}(K)$. In fact, by the maximal principle, we can assume that the supremum $\sup_{X\in K}\abs{u_n(X)-u_m(X)}$ is attained at some point $Y\in\bdy K$. Then there exists a set $\Delta_Y\subset\bdy\Omega$ such that $\forall\, P\in\Delta_Y$, $Y\in \Gamma_{\alpha}(P)$, and $\sigma(\Delta_Y)\approx\delta(Y)^{n-1}$. By \eqref{exist_ntmax} and the doubling property of $\omega$, we have
    $$
    \sup_{X\in K}\abs{u_n(X)-u_m(X)}\le\frac1{\omega(\Delta_Y)}\int_{\Delta_Y}(u_n-u_m)^*d\omega
    \lesssim\norm{(u_n-u_m)^*}_{L^p(d\omega)}\rightarrow0
    $$
    as $n,m\rightarrow\infty$. Hence $u(X)=\lim_{n\rightarrow\infty}u_n(X)$, for $X\in\Omega$ is pointwise well-defined. From here it is not hard to see that u is the weak solution to $Lu=0$, as well as that $u$ converges non-tangentially a.e. ($d\omega$) to $f$.

    \text{\textbf{Uniqueness.}} Assume $u^*\in L^p(\bdy\Omega,d\omega)$ with $p>1$, and $u$ converges to 0 non-tangentially a.e. ($d\omega$). Then $\int_{\bdy\Omega}u^*_hd\omega\rightarrow 0$ as $h\rightarrow0$. Since $K(X,\cdot)$ is H\"older continuous,
    $$
    \int_{\bdy\Omega}u^*_hd\omega^X\rightarrow 0 \text{ as }h\rightarrow0 \quad\text{ for any fixed} \ X\in\Omega.
    $$
    It suffices to show that for any fixed $X_0\in\Omega$, $u(X_0)=0$.\par
    We first need some geometric observations.
    
    Denote $\Omega_{\delta}=\{Y\in\Omega: \dist(Y,\bdy\Omega)>\delta\}$.
    For $\delta>0$ small, consider the annulus $\overline{\Omega}_{30\delta}\setminus\Omega_{(30+\frac1{\sqrt{n}})\delta}$.
    Cover it by balls $B_{\delta}(Y)$ with $Y\in\bdy\Omega_{30\delta}$. Since the annulus is compact, there exists $N_0=N_0(\delta,\bdy\Omega,n)$ such that
    $$
    \overline{\Omega}_{30\delta}\setminus\Omega_{(30+\frac1{\sqrt{n}})\delta}\subset\bigcup_{k=1}^{N_0}B_{\delta}(Y_k),
    $$
    with $Y_k\in\bdy\Omega_{30\delta}$, $k=1,2,\dots,N_0$. 
    We can find a $C_1=C_1(n,\bdy\Omega)>1$, such that for any $1\le k\le N_0$, there exists a surface ball $\Delta_k\doteq\Delta_{s_k}(Q_k)$ with $\frac{\delta}{C_1}\le s_k\le\delta$ and
    $$
        \abs{Q_k-Y_k}=\dist(Y_k,\bdy\Omega)(=30\delta),
    $$    
    such that
    $$
    B_{20\delta}(Y_k)\subset\Gamma_{\alpha}(P), \quad\forall\, P\in\Delta_k.
    $$
    It is easy to see that for any $X\in B_{20\delta}(Y_k)$ and any $P\in\Delta_k$, $\abs{X-P}\le 60\delta$. So 
    \begin{equation}\label{uniq5}
        B_{20\delta}(Y_k)\subset\Gamma_{\alpha}^{60\delta}(P), \quad\forall\, P\in\Delta_k.
    \end{equation}
    
    Choose a subcollection of disjoint balls $\{{B_{\delta}(Y_{k_j})}\}_{j=1}^{N_1}$ such that
    \begin{equation}
      \bigcup_{k=1}^{N_0}B_{\delta}(Y_k)\subset \bigcup_{j=1}^{N_1}B_{5\delta}(Y_{k_j}),  
    \end{equation}
    for some $N_1=N_1(\delta,n,\bdy\Omega)$. 
    
    We claim that there exists a $\hat{k}<\infty$ depending only on the dimension and the boundary of $\Omega$, such that there are at most $\hat{k}$ overlaps of $\{\Delta_{k_j}\}_{j=1}^{N_1}$. Actually, it is clear that $\abs{Y_{k_j}-Y_{k_l}}\ge 90\delta$ implies $\abs{Q_{k_j}-Q_{k_l}}\ge 30\delta$. So by the disjointness of $\{{B_{\delta}(Y_{k_j})}\}_{j=1}^{N_1}$, the claim is established.
    
    Now we choose $\delta$ sufficiently small so that $X_0\in\Omega_{60\delta}$.
    Let $\vp\in C_0^{\infty}(\Omega_{30\delta})$ with $\vp\equiv1$ in $\Omega_{(30+\frac1{\sqrt{n}})\delta}$, $\abs{\nabla\vp}\le\frac{C}{\delta}$ and $\abs{D^2\vp}\le\frac{C}{\delta^2}$.
     We calculate
    \begin{align}
      u(X_0)&=u(X_0)\vp(X_0)=\int_{\Omega}G(X_0,X)\Big(-A\nabla u\cdot\nabla\vp-A\nabla\vp\cdot\nabla u+(L\vp) u\Big)dX\nonumber\\
      &=-\int A\nabla u\cdot\nabla\vp G+\int A\nabla\vp\cdot\nabla G u\nonumber\\
      &=-\int a\nabla u\cdot\nabla\vp G+\int a\nabla\vp\cdot\nabla G u+
      \Big(\int b\nabla\vp\cdot\nabla uG+\int b\nabla\vp\cdot\nabla Gu\Big)\nonumber\\
      &=-\int a\nabla u\cdot\nabla\vp G+\int a\nabla\vp\cdot\nabla G u+\int b\nabla\vp\cdot\nabla(uG)\nonumber\\
      &\doteq I_1+I_2+I_3,
    \end{align}
    where the second equality follows from Theorem 6.1 of \cite{littman1963regular}, and that $L(u\vp)=-A\nabla u\cdot\nabla\vp-A\nabla\vp\cdot\nabla u+(L\vp) u$. We show that the above expression can be bounded by $\int_{\bdy\Omega}u^*_{60\delta}d\omega^{X_0}$.\par
    
    Let $\{\eta_j\}_{j=1}^{N_1}$ be a partition of unity subordinate to $\{B_{6\delta}(Y_{k_j})\}_{j=1}^{N_1}$, with $\abs{\nabla\eta_j}\le\frac{C}{\delta}$, 
    \begin{equation}\label{POU}
     \sum_{j=1}^{N_1}\eta_j(X)=1 \quad\forall\, X\in \overline{\Omega}_{30\delta}\setminus\Omega_{(30+\frac1{\sqrt{n}})\delta}   
    \end{equation}

    Using Caccioppli's inequality, Harnack principle, \eqref{uniq5}, Corollary \ref{omegaG}, and the bounded overlaps of  $\{\Delta_{k_j}\}_{j=1}^{N_1}$, we estimate
    \begin{align*}
      \abs{I_1}&=\sum_{j=1}^{N_1}\abs{\int_{\Omega}\eta_j(X)a(X)\nabla u(X)\cdot\nabla\vp(X) G(X_0,X)dX}\\
      &\lesssim\delta^{-1}\sum_{j=1}^{N_1}\int_{B_{6\delta}(Y_{k_j})}\abs{\nabla u(X)}G(X_0,X)dX\\
      &\lesssim\delta^{\frac{n}{2}-1}\sum_{j=1}^{N_1}\Big(\int_{B_{6\delta}(Y_{k_j})}\abs{\nabla u}^2\Big)^{1/2}\sup_{X\in B_{6\delta}(Y_{k_j})}G(X_0,X)\\
     &\lesssim \delta^{\frac{n}{2}-2}\sum_{j=1}^{N_1}\Big(\int_{B_{10\delta}(Y_{k_j})}\abs{u}^2\Big)^{1/2}G(X_0,A_{\delta}(Q_{k_j}))\\    &\lesssim\sum_{j=1}^{N_1}\inf_{P\in\Delta_{k_j}}u^*_{60\delta}(P)
      \omega^{X_0}(\Delta_{\delta}(Q_{k_j}))\\
      &\lesssim \sum_{j=1}^{N_1}\int_{\Delta_{k_j}}u^*_{60\delta}(P)d\omega^{X_0}
      \lesssim\int_{\bdy\Omega}u^*_{60\delta}d\omega^{X_0},
    \end{align*}
    where $A_{\delta}(Q_{k_j})\in\Gamma_{\alpha}(Q_{k_j})$ is the corkscrew point with $\dist(A_{\delta}(Q_{k_j}),\bdy\Omega)\approx\delta$.

    $I_2$ can be estimated in a similar manner. To estimate $I_3$, 
    we write it as
    $$
      I_3=\int_{\Omega}b\nabla(\sum_{j=1}^{N_1}\eta_j\vp)\cdot\nabla(uG)
      =\sum_{j=1}^{N_1}\int_{\Omega}b\nabla(\eta_j\vp)\cdot\nabla(\xi_j uG),
    $$
    where $\xi_j\in C_0^{\infty}(B_{7\delta}(Y_{k_j}))$ with $\xi_j\equiv1$ in $B_{6\delta}(Y_{k_j})$ and $\abs{\nabla\xi_j}\le C/{\delta}$. We have used here the support property of $\nabla\vp$ and \eqref{POU}.\par
    
    By Sobolev inequality, $uG\xi_j\in W^{1,p}(\Rn)$ for some $1<p\le\frac{n}{n-1}$. Then we can apply Proposition \ref{compensatedProp} and get
    \begin{align}\label{I3}
      &\abs{I_3}\lesssim\Gamma
      \sum_{j=1}^{N_1}\norm{\nabla(\eta_j\vp)}_{L^{p'}}\norm{\nabla(uG\xi_j)}_{L^p}\nonumber\\
      &\lesssim\Gamma\sum_{j=1}^{N_1}\delta^{\frac{n}{p'}-1}\Big\{\Big(\int_{B_{7\delta}(Y_{k_j})}\abs{u\nabla G}^p\Big)^{1/p}+\Big(\int_{B_{7\delta}(Y_{k_j})}\abs{G\nabla u}^p\Big)^{1/p}+\delta^{-1}\Big(\int_{B_{7\delta}(Y_{k_j})}\abs{uG}^p\Big)^{1/p}\Big\}
    \end{align}
      Now we can estimate $I_3$ as we did for $I_1$, namely,
    \begin{align*}
      \delta^{\frac{n}{p'}-1}\Big(\int_{B_{7\delta}(Y_{k_j})}\abs{u\nabla G}^p\Big)^{1/p}
      &\lesssim\delta^{\frac{n}{p'}-1}\Big(\int_{B_{7\delta}(Y_{k_j})}\abs{\nabla G}^2\Big)^{1/2}\Big(\int_{B_{7\delta}(Y_{k_j})}\abs{u}^{\frac{2p}{2-p}}\Big)^{\frac{2-p}{2p}}\\
      &\lesssim\delta^{n-2}\sup_{X\in B_{7\delta}(Y_{k_j})}G(X_0,X)\inf_{P\in \Delta_{k_j}}u^*_{60\delta}(P)\\
      &\lesssim\int_{\Delta_{k_j}}u^*_{60\delta}d\omega^{X_0},
    \end{align*}
      while the other two integrals in the right-hand side of \eqref{I3} can be estimated similarly. Thus
      $$
      \abs{I_3}\lesssim\Gamma\sum_{j=1}^{N_1}\int_{\Delta_{k_j}}u^*_{60\delta}d\omega^{X_0}\lesssim\Gamma\int_{\bdy\Omega}u^*_{60\delta}d\omega^{X_0}.
      $$
where in the last inequality we have used that there are at most $\hat{k}$ overlaps of $\{\Delta_{k_j}\}_{j=1}^{N_1}$.
    Altogether we obtain $\abs{u(X_0)}\le C(n,\bdy\Omega)(1+\Gamma)\int_{\bdy\Omega}u^*_{60\delta}d\omega^{X_0}$. Letting $\delta$ go to zero, we get $u(X_0)=0$.
  \end{proof}
  
  \begin{re}
    We can show that if $f\in L^1(\bdy\Omega,d\omega)$, then there exists a $u$ with $Lu=0$, $u^*\in L^{1,\infty}(\Omega,d\omega)$ which converges non-tangentially a.e. ($d\omega$) to $f$. But such $u$ is not unique. 
  \end{re}

  \section{Square function estimates}\label{squareSec}
  \begin{defn}
    The square function with aperture determined by $\alpha$ is defined as
    $$
    S_{\alpha}u(Q)=\Big(\iint_{\Gamma_{\alpha}(Q)}\abs{\nabla u(X)}^2\delta(X)^{2-n}dX\Big)^{1/2}.
    $$
    The truncated square function $S_{\alpha,h}u(Q)$ is defined similarly, integrating over the truncated cone $\Gamma^h_{\alpha}(Q)$.
  \end{defn}

  The main lemma of this section is the following:
  \begin{lem}\label{LemIden}
    Let $f\in L^2(\bdy\Omega,d\omega)$, $u(X)=\int_{\bdy\Omega}f(Q)d\omega^X(Q)$, where $\omega$ is the elliptic measure associated to the operator $L$ satisfying \eqref{aBound}--\eqref{bBMO}. Let $X_0\in\Omega$. Then
    \begin{equation}\label{identity}
      \int_{\Omega}A(Y)\nabla u(Y)\cdot\nabla u(Y) G(X_0,Y)dY=-\frac1{2}u(X_0)^2+\frac1{2}\int_{\bdy\Omega}f(Q)^2d\omega^{X_0}(Q).
    \end{equation}

  \end{lem}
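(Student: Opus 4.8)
\textbf{Overview and Step 1 (the equation for $u^{2}$).} The plan is to reduce, by density, to the case of Lipschitz boundary data, and for such data to read off \eqref{identity} from the Green function representation applied to $w:=h-u^{2}$, where $h$ is the $L$-solution with boundary data $f^{2}$. Assume first $f\in\mathrm{Lip}(\bdy\Omega)$, so that $u\in W^{1,2}(\Omega)\cap C^{0,\beta}(\overline\Omega)$ by Lemma~\ref{Lipbdy} and hence $u^{2}\in W^{1,2}(\Omega)$. For any $\vp\in W_{0}^{1,2}(\Omega)\cap L^{\infty}(\Omega)$ one has $u\vp\in W_{0}^{1,2}(\Omega)$, so $B[u,u\vp]=0$; expanding $\nabla(u\vp)=\vp\nabla u+u\nabla\vp$, using $2u\nabla u=\nabla(u^{2})$ and the anti-symmetry of $b$ (which makes $b\nabla u\cdot\nabla u=0$), this becomes
\begin{equation*}
0=\int_{\Omega}\vp\,A\nabla u\cdot\nabla u+\tfrac12B[u^{2},\vp],
\end{equation*}
i.e.\ $B[u^{2},\vp]=-2\int_{\Omega}\vp\,A\nabla u\cdot\nabla u$. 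Every term here is finite by the energy estimate \eqref{energyest}; the key point is that only the boundedness of $B[\cdot,\cdot]$ is used, not any pointwise bound on $b$, provided all quantities are kept expressed through $u^{2}$.

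\textbf{Step 2 (Lipschitz data).} Let $h$ be the $L$-solution with data $f^{2}\in\mathrm{Lip}(\bdy\Omega)$, so $h\in W^{1,2}(\Omega)\cap C(\overline\Omega)$, $B[h,\vp]=0$ on $W_{0}^{1,2}(\Omega)$, and $h(X_{0})=\int_{\bdy\Omega}f^{2}\,d\omega^{X_{0}}$. Since $w=h-u^{2}$ is continuous on $\overline\Omega$, vanishes on $\bdy\Omega$, and lies in $W^{1,2}(\Omega)$, the truncations $\mathrm{sgn}(w)(|w|-\eps)^{+}$ are compactly supported in $\Omega$ and converge to $w$ in $W^{1,2}$, whence $w\in W_{0}^{1,2}(\Omega)$. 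By Step 1, $B[w,\vp]=2\int_{\Omega}\vp\,A\nabla u\cdot\nabla u$ for all $\vp\in W_{0}^{1,2}(\Omega)\cap L^{\infty}(\Omega)$. Taking $\vp=(G^{*})^{\rho}(\cdot,X_{0})$, the approximate Green function of the adjoint $L^{*}$ (which belongs to $W_{0}^{1,2}(\Omega)$ and is bounded), and combining with $B[w,(G^{*})^{\rho}(\cdot,X_{0})]=\fint_{B_{\rho}(X_{0})}w$, one gets
\begin{equation*}
\fint_{B_{\rho}(X_{0})}w=2\int_{\Omega}A\nabla u\cdot\nabla u\,(G^{*})^{\rho}(Y,X_{0})\,dY .
\end{equation*}
Letting $\rho\to0$, the left-hand side tends to $w(X_{0})=h(X_{0})-u(X_{0})^{2}$; using $(G^{*})^{\rho}(\cdot,X_{0})\to G^{*}(\cdot,X_{0})=G(X_{0},\cdot)$, the pointwise bounds \eqref{GW1.8}--\eqref{GW1.9}, the Harnack inequality (to dominate $(G^{*})^{\rho}$ by $C\,G(X_{0},\cdot)$ away from $X_{0}$), and a dyadic decomposition around $X_{0}$ combining Caccioppoli's inequality (Corollary~\ref{caccio}) with the interior H\"older estimate (Lemma~\ref{intHolderLem}) to handle the neighborhood of $X_{0}$, the right-hand side converges to $2\int_{\Omega}A\nabla u\cdot\nabla u\,G(X_{0},\cdot)$ (which is thereby seen to be finite). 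This is \eqref{identity} for Lipschitz $f$.

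\textbf{Step 3 (general $f\in L^{2}(\bdy\Omega,d\omega^{X_{0}})$).} Choose $f_{n}\in\mathrm{Lip}(\bdy\Omega)$ with $f_{n}\to f$ in $L^{2}(d\omega^{X_{0}})$, and set $u_{n}(X)=\int f_{n}\,d\omega^{X}$; then $u_{n}(X_{0})\to u(X_{0})$ and $\int f_{n}^{2}\,d\omega^{X_{0}}\to\int f^{2}\,d\omega^{X_{0}}$. Applying Step 2 to $f_{n}-f_{m}$ and using $A\nabla v\cdot\nabla v\ge\lambda|\nabla v|^{2}$,
\begin{equation*}
\lambda\int_{\Omega}|\nabla(u_{n}-u_{m})|^{2}\,G(X_{0},\cdot)\le\tfrac12\|f_{n}-f_{m}\|_{L^{2}(d\omega^{X_{0}})}^{2}\longrightarrow0,
\end{equation*}
so $\{\nabla u_{n}\}$ is Cauchy in $L^{2}(\Omega;G(X_{0},\cdot)\,dY)$. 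On the other hand $u_{n}\to u$ locally uniformly (as in Section~\ref{LpSec}), so Caccioppoli applied to $u_{n}-u_{m}$ gives $\nabla u_{n}\to\nabla u$ in $L^{2}_{\mathrm{loc}}(\Omega)$; since $G(X_{0},\cdot)>0$ a.e.\ in $\Omega$ (by \eqref{GW1.9} and Harnack), the two limits coincide and $\nabla u_{n}\to\nabla u$ in $L^{2}(\Omega;G(X_{0},\cdot)\,dY)$. Passing to the limit in \eqref{identity} for $u_{n}$ yields \eqref{identity} for $u$.

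\textbf{Main difficulty.} The delicate point is the passage to the limit in Step 3, together with the finiteness of $\int_{\Omega}A\nabla u\cdot\nabla u\,G(X_{0},\cdot)$ near $X_{0}$: local convergence of the $u_{n}$ alone does not control the Green-weighted gradient integral near the boundary, and one must bootstrap using the identity applied to the differences $u_{n}-u_{m}$. By contrast, Step 1 — the only place where the $BMO$ (rather than $L^{\infty}$) nature of $b$ enters — goes through without trouble, since there one only needs the boundedness \eqref{energyest} of the bilinear form, never a pointwise estimate on $b$.
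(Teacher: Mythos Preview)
Your proof is correct and takes a genuinely different, more direct route than the paper's. The paper proceeds through a double approximation: first it establishes \eqref{identity} for smooth coefficients on $C^{2}$ domains via the classical divergence theorem and the representation $d\omega^{X}=-A^{*}\nabla G^{*}\cdot\vec N\,d\sigma$ (Lemma~\ref{LemGreenform} and Corollary~\ref{CorMeasureRep}), then exhausts the NTA domain by smooth subdomains, then mollifies the coefficients $A_{s}\to A$ and tracks the convergence of the associated Green functions and elliptic measures (four separate claims), and only at the very end passes from Lipschitz data to $L^{2}$ data. Your argument bypasses both the domain approximation and the coefficient approximation entirely: the observation that $B[u^{2},\vp]=-2\int_{\Omega}\vp\,A\nabla u\cdot\nabla u$ for bounded $\vp\in W_{0}^{1,2}$, combined with testing $w=h-u^{2}\in W_{0}^{1,2}(\Omega)$ against the approximate adjoint Green function $(G^{*})^{\rho}$, delivers \eqref{identity} for Lipschitz data after a single limit $\rho\to0$. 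This is cleaner and uses only the machinery already built in Sections~\ref{intSec}--\ref{GreenSection} (boundedness of $B$, interior H\"older continuity, Caccioppoli, the pointwise Green bounds \eqref{GW1.8}--\eqref{GW1.9}, and the averaging formula for $G^{\rho}$). Your Step~3, applying the identity to differences $f_{n}-f_{m}$ to obtain a Cauchy estimate in the $G$-weighted $L^{2}$ space, is also slightly more efficient than the paper's Step~4, which uses Fatou followed by a bilinear splitting. The paper's longer route has the minor advantage of making the elliptic-measure representation on smooth domains explicit, which may be of independent use; your route shows that none of that is actually needed for Lemma~\ref{LemIden}.

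Two small points worth tightening in a write-up: the boundedness of $(G^{*})^{\rho}(\cdot,X_{0})$ (needed so that it is an admissible test function in Step~1) follows from the averaging formula $(G^{*})^{\rho}(Y,X_{0})=\fint_{B_{\rho}(X_{0})}G(Z,Y)\,dZ$ together with \eqref{GW1.8}; and the uniform-in-$\rho$ smallness of $\int_{B_{\delta}(X_{0})}a\nabla u\cdot\nabla u\,(G^{*})^{\rho}$, which you describe via a dyadic decomposition, should separate the shell $B_{\delta}\setminus B_{2\rho}$ (where \eqref{GW1.28} gives $(G^{*})^{\rho}\lesssim|Y-X_{0}|^{2-n}$) from the core $B_{2\rho}$ (where $(G^{*})^{\rho}\lesssim\rho^{2-n}$ and $\int_{B_{2\rho}}|\nabla u|^{2}\lesssim\rho^{\,n-2+2\alpha}$). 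Both are routine.
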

This lemma has been proved for harmonic functions $u$ in an NTA domain using a classical potential theoretic result of Riesz. See Theorem 5.14 in \cite{jerison1982boundary} for details. We present here a proof that does not use the Riesz theorem, and for solutions $u$ to $Lu=0$ in $\Omega$.\par
Once the lemma is proved, an argument in \cite{dahlberg1984area} can be verified to work for operators $L=-\divg A\nabla$ where the anti-symmetric part of $A$ belongs to $BMO$. Thus we are able to obtain

\begin{thm}
  Let $\Omega$ be a bounded NTA domain in $\Rn$, $n\ge3$. Let $\mu$ be a positive measure satisfying $A_\infty$ with respect to $\omega^{X_0}$. Let $u$ be the solution to $Lu=0$ in $\Omega$. Then for all $0<p<\infty$,
  $$
  \Big(\int_{\bdy\Omega}(S_{\alpha}(u))^pd\mu\Big)^{1/p}\le C\Big(\int_{\bdy\Omega}(u^*)^pd\mu\Big)^{1/p}
  $$
  where $C=C(n,\lambda,\Lambda,\Gamma,p,\alpha)$. If, in addition, $u(X_0)=0$, then
  $$
  \Big(\int_{\bdy\Omega}(u^*)^pd\mu\Big)^{1/p}\le C\Big(\int_{\bdy\Omega}(S_{\alpha}(u))^pd\mu\Big)^{1/p}
  $$
  where $C=C(n,\lambda,\Lambda,\Gamma,p,\alpha)$.

\end{thm}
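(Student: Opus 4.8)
The only genuinely new ingredient is Lemma~\ref{LemIden}; granting it, the $L^2(d\omega^{X_0})$ form of both inequalities is essentially immediate, and for general exponents and general $A_\infty$ weights one runs the good-$\lambda$ machinery of \cite{dahlberg1984area}, the sole points needing verification being those where the unboundedness of the antisymmetric part $b$ enters. So the plan is: (i) prove the identity \eqref{identity} and its localized analogue over sawtooth regions; (ii) read off the $L^2(d\omega^{X_0})$ equivalence from \eqref{identity} via Fubini and Corollary~\ref{omegaG}; (iii) feed the local identities into Dahlberg's good-$\lambda$ argument, using the comparison principle (Proposition~\ref{comparisonpp}) and the boundary estimates of Section~\ref{BdySection}, and finally pass to $d\mu$ using the $A_\infty$ hypothesis.

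For Lemma~\ref{LemIden} I would first take $f\in Lip(\bdy\Omega)$, so that by Lemma~\ref{Lipbdy} we have $u\in W^{1,2}(\Omega)\cap C(\overline\Omega)$ and hence $w:=u^2\in W^{1,2}(\Omega)$. Testing the equation for $u$ against $u\psi$ and using that $b\xi\cdot\xi=0$ by anti-symmetry (so the $b$-contributions cancel, as in \eqref{zero}) gives $B[w,\psi]=-2\int_\Omega A\nabla u\cdot\nabla u\,\psi$ for $\psi\in C_0^\infty(\Omega)$, with $w|_{\bdy\Omega}=f^2$. Subtracting the $L$-harmonic extension $\mathcal{H}(f^2)$ of $f^2$ (whose value at $X_0$ is $\int_{\bdy\Omega}f^2\,d\omega^{X_0}$) and pairing the resulting $W_0^{1,2}$ function against the approximate adjoint Green's function $(G^*)^\rho(\cdot,X_0)$, then letting $\rho\to0$ exactly as in the derivation of \eqref{claim1} and using $G(X,Y)=G^*(Y,X)$, yields
\[ u(X_0)^2 - \int_{\bdy\Omega}f^2\,d\omega^{X_0} = -2\int_\Omega G(X_0,Y)\,A\nabla u\cdot\nabla u\,dY, \]
which is \eqref{identity}; the integral on the right converges because $A\nabla u\cdot\nabla u\le\Lambda|\nabla u|^2\in L^1(\Omega)$ while $G(X_0,\cdot)\le C|\,\cdot-X_0|^{2-n}$ is bounded away from $X_0$ and locally integrable near it. To reach general $f\in L^2(\bdy\Omega,d\omega)$, approximate by $f_n\in Lip(\bdy\Omega)$ with $f_n\to f$ in $L^2(d\omega)$; applying \eqref{identity} to $u_n-u_m$ and using \eqref{AEllip} gives
\[ \lambda\int_\Omega|\nabla(u_n-u_m)|^2\,G(X_0,\cdot)\,dY\le\tfrac12\|f_n-f_m\|_{L^2(d\omega)}^2+\tfrac12|u_n(X_0)-u_m(X_0)|^2\longrightarrow0, \]
so $\nabla u_n$ is Cauchy in $L^2(G(X_0,\cdot)\,dY)$ and the identity passes to the limit.

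With the lemma in hand, the $L^2(d\omega^{X_0})$ case is immediate: by Fubini and Corollary~\ref{omegaG}, $\omega^{X_0}(\{Q:X\in\Gamma_\alpha(Q)\})\approx\delta(X)^{n-2}G(X_0,X)$, whence
\[ \int_{\bdy\Omega}(S_\alpha u)^2\,d\omega^{X_0}\approx\int_\Omega|\nabla u|^2\,G(X_0,\cdot)\approx\int_\Omega A\nabla u\cdot\nabla u\,G(X_0,\cdot)=\tfrac12\int_{\bdy\Omega}f^2\,d\omega^{X_0}-\tfrac12 u(X_0)^2, \]
which is $\le\tfrac12\int(u^*)^2\,d\omega^{X_0}$ in general (using $|f|\le u^*$ $\omega$-a.e., valid since $u\to f$ non-tangentially a.e.) and comparable to it when $u(X_0)=0$. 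For general $0<p<\infty$ and $\mu\in A_\infty(\omega^{X_0})$ one localizes: over a sawtooth region $\Omega_\Delta$ built on a cone above a surface ball $\Delta=\Delta_r(Q)$ with corkscrew center $A_\Delta$, the same computation applied to $u-u(A_\Delta)$ and the Green's function of $\Omega_\Delta$ — together with $G_{\Omega_\Delta}(A_\Delta,\cdot)\approx\delta^{2-n}$ on the bulk of the cones, Proposition~\ref{comparisonpp}, and the boundary H\"older estimate \eqref{bdyHolder} — produces the local comparisons between truncated square functions and truncated non-tangential maxima that are the input to the good-$\lambda$ inequalities $\mu(\{S_\alpha u>2\lambda,\ u^*\le\gamma\lambda\})\lesssim\gamma^{\epsilon}\mu(\{S_\alpha u>\lambda\})$ and its counterpart with the roles reversed; here the $A_\infty$ property of $\mu$ relative to $\omega^{X_0}$ transfers inequalities that are natural with respect to $\omega^{X_0}$ to $\mu$, and in the localized identities the antisymmetric term is handled by Proposition~\ref{compensatedProp} together with the reverse-H\"older bound of Remark~\ref{RHgrad_global}. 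Integrating the good-$\lambda$ inequalities over $\lambda$ gives the two $L^p(d\mu)$ bounds, the normalization $u(X_0)=0$ being precisely what keeps the right-hand term of \eqref{identity} nonnegative in the direction $\|u^*\|_{L^p(\mu)}\lesssim\|S_\alpha u\|_{L^p(\mu)}$.

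The step I expect to be the main obstacle is the rigorous justification of \eqref{identity} and of its localized version: $u$ need not belong to $W^{1,2}(\Omega)$ up to the boundary, $G(X_0,\cdot)$ is singular, and $A$ is unbounded, so each integration by parts must be routed through the approximate Green's function $(G^*)^\rho$ and, in the localized setting, through the div-curl estimate of Proposition~\ref{compensatedProp}, rather than performed directly. The saving grace is that the identity is self-improving — it controls $\int_\Omega|\nabla(u_n-u_m)|^2G(X_0,\cdot)$ by data quantities — which is exactly what is needed to push the approximation and the passage to sawtooth domains through.
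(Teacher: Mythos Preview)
Your overall architecture matches the paper's: establish the identity \eqref{identity} and then invoke \cite{dahlberg1984area}. The paper itself gives no argument for the theorem beyond ``an argument in \cite{dahlberg1984area} can be verified to work,'' so your outline of the good-$\lambda$ mechanism, the localization to sawtooth regions, and the passage to $\mu\in A_\infty(\omega^{X_0})$ is a correct (and more explicit) rendering of that citation.

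Where you genuinely diverge from the paper is in the proof of Lemma~\ref{LemIden} for Lipschitz data. You argue directly: set $w=u^2$, compute $B[w,\psi]=-2\int A\nabla u\cdot\nabla u\,\psi$ from $B[u,u\psi]=0$, subtract the $L$-harmonic extension of $f^2$, and pair with $(G^*)^\rho$. The paper instead runs a three-layer approximation: first smooth coefficients on a $C^2$ domain (where one integrates by parts against $-\divg(u^2 A^*\nabla G^*)$ using Corollary~\ref{CorMeasureRep}), then exhausts the NTA domain by $C^2$ subdomains, then mollifies the coefficients $A_s\to A$ and passes to the limit using \eqref{abNormBd}--\eqref{abCovae} and the arguments around \eqref{I1est}. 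Your route is shorter and avoids the domain and coefficient smoothing entirely; what it costs is that the $\rho\to 0$ limit must be justified directly. Your claim that this follows ``exactly as in the derivation of \eqref{claim1}'' is not quite right, because there the test function lies in $W_0^{1,p}$ with $p>n$, while $u^2-\mathcal H(f^2)$ is only in $W_0^{1,2}$. The correct justification uses instead that $(G^*)^\rho(\cdot,X_0)=\fint_{B_\rho(X_0)}G(Z,\cdot)\,dZ$ together with \eqref{GW1.8}--\eqref{GW1.9} and Harnack to get a uniform pointwise bound $(G^*)^\rho\le CG^*$ near $X_0$; Fatou then gives finiteness of $\int a\nabla u\cdot\nabla u\,G$, after which dominated convergence handles the limit. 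So your direct approach works, but the limit step needs this extra input rather than a citation of \eqref{claim1}.

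Two minor points. First, the cancellation you use is simply $b\nabla u\cdot\nabla u=0$ from anti-symmetry; the reference to \eqref{zero} (which concerns $\int(b)_R\nabla u\cdot\nabla\varphi$) is a different mechanism and not what is operating here. Second, in the localized identity over sawtooth regions there is no antisymmetric boundary term to absorb via Proposition~\ref{compensatedProp}: the integrand is $a\nabla u\cdot\nabla u\,G$ throughout, and what survives from \cite{dahlberg1984area} is the need for Caccioppoli, Harnack, boundary H\"older, and the comparison principle---all of which you have from Sections~\ref{intSec}--\ref{ellipticSec} with the correct dependence on $\Gamma$.
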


Before proving Lemma\ \ref{LemIden}, we provide Green's formula in the following form:

\begin{lem}[Green's formula]\label{LemGreenform}
Let $\Omega$ be a domain with $\bdy\Omega\in C^2$. Let $A$ be a matrix with entries belong to $C^{0,1}(\Rn)$. Let $L=-\divg A\nabla$, and the adjoint operator $L^*=-\divg A^*\nabla$. Let $G^*$ be the Green function corresponding to $L^*$. Then for all $u\in W^{1,p'}(\Omega)\cap C(\Omega)$ with $1<p<\frac{n}{n-1}$ and $\frac1{p'}+\frac1{p}=1$,
\begin{equation}
u(X)=\int_{\Omega}A^*(Y)\nabla G^*(Y,X)\cdot\nabla u(Y)dY-\int_{\bdy\Omega}A^*(Q)\nabla G^*(Q,X)\cdot \vec{N}(Q)u(Q)d\sigma(Q)
\end{equation}
$\forall\,X\in\Omega$, where $N(Q)$ is the outward unit normal at $Q\in\bdy\Omega$.

\end{lem}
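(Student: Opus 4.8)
The plan is to integrate by parts against the regularized Green function of $L^*$ and then let the regularization parameter go to zero. Fix $X\in\Omega$ and write $w_\rho:=(G^*)^\rho(\cdot,X)\in W_0^{1,2}(\Omega)$ for the approximate Green function of $L^*$ from Section~\ref{GreenSection}, so that (the adjoint form of \eqref{GW1.14})
\begin{equation*}
\int_{\Omega}A^*\nabla w_\rho\cdot\nabla\phi\,dY=\fint_{B_\rho(X)}\phi\,dY\qquad\forall\,\phi\in W_0^{1,2}(\Omega),
\end{equation*}
i.e. $L^*w_\rho=|B_\rho(X)|^{-1}\mathbf 1_{B_\rho(X)}$ weakly with zero Dirichlet data, and recall that $w_\rho\rightharpoondown G^*(\cdot,X)$ in $W_0^{1,k}(\Omega)$ for every $k\in[1,\tfrac{n}{n-1})$. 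Since the entries of $A$ lie in $C^{0,1}(\Rn)$, the bounded right-hand side forces $w_\rho\in W^{2,2}(\Omega)$ (using also $\bdy\Omega\in C^2$ and the zero boundary data), and away from $B_\rho(X)$, where $w_\rho$ solves $L^*w_\rho=0$, Schauder estimates up to the boundary give $w_\rho\in C^{1,\alpha}$ on a fixed neighborhood of $\bdy\Omega$; in particular the conormal derivative $A^*\nabla w_\rho\cdot\vec N$ makes sense and is continuous on $\bdy\Omega$.

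First I would carry out the integration by parts. As $1<p<\tfrac{n}{n-1}$ we have $p'>n$, hence $u\in W^{1,p'}(\Omega)\cap C(\overline\Omega)$, and together with the regularity of $w_\rho$ the field $u\,A^*\nabla w_\rho$ belongs to $W^{1,1}(\Omega)$. The Gauss--Green theorem on the ($C^2$, hence Lipschitz) domain $\Omega$, together with $\divg(u\,A^*\nabla w_\rho)=A^*\nabla w_\rho\cdot\nabla u+u\,\divg(A^*\nabla w_\rho)=A^*\nabla w_\rho\cdot\nabla u-|B_\rho(X)|^{-1}\mathbf 1_{B_\rho(X)}\,u$, yields
\begin{equation*}
\int_{\Omega}A^*\nabla w_\rho\cdot\nabla u\,dY-\fint_{B_\rho(X)}u\,dY=\int_{\bdy\Omega}\big(A^*\nabla w_\rho\cdot\vec N\big)u\,d\sigma.
\end{equation*}

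Next I would send $\rho\to0$ in the three terms. The averaged term tends to $u(X)$ since $u$ is continuous. For the solid integral, pick $k\in[p,\tfrac{n}{n-1})$; then $A\nabla u\in L^{p'}(\Omega)\subseteq L^{k'}(\Omega)$, and since $A^*\nabla w_\rho\cdot\nabla u=\nabla w_\rho\cdot A\nabla u$, the weak convergence $\nabla w_\rho\rightharpoondown\nabla G^*(\cdot,X)$ in $L^k(\Omega)$ tested against this fixed $L^{k'}$ field gives $\int_{\Omega}A^*\nabla w_\rho\cdot\nabla u\to\int_{\Omega}A^*\nabla G^*(\cdot,X)\cdot\nabla u$. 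For the boundary integral, the same weak convergence plus Rellich--Kondrachov give $w_\rho\to G^*(\cdot,X)$ strongly in $L^2$ on a fixed neighborhood $V$ of $\bdy\Omega$; for $\rho$ small both functions solve $L^*(\cdot)=0$ in $V$ with vanishing trace, so interior and boundary Schauder estimates upgrade this to convergence in $C^1$ on a slightly smaller neighborhood of $\bdy\Omega$, whence $A^*\nabla w_\rho\cdot\vec N\to A^*\nabla G^*(\cdot,X)\cdot\vec N$ uniformly on $\bdy\Omega$. Passing to the limit and rearranging gives exactly
\begin{equation*}
u(X)=\int_{\Omega}A^*(Y)\nabla G^*(Y,X)\cdot\nabla u(Y)\,dY-\int_{\bdy\Omega}A^*(Q)\nabla G^*(Q,X)\cdot\vec N(Q)\,u(Q)\,d\sigma(Q).
\end{equation*}

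The main obstacle is the boundary-regularity input: the global $W^{2,2}$ estimate and, more importantly, the up-to-the-boundary $C^{1,\alpha}$ Schauder estimate for the divergence-form operator $L^*$ with Lipschitz coefficients on the $C^2$ domain $\Omega$. This is what makes the conormal derivatives of $w_\rho$ and of $G^*(\cdot,X)$ meaningful on $\bdy\Omega$ and what delivers the $C^1$ convergence $w_\rho\to G^*(\cdot,X)$ near $\bdy\Omega$. These facts are classical but must be invoked carefully; everything else---the Gauss--Green theorem on a Lipschitz domain, the weak convergence already recorded in Section~\ref{GreenSection}, and Rellich--Kondrachov---is routine. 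A minor technical point is the $W^{1,1}$ integrability of $u\,A^*\nabla w_\rho$ near the ``pole region'' $B_\rho(X)$, which causes no difficulty because $u$ is bounded and $w_\rho\in W^{2,2}$.
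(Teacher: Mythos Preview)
Your proof is correct, but it takes a different route from the paper's. The paper works directly with $G^*(\cdot,X)$: it introduces a cutoff $\eta_\epsilon$ equal to $1$ on $\Omega_\epsilon=\{\delta>\epsilon\}$ and $0$ near $\bdy\Omega$, splits $u=\eta_\epsilon u+(1-\eta_\epsilon)u$, applies the defining property \eqref{GW1.4} of $G^*$ to the compactly supported piece $\eta_\epsilon u$ (obtaining $u(X)$ directly), and for the boundary piece $(1-\eta_\epsilon)u$ integrates by parts in the collar $\Omega\setminus\Omega_{2\epsilon}$, using only that $G^*\in W^{2,2}$ there (from the $C^{0,1}$ coefficients and $C^2$ boundary); since $L^*G^*=0$ away from $X$, the residual volume term vanishes as $\epsilon\to 0$.

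Your approach instead regularizes the Green function by $(G^*)^\rho$, integrates by parts once over all of $\Omega$, and then sends $\rho\to 0$. This is conceptually clean---a single global application of Gauss--Green---but it costs you an extra convergence step on $\bdy\Omega$, for which you invoke up-to-the-boundary $C^{1,\alpha}$ Schauder theory to upgrade $L^2$ convergence of $(G^*)^\rho-G^*$ near $\bdy\Omega$ to $C^1$ convergence. The paper avoids that Schauder input entirely: it never needs $C^1$ convergence of anything, only $W^{2,2}$ regularity of $G^*$ in a fixed collar. So the paper's argument is lighter on regularity theory, while yours trades a slightly heavier analytic toolbox for a more linear narrative. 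Both are valid; if you keep yours, just note that the weak $W_0^{1,k}$ convergence of $(G^*)^\rho$ recorded in Section~\ref{GreenSection} is a priori along a subsequence, and it is the uniqueness of the Green function that upgrades it to full-sequence convergence.
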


\begin{proof}
  For any $\epsilon>0$, denote $\Omega_{\epsilon}=\{X\in\Omega: \delta(X)>\epsilon\}$. Choose $\eta_{\epsilon}\in C_0^{\infty}(\Omega)$ such that $0\le\eta_{\epsilon}\le1$, and that
  $$
  \eta_{\epsilon}=
  \begin{cases}
    1\qquad\text{in}\quad\Omega_{\epsilon},\\
    0\qquad\text{in}\quad\Omega\setminus\Omega_{\frac{\epsilon}{2}}.
  \end{cases}
  $$
  Let $\epsilon$ be sufficiently small, then
  \begin{align*}
    &\int_{\Omega}A^*(Y)\nabla G^*(Y,X)\cdot\nabla u(Y)dY=
    \int_{\Omega}A^*\nabla G^*\cdot\nabla\Big((1-\eta_{\epsilon})u\Big)dY+\int_{\Omega}A^*\nabla G^*\cdot\nabla(\eta_{\epsilon}u)dY\\
    &=\int_{\Omega\setminus\Omega_{2\epsilon}}A^*\nabla G^*\cdot\nabla\Big((1-\eta_{\epsilon})u\Big)dY+u(X)\\
    &=-\int_{\Omega\setminus\Omega_{2\epsilon}}\divg(A^*\nabla G^*)(1-\eta_{\epsilon})udY
    +\int_{\bdy\Omega}A^*\nabla G^*\cdot \vec{N}ud\sigma+u(X)
  \end{align*}
  The assumptions on the regularity of the boundary and the coefficients imply that
  $G^*\in W^{2,2}(\Omega\setminus\Omega_{2\epsilon})$, so that $\divg(A^*\nabla G^*)$ is uniformly bounded in $\epsilon$. Thus, as $\epsilon\rightarrow0$,
  $$
  -\int_{\Omega\setminus\Omega_{2\epsilon}}\divg(A^*\nabla G^*)(1-\eta_{\epsilon})udY\rightarrow 0.
  $$
   Then the lemma follows.

\end{proof}
\begin{re}
  From the proof one can see that this lemma is also true for $u\in W^{1,2}(\Omega)\cap W_{\loc}^{1,p'}(\Omega)\cap C(\Omega)$. Also, note that if $u\in W_0^{1,p'}(\Omega)\cap C(\Omega)$, then the lemma is simply the definition of Green function.
\end{re}

\begin{cor}\label{CorMeasureRep}
  Let $A$ be a $n\times n$ matrix with entries belong to $C^{k,1}(\Rn)$ with $k\ge0$ and $k>\frac{n}{2}-2$. Let $\Omega$, $L\ ,L^*\ ,G^*$ be as in Lemma\ \ref{LemGreenform}. Then
  \begin{equation}
    d\omega_L^X(Q)=-A^*(Q)\nabla G^*(Q,X)\cdot \vec{N}(Q)d\sigma.
  \end{equation}
\end{cor}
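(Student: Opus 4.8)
The plan is to obtain the formula by combining the representation formula of Lemma \ref{LemGreenform} with the defining property of the elliptic measure, establishing the identity first for a dense class of boundary data and then passing to a limit. Fix $X\in\Omega$ once and for all. Under the stated smoothness hypotheses on $A$ (together with $\bdy\Omega\in C^2$) the function $G^*(\cdot,X)$ is, away from its pole $X$, of class $C^1$ up to $\bdy\Omega$ and lies in $W^{2,2}$ in a one-sided neighborhood of $\bdy\Omega$; in particular $\nabla G^*(\cdot,X)$ has a continuous trace on $\bdy\Omega$, so that $d\nu^X(Q)\doteq-A^*(Q)\nabla G^*(Q,X)\cdot\vec N(Q)\,d\sigma(Q)$ is a finite signed Borel measure on $\bdy\Omega$ and $g\mapsto\int_{\bdy\Omega}g\,d\nu^X$ is a bounded linear functional on $C(\bdy\Omega)$.

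First I would take $g\in C^\infty(\bdy\Omega)$ and let $u$ be the solution of $Lu=0$ in $\Omega$ with $u|_{\bdy\Omega}=g$; such $u$ exists by Lemma \ref{Lipbdy} (a bounded $C^2$ domain is NTA), and by interior regularity together with Schauder estimates up to the $C^2$ boundary for the smooth operator $L$, one has $u\in C^{1,\gamma}(\overline\Omega)\cap W^{1,2}(\Omega)$ for some $\gamma>0$, hence $u\in W^{1,p'}(\Omega)\cap C(\Omega)$ for every $p'$. Thus $u$ satisfies the hypotheses of Lemma \ref{LemGreenform}, which yields
\begin{equation*}
u(X)=\int_{\Omega}A^*(Y)\nabla G^*(Y,X)\cdot\nabla u(Y)\,dY-\int_{\bdy\Omega}A^*(Q)\nabla G^*(Q,X)\cdot\vec N(Q)\,g(Q)\,d\sigma(Q).
\end{equation*}

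The crucial point is that the volume integral here vanishes. Using $A^*v\cdot w=v\cdot Aw$, rewrite it as $\int_{\Omega}A\nabla u\cdot\nabla G^*(\cdot,X)$ and apply the divergence theorem to the vector field $G^*(\cdot,X)\,A\nabla u$ on $\Omega\setminus B_r(X)$: since $\divg(A\nabla u)=-Lu=0$ there, only boundary terms survive, one over $\bdy\Omega$ equal to $\int_{\bdy\Omega}G^*(Q,X)\,A\nabla u(Q)\cdot\vec N(Q)\,d\sigma(Q)$, which is zero because $G^*(\cdot,X)$ vanishes on $\bdy\Omega$, and one over $\bdy B_r(X)$ bounded by $C\,\|\nabla u\|_{L^\infty(B_{2r}(X))}\,r^{2-n}\,\sigma(\bdy B_r(X))\lesssim r$ in view of \eqref{GW1.8}. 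Letting $r\to0$ and absorbing $\int_{B_r(X)}A\nabla u\cdot\nabla G^*(\cdot,X)$ using $\nabla G^*(\cdot,X)\in L^1_{\loc}$ (cf.\ \eqref{GW1.6}), we conclude $\int_{\Omega}A^*\nabla G^*(\cdot,X)\cdot\nabla u=0$. Hence $u(X)=\int_{\bdy\Omega}g\,d\nu^X$, while by the definition of the elliptic measure $u(X)=\int_{\bdy\Omega}g\,d\omega_L^X$. Therefore $\int g\,d\omega_L^X=\int g\,d\nu^X$ for every $g\in C^\infty(\bdy\Omega)$; since $C^\infty(\bdy\Omega)$ is dense in $C(\bdy\Omega)$ and both sides are bounded linear functionals on $C(\bdy\Omega)$, the two measures agree, which is the assertion.

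I expect the main obstacle to be making the vanishing of the volume integral entirely rigorous, i.e.\ verifying that $G^*(\cdot,X)$ vanishes on $\bdy\Omega$ in a sense strong enough to kill the boundary term, that $G^*(\cdot,X)$ near $\bdy\Omega$ and $u$ up to $\bdy\Omega$ are regular enough for the divergence theorem on $\Omega\setminus B_r(X)$, and that the flux $A\nabla u\cdot\vec N$ is meaningfully defined on $\bdy\Omega$; this is precisely where the smoothness assumptions on $A$ and the $C^2$ regularity of $\bdy\Omega$ enter, and tracking the exact amount of regularity needed (hence the hypothesis $k>\frac n2-2$) is the delicate bookkeeping. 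One should also double-check the sign conventions, which should yield $-A^*\nabla G^*(\cdot,X)\cdot\vec N\ge0$ on $\bdy\Omega$, consistent with $\omega_L^X$ being a probability measure.
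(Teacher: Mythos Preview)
Your proposal is correct and follows the same overall architecture as the paper: apply Lemma~\ref{LemGreenform} to a solution $u$ with sufficiently regular boundary data, show that the volume integral vanishes, and then conclude by density in $C(\bdy\Omega)$. The difference lies in how the volume integral $\int_{\Omega}A^*\nabla G^*(\cdot,X)\cdot\nabla u=\int_{\Omega}A\nabla u\cdot\nabla G(X,\cdot)$ is shown to vanish.

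The paper dispatches this in one line via the \emph{weak} formulation: since $A$ is smooth here (so in particular bounded), and since interior elliptic regularity gives $u\in W_{\loc}^{k+2,2}(\Omega)$, the Sobolev embedding $W^{k+2,2}\hookrightarrow W^{1,p'}$ with $p'>n$ holds precisely when $k>\frac{n}{2}-2$; then $\nabla u\in L^{p'}_{\loc}$ pairs against $\nabla G(X,\cdot)\in L^p$ for $p<\frac{n}{n-1}$, and the integral vanishes because $G(X,\cdot)\in W_0^{1,p}$ can be approximated by $W_0^{1,2}$ test functions (e.g.\ the $G^{\rho}$) against which $Lu=0$. This is why the hypothesis $k>\frac{n}{2}-2$ is stated: it is exactly what the Sobolev step needs. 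Your route instead invokes Schauder regularity to get $u\in C^{1,\gamma}(\overline\Omega)$ and $G^*(\cdot,X)\in C^1$ up to $\bdy\Omega$ away from $X$, and then applies the classical divergence theorem on $\Omega\setminus B_r(X)$. This is valid and arguably uses only $k\ge 0$, but it trades a one-line weak argument for the regularity bookkeeping you flag as the ``main obstacle''; the paper's approach avoids that bookkeeping entirely and makes the role of the hypothesis $k>\frac{n}{2}-2$ transparent.
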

\begin{proof}
  For any $\vp\in Lip(\bdy\Omega)$, let $u\in W^{1,2}(\Omega)$ be the weak solution of
  $$
  \begin{cases}
    Lu=0 \qquad\text{in}\quad\Omega,\\
    u=\vp \qquad\text{on}\quad\bdy\Omega.
  \end{cases}
  $$
  Since $a_{ij}\in C^{k,1}(\Rn)$, $u\in W_{loc}^{k+2,2}(\Omega)\cap C(\overline{\Omega})$. By the Sobolev inequality, $u\in W_{loc}^{1,p'}(\Omega)$ for some $1<p<\frac{n}{n-1}$ and $\frac1{p'}+\frac1{p}=1$. Then
  $$
  \int_{\Omega}A^*(Y)\nabla G^*(Y,X)\cdot\nabla u(Y)dY=\int_{\Omega}A(Y)\nabla u(Y)\cdot\nabla G(X,Y)dY=0.
  $$
  By Lemma\ \ref{LemGreenform},
  $$
  u(X)=-\int_{\bdy\Omega}A^*(Q)\nabla G^*(Q,X)\cdot \vec{N}(Q)\vp(Q) d\sigma(Q),
  $$
  $\Rightarrow$ $\int_{\bdy\Omega}\vp(Q)d\omega_L^X(Q)=-\int_{\bdy\Omega}A^*(Q)\nabla G^*(Q,X)\cdot \vec{N}(Q)\vp(Q) d\sigma(Q)$.

  Then a standard limiting argument completes the proof.
\end{proof}

We now prove Lemma \ref{LemIden}.
\begin{proof}
  We first point out that the expression $A\nabla u\cdot\nabla uG$ equals $a\nabla u\cdot\nabla u G$ a.e. since $b\nabla u\cdot\nabla u=0$ a.e.. In other words, the integral on the left-hand side of \eqref{identity} is actually $\int_{\Omega}a(Y)\nabla u(Y)\cdot\nabla u(Y) G(X_0,Y)dY$.

  Let $X_0\in\Omega$ be any fixed point. We prove the lemma in 4 steps.

  \underline{Step 1.} Assume $\bdy\Omega\in C^{2}$, $A=(a_{ij})$ with $a_{ij}\in C^{\infty}(\Rn)$, $(A\xi)\cdot\xi\ge\lambda\abs{\xi}^2$. Let $f\in C(\bdy\Omega)$. Then \eqref{identity} holds.\par
  To see this, we calculate, by Corollary \ref{CorMeasureRep}
  \begin{align*}
  -\int_{\Omega}\divg\Big(u^2(Y)A^*(Y)\nabla G^*(Y,X_0)\Big)dY&=-\int_{\bdy\Omega}f^2A^*(Q)\nabla G^*(Q,X_0)\cdot \vec{N}(Q)d\sigma(Q)\\
  &=\int_{\bdy\Omega}f^2d\omega^{X_0}.
  \end{align*}
  On the other hand,
  \begin{align*}
    &-\int_{\Omega}\divg\Big(u^2(Y)A^*(Y)\nabla G^*(Y,X_0)\Big)dY=
    -\int_{\Omega}\divg(A^*\nabla G^*)u^2dY-2\int_{\Omega}u\nabla u\cdot(A^*\nabla G^*)dY\\
    &=u^2(X_0)-2\int_{\Omega}u(Y)\Big(A(Y)\nabla u(Y)\Big)\cdot\nabla G(X_0,Y)dY\\
    &=u^2(X_0)+2\int_{\Omega}A(Y)\nabla u(Y)\cdot\nabla u(Y)G(X_0,Y)dY.
  \end{align*}
  Then \eqref{identity} follows.

 \underline{Step 2.} Let $\Omega$ a bounded NTA domain. Then there exists a sequence of domains $\{\Omega_t\}$ with $\bdy\Omega_t\in C^{2}$, $\Omega_t\subset\subset\Omega$ such that $\Omega_t$ increases to $\Omega$ as $t\rightarrow\infty$. Let $A=(a_{ij})$ with $a_{ij}\in C^{\infty}(\Rn)$, $(A\xi)\cdot\xi\ge\lambda\abs{\xi}^2$, and let $f\in Lip(\bdy\Omega)$.

 Let $u$ be the solution to $Lu=0$ in $\Omega$ with boundary data $f$. Then $u\in C(\overline{\Omega})\cap C^{\infty}(\Omega)$, and in particular, $u\in C^{\infty}(\bdy\Omega_t)$. So by the result we obtained in Step 1, we have
 \begin{equation}
   \int_{\Omega_t}A(Y)\nabla u(Y)\cdot\nabla u(Y) G_t(X_0,Y)=-\frac1{2}u^2(X_0)+\frac1{2}\int_{\bdy\Omega_t}u^2d\omega_t^{X_0},
 \end{equation}
 where $G_t$ is the Green function for domain $\Omega_t$, and $\omega_t$ is the elliptic measure on domain $\Omega_t$.

 Extending $G^*_t$ to be zero in $\Omega\setminus\Omega_t$. We still denote this extension as $G^*_t$.

 We now discuss the convergence of $G_t^*$. By the estimate for Green function, we have
\begin{equation}\label{GtBound}
\norm{G^*_t}_{W^{1,k}(\Omega)}\le C_k \qquad\text{for all }t,
\end{equation}
where $1<k<\frac{n}{n-1}$. So there exists a sequence $t_n\rightarrow\infty$ such that $G_{t_n}^*$ converges to a $\tilde{G}^*$ weakly in $W_0^{1,k}$,
$\Rightarrow$ there exists a subsequence, still denoted by $t_n$ such that $G_{t_n}^*\rightarrow \tilde{G}^*$ in $L^p(\Omega)$ for all $1<p<\frac{nk}{n-k}$. And then there is a subsequence converges to $\tilde{G}^*$ a.e. in $\Omega$.
For simplicity, we write
 \begin{equation}\label{GtConv}
    G^*_t\rightarrow \tilde{G}^* \ \text{weakly in }W_0^{1,k}(\Omega),\ \text{strongly in }L^p(\Omega)\quad\forall\, 1<p<\frac{nk}{n-k},\ \text{and a.e. in }\Omega
 \end{equation}
 for some $\tilde{G}^*$.

\underline{Claim 1.} The limit $\tilde{G}^*$ is the Green function corresponding to $L^*$ on $\Omega$.

 By a limiting process, it suffices to show $\forall\,\vp\in C_0^1(\Omega)$,
 \begin{equation}
   \int_{\Omega}A^*(Y)\nabla \tilde{G}^*(Y,X)\cdot\nabla\vp(Y)dY=\vp(X), \qquad\forall\, X\in\Omega.
 \end{equation}
For any fixed $X\in\Omega$, choose $t$ to be sufficiently large so that $X\in\Omega_t$ and that $\supp\vp\subset\Omega_t$. Then
$$
\int_{\Omega}A^*(Y)\nabla G^*_t(Y,X)\cdot\nabla\vp(Y)dY=\int_{\Omega_t}A^*(Y)\nabla G^*_t(Y,X)\cdot\nabla\vp(Y)dY=\vp(X).
$$
By the weak convergence of $G_t^*$ \eqref{GtConv}, we obtain
$$
\int_{\Omega}A^*(Y)\nabla \tilde{G}^*(Y,X)\cdot\nabla\vp(Y)dY=\lim_{t\rightarrow\infty}\int_{\Omega}A^*(Y)\nabla G^*_t(Y,X)\cdot\nabla\vp(Y)dY=\vp(X).
$$
By uniqueness of Green function, $\tilde{G}^*=G^*$ is the Green function on $\Omega$.

Let $v_t(X)=\int_{\bdy\Omega_t}u^2d\omega_t^X$. Then $Lv_t=0$ in $\Omega_t$, $v_t|_{\bdy\Omega_t}=u^2$. Let $v(X)=\int_{\bdy\Omega}f^2d\omega^{X}$.

We have $u,v\in C(\overline{\Omega})$ and $v_t\in C(\overline{\Omega_t})$. So for any $\epsilon>0$, $\exists\, t_1$ such that whenever $t\ge t_1$,
$
\sup_{\bdy\Omega_t}\abs{v_t-v}<\epsilon
$.
Then by maximum principle,
$$
\sup_{\Omega_t}\abs{v_t-v}<\epsilon\qquad\forall\, t\ge t_1.
$$
Therefore,
$$
\lim_{t\rightarrow\infty}\int_{\bdy\Omega_t}u^2d\omega_t^{X_0}=\int_{\bdy\Omega}f^2d\omega^{X_0}.
$$
Recall that we have
$$
\int_{\Omega}A(Y)\nabla u(Y)\cdot\nabla u(Y) G_t(X_0,Y)=-\frac1{2}u^2(X_0)+\frac1{2}\int_{\bdy\Omega_t}u^2d\omega_t^{X_0},
$$
so
\begin{equation}
  \lim_{t\rightarrow\infty}\int_{\Omega}A\nabla u\cdot\nabla uG_t=-\frac1{2}u^2(X_0)+\frac1{2}\int_{\bdy\Omega}f^2d\omega^{X_0}<\infty.
\end{equation}

By this, \eqref{GtConv} and Claim 1, Fatou's lemma implies that
$
\int_{\Omega}A\nabla u\cdot\nabla uG<\infty.
$
Therefore, for any $\epsilon>0$, $\exists\, r_0$ such that whenever $r\le r_0$,
\begin{equation}\label{I4}
  \abs{\int_{B_r(X_0)}A\nabla u\cdot\nabla uG}<\epsilon.
\end{equation}

\underline{Claim 2.}
\begin{equation}
  \lim_{t\rightarrow\infty}\int_{\Omega}A\nabla u\cdot\nabla u G_t=\int_{\Omega}A\nabla u\cdot\nabla uG.
\end{equation}
We write
\begin{align*}
&\abs{\int_{\Omega}A\nabla u\cdot\nabla u G_t-\int_{\Omega}A\nabla u\cdot\nabla uG}\\
&\le\abs{\int_{B_r(X_0)}A\nabla u\cdot\nabla u G}
+\abs{\int_{B_r(X_0)}A\nabla u\cdot\nabla u G_t}
+\abs{\int_{\Omega\setminus B_r(X_0)}A\nabla u\cdot\nabla u (G_t-G)}\\
&\doteq I_1+I_2+I_3.
\end{align*}

For any $\epsilon>0$, choose $r$ to be as in \eqref{I4}, so $I_1<\epsilon$.

By \eqref{GW1.8} and \eqref{GW1.9}, we have
  $$
    G_t(X_0,Y)\le C(n,\lambda,\Lambda,\Gamma)G(X_0,Y) \qquad\forall\, Y\text{ such that }\abs{X_0-Y}\le\frac1{2}\delta(X_0).
  $$
  So
  $$
  I_2\le CI_1< C\epsilon,
  $$
  where $C=C(n,\lambda,\Lambda,\Gamma)$.

  For $I_3$, since $\abs{G_t-G}$ is bounded in $\Omega\setminus B_r(X_0)$, we can use Fatou's lemma to get
  $$
  \lim_{t\rightarrow\infty}\int_{\Omega\setminus B_r(X_0)}A\nabla u\cdot\nabla u \abs{G_t-G}=0
  $$

Thus the claim is established, and we have shown that \eqref{identity} holds for NTA domain $\Omega$, $L=-\divg A\nabla$ with $A$ smooth, and $f\in Lip(\bdy\Omega)$.

 \underline{Step 3.} Let $A_s=a^s+b^s=(a^s_{ij})+(b^s_{ij})$, where
  $a^s_{ij}(X)=a_{ij}*\alpha_s(X)\in C^{\infty}(\Rn)$ and $b^s_{ij}(X)=b_{ij}*\beta_s(X)\in C^{\infty}(\Rn)$. $a^s$ and $b^s$ satisfy: for all $s$,
  \begin{equation}\label{abNormBd}
    (a^s\xi)\cdot\xi\ge\lambda\abs{\xi}^2, \quad \norm{a_{ij}^s}_{L^{\infty}}\le\Lambda,\quad
    \norm{b^s}_{BMO(\Omega)}\le\norm{b}_{BMO(\Omega)}.
  \end{equation}
  And as $s\rightarrow\infty$,
  \begin{equation}\label{abCovLp}
    a^s\rightarrow a\quad\text{in}\ L^p(\Omega), \quad b^s\rightarrow b\quad\text{in}\ L^p(\Omega), \forall\, p>1,
  \end{equation}
 \begin{equation}\label{abCovae}
   a^s\rightarrow a\quad\text{a.e. in}\ \Omega, \quad b^s\rightarrow b\quad\text{a.e. in}\ \Omega.
 \end{equation}
 We want to show \eqref{identity} for NTA domain $\Omega$, $A=a+b$ satisfying \eqref{aBound}--\eqref{bBMO}, and $f\in Lip(\bdy\Omega)$.

 Let $L_s=-\divg A_s\nabla$. Let $u_s$ be the solution to
 $$
 \begin{cases}
 L_su_s=0\qquad\text{in }\Omega,\\
 u_s=f\qquad\text{on }\bdy\Omega.
 \end{cases}
 $$
 Let $G_s$ and $\omega_s$ be the Green function and the elliptic measure associated to $L_s$, respectively. Then from Step 2., it follows that
 \begin{equation}\label{identitys}
   \int_{\Omega}A_s(Y)\nabla u_s(Y)\cdot\nabla u_s(Y)G_s(X_0,Y)dY=-\frac1{2}u_s^2(X_0)+\frac1{2}\int_{\bdy\Omega}f^2d\omega_s^{X_0}.
 \end{equation}

 Let $v_s(X)=\int_{\bdy\Omega}f^2d\omega_s^X$. Then $L_sv_s=0$ in $\Omega$, $v_s\in W^{1,2}(\Omega)\cap C(\overline{\Omega})$ and $v_s|_{\bdy\Omega}=f^2$. Let $v(X)=\int_{\bdy\Omega}f^2d\omega^X$.
 Using Remark \ref{RHgrad_global}, one can prove as in \cite{kenig1993neumann} 7.1--7.5, that
 \begin{equation}\label{vsW12Conv}
   \norm{v_s-v}_{W^{1,2}(\Omega)}\rightarrow0 \qquad\text{as }s\rightarrow\infty.
 \end{equation}
 
 Also, since $v_s$'s are equicontinuous (by Lemma \ref{intHolderLem}, Lemma \ref{BdyHolderCont} and \eqref{abNormBd}) and uniformly bounded (by the maximum principle),
 \begin{equation}
   v_s\rightarrow v \quad\text{uniformly in }\overline{\Omega}.
 \end{equation}
 For the same reason,
 \begin{equation}
   \norm{u_s-u}_{W^{1,2}(\Omega)}\rightarrow0\quad\text{and}\quad
   u_s\rightarrow u \quad\text{uniformly in }\overline{\Omega}.
 \end{equation}
Thus,
\begin{align}\label{I2est1}
  &\lim_{s\rightarrow\infty}\int_{\Omega}A_s\nabla u_s\cdot\nabla u_sG_s=
  \lim_{s\rightarrow\infty}\Big(-\frac1{2}u_s^2(X_0)+\frac1{2}\int_{\bdy\Omega}f^2d\omega_s^{X_0}\Big)\nonumber\\
  &=-\frac1{2}u^2(X_0)+\frac1{2}\int_{\bdy\Omega}f^2d\omega^{X_0}<\infty.
\end{align}
Now consider the convergence of $G_s^*$. By \eqref{GW1.7} and \eqref{abNormBd},
\begin{equation}\label{GsBound}
\norm{G^*_s}_{W^{1,k}(\Omega)}\le C_k \qquad\text{for all }s,
\end{equation}
where $C_k=C(n,\lambda,\Lambda,\Gamma,k)$ and $1<k<\frac{n}{n-1}$.
Then the same argument about the convergence of $G_t^*$ in Step 2 applies to $G^*_s$. So we have
\begin{equation}\label{GsConv}
  G^*_s\rightarrow G^* \ \text{weakly in }W_0^{1,k}(\Omega),\ \text{strongly in }L^p(\Omega)\quad\forall\, 1<p<\frac{nk}{n-k},\ \text{and a.e. in }\Omega
\end{equation}
for some $G^*$.

\underline{Claim 3.} The limit $G^*$ is the Green function corresponding to $L^*$ in $\Omega$.

By a limiting process, it suffices to show $\forall\,\vp\in C_0^1(\Omega)$,
\begin{equation}\label{G*Green}
\int_{\Omega}A^*(X)\nabla G^*(X,Y)\cdot\nabla\vp(X)dX=\vp(Y).
\end{equation}

We have
\begin{equation}
\abs{\int_{\Omega}A^*_s\nabla G^*_s\cdot\nabla\vp-\int_{\Omega}A^*\nabla G^*\cdot\nabla\vp}
\le\abs{\int_{\Omega}A^*_s\nabla(G^*_s-\nabla G^*)\cdot\nabla\vp}+\abs{\int_{\Omega}(A_s^*-A^*)\nabla G^*\cdot\nabla\vp}
\end{equation}
The first integral on the right-hand side is bounded by $(\Lambda+\Gamma)\norm{G^*_s-G^*}_{W^{1,k}(\Omega)}\norm{\vp}_{W^{1,k'}(\Omega)}$, while the second integral is bounded by
$$
\norm{A_s-A}_{L^{k_0'}(\Omega)}\Big(\int_{\Omega}\abs{\nabla G^*\cdot\nabla\vp}^{k_0}\Big)^{1/{k_0}}
$$
for some $1<k_0<k$. Thus by \eqref{abCovLp} and \eqref{GsConv},
\begin{equation}
  \lim_{s\rightarrow\infty}\int_{\Omega}A^*_s\nabla G^*_s\cdot\nabla\vp=\int_{\Omega}A^*\nabla G^*\cdot\nabla\vp.
\end{equation}
Since
$$
\int_{\Omega}A_s^*(X)\nabla G_s^*(X,Y)\cdot\nabla\vp(X)dX=\vp(Y),
$$
we have proved \eqref{G*Green}. Then by uniqueness of Green function, $G^*$ is the Green function corresponding to $L^*$ in $\Omega$.

\underline{Claim 4.} \begin{equation}
  \lim_{s\rightarrow\infty}\int_{\Omega}A_s(Y)u_s(Y)\cdot\nabla u_s(Y)G_s(X_0,Y)dY=\int_{\Omega}A(Y)u(Y)\cdot\nabla u(Y)G(X_0,Y)dY.
\end{equation}
  Write
  \begin{align*}
    &\abs{\int_{\Omega}A_s(Y)u_s(Y)\cdot\nabla u_s(Y)G_s(X_0,Y)dY-\int_{\Omega}A(Y)u(Y)\cdot\nabla u(Y)G(X_0,Y)dY}\\
    &\le\abs{\int_{B_{\frac{\delta}{2}}(X_0)}A_s\nabla u_s\cdot\nabla u_sG_s}
    +\abs{\int_{B_{\frac{\delta}{2}}(X_0)}A\nabla u\cdot\nabla uG}\\
    &\quad+\abs{\int_{\Omega\setminus B_{\frac{\delta}{2}}(X_0)}A_s\nabla u_s\cdot\nabla u_sG_s
    -\int_{\Omega\setminus B_{\frac{\delta}{2}}(X_0)}A\nabla u\cdot\nabla uG}\\
    &\doteq I_4+I_5+I_6.
  \end{align*}
  For $I_4$, applying the identity \eqref{identitys} to the ball $B_{\delta}(X_0)$ to obtain
  $$
  \int_{B_{\delta}(X_0)}A_s\nabla u_s\cdot\nabla u_sG_{s,B_{\delta}(X_0)}dY
  =-\frac1{2}u_s^2(X_0)
  +\frac1{2}\int_{\abs{X_0-Y}=\delta}u_s^2d\omega_{s,B_{\delta}(X_0)}^{X_0},
  $$
  where $G_{s,B_{\delta}(X_0)}$ is the Green function, and $\omega_{s,B_{\delta}(X_0)}$ is the elliptic measure, associated to $L_s$ and domain $B_{\delta}(X_0)$.

  By \eqref{GW1.8}, \eqref{GW1.9} and \eqref{abNormBd}, we have
  \begin{equation}
    G_s(X_0,Y)\le C(n,\lambda,\Lambda,\Gamma)G_{s,B_{\delta}(X_0)}(X_0,Y) \qquad\forall\, Y\in B_{\frac{\delta}{2}}(X_0).
  \end{equation}
  Also note that $A_s\nabla u_s\cdot\nabla u_sG_s=a_s\nabla u_s\cdot\nabla u_sG_s\ge0$, so
  \begin{align}\label{I1est}
    &\abs{\int_{B_{\frac{\delta}{2}}(X_0)}A_s\nabla u_s\cdot\nabla u_sG_s}
    \le C\abs{\int_{B_{\delta}(X_0)}A_s\nabla u_s\cdot\nabla u_sG_{s,B_{\delta}(X_0)}dY}\nonumber\\
    &= C\abs{-\frac1{2}u_s^2(X_0)
  +\frac1{2}\int_{\abs{X_0-Y}=\delta}u_s(Y)^2d\omega_{s,B_{\delta}(X_0)}^{X_0}(Y)}\nonumber\\
  &=C\abs{\int_{\abs{X_0-Y}=\delta}\Big(u_s^2(Y)-u_s^2(X_0)\Big)d\omega_{s,B_{\delta}(X_0)}^{X_0}}\nonumber\\
  &\le C\sup_{\abs{X_0-Y}=\delta}\abs{u_s^2(Y)-u_s^2(X_0)}
  \le C\sup_{\bdy\Omega}\abs{f}\sup_{\abs{X_0-Y}=\delta}\abs{u_s(Y)-u_s(X_0)}\nonumber\\
  &\le C\sup_{\bdy\Omega}\abs{f}^2\delta^{\alpha},
  \end{align}
  where $C=C(n,\lambda,\Lambda,\Gamma)$, $0<\alpha=\alpha(n,\lambda,\Lambda,\Gamma)<1$, and we used Lemma \ref{intHolderLem} to obtain the last inequality.

  For $I_5$, it follows from Fatou's lemma, the convergence of $G_s^*$ and $u_s$, as well as \eqref{I2est1} that
  \begin{align}
    0&\le\int_{\Omega}A\nabla u\cdot\nabla uG
    =\int_{\Omega}\liminf\limits_{s\rightarrow\infty}A_s\nabla u_s\cdot\nabla u_s G_s
    \le \liminf\limits_{s\rightarrow\infty}\int_{\Omega}A_s\nabla u_s\cdot\nabla u_s G_s\nonumber\\
    &=-\frac1{2}u^2(X_0)+\frac1{2}\int_{\bdy\Omega}f^2d\omega^{X_0}<\infty.
  \end{align}
  $\Rightarrow$ $\forall\, \epsilon>0$, there exists a $\delta_0>0$, such that whenever $\delta<\delta_0$,
  $$
  \abs{\int_{B_{\frac{\delta}{2}}(X_0)}A\nabla u\cdot\nabla uG}<\epsilon, \text{ i.e. } I_5<\epsilon.
  $$
  By \eqref{I1est}, we can choose $\delta<\delta_0$ to be so small that $I_4<\epsilon$. Note that $\delta$ is independent of $s$. Let this $\delta$ be fixed.

  We now estimate $I_6$.
  \begin{align*}
    I_6&\le\abs{\int_{\Omega\setminus B_{\frac{\delta}{2}}(X_0)}A_s\nabla u_s\cdot \nabla u_s (G_s-G)}
    +\abs{\int_{\Omega\setminus B_{\frac{\delta}{2}}(X_0)}(A_s\nabla u_s\cdot \nabla u_s-A\nabla u\nabla u)G}\\
    &\doteq I_s+II_s.
  \end{align*}
  For $I_s$,
  $$
  I_s=\abs{\int_{\Omega\setminus B_{\frac{\delta}{2}}(X_0)}a_s\nabla u_s\cdot \nabla u_s (G_s-G)}\le\Lambda\int_{\Omega\setminus B_{\frac{\delta}{2}}(X_0)}\abs{\nabla u_s}^2\abs{G_s-G}
  $$
  Since $G_s$ and $G$ are bounded in $\Omega\setminus B_{\frac{\delta}{2}}(X_0)$, $\abs{\nabla u_s}^2\abs{G_s-G}$ is uniformly integrable. Then Vitali convergence theorem gives that
  $\lim_{s\rightarrow\infty}I_s=0.$

  For $II_s$, we have
  \begin{align*}
    II_s&\le\sup_{\Omega\setminus B_{\frac{\delta}{2}}(X_0)}G(X_0,Y)
    \int_{\Omega\setminus B_{\frac{\delta}{2}}(X_0)}\abs{a_s\nabla u_s\cdot\nabla u_s-a\nabla u\cdot\nabla u}\\
    &\le\Lambda\sup_{\Omega\setminus B_{\frac{\delta}{2}}(X_0)}G(X_0,Y)\int\abs{\nabla u_s\cdot\nabla u_s-\nabla u\cdot\nabla u}+\int\abs{a_s-a}\abs{\nabla u}^2.
  \end{align*}
  So $\lim_{s\rightarrow\infty}II_s=0$. Note that $\delta$ has been fixed when estimating $I_s$ and $II_s$.

  Therefore, we have shown that for any $\epsilon>0$, there exists $\delta>0$ such that
  $$
  \lim_{s\rightarrow\infty}\abs{\int_{\Omega}A_s(Y)u_s(Y)\cdot\nabla u_s(Y)G_s(X_0,Y)dY-\int_{\Omega}A(Y)u(Y)\cdot\nabla u(Y)G(X_0,Y)dY}<\epsilon,
  $$
 which justifies Claim 4.

 Combining Claim 4 and \eqref{I2est1}, we obtain that \eqref{identity} holds for Lipschitz domain $\Omega$, $L=-\divg A\nabla$ satisfying \eqref{aBound}--\eqref{bBMO}, and $f\in Lip(\bdy\Omega)$.

\underline{Step 4.} Let $f\in L^2(\bdy\Omega,d\omega)$, $u(X)=\int_{\bdy\Omega}fd\omega^X$. Let $f_i\rightarrow f$ in $L^2(\bdy\Omega,d\omega)$, with each $f_i\in Lip(\bdy\Omega)$. Define $u_i(X)=\int_{\bdy\Omega}f_id\omega^X$.

By Step 3, we have
$$
\int_{\Omega} A\nabla u_i\cdot\nabla u_iG=-\frac1{2}u_i(X_0)^2+\frac1{2}\int_{\bdy\Omega}f_i^2d\omega^{X_0}<\infty,
$$
$\Rightarrow$
\begin{align}\label{Auiui}
  \lim_{i\rightarrow\infty}\int_{\Omega} A\nabla u_i\cdot\nabla u_iG
  &=\lim_{i\rightarrow\infty}
  \Big(-\frac1{2}u_i(X_0)^2+\frac1{2}\int_{\bdy\Omega}f_i^2d\omega^{X_0}\Big)\nonumber\\
  &=-\frac1{2}u(X_0)^2+\frac1{2}\int_{\bdy\Omega}f^2d\omega^{X_0}<\infty
\end{align}

By Caccioppoli's inequality, $\nabla u_i$ converges to $\nabla u$ in $L^2(K)$ where $K$ is any compact subset of $\Omega$, and thus there exists a subsequence, still denoted by $\{\nabla u_i\}$, converges to $\nabla u$ a.e. in $K$.

Therefore, Fatou's lemma implies that
\begin{align}\label{auuGFinite}
  \int_{\Omega}a\nabla u\cdot\nabla uG&=\int_{\Omega}A\nabla u\cdot\nabla uG
  \le\sup_K\lim_{i\rightarrow\infty}\int_K A\nabla u_i\cdot\nabla u_iG\nonumber\\
  &\le \lim_{i\rightarrow\infty}\int_{\Omega} A\nabla u_i\cdot\nabla u_iG
  =-\frac1{2}u(X_0)^2+\frac1{2}\int_{\bdy\Omega}f^2d\omega^{X_0}<\infty.
\end{align}

Similarly, for any fixed $i$, we have
\begin{align*}
  \int_{\Omega}a\nabla (u-u_i)\cdot\nabla (u-u_i)G&=\int_{\Omega}A\nabla (u-u_i)\cdot\nabla (u-u_i)G\\
  &\le\sup_K\lim_{j\rightarrow\infty}\int_K A\nabla(u_j-u_i)\cdot\nabla (u_j-u_i)G\\
  &\le-\frac1{2}\Big(u(X_0)-u_i(X_0)\Big)^2+\frac1{2}\int_{\bdy\Omega}(f-f_i)^2d\omega^{X_0}.
\end{align*}
So for any $\epsilon>0$, there exists $i$ sufficiently large, such that
\begin{equation}\label{au-uiSmall}
  \int_{\Omega}a\nabla (u-u_i)\cdot\nabla (u-u_i)G=\int_{\Omega}A\nabla (u-u_i)\cdot\nabla (u-u_i)G<\epsilon.
\end{equation}

We write
\begin{equation}\label{diffauuG}
  \abs{\int_{\Omega}a\nabla u\cdot\nabla uG-\int_{\Omega}a\nabla u_i\cdot\nabla u_iG}\le
  \abs{\int_{\Omega}a\nabla(u-u_i)\cdot\nabla uG}+\abs{\int_{\Omega}a\nabla(u-u_i)\cdot\nabla u_iG}.
\end{equation}
For the first integral on the right-hand side of \eqref{diffauuG}, we have
\begin{align*}
  \abs{\int_{\Omega}a\nabla(u-u_i)\cdot\nabla uG}
  &\le\frac{\Lambda}{\lambda}
  \Big(\int_{\Omega}a\nabla(u-u_i)\cdot\nabla(u-u_i)G\Big)^{1/2}
  \Big(\int_{\Omega}a\nabla u\cdot\nabla uG\Big)^{1/2}\\
  &<\frac{\Lambda}{\lambda}\Big(\int_{\Omega}a\nabla u\cdot\nabla uG\Big)^{1/2}\epsilon.
\end{align*}
The second integral can be estimated similarly, and is bounded by
$$
\frac{\Lambda}{\lambda}\Big(\int_{\Omega}a\nabla u_i\cdot\nabla u_iG\Big)^{1/2}\epsilon
$$

So by \eqref{Auiui} and \eqref{auuGFinite}, we have
$$
\int_{\Omega}A \nabla u\cdot\nabla uG=\lim_{i\rightarrow\infty}\int_{\Omega}A \nabla u_i\cdot\nabla u_iG=-\frac1{2}u(X_0)^2+\frac1{2}\int_{\bdy\Omega}f^2d\omega^{X_0},
$$
which completes the proof.

\end{proof}

We end this section by pointing out that the $\epsilon-$approximability result in \cite{kenig2000new} and the result in \cite{kenig2016square} also hold for operators defined in Section 2 and domains being Lipschitz. Namely, we have the following

\begin{thm}\label{KKPT00}
  Let $L=\divg A\nabla$ be as in Section 2. Let $\Omega\subset\Rn$ be a Lipschitz domain, $X_0\in\Omega$. Then there exists an $\epsilon$, depending on $\lambda,\Lambda,\Gamma$ and the Lipschitz character of $\Omega$ such that if every solution $u$ to $Lu=0$, with $\norm{u}_{L^{\infty}}\le 1$, is $\epsilon-$approxiable on $\Omega$, then $\omega^{X_0}_L\in A_{\infty}(d\sigma)$, where $d\sigma$ is the surface measure on $\bdy\Omega$. 
\end{thm}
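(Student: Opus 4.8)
The plan is to follow the proof of the corresponding theorem of \cite{kenig2000new}, checking at each step that the $BMO$ anti-symmetric part $b$ does not interfere. The argument proceeds in two stages: in the first, $\epsilon$-approximability of all bounded solutions (for $\epsilon$ small, depending on $\lambda,\Lambda,\Gamma$ and the Lipschitz character of $\Omega$) is converted into a Carleson measure estimate; in the second, that Carleson estimate is converted into $\omega_L^{X_0}\in A_\infty(d\sigma)$.

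\emph{Stage 1.} Recall that, in the sense of \cite{kenig2000new}, $u$ being $\epsilon$-approximable means there is $\varphi$ with $\norm{u-\varphi}_{L^\infty(\Omega)}\le\epsilon\norm{u}_{L^\infty(\Omega)}$ and $\sup_{\Delta}\frac1{\sigma(\Delta)}\iint_{T(\Delta)}\abs{\nabla\varphi}\,dX\le C_\epsilon\norm{u}_{L^\infty(\Omega)}$. The goal of this stage is to deduce that, for every solution $u$ of $Lu=0$,
$$\frac1{\sigma(\Delta_r(Q))}\iint_{T_r(Q)}\abs{\nabla u(X)}^2\,\delta(X)\,dX\le C\norm{u}_{L^\infty(\Omega)}^2\qquad\text{for all }Q\in\bdy\Omega,\ r>0,$$
with $C$ depending on $n,\lambda,\Lambda,\Gamma$ and the Lipschitz character of $\Omega$. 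One integrates $\iint\abs{\nabla u}^2\delta$ against a cutoff truncated both near $\bdy\Omega$ and near the top of the tent, writes $u=\varphi+(u-\varphi)$, and uses $\divg(A\nabla u)=0$ together with the Caccioppoli inequality (Corollary \ref{caccio}) to replace $\iint\abs{\nabla u}^2\delta$ by the Carleson datum $\iint\abs{\nabla\varphi}$, boundary terms on a thin collar (which are $O(\sigma(\Delta))$ uniformly), and an error proportional to $\norm{u-\varphi}_{L^\infty}\le\epsilon\norm u_{L^\infty}$; choosing $\epsilon$ small enough absorbs the last contribution. The only new feature compared with \cite{kenig2000new} is that the integration by parts also produces terms of the schematic form $\int_{\Omega}b\,\nabla u\cdot\nabla\psi$, where $\psi$ is an auxiliary function built from $u$, $\varphi$ and the cutoffs; since $u,\psi\in W^{1,2}$ with support in $\Omega$, each such term is bounded by $C\Gamma\norm{\nabla u}_{L^2}\norm{\nabla\psi}_{L^2}$ via \eqref{energyest} --- equivalently, by extending $b$ to $\tilde b\in BMO(\Rn)$ and invoking Proposition \ref{compensatedProp}, exactly as the term $I_3$ was treated in the uniqueness argument of Section \ref{LpSec}. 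The appearance of $\Gamma$ in this bound is why the admissible $\epsilon$ must depend on $\Gamma$.

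\emph{Stage 2.} The passage from the Carleson estimate to $A_\infty$ is carried out exactly as in \cite{kenig2000new}, and every tool it uses is already available here: the pointwise Green function bounds \eqref{GW1.8}--\eqref{GW1.9}, Harnack's inequality (Lemma \ref{Harnackineq}) and the change of pole it yields, boundary H\"older continuity (Lemma \ref{BdyHolderCont}), doubling of $\omega^{X_0}$ and the comparison principle (Section \ref{ellipticSec}), the square function identity of Lemma \ref{LemIden}, and the square function estimates of Section \ref{squareSec}. Concretely, for a surface ball $\Delta=\Delta_r(Q_0)$ and a Borel set $E\subset\Delta$, let $v$ solve $Lv=0$ with data $\mathbf 1_{\Delta\setminus E}$; then $0\le v\le1$, $v\to1$ at $\omega$-a.e.\ point of $\Delta\setminus E$, and Lemma \ref{LemIden} with pole $A_r(Q_0)$ gives $\int_{\Omega}A\nabla v\cdot\nabla v\,G(A_r(Q_0),\cdot)=\tfrac12\,v(A_r(Q_0))(1-v(A_r(Q_0)))$, with $v(A_r(Q_0))=\omega^{A_r(Q_0)}(\Delta\setminus E)$. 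The Carleson estimate of Stage 1, together with a Chang--Wilson--Wolff type exponential bound, yields $\sigma(\set{Q\in\Delta:S_{\alpha,r}v(Q)>\lambda})\le Ce^{-c\lambda^2}\sigma(\Delta)$; where $S_{\alpha,r}v$ is small the oscillation of $v$ along $\Gamma_\alpha(Q)$ is controlled, so if $\omega^{A_r(Q_0)}(\Delta\setminus E)$ is small --- i.e.\ $v(A_r(Q_0))$ is small while $v\to1$ on $\Delta\setminus E$ --- one forces $S_{\alpha,r}v(Q)\gtrsim1$ for $\sigma$-a.e.\ $Q\in\Delta\setminus E$, whence $\sigma(\Delta\setminus E)\le Ce^{-c\lambda^2}\sigma(\Delta)$. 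Thus smallness of $\omega^{A_r(Q_0)}(\Delta\setminus E)$ forces smallness of $\sigma(\Delta\setminus E)/\sigma(\Delta)$; undoing the change of pole to $X_0$ and using that $\omega^{X_0}$ and $\sigma$ are doubling, this is precisely $\sigma\in A_\infty(d\omega^{X_0})$, equivalently $\omega_L^{X_0}\in A_\infty(d\sigma)$, with constants independent of $\Delta$.

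The main obstacle is Stage 1: one must organize the integration by parts so that $b$ enters only paired as $\int b\,\nabla(\cdot)\cdot\nabla(\cdot)$ with both factors in $W^{1,2}$ of compact support, so that the boundedness \eqref{energyest} of the bilinear form applies and the $\Gamma$-dependence can be tracked through the absorption step. The remaining bookkeeping --- the precise truncations matching the collar near $\bdy\Omega$ and the caps of the tent, and the off-$\Omega$ extension of $b$ --- is of the same nature as in Section \ref{LpSec}, now in tent rather than single-ball geometry; every other step is a transcription of \cite{kenig2000new}.
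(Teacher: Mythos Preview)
The paper's proof is a single sentence: the argument of \cite{kenig2000new} uses only the Harnack principle, interior H\"older continuity, and boundary H\"older continuity of solutions, all of which have been established in Sections \ref{intSec}--\ref{BdySection}, so it carries over verbatim. Crucially, the KKPT proof of $\epsilon$-approximability $\Rightarrow A_\infty$ never integrates by parts against $L$: it works with the solution $u$ having boundary data $\chi_E$, uses the $\epsilon$-approximant $\varphi$ and the Carleson bound on $\int|\nabla\varphi|$ to count oscillations of $u$ along nested dyadic blocks via a stopping-time argument, and concludes $A_\infty$ directly. Since the operator enters only through properties of its solutions, the anti-symmetric part $b$ simply does not appear, and there is no $\Gamma$-dependent absorption step to carry out.

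Your Stage~1 is therefore not the KKPT route, and as written it has a gap. You propose to derive the Carleson measure estimate $\iint_{T(\Delta)}|\nabla u|^2\delta\lesssim\sigma(\Delta)\|u\|_\infty^2$ from $\epsilon$-approximability by an integration-by-parts that ``replaces $\iint|\nabla u|^2\delta$ by the Carleson datum $\iint|\nabla\varphi|$''. But the Carleson datum controls $\int|\nabla\varphi|$ (first power, no weight), and there is no obvious way to pass from that to a bound on $\iint|\nabla u|^2\delta$ by writing $u=\varphi+(u-\varphi)$ and integrating by parts once --- the error term $\int|\nabla(u-\varphi)|$ is not controlled by $\|u-\varphi\|_\infty$ alone, and $\varphi$ is not a solution so Caccioppoli does not apply to $u-\varphi$. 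The implication $\epsilon$-approximability $\Rightarrow$ CME is true, but the known proofs either pass through $A_\infty$ first (which would be circular here) or use a genuinely different mechanism; it is not the content of \cite{kenig2000new}. Your Stage~2 then imports the square-function identity and Chang--Wilson--Wolff bounds, which are the tools for the \emph{following} theorems (from \cite{kenig2016square}), not this one.

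In short: your concern about handling $b$-terms via \eqref{energyest} and Proposition~\ref{compensatedProp} is misplaced for this particular theorem --- the KKPT argument is a black box in the solution properties already established, and no such terms arise.
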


The proof proceeds exactly as in \cite{kenig2000new} since the solutions have all the properties to prove the theorem: the Harnack principle, H\"older continuity and boundary H\"older continuity.

Similarly, we obtain the following results in \cite{kenig2016square}:
\begin{thm}
  Let $L$, $\Omega$ and $X_0$ be as in Theorem \ref{KKPT00}. Assume that there exists some $A<\infty$ such that for all Borel sets $H\subset\bdy\Omega$, the solution to the Dirichlet problem
  $$
  \begin{cases}
  Lu=0 \quad\text{in }\Omega\\
  u=\chi_H \quad\text{on }\bdy\Omega.
  \end{cases}
  $$
  satisfies the following Carleson bound
  $$
  \sup_{\Delta\subset\bdy\Omega,\diam(\Delta)\le\diam(\Omega)}\frac1{\sigma(\Delta)}\int_{T(\Delta)}\delta(X)\abs{\nabla u(X)}^2dX\le A.
  $$
 Then $\omega^{X_0}_L\in A_{\infty}(d\sigma)$.
\end{thm}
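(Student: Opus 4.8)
The plan is to transcribe the argument of \cite{kenig2016square}, after checking that every tool it uses is now available for operators with a $BMO$ anti-symmetric part. Concretely we shall need: the interior Harnack inequality and oscillation estimate (Lemma~\ref{Harnackineq}, Lemma~\ref{intHolderLem}); the boundary H\"older continuity, the comparison principle and its H\"older refinement (Lemma~\ref{BdyHolderCont}, Proposition~\ref{comparisonpp}); the CFMS-type relation $\omega^X(\Delta_r(Q))\approx r^{n-2}G(X,A_r(Q))$, the change-of-pole estimates, and the kernel bounds of Section~\ref{ellipticSec} (Corollary~\ref{omegaG}, Proposition~\ref{kernelest}); and, crucially, the $L^p(d\mu)$ equivalence of the square function and the non-tangential maximal function for solutions proved in Section~\ref{squareSec} via the identity~\eqref{identity}. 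Since $\omega^{X_0}$ and $\sigma$ are both doubling on the Lipschitz boundary, the $A_\infty$ conclusion is equivalent to the scale-invariant statement that for every $\epsilon\in(0,1)$ there is $\delta\in(0,1)$, depending only on $n,\lambda,\Lambda,\Gamma$ and the Lipschitz character, so that for every surface ball $\Delta=\Delta_r(Q)$ with $r\lesssim\diam\Omega$, corkscrew point $A_\Delta=A_r(Q)$, and every Borel set $H\subset\Delta$ (which we may take closed), $\sigma(H)\le\delta\,\sigma(\Delta)$ forces $\omega^{A_\Delta}(H)\le\epsilon\,\omega^{A_\Delta}(\Delta)$. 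Using the change-of-pole estimates and the lower bound $\omega^{A_\Delta}(\Delta)\gtrsim 1$ from Section~\ref{ellipticSec}, this further reduces to bounding $u(A_\Delta)=\omega^{A_\Delta}(H)$ by a small constant, where $u$ solves $Lu=0$ in $\Omega$ with $u=\chi_H$ on $\bdy\Omega$, so that $0\le u\le1$ and, by Lemma~\ref{BdyHolderCont}, $u$ has non-tangential limit $0$ at every point of $\bdy\Omega\setminus H$.

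Next I would feed the Carleson hypothesis into this reduction. Writing the square function over cones and integrating in the vertex over $\Delta$, Fubini's theorem and the hypothesis yield, for a suitable aperture $\alpha$,
$$
\int_{\Delta}\bigl(S_{\alpha,2r}u\bigr)^2\,d\sigma\ \lesssim\ \iint_{T(2\Delta)}\delta(X)\,\abs{\nabla u(X)}^2\,dX\ \lesssim\ A\,\sigma(\Delta).
$$
Chebyshev's inequality then confines the set where $S_{\alpha,2r}u$ exceeds a large threshold to a small $\sigma$-fraction of $\Delta$. On the complementary good set, where $u$ moreover vanishes non-tangentially, the pointwise domination of the non-tangential maximal function by the square function (the $N\lesssim S$ half of the Section~\ref{squareSec} estimate, localized to $T(2\Delta)$) controls $u$ in the Whitney regions over the good part of $\Delta$, and the comparison principle (Proposition~\ref{comparisonpp}) transfers this control to the corkscrew point $A_\Delta$.

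A single pass produces only that $u(A_\Delta)$ is bounded strictly below $1$ once $\sigma(H)/\sigma(\Delta)$ is small; upgrading this to genuine smallness — which is what $A_\infty$ demands — is carried out by a stopping-time decomposition of $\Delta$, in which the stopping cubes (those on which $\omega^{A_\Delta}$ and $\sigma$ fail to be comparable) satisfy a Carleson packing estimate inherited, once more through the display above, from the hypothesis; summing the resulting geometric series over scales gives the implication required in the first paragraph. An essentially equivalent route is to show that the hypothesis forces every bounded solution to be $\epsilon$-approximable and then to invoke Theorem~\ref{KKPT00}.

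The step I expect to be the main obstacle is making the square-function/maximal-function equivalence of Section~\ref{squareSec} operate \emph{locally} with uniform constants: it is stated globally, with a fixed pole $X_0$ and a fixed $\mu\in A_\infty(\omega^{X_0})$, whereas here it must be applied on each Carleson box $T(2\Delta)$ with constants independent of $\Delta$. This forces one to re-run, at every scale, the localization and Green-function comparison arguments from the proof of Lemma~\ref{LemIden} — in particular the comparison between the global Green function and the Green function of a ball on half-balls, which is exactly Claim~4 there. The subsequent iteration is also delicate, since one must marry the $\sigma$-side Carleson bound to the $\omega$-side estimates before knowing that $\omega\ll\sigma$; but this balancing act is the one performed in \cite{kenig2016square} using only the Harnack principle, the interior and boundary H\"older continuity, and the CFMS and kernel estimates recorded above, so the sole additional bookkeeping is to track the dependence of all constants on the $BMO$-norm bound $\Gamma$.
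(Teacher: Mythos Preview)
Your proposal is correct and matches the paper's approach: the paper does not give an independent proof of this theorem but simply states that the result of \cite{kenig2016square} carries over, for the same reason as Theorem~\ref{KKPT00}, once the Harnack principle, interior and boundary H\"older continuity, and the elliptic-measure estimates of Sections~\ref{intSec}--\ref{ellipticSec} are in hand. Your sketch is in fact considerably more detailed than what the paper provides; the only remark is that the argument in \cite{kenig2016square} does not actually route through the global $S\approx N$ equivalence of Lemma~\ref{LemIden}, so the localization obstacle you flag is less central than you suggest.
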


\begin{thm}
Let $L$, $\Omega$ and $X_0$ be as in Theorem \ref{KKPT00}. Assume that for all Borel sets $H\subset\bdy\Omega$, the solution to the Dirichlet problem
  $$
  \begin{cases}
  Lu=0 \quad\text{in }\Omega\\
  u=\chi_H \quad\text{on }\bdy\Omega.
  \end{cases}
  $$
  satisfies 
  \begin{equation}
      \norm{S_{\alpha}(u)}_{L^p(\bdy\Omega,d\sigma)}\le A\norm{u^*}_{L^p(\bdy\Omega,d\sigma)} 
  \end{equation}
  for some $p$, $1+\frac1{n-2}\le p<\infty$ if $n\ge 3$ or $p_0\le p<\infty$ if $n=2$ with $p_0$ depending on $\lambda,\Lambda,\Gamma$ and the Lipschitz character of $\Omega$. Then $\omega^{X_0}_L\in A_{\infty}(d\sigma)$.
\end{thm}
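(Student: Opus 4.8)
The plan is to follow the proof in \cite{kenig2016square} essentially line by line; as has been the pattern throughout this paper, the only thing to check is that each analytic ingredient of that argument is available here, the $BMO$ (rather than $L^\infty$) anti-symmetric part having been neutralized in Sections \ref{preSec}--\ref{squareSec}.

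The first step is to reduce the conclusion $\omega^{X_0}_L\in A_\infty(d\sigma)$ to the Carleson measure estimate of the preceding theorem, namely
$$
\sup_{\Delta}\frac{1}{\sigma(\Delta)}\int_{T(\Delta)}\delta(X)\abs{\nabla u_H(X)}^2\,dX\le A'
$$
uniformly over Borel sets $H\subset\bdy\Omega$, where $u_H$ solves $Lu_H=0$ with data $\chi_H$; once this holds, the theorem just cited gives the result. To get the Carleson bound from the hypothesis $\norm{S_\alpha u_H}_{L^p(d\sigma)}\le A\norm{u_H^*}_{L^p(d\sigma)}$ one uses: that $0\le u_H\le1$ in $\Omega$ by the maximum principle (Lemma \ref{maxprinciple}), so $u_H^*\le1$; the Fubini relation $\int_{T(\Delta_r(Q))}\delta\abs{\nabla u_H}^2\approx\int_{\Delta_{Cr}(Q)}(S_{\alpha,Cr}u_H)^2\,d\sigma$; and the dictionary supplied by Lemma \ref{LemIden}, whose identity \eqref{identity} for $f=\chi_H$ reads
$$
\int_\Omega A(Y)\nabla u_H(Y)\cdot\nabla u_H(Y)\,G(X_0,Y)\,dY=\tfrac12\,\omega^{X_0}(H)\bigl(1-\omega^{X_0}(H)\bigr),
$$
and which, together with Corollary \ref{omegaG} and the doubling of $\omega$, converts integrals of $\abs{\nabla u_H}^2\,G(X_0,\cdot)$ into integrals of $(S_\alpha u_H)^2\,d\omega^{X_0}$. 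Splitting $H$ into its parts inside and outside a fixed $2\Delta$, the outside piece $u_{H\setminus2\Delta}$ vanishes on $\Delta_{2r}$, so its contribution to $\int_{T(\Delta)}\delta\abs{\nabla u}^2$ is estimated directly from boundary H\"older continuity (Lemma \ref{BdyHolderCont}), Caccioppoli (Corollary \ref{caccio}) and the comparison principle (Proposition \ref{comparisonpp}); the inside piece is controlled by the hypothesized $L^p$ inequality together with Lemma \ref{ntmaxvsmax}, Corollary \ref{omegaG} and the doubling property. The exponent restriction $p\ge1+\tfrac1{n-2}$ (for $n\ge3$) enters precisely in this reduction, keeping the Green's-function and gradient reverse-H\"older estimates (Remark \ref{RHgrad_global}, \eqref{GW1.7}--\eqref{GW1.9}) compatible with the $L^p$ bound; I would import the bookkeeping of these two pieces from \cite{kenig2016square} unchanged.

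The only place where the $BMO$ hypothesis, rather than $L^\infty$, might cause a problem is in the integrations by parts and div-curl manipulations of \cite{kenig2016square}, and these have already been made robust in the earlier sections: wherever $b$ is paired against a gradient one first subtracts its average $(b)_Q$ over the relevant cube or ball and estimates the remainder either through the $\mathscr H^1$--$BMO$ duality of Proposition \ref{compensatedProp} or directly via \eqref{bBMO} and John--Nirenberg (Lemma \ref{John-Niren}), exactly as in the Caccioppoli, Moser, Green's-function and elliptic-measure arguments above; and in the energy identity \eqref{identity} the term $b\nabla u\cdot\nabla u$ vanishes identically by anti-symmetry, so $BMO$ plays no role there. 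With these substitutions the argument of \cite{kenig2016square} applies verbatim. I expect the main obstacle to be not any individual estimate but the organizational task of matching every lemma invoked in \cite{kenig2016square} with its counterpart in Sections \ref{preSec}--\ref{squareSec}; once that correspondence is laid out, the proof is routine.
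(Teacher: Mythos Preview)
Your proposal is correct and takes essentially the same approach as the paper: the paper does not give a separate proof of this theorem but simply states it after observing that the argument of \cite{kenig2016square} carries over because all the required properties of solutions (Harnack, interior and boundary H\"older continuity, Caccioppoli, the comparison principle, the Green's-function estimates, and the identity of Lemma \ref{LemIden}) have been established in Sections \ref{preSec}--\ref{squareSec}. Your write-up is in fact more detailed than the paper's, supplying the reduction-to-Carleson outline and the local/far decomposition that the paper leaves implicit in the citation.
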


\newpage
\bibliography{project}
\bibliographystyle{amsalpha}

\Addresses

\end{document}